\numberwithin{equation}{section}
\definecolor{darkred}{rgb}{0.8,0.1,0.1}
\def\tr{{\rm tr}}
\newcommand{\1}{1}
\newcommand{\RR}{\ensuremath{\mathbb R}}
\newcommand{\ZZ}{\ensuremath{\mathbb Z}}
\newcommand{\CC}{\ensuremath{\mathbb C}}
\newcommand{\bpm}{\ensuremath{\begin{pmatrix}}}
\newcommand{\epm}{\ensuremath{\end{pmatrix}}}
\DeclareMathSymbol{:}{\mathord}{operators}{"3A}
\DeclareMathOperator{\Ad}{Ad}
\DeclareMathOperator{\ad}{ad}
\DeclareMathOperator{\Hom}{Hom}
\DeclareMathOperator{\End}{End}
\DeclareMathOperator{\Aut}{Aut}
\DeclareMathOperator{\GP}{GP}
\newcommand{\Le}{\mathrm{L}}
\newcommand{\Ri}{\mathrm{R}}
\newtheorem{theorem}{Theorem}[section]
\newtheorem{lemma}[theorem]{Lemma}
\newtheorem{definition}[theorem]{Definition}
\newtheorem{proposition}[theorem]{Proposition}
\newtheorem{corollary}[theorem]{Corollary}
\newtheorem{remark}[theorem]{Remark}
\newtheorem{example}[theorem]{Example}
\begin{document}
\title{Casimir Radial Parts via Matsuki Decomposition}

\author[Schl\"osser]{Philip Schl\"osser}
\email{philip.schloesser@ru.nl}
\address{Radboud University, IMAPP-Mathematics, Heyendaalseweg 135, 6525 AJ NIJMEGEN, the Netherlands}
\author[Isachenkov]{Mikhail Isachenkov}
\email{m.isachenkov@uva.nl}
\address{Universiteit van Amsterdam: Korteweg-de Vries Institute for Mathematics, Science Park 107, 1090 GE AMSTERDAM\\
and Institute of Physics, Science Park 904, 1098 XH AMSTERDAM, the Netherlands}
\subjclass[2020]{33C55, 33C67, 33C80, 43A90, 81T40}
\keywords{matrix-spherical function, symmetric pair, Cartan decomposition, conformal blocks, Calogero--Sutherland model, degenerate double-affine Hecke algebra, Cherednik--Dunkl operator, Heckman--Opdam hypergeometric function}
\begin{abstract}
We use Matsuki's decomposition for symmetric pairs $(G,H)$ of
(not necessarily compact) reductive Lie groups to construct the radial
parts for invariant differential operators acting on
matrix-spherical functions.
As an application, we employ this machinery to formulate an alternative, mathematically rigorous approach to obtaining radial parts of Casimir operators
that appear in the theory of conformal blocks, which avoids poorly defined analytical continuations from the compact quotient cases.
To exemplify how this works, after reviewing
the presentation of conformal 4-point correlation functions via matrix-spherical functions for the corresponding symmetric pair,
we for the first time provide a complete
analysis of the Casimir radial part decomposition in the case of Lorentzian signature. 
As another example, we revisit the Casimir reduction in the case of conformal blocks for two scalar defects of equal dimension. We argue that Matsuki's decomposition thus provides a proper mathematical framework for analysing the correspondence between Casimir equations and the Calogero--Sutherland-type models,
first discovered by one of the authors and Schomerus.

\end{abstract}

\setcounter{footnote}{0}

\maketitle

\tableofcontents

\setcounter{equation}{0}
\setcounter{footnote}{0}

\tableofcontents

\section{Introduction}

A matrix-spherical function for two (topological, real Lie, complex Lie, algebraic) groups $(G,H)$ with $H\le G$ finite-dimensional $H$-representations $V,W$ is a (continuous, smooth, holomorphic, regular) function
$f:\, G\to\Hom(V,W)$ satisfying
\[
    \forall g\in G, h,h'\in H:\quad
    f(hgh') = \pi_W(h) f(g) \pi_V(h').
\]
The theory of matrix-spherical functions, and the matrix-valued
orthogonal polynomials they give rise to, has been well studied, with references including \cite{warner,CM,mvop}. Considerable attention has
been given to matrix-spherical functions for symmetric pairs $(G,K)$,
i.e. when there is an involutive isomorphism $\theta$ of $G$ and $K=G^\theta$ is compact. 

However, the literature becomes more sparse when
we allow $K$ to be non-compact. The reason for this is that the well-studied cases
correspond to the situation (and slight variations thereof) that the 
reductive group $(G,K,\theta,B)$ (see e.g. \cite[Section~VII.2]{knapp}) is quotiented by its maximal compact subgroup $K$, and thus there is
an Abelian subgroup $A$ such that
$G=KAK$ \cite[Section VII.3]{knapp}. This allows to uniquely reconstruct $(G,K)$-matrix-spherical functions 
from their restrictions to $A$ and can then be leveraged to decompose invariant differential operators into matrix-valued differential operators on $A$,
their \emph{radial parts}, as
is done in \cite{CM}, following the earlier works of Harish-Chandra.

The proof that $G$ decomposes as $KAK$ quite crucially relies on the
fact that $K$ is compact, and such a statement at face value is usually wrong otherwise. Actually, as is shown in \cite{matsuki}, the best we can hope
for with a general symmetric pair $(G,H)$ is
\[
    G = \overline{\bigcup_i HC_iH}
\]
for finitely many ``affine tori'' $C_i$, i.e. cosets with respect
to an Abelian subgroup. Fortunately, it turns out that in this case matrix-spherical functions
are still determined by their restrictions to all of the $C_i$. In the present
work we will take this story one step further and explore how invariant differential operators, in particular the quadratic Casimir element, decompose in such setting.

This has a direct application to conformal field theory (CFT), more
specifically the theory of conformal blocks. As is explained in
\cite[Section~3]{harmony}, conformal blocks for (Euclidean) 4-point correlators
can be described as matrix-spherical functions for
$(SO(d+1,1)_0, SO(1,1)_0\times SO(d))$.\footnote{Notice, however, that a rigorous treatment of correlation functions in a CFT also needs to take into account their distributional nature. In the present paper we choose not to work in this generality. See e.g. \cite{KQR-I, KQR} for the recent relevant work in this direction and a review of earlier physics literature.} In the scalar case (i.e.
$SO(d)$ acting trivially, and $SO(1,1)_0\cong\RR_{>0}$ acting by characters), the action of the quadratic Casimir operator was first determined in \cite{dolanOsborn}. The authors of \cite{superintegrability} then showed that a change of coordinates transforms the resulting differential operator into the Hamiltonian of the Calogero--Sutherland model for $BC_2$, with parameters 
that refer not only to multiplicities of a relevant root system, but also to the characters by which $SO(1,1)_0$ acts from the left and from the right. 

The Calogero--Sutherland model is a super-integrable model with quite some history. In particular, it has been extensively studied by
Heckman and Opdam (see e.g. \cite{heckmanSchlichtkrull} and references therein) and has been
known to capture the Casimir action on zonal spherical functions
for the more conventionally studied symmetric pairs $(G,K)$, where $K$ has to be compact. This connection, however, is not enough
to explain the result from \cite{superintegrability}, as
$SO(1,1)_0\times SO(d)$ is non-compact, and as the conformal blocks
in question aren't zonal spherical functions. What we will show in present paper is that,
with Matsuki's decomposition in hand, the steps from \cite{heckmanSchlichtkrull} can be reproduced very closely to yield parallel
results in the vastly generalised setting, including for example the case of Lorentzian kinematics, where the configuration space of points on the group has richer structure compared to the Euclidean situation.
As a matter of fact, we will develop most of the theory for the general symmetric pair context, then specialize to a general indefinite orthogonal group $G=SO(p+1, q+1)$, and only further down the road specialize to Euclidean and Lorentzian signatures ($q=0,1$), once the real rank of $G$ will start to affect the amount of needed calculations in an essential way.

Lastly, conformal blocks for two defects of the same dimension can 
also be described as spherical functions, namely for $(SO(d+1,1)_0, SO(d-p)\times SO(p+1,1)_0)$. We will see that this setup also falls within the domain of validity for
Matsuki's decomposition, so that we will be able to similarly use it in the
radial part decomposition of the quadratic Casimir element, clarifying
the appearance of the Calogero--Sutherland model 
in this context, which was earlier argued in
\cite{defect}.

Let us stress that the issue of having a properly defined Cartan decomposition in non-compact case is also important from the physics perspective, perhaps contrary to what might seem at the first glance.
Being sometimes perceived more as a mathematical nuisance, it is usually glossed over in the physics literature by declaring it to be a more or less straightforward 'analytical continuation' from the compact case. However, as is frequently the case with going from compact spaces to non-compact ones, the 'boundary questions' actually cause trouble and turn out to be rather non-trivial mathematically, especially in treating lower-dimensional group orbits. Notice that, despite the possible lower dimensionality of such strata, good analytical control over them is of relevance, since various such regions might carry information about singularities of the physical correlation functions.

As we have just reviewed above, bits and pieces of this radial part analysis, mostly but not exclusively in Euclidean setting, have appeared in physics literature before, see, in particular, \cite{Mack-book, harmony, Isachenkov:2017qgn, BSI, Buric:2022ucg}. Spherical functions relevant for Lorentzian analysis 
in the scalar case were also recently discussed in \cite{Agarwal:2023xwl}. However, the mathematically rigorous description of the Casimir reduction in the setting pertinent to the four-point (spinning) conformal blocks, including the case of Lorentzian signature, was, as far as we are aware, up to now missing from the literature. In particular, a careful analysis of the subtleties of the $KAK$-type matrix decomposition in non-compact case that we provide in this paper using Matsuki's theory is, to our knowledge, new.\footnote{For example, in \cite{Buric:2022ucg} the existence of such a decomposition is attributed to the Gel'fand pair property of $(G,H)$. However, we are not aware of any general statements yielding such a group factorisation for a given non-compact Gel'fand pair.}
This, besides the wish to streamline and clean up mathematical details, might be seen as the main motivation behind this work on the physics side. It puts the $KAK$-type decompositions used in the Calogero--Sutherland approach to conformal blocks on a firm mathematical ground.

The plan of the paper is as follows. In Section~\ref{sec:msf} we review some generalities on matrix-spherical functions, especially
with respect to group actions on them. We then go on, in 
Section~\ref{sec:matsuki} to review Matsuki's (\cite{matsuki}) theory of decompositions
in the special case where the two involutions $\sigma=\tau$ and
the two subgroups $H=L$ are equal. In Section~\ref{sec:radial-parts}
we develop this into a theory of radial part decompositions and compute the radial part of the quadratic Casimir element.
In Section~\ref{sec:cb} we then pivot to applications in the theory of
conformal blocks and explain how conformal blocks for 4-point correlation
functions can be viewed as matrix-spherical functions in the sense
defined earlier. In particular, in Section~\ref{sec:casimir-eq} we do a thorough
analysis of Matsuki's theory for the group $SO(p+1,q+1)_0$, derive
the expression of the quadratic Casimir element and match it with the
scalar (Heckman--Opdam) expression from \cite{superintegrability}
and
the spinor expression from \cite{BSI}. In Section~\ref{sec:defect} we
then provide a brief discussion of defect blocks and
matching our results with \cite{defect}. We close this paper with a short summary and discussion of possible future directions.

A word about notation: most of the notation is chosen in such a way
that it is compatible with \cite{matsuki}. The notation of
Section~\ref{sec:cb} that pertains to $G,\mathfrak{g}$ and
various subgroups and subalgebras is more in line with the
standard treatment of the structure theory of simple Lie groups
(as presented in \cite{knapp}) and
will, due to collisions with \cite{matsuki}'s notation, not be used
in the subsequent sections. In Subsection~\ref{sec:cb-scalar} (and
examples beforehand), we
introduce the scalar parameters $\alpha,\beta$, so then roots
will not be called $\alpha,\beta$, but $\gamma$.


\section{Matrix-Spherical Functions (MSFs)}\label{sec:msf}

\begin{definition}
    Let $(G,H)$ be a pair of groups, i.e. $G$ is a group and $H\le G$ is a subgroup.
    Let $(V,\pi_V),(W,\pi_W)$ be representations of $H$. A function $f:\, G\to\Hom(V,W)$
    is said to be a \emph{matrix-spherical function} (MSF) if
    \[
        \forall g\in G, h,h'\in H:\quad f(hgh') = \pi_W(h) f(g) \pi_V(h').
    \]
    In the case where $G,H$ are real Lie groups (complex Lie groups, algebraic groups), 
    we usually also require $f$ to be smooth (holomorphic, regular). Write
    $E^{V,W}(G,H)$ for the set of matrix-spherical functions for $(G,H)$ with the
    representations $V,W$.

    In the more general case, where $W$ is an $H$-bimodule, we
    can make the same definition: a function $f:\, G\to W$ is a MSF
    if
    \[
        \forall g\in G,h,h'\in H:\quad
        f(hgh') = h\cdot f(g)\cdot h'.
    \]
    However, as we cannot interpret this function as being
    matrix-valued anymore, the name MSF is a misnomer. Write
    $E^W(G,H)$ for the functions thus described.
\end{definition}
We're now going to consider the case where $(G,H)$ are Lie groups and
$W$ an $H$-bimodule with two smooth actions.

\begin{lemma}\label{sec:lem-msf-action-diffops}
    $E^W(G,H)$ has actions by $U(\mathfrak{g})^H$ in terms of left-invariant
    and right-invariant differential operators. Furthermore, these two actions are
    pointwise related: for $f\in E^W(G,H)$ (or $C^\infty(G,W)$ more generally), $g\in G, p\in U(\mathfrak{g})^H$ with
    $\Ad(g)(p)\in U(\mathfrak{g})^H$ we have
    \[
        (p\cdot f)(g) = (f\cdot \Ad(g)(p))(g).
    \]
\end{lemma}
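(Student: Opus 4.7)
The plan is to first define the two $U(\mathfrak{g})$-actions and establish their well-definedness on $E^W(G,H)$, then prove the pointwise identity, which is the actual content of the lemma.

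For the setup, I would extend the standard left-invariant and right-invariant derivative actions of $\mathfrak{g}$ on $C^\infty(G,W)$ multiplicatively to associative $U(\mathfrak{g})$-actions via
\[
    (X\cdot f)(g) = \tfrac{d}{dt}\Big|_{t=0} f(g\exp(tX)),
    \qquad
    (f\cdot X)(g) = \tfrac{d}{dt}\Big|_{t=0} f(\exp(tX)\, g),
\]
demanding $(pq)\cdot f = p\cdot (q\cdot f)$ and $f\cdot (pq) = (f\cdot p)\cdot q$ so that both become honest $U(\mathfrak{g})$-actions (not anti-actions). For well-definedness on $E^W(G,H)$, I would compute $(X\cdot f)(hgh')$ for $h,h'\in H$ and $X\in\mathfrak{g}^H$ using the identity $h'\exp(tX) = \exp(t\Ad(h')X)\, h' = \exp(tX)\, h'$ together with the bimodule equivariance of $f$; the same step extends to monomials in $U(\mathfrak{g})$ by induction on degree. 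The $H$-invariance of $p$ is precisely what cancels the residual $\Ad(h)$-twists that would otherwise spoil the MSF property, and the right-invariant action is handled symmetrically.

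For the pointwise identity, the cornerstone is the conjugation relation $g\exp(tX) = \exp(t\Ad(g)X)\, g$, valid for any $X\in\mathfrak{g}$ and $g\in G$; differentiating at $t=0$ immediately proves the lemma for $X\in\mathfrak{g}$. To extend to all of $U(\mathfrak{g})$ I would iterate to obtain
\[
    g\exp(t_1 X_1)\cdots \exp(t_n X_n) = \exp(t_1 \Ad(g) X_1)\cdots \exp(t_n \Ad(g)X_n)\, g,
\]
apply $\partial_{t_1}\cdots\partial_{t_n}\big|_0$ to $f$ composed with each side, and read off the left-invariant action of $X_1\cdots X_n$ on the left and the right-invariant action of $\Ad(g)X_1\cdots \Ad(g)X_n = \Ad(g)(X_1\cdots X_n)$ on the right, using that $\Ad(g)$ extends to an algebra automorphism of $U(\mathfrak{g})$. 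Linearity then gives the identity on all of $U(\mathfrak{g})$, in particular on the relevant $p\in U(\mathfrak{g})^H$ with $\Ad(g)(p)\in U(\mathfrak{g})^H$.

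The only obstacle is the mild bookkeeping of arranging both actions as genuine $U(\mathfrak{g})$-actions so that $\Ad(g)$ enters the final identity as an algebra homomorphism rather than an anti-homomorphism; once the conventions are pinned down this way, no analytic subtleties remain.
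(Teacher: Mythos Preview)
Your proposal is correct and follows essentially the same route as the paper: the same definitions of the left and right $U(\mathfrak{g})$-actions, the same observation that $H$-invariance of $p$ cancels the $\Ad(h')$-twist to keep $p\cdot f$ in $E^W(G,H)$, and the same induction on monomials (via the iterated identity $g\exp(t_1X_1)\cdots\exp(t_nX_n)=\exp(t_1\Ad(g)X_1)\cdots\exp(t_n\Ad(g)X_n)\,g$) for the pointwise relation. The paper packages the well-definedness step slightly differently---it first records the compatibility $h\cdot p\cdot h^{-1}\cdot f=\Ad(h)(p)\cdot f$ on all of $C^\infty(G,W)$ and then passes to $H$-invariants---but this is only a cosmetic reorganisation of the same computation you outline.
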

\begin{proof}
    We first show that $C^\infty(G,W)$ is a $(U(\mathfrak{g}), H)$-bimodule (which is derived from the left and right regular representations).

    For a function $f\in C^\infty(G,W)$, and elements $g\in G, h,h'\in H,X\in\mathfrak{g}$
    we define
    \begin{align*}
        (h\cdot f\cdot h')(g) & := (h')^{-1}\cdot f(h'gh)\cdot h^{-1}\\
        (X\cdot f)(g) & := \dv{t}\eval{f(g\exp(tX))}_{t=0}\\
        (f\cdot X)(g) & := \dv{t}\eval{f(\exp(tX)g)}_{t=0}.
    \end{align*}
    Since $G$ is associative, the left and right actions commute.
    Furthermore, for $h\in H, p\in U(\mathfrak{g})$ we have
    \begin{align*}
        h\cdot p\cdot h^{-1}\cdot f
        &= \Ad(h)(p)\cdot f\\
        f\cdot h\cdot p \cdot h^{-1} &=
        f\cdot\Ad(h)(p),
    \end{align*}
    so that the left actions (resp. the right actions) are compatible
    with each other, which establishes that $(U(\mathfrak{g}),H)$-bimodule structure. Therefore, the invariants under the left action of $H$
    still have a left action of $U(\mathfrak{g})^H$, analogously for the
    right actions. In particular, the elements that are left and right
    invariant under $H$ (i.e. elements of $E^W(G,H)$) have a left and a right
    action of $U(\mathfrak{g})^H$. As can be seen from the definition, the left
    action of $U(\mathfrak{g})$ involves the right regular representation on $C^\infty(G)$ and therefore
    commutes with the left regular representation, so that it acts by left-invariant
    differential operators. Similarly, the right action of $U(\mathfrak{g})$
    is an action by means of right-invariant differential operators.
    
    Lastly, we show the pointwise relation between the left and right actions
    without the $H$-invariance assumption, i.e. for $C^\infty(G,W)$
    and $U(\mathfrak{g})$. Let $f\in C^\infty(G,W)$ and $X\in\mathfrak{g}$ then
    \[
        (X\cdot f)(g) = \dv{t}\eval{f(g\exp(tX))}_{t=0}
        = \dv{t}\eval{f(\exp(t\Ad(g)(X))g)}_{t=0}
        = (f\cdot \Ad(g)(X))(g).
    \]
    Now assume, the claim already holds for $p\in U(\mathfrak{g})$, then we have
    \begin{align*}
        (Xp\cdot f)(g) &= (X\cdot (p\cdot f))(g)
        = ((p\cdot f)\cdot\Ad(g)(X))(g)\\
        &= (p\cdot (f\cdot\Ad(g)(X)))(g)
        = ((f\cdot\Ad(g)(X))\cdot\Ad(g)(p))(g)\\
        &= (f\cdot \Ad(g)(Xp))(g).\qedhere
    \end{align*}
\end{proof}

\begin{corollary}
    The actions from Lemma~\ref{sec:lem-msf-action-diffops} can be restricted to
    $U(\mathfrak{g})^G$. If $G$ is connected, this yields two actions of $Z(U(\mathfrak{g}))$ that coincide.
\end{corollary}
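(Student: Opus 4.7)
The proof has essentially two components: the restriction claim and the coincidence claim.

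For the restriction, the plan is to note that $H\le G$ immediately gives $U(\mathfrak{g})^G\subseteq U(\mathfrak{g})^H$, so the two actions of $U(\mathfrak{g})^H$ on $E^W(G,H)$ from Lemma~\ref{sec:lem-msf-action-diffops} restrict to actions by $U(\mathfrak{g})^G$. This step is essentially a tautology and requires no extra input.

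For the identification $U(\mathfrak{g})^G = Z(U(\mathfrak{g}))$ when $G$ is connected, the plan is to invoke the standard fact that a connected Lie group acts on $U(\mathfrak{g})$ by $\Ad$ with infinitesimal generator $\ad$, so $G$-invariance coincides with $\mathfrak{g}$-invariance, and the latter is exactly centrality in $U(\mathfrak{g})$. Thus restricting the two actions to $U(\mathfrak{g})^G$ produces two actions of $Z(U(\mathfrak{g}))$.

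The coincidence of the two actions is where the pointwise relation from Lemma~\ref{sec:lem-msf-action-diffops} does the work. For $p\in Z(U(\mathfrak{g})) = U(\mathfrak{g})^G$ and any $g\in G$, one has $\Ad(g)(p) = p$, so in particular $\Ad(g)(p)\in U(\mathfrak{g})^H$, which is the hypothesis needed to apply the pointwise identity. It then gives
\[
    (p\cdot f)(g) = (f\cdot\Ad(g)(p))(g) = (f\cdot p)(g)
\]
for every $f\in E^W(G,H)$ and every $g\in G$, whence the two actions agree as maps $Z(U(\mathfrak{g}))\times E^W(G,H)\to E^W(G,H)$.

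There is no real obstacle here; the only thing to be careful about is flagging why connectedness is used (to pass from $\Ad(G)$-invariance to $\ad(\mathfrak{g})$-invariance and hence to centrality), since without connectedness one only gets $U(\mathfrak{g})^G \subseteq Z(U(\mathfrak{g}))$ and the actions of $U(\mathfrak{g})^G$ still coincide by the same pointwise argument, but one can no longer phrase the statement in terms of the center.
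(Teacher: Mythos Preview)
Your proposal is correct and matches the paper's proof essentially line for line: identify $U(\mathfrak{g})^G=Z(U(\mathfrak{g}))$ via connectedness, then apply the pointwise relation from Lemma~\ref{sec:lem-msf-action-diffops} with $\Ad(g)(z)=z$ to conclude $(z\cdot f)(g)=(f\cdot z)(g)$. Your additional remarks on the restriction step and on what survives without connectedness are fine but not needed for the argument.
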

\begin{proof}
    If $G$ is connected, $G$-invariance is the same as $\mathfrak{g}$-invariance, so
    $U(\mathfrak{g})^G=Z(U(\mathfrak{g}))$. Let $z\in Z(U(\mathfrak{g})),g\in G$
    and $f\in E^W(G,H)$. By Lemma~\ref{sec:lem-msf-action-diffops}, we have
    \[
        (z\cdot f)(g) = (f\cdot\Ad(g)(z))(g) = (f\cdot z)(g).\qedhere
    \]
\end{proof}

\begin{corollary}\label{sec:cor-pull-out-h}
    For $p\in U(\mathfrak{h})$ and $q\in U(\mathfrak{g})$ and $f\in E^W(G,H)$ we have
    \[
        (pq\cdot f)(g) = (p\cdot f)(g)\cdot q,\qquad
        (f\cdot qp)(g) = q\cdot (f\cdot p)(g).
    \]
\end{corollary}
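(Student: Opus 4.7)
The strategy is to reduce both identities to the pointwise formula for the $U(\mathfrak{h})$-action on matrix-spherical functions, namely
\[
    (p\cdot f)(g) = f(g)\cdot p
    \qquad\text{and}\qquad
    (f\cdot p)(g) = p\cdot f(g),
    \qquad p\in U(\mathfrak{h}),
\]
where the right (resp.\ left) $U(\mathfrak{h})$-action on $W$ is obtained by differentiating the given smooth $H$-bimodule structure. These pointwise formulas, once in hand, may be combined with the commutativity of the left and right $U(\mathfrak{g})$-actions on $C^\infty(G,W)$ established in Lemma~\ref{sec:lem-msf-action-diffops} to pull the $U(\mathfrak{h})$-factor out of the composed $U(\mathfrak{g})$-action.

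I would prove the pointwise formulas by induction on the degree of $p$. The base case $p = X \in \mathfrak{h}$ is immediate: differentiating the right-equivariance relation $f(g\exp(tX)) = f(g)\cdot\exp(tX)$ at $t=0$ gives $(X\cdot f)(g) = f(g)\cdot X$, and the analogous left-equivariance yields $(f\cdot X)(g) = X\cdot f(g)$. For the induction step, write $p = Xp'$ with $X\in\mathfrak{h}$ and $p'\in U(\mathfrak{h})$ of smaller degree; associativity of the left action gives $(Xp'\cdot f)(g) = \partial_t|_{0}(p'\cdot f)(g\exp(tX))$, the inductive hypothesis identifies $(p'\cdot f)(g\exp(tX))$ with $f(g\exp(tX))\cdot p'$, and a further application of right-equivariance then yields $f(g)\cdot(Xp')$. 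The symmetric right-action formula is proved identically using left-equivariance of $f$.

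Given these pointwise formulas, each identity of the corollary follows from the observation that the maps $w\mapsto w\cdot p$ and $w\mapsto p\cdot w$ on $W$ are fixed linear endomorphisms, and hence commute with every group-direction differentiation involved in the $U(\mathfrak{g})$-action. This allows the $U(\mathfrak{h})$-factor, once converted into a pointwise $W$-operation, to be pulled past the remaining $U(\mathfrak{g})$-action. The main subtlety, and the only real bookkeeping hurdle, is that after applying $p\in U(\mathfrak{h})$ the intermediate function is no longer matrix-spherical; one must work with the partial equivariance that survives in order to either iterate the induction or invoke the commutation of pointwise $W$-operations with $L_q$ and $R_q$. No step presents a genuine obstacle beyond organising this bookkeeping carefully.
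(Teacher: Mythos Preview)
Your proposal is correct and follows essentially the same route as the paper: establish the pointwise identity for the $U(\mathfrak{h})$-factor acting on an MSF (the paper does this in one stroke by differentiating $f(g\exp(t_1X_1)\cdots\exp(t_rX_r))$ for a monomial $X_1\cdots X_r$ in $\mathfrak{h}$, whereas you use induction on the degree---these are equivalent), and then use that the resulting fixed linear endomorphism of $W$ commutes with the remaining $U(\mathfrak{g})$-differentiations. Note that the paper's statement appears to have the roles of $p$ and $q$ swapped; its own proof in fact takes the monomial in $\mathfrak{h}$ to be the factor that ends up acting on the value in $W$, which is what is needed for the right-hand sides to make sense and is how the corollary is applied later.
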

\begin{proof}
    We show the claim for $q$ being a monomial and then use linearity. Let $X_1\cdots X_r\in\mathfrak{h}$ and $p=X_1\cdots X_r$. Let $g\in G$, then
    \begin{align*}
        (q\cdot f)(g) &= \dv{t_1}\cdots\dv{t_r}\eval{f(g\exp(tX_1)\cdots
        \exp(tX_r))}_{t=0}\\
        &= \dv{t_1}\cdots\dv{t_r} \eval{f(g)\cdot \exp(t_1X_1)\cdots \exp(t_rX_r)}_{t_1=\cdots=t_r=0}\\
        &= f(g)\cdot q,
    \end{align*}
    whence $(pq\cdot f)(g) = (p\cdot (q\cdot f))(g) = (p\cdot f)(g)\cdot q$ as claimed. The result for the right action can
    be obtained analogously.
\end{proof}

Suppose now that we can decompose a dense subset $G'$ of $G$ as follows:
\begin{equation}\label{eq:matsuki-decomposition}
    G' = \bigsqcup_{i\in I} HC_i H
\end{equation}
where $C_i\subseteq G$ are weakly embedded submanifolds (in our story these will turn out to be tori or right cosets of tori), i.e. the
identity map $C_i\hookrightarrow G$ is an immersion. Parallel to the
usual treatment, a spherical function is then determined by its 
restrictions to these tori $C_i$.

\begin{lemma}\label{sec:lem-restriction-injective}
    The map $|_C:\,\, E^W(G,H)\to \prod_{i\in I} C^\infty(C_i,W)$
    given by
    \[
        f\mapsto (f|_{C_i})_{i\in I}
    \]
    is injective.
\end{lemma}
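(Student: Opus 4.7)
The plan is to show that if a spherical function $f \in E^W(G,H)$ restricts to zero on every $C_i$, then it vanishes on all of $G$. By linearity this proves injectivity.

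First I would take $f \in E^W(G,H)$ with $f|_{C_i} = 0$ for all $i \in I$, and use the defining bi-equivariance property. For any $c \in C_i$ and any $h, h' \in H$, the MSF condition gives
\[
    f(hch') = h \cdot f(c) \cdot h' = h \cdot 0 \cdot h' = 0.
\]
Thus $f$ vanishes identically on the double coset $HC_iH$, and so on the whole union $G' = \bigsqcup_{i \in I} HC_iH$.

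Next I would invoke density. Since $E^W(G,H) \subseteq C^\infty(G,W)$, the function $f$ is in particular continuous. The hypothesis on the decomposition \eqref{eq:matsuki-decomposition} is that $G'$ is dense in $G$, and $f$ vanishes on $G'$, so by continuity $f \equiv 0$ on $G$.

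Note that the weakly embedded hypothesis on the $C_i$ plays no role in the injectivity argument itself; it is only needed in order for the restrictions $f|_{C_i}$ to make sense as smooth functions on the submanifolds $C_i$ (i.e.\ so that the codomain $\prod_i C^\infty(C_i, W)$ is well-defined). There is essentially no obstacle here: the proof is a direct consequence of bi-equivariance plus density plus continuity. The more substantive content of the theory will come later, when one needs to characterise which tuples $(f_i)_{i\in I}$ actually arise as restrictions of a globally defined MSF, and when one wants to descend differential operators on $G$ to differential operators on the $C_i$.
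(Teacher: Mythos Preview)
Your proof is correct and follows essentially the same approach as the paper: use the bi-equivariance to extend vanishing (or agreement) from each $C_i$ to $HC_iH$, then invoke density of $G'$ and continuity to conclude. The only cosmetic difference is that the paper argues directly with two functions $f,f'$ agreeing on all $C_i$, whereas you pass to the kernel via linearity.
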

\begin{proof}
    Let $f,f'$ with $f|_C=f'|_C$, i.e. $f|_{C_i}=f'|_{C_i}$ for all $i\in I$. Since $f,f'$ are matrix-spherical, they agree on the corresponding $H\times H$-orbits
    as well, i.e. on $HC_i H$ for all $i\in I$, so in particular on
    \[
        \bigcup_{i\in I} HC_i H = G'.
    \]
    Since both are continuous and $G'$ is dense in $G$, we have $f=f'$.
\end{proof}

\begin{definition}
    For $i\in I$ define
    \begin{align*}
        N_{C_i} &:= \{(h,h')\in H^2\mid hC_i h^{\prime-1}=C_i\}\\
        Z_{C_i} &:= \{(h,h')\in H^2\mid \forall x\in C_i:\quad  hxh^{\prime-1}=x\}
    \end{align*}
    and $J_{C_i}:= N_{C_i}/Z_{C_i}$. They are all groups.
\end{definition}

\begin{lemma}\label{sec:lem-msf-kak-restriction}
    Let $f\in E^W(G,H)$ and $i\in I$, then
    \[
        f|_{C_i}\in C^\infty(C_i,W^{Z_{C_i}})^{J_{C_i}}.
    \]
\end{lemma}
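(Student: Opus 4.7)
The plan is to unpack the MSF condition twice: once against elements of $Z_{C_i}$ to show that the values of $f|_{C_i}$ already lie in the fixed-point subspace $W^{Z_{C_i}}$, and once against representatives of classes in $J_{C_i}$ to show the desired invariance. The action of $H\times H$ on the bimodule $W$ that is relevant here is $(h,h')\cdot w := h\cdot w\cdot h'^{-1}$, under which MSFs pull back to invariant functions; with this convention, the natural action of $N_{C_i}$ on $C^\infty(C_i,W)$ is
\[
    ((h,h')\cdot g)(x) := h^{-1}\cdot g(hxh'^{-1})\cdot h',
\]
which is well-defined because $hxh'^{-1}\in C_i$ by definition of $N_{C_i}$, and turns $C_i\to C_i,\; x\mapsto hxh'^{-1}$ into a diffeomorphism.

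For the first step I would fix $(h,h')\in Z_{C_i}$ and $x\in C_i$. By definition $hxh'^{-1}=x$, so the MSF property applied to the pair $(h,h'^{-1})$ at the point $x$ gives
\[
    f(x) = f(hxh'^{-1}) = h\cdot f(x)\cdot h'^{-1},
\]
which is exactly the condition that $f(x)$ be fixed by $(h,h')$. Hence $f|_{C_i}$ takes values in $W^{Z_{C_i}}$.

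For the second step I would specialise the displayed $N_{C_i}$-action to $g=f|_{C_i}$ and use the MSF property to get
\[
    ((h,h')\cdot f|_{C_i})(x) = h^{-1}\cdot f(hxh'^{-1})\cdot h' = h^{-1}\cdot h\cdot f(x)\cdot h'^{-1}\cdot h' = f(x),
\]
so $f|_{C_i}$ is $N_{C_i}$-invariant. To descend this action from $N_{C_i}$ to $J_{C_i}=N_{C_i}/Z_{C_i}$ on the space $C^\infty(C_i,W^{Z_{C_i}})$, I would verify that $Z_{C_i}$ acts trivially there: for $(h,h')\in Z_{C_i}$ and $g\in C^\infty(C_i,W^{Z_{C_i}})$ one has $hxh'^{-1}=x$, so
\[
    ((h,h')\cdot g)(x) = h^{-1}\cdot g(x)\cdot h',
\]
and this equals $g(x)$ precisely because $g(x)\in W^{Z_{C_i}}$, by Step 1 applied in reverse. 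Combining the two steps yields $f|_{C_i}\in C^\infty(C_i,W^{Z_{C_i}})^{J_{C_i}}$.

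I do not expect any substantial obstacle: the entire argument is a direct application of the matrix-spherical condition together with the definitions of $N_{C_i}$ and $Z_{C_i}$. The only delicate point is keeping the sign and side conventions straight so that the bimodule action on $W$, the $N_{C_i}$-action on $C^\infty(C_i,W)$, and the MSF equivariance all match; once these are fixed consistently (as above) everything collapses to a one-line manipulation in each step.
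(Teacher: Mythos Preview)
Your proof is correct and follows essentially the same approach as the paper: both apply the MSF identity to elements of $Z_{C_i}$ to obtain pointwise invariance, and to elements of $N_{C_i}$ to obtain equivariance that then descends to $J_{C_i}$. The paper phrases the second step as ``$f|_{C_i}$ intertwines the actions of $N_{C_i}$'' rather than setting up an explicit action on the function space, but the content is identical; you are simply more explicit about verifying that the $N_{C_i}$-action on $C^\infty(C_i,W^{Z_{C_i}})$ factors through $J_{C_i}$.
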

\begin{proof}
    Let $g\in C_i$ and $(h,h')\in N_{C_i}$, then
    \[
        f(hgh^{\prime-1}) = h\cdot f(g)\cdot h^{\prime-1}
        = (h,h')\cdot f(x).
    \]
    For the case where $(h,h')\in Z_{C_i}$ we have $hgh^{\prime-1}=g$ and hence
    $f(g)\in W^{Z_{C_i}}$. Thus we see that $f|_{C_i}$ intertwines the actions of
    $N_{C_i}$, which pass to the quotient $J_{C_i}$.
\end{proof}


\section{Matsuki's Double Coset Decomposition}\label{sec:matsuki}

We are now going to quote some results from \cite{matsuki} that will be relevant
later. Note that Matsuki considers two (generally) different involutions 
$\sigma,\tau$ on $G$, whereas in this paper we shall always assume $\sigma=\tau$.

Let $(G,H)$ be a symmetric pair of Lie groups, i.e. $H\le G$ a Lie subgroup
and $\sigma\in\Aut(G)$ an involution such that $(G^\sigma)_0\le H\le G^\sigma$,
where we assume that $HG_0H=G$. We shall also assume that $(G,K,\theta,B)$
is a reductive Lie group (see e.g. \cite[Section VII.2]{knapp}), where $\theta$ and $\sigma$ ('s derivative) commute.

We shall also assume that $B$ is invariant under $\sigma$, which is no
restriction by the following observation:
\begin{lemma}
    Let $(G,K,\theta,B)$ be a reductive Lie group and let $S\le\Aut(G)$ be a
    finite subgroup (the derivative of) whose action commutes with $\theta$.
    Then $B$ can without loss of generality be chosen to be $S$-invariant.
\end{lemma}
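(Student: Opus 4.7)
The plan is the standard averaging trick: replace $B$ by
\[
    \tilde B(X,Y) := \frac{1}{|S|}\sum_{s\in S} B(ds(X),ds(Y)),\qquad X,Y\in\mathfrak{g},
\]
and verify that $(G,K,\theta,\tilde B)$ still satisfies all the axioms of a reductive Lie group in the sense of \cite[Section VII.2]{knapp}. The $S$-invariance of $\tilde B$ is immediate from the averaging, so all that remains is to check the structural properties.

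First I would verify symmetry and $\Ad(G)$-invariance. Symmetry is obvious since each summand is symmetric. For $\Ad(G)$-invariance, since every $s\in S$ is a Lie group automorphism we have $ds\circ\Ad(g)=\Ad(s(g))\circ ds$, so $\Ad(G)$-invariance of $B$ carries over summand by summand. Next, I would check $\theta$-invariance: using that $ds$ commutes with $\theta$ (which is the hypothesis of the lemma), each summand satisfies
\[
    B(ds(\theta X),ds(\theta Y)) = B(\theta\, ds(X),\theta\, ds(Y)) = B(ds(X),ds(Y)),
\]
so $\tilde B(\theta X,\theta Y)=\tilde B(X,Y)$.

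The slightly more delicate points are the definiteness statements and nondegeneracy. Because each $ds$ commutes with $\theta$, it preserves the $\pm 1$-eigenspace decomposition $\mathfrak{g}=\mathfrak{k}\oplus\mathfrak{p}$, so $ds|_\mathfrak{k}$ and $ds|_\mathfrak{p}$ are linear automorphisms of $\mathfrak{k}$ and $\mathfrak{p}$ respectively. Hence each form $(X,Y)\mapsto B(ds(X),ds(Y))$ is the pullback of $B$ along a linear automorphism of $\mathfrak{k}$ (resp. $\mathfrak{p}$), and remains negative definite on $\mathfrak{k}$ (resp. positive definite on $\mathfrak{p}$). An average of negative-definite forms on $\mathfrak{k}$ is still negative definite, and analogously on $\mathfrak{p}$. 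Nondegeneracy of $\tilde B$ then follows once we know that $\mathfrak{k}$ and $\mathfrak{p}$ are $\tilde B$-orthogonal; but this is forced by $\theta$-invariance, since for $X\in\mathfrak{k}$, $Y\in\mathfrak{p}$ we have $\tilde B(X,Y)=\tilde B(\theta X,\theta Y)=-\tilde B(X,Y)$.

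There is really no serious obstacle here — this is a routine $|S|<\infty$ averaging argument. The only point that requires the hypotheses is that $ds$ commutes with $\theta$, which is used twice: once to establish $\theta$-invariance of $\tilde B$, and once to ensure that the Cartan decomposition $\mathfrak{g}=\mathfrak{k}\oplus\mathfrak{p}$ is preserved by each $s\in S$, so that the definiteness of $B$ on the two factors is transported into definiteness of each summand, and hence of $\tilde B$.
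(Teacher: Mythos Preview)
Your proof is correct and is essentially the same as the paper's: both use the averaging trick $\tilde B(X,Y)=\sum_{s\in S}B(ds(X),ds(Y))$ (the paper omits the normalisation by $|S|$, which is immaterial) and verify that the resulting form remains $\Ad(G)$- and $\theta$-invariant with the correct definiteness on $\mathfrak{k}$ and $\mathfrak{p}$. You have simply spelled out in more detail the checks that the paper states in one sentence.
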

\begin{proof}
    If we choose
    \[
        B'(X,Y):=\sum_{\sigma\in S} B(\sigma(X),\sigma(Y))
    \]
    instead of $B$, we obtain a real inner product on $\mathfrak{g}$ that
    is invariant under $\Ad(G),\theta,S$, which is still positive(negative)-definite on $\mathfrak{p}$ ($\mathfrak{k}$), and under which $\mathfrak{p}$
    and $\mathfrak{k}$ are still orthogonal. In other words: $(G,K,\theta,B')$
    is still a reductive Lie group.
\end{proof}

\begin{definition}
    Let $\mathfrak{t}$ be a maximal commutative subalgebra of $\mathfrak{k}^{-\sigma}$ and extend $\mathfrak{t}$ to a maximal commutative subalgebra
    $\mathfrak{c}=\mathfrak{t}\oplus\mathfrak{a}$ of $\mathfrak{g}^{-\sigma}$.
    The subgroup $C:=\exp(\mathfrak{c})=:\exp(\mathfrak{a})T$ is called a \emph{fundamental Cartan subset}.
\end{definition}

\begin{definition}
    Given a fundamental Cartan subset $C$ as above, a subset
    $C'=\exp(\mathfrak{c}')t$ of $G$ is called a \emph{standard Cartan subset}
    if
    \begin{enumerate}
        \item $t\in T$ and $\mathfrak{c}'\le\mathfrak{g}^{-\sigma}\cap\Ad(t)(\mathfrak{g}^{-\sigma})$ is a commutative subalgebra;
        \item $\mathfrak{c'}=\mathfrak{t}'\oplus\mathfrak{a}'$ where
        $\mathfrak{t}'\le\mathfrak{t}$ and $\mathfrak{a}\le\mathfrak{a}'\subseteq\mathfrak{p}$;
        \item $\dim(\mathfrak{c}')=\dim(\mathfrak{c})$.
    \end{enumerate}
    Two standard Cartan subsets $\exp(\mathfrak{a}'_1)T'_1$ and $\exp(\mathfrak{a}'_2)T'_2$ are said to be \emph{conjugate} if there are $h,h'\in K\cap H$ such that $hT'_1h^{\prime-1}=T'_2$. In particular,
    any two Cartan subsets with the same $T'$ are conjugate.
\end{definition}

\begin{example}
    For the case of $\sigma=\theta$ and $H=K$, i.e. the case of
    non-compact symmetric spaces, we pick
    $\mathfrak{t}:=0$ and $\mathfrak{a}$ as a maximal commutative
    subalgebra of $\mathfrak{p}$. Then the fundamental Cartan subset
    $C$ is the analytic subgroup $A$ of $\mathfrak{a}$, i.e. the torus
    one finds in the usual treatment for this case, e.g. \cite[Theorem~6.46]{knapp}. There are also no other standard Cartan subsets (with respect to $C$) as $\mathfrak{a}$ is already a maximal
    commutative subalgebra of $\mathfrak{g}^{-\sigma}$.
\end{example}

From now on, we shall fix a fundamental Cartan subset $C\le G$.

\begin{lemma}\label{sec:lem-Adt-involution}
    Let $C'=\exp(\mathfrak{c}')t$ be a standard Cartan subset, then
    $\Ad(t)$ is an involution of $\mathfrak{g}^{-\sigma}\cap\Ad(t)(\mathfrak{g}^{-\sigma})$. In particular, $C'$ and $tC't^{-1}$ are
    conjugate standard Cartan subsets.
\end{lemma}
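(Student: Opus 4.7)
The plan is to handle the two assertions in sequence: first, that $\Ad(t)$ restricts to an involution of $V:=\mathfrak{g}^{-\sigma}\cap\Ad(t)(\mathfrak{g}^{-\sigma})$; second, that $tC't^{-1}$ then satisfies the three defining properties of a standard Cartan subset sharing the same toric factor as $C'$, whence the conjugacy is automatic. The one structural input I need is that $t\in T=\exp(\mathfrak{t})$ with $\mathfrak{t}\le\mathfrak{k}^{-\sigma}$, which supplies two facts used throughout: $\sigma(t)=t^{-1}$ and $t\in K$.

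For the involution claim I would start from an arbitrary $X\in V$ and pick $Y\in\mathfrak{g}^{-\sigma}$ with $X=\Ad(t)Y$. Applying $\sigma$ gives
\[
    -X=\sigma(\Ad(t)Y)=\Ad(\sigma(t))\sigma(Y)=-\Ad(t^{-1})Y=-\Ad(t^{-2})X,
\]
so $\Ad(t^2)X=X$ and therefore $\Ad(t)|_V^2=\mathrm{id}$ as soon as we know $\Ad(t)$ preserves $V$. Membership of $\Ad(t)X$ in $\Ad(t)(\mathfrak{g}^{-\sigma})$ is automatic; for the $\mathfrak{g}^{-\sigma}$-membership I would use the same relation to write $\Ad(t)X=\Ad(t^{-1})X$ and then compute $\sigma(\Ad(t^{-1})X)=\Ad(t)\sigma(X)=-\Ad(t)X=-\Ad(t^{-1})X$, which is exactly the required anti-invariance.

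For the conjugacy assertion I would compute $tC't^{-1}=\exp(\Ad(t)\mathfrak{c}')\,t$ and then verify the three axioms for $\Ad(t)\mathfrak{c}'=\Ad(t)\mathfrak{t}'\oplus\Ad(t)\mathfrak{a}'$. Containment in $V$ follows from the first part, commutativity is automatic since $\Ad(t)$ is an automorphism, and the dimension is unchanged. The splitting condition reduces to the observation that $\Ad(t)$ acts as the identity on both $\mathfrak{t}'$ and $\mathfrak{a}$: this is the commutativity of $\mathfrak{c}=\mathfrak{t}\oplus\mathfrak{a}$ combined with $\mathfrak{t}'\le\mathfrak{t}$ and $\mathfrak{a}\le\mathfrak{a}'$. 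Since $t\in K$ preserves the Cartan decomposition, $\Ad(t)\mathfrak{a}'\subseteq\mathfrak{p}$, and it still contains $\Ad(t)\mathfrak{a}=\mathfrak{a}$. Writing both $C'=\exp(\mathfrak{a}')\cdot(\exp(\mathfrak{t}')t)$ and $tC't^{-1}=\exp(\Ad(t)\mathfrak{a}')\cdot(\exp(\mathfrak{t}')t)$ exhibits the common toric factor, so conjugacy is witnessed by $h=h'=e\in K\cap H$.

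The only step requiring genuine care is the closure $\Ad(t)V\subseteq V$; once the identity $\Ad(t^2)X=X$ on $V$ has been extracted from the interaction between $\sigma$ and $\Ad(t)$, everything else is bookkeeping that exploits the commutativity of $\mathfrak{c}$ and the fact that $t\in K$.
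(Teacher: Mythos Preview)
Your proof is correct and follows essentially the same approach as the paper: both arguments exploit $\sigma(t)=t^{-1}$ to obtain $\Ad(t^{-2})X=X$ on $V$, deduce from this that $\Ad(t)$ preserves $V$ and is involutive there, and then observe that $tC't^{-1}=\exp(\Ad(t)\mathfrak{c}')t$ shares the compact part $\exp(\mathfrak{t}')t$ with $C'$ (using that $t\in\exp(\mathfrak{t})$ fixes $\mathfrak{t}'$), which forces conjugacy. Your write-up is slightly more explicit in checking the three axioms for a standard Cartan subset and in noting $t\in K$ to ensure $\Ad(t)\mathfrak{a}'\subseteq\mathfrak{p}$, but the substance is the same.
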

\begin{proof}
    Let $X\in\mathfrak{g}^{-\sigma}\cap\Ad(t)(\mathfrak{g}^{-\sigma})$, so 
    that $X,\Ad(t^{-1})(X)\in\mathfrak{g}^{-\sigma}$. Note
    that $t\in\exp(\mathfrak{k}^{-\sigma})$, whence $\sigma(t^{-1})=t$.
    Consequently, we have
    \[
        -\Ad(t^{-1})(X) = \sigma(\Ad(t^{-1})(X)) = \Ad(t)(\sigma(X))
        = -\Ad(t)(X),
    \]
    which implies $\Ad(t^{-2})(X)=X$. Therefore we have
    $\Ad(t)(X),X\in\mathfrak{g}^{-\sigma}$, so that
    \[
        \Ad(t)(X)\in\mathfrak{g}^{-\sigma}\cap\Ad(t)(\mathfrak{g}^{-\sigma}).
    \]
    Now, note that $tC't^{-1}=t\exp(\mathfrak{c}')=\exp(\Ad(t)(\mathfrak{c}'))t$
    and that $\Ad(t)(\mathfrak{c}')$ decomposes as
    \[
        \mathfrak{t}'\oplus\Ad(t)(\mathfrak{a}')
    \]
    since $t\in\exp(\mathfrak{t})$ commutes with $\mathfrak{t}'\le\mathfrak{t}$.
    Therefore, $tC't^{-1}$ is also a standard Cartan subset that shares its
    ``compact part'' $\exp(\mathfrak{t}')t$ with $C'$ and is therefore
    conjugate to $C'$.
\end{proof}

\begin{definition}
    The set $G_{\mathrm{ss}}$ of all \emph{semisimple elements} of $G$ is the set of all $g\in G$ such that $\sigma\circ\Ad(g)\circ\sigma\circ\Ad(g)^{-1}$ is a semisimple Lie algebra
    automorphism of $\mathfrak{g}$. Let
    \[
        G_{\mathrm{rs}} := \{g\in G_{\mathrm{ss}}\mid g^{-\sigma}\cap\Ad(g)(\mathfrak{g}^{-\sigma})\text{ is commutative}\}
    \]
    be the set of all \emph{regular semisimple elements}. Both are dense in $G$.
\end{definition}

\begin{lemma}\label{sec:lem-regular-intersection-c}
    Let $C'=\exp(\mathfrak{c}')t$ be a standard Cartan subset and let
    $x\in C'$. Then
    \[
        \mathfrak{g}^{-\sigma}\cap\Ad(x)(\mathfrak{g}^{-\sigma}) = \mathfrak{c}'
    \]
    if and only if $x\in G_{rs}$.
\end{lemma}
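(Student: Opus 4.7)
Write $x = \exp(X)t$ with $X\in\mathfrak{c}'$ and $t\in T$, and set $V := \mathfrak{g}^{-\sigma}\cap\Ad(x)(\mathfrak{g}^{-\sigma})$. The plan is to identify $V$ with the fixed subspace on $\mathfrak{g}^{-\sigma}$ of the automorphism $\tau_x := \sigma\circ\Ad(x)\circ\sigma\circ\Ad(x)^{-1} = \Ad(\sigma(x)x^{-1})$ (a short calculation, since for $Y\in\mathfrak{g}^{-\sigma}$ the condition $\Ad(x^{-1})(Y)\in\mathfrak{g}^{-\sigma}$ is equivalent to $\tau_x(Y) = Y$), and then to prove three things: (a)~$\mathfrak{c}'\subseteq V$ unconditionally; (b)~$\tau_x$ is semisimple for every $x\in C'$, so that $x\in G_{\mathrm{ss}}$ holds automatically; and (c)~commutativity of $V$ forces $V = \mathfrak{c}'$. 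Step~(a) is immediate: for $Y\in\mathfrak{c}'$, commutativity of $\mathfrak{c}'$ gives $\Ad(\exp(-X))(Y) = Y$, so $\Ad(x^{-1})(Y) = \Ad(t^{-1})(Y)$, which lies in $\mathfrak{g}^{-\sigma}$ because $\mathfrak{c}'\subseteq\Ad(t)(\mathfrak{g}^{-\sigma})$ by the very definition of a standard Cartan subset.

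For (b), using $\sigma(X) = -X$ and $\sigma(t) = t^{-1}$ (as $t\in\exp(\mathfrak{k}^{-\sigma})$), I compute $\sigma(x)x^{-1} = \exp(-X)t^{-2}\exp(-X)$. The crucial input is Lemma~\ref{sec:lem-Adt-involution}: since $\mathfrak{c}'\subseteq\mathfrak{g}^{-\sigma}\cap\Ad(t)(\mathfrak{g}^{-\sigma})$ and $\Ad(t)$ restricts to an involution on this intersection, $\Ad(t^2)$ acts as the identity on $\mathfrak{c}'$. This both yields the commuting factorisation
\[
    \tau_x \;=\; e^{-2\ad(X)}\circ\Ad(t^{-2})
\]
(the two factors commute because $\Ad(t^{-2})(X) = X$) and lets me establish semisimplicity of each factor: $\Ad(t^{-2})$ is semisimple because $T$ is a compact torus in $K$, while the decomposition $X = T' + A'$ with $T'\in\mathfrak{t}'\subseteq\mathfrak{k}$ and $A'\in\mathfrak{a}'\subseteq\mathfrak{p}$ makes $\ad(T')$ anti-self-adjoint and $\ad(A')$ self-adjoint with respect to the $B_\theta$-inner product, and they commute. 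Hence $\ad(X)$, $e^{-2\ad(X)}$, and finally the product $\tau_x$ are all semisimple.

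For (c), if $V$ is commutative then it is a commutative subalgebra of $\mathfrak{g}^{-\sigma}$ containing $\mathfrak{c}'$; since $\mathfrak{c}$ is maximal commutative in $\mathfrak{g}^{-\sigma}$ and $\dim\mathfrak{c}' = \dim\mathfrak{c}$ realises the rank of the symmetric pair, $\mathfrak{c}'$ is itself of maximal dimension among commutative subalgebras of $\mathfrak{g}^{-\sigma}$, forcing $V = \mathfrak{c}'$; the converse is immediate from~(a). Combined with~(b) this yields the desired chain of equivalences: $x\in G_{\mathrm{rs}}$ iff $V$ is commutative iff $V = \mathfrak{c}'$. The principal obstacle I anticipate is the commuting factorisation and automatic semisimplicity in~(b); once Lemma~\ref{sec:lem-Adt-involution} has been applied to identify $\Ad(t^{-2})(X)=X$, the remaining arguments are essentially bookkeeping and a single dimension count.
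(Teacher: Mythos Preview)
Your steps~(a) and~(b) are correct, and (b) actually plugs a small hole the paper leaves open: the paper's contrapositive ``if $x\notin G_{\mathrm{rs}}$ then $V$ is not commutative'' tacitly assumes $x\in G_{\mathrm{ss}}$, which your semisimplicity computation establishes for every $x\in C'$.

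Step~(c), however, has a genuine gap. Your dimension argument---that $\dim\mathfrak{c}'=\dim\mathfrak{c}$ forces $\mathfrak{c}'$ to have maximal dimension among commutative subspaces of $\mathfrak{g}^{-\sigma}$---is false in general. ``Maximal commutative'' means maximal under inclusion, not of maximal dimension, and the two notions diverge once $\mathfrak{g}^{-\sigma}$ contains nilpotent directions. A counterexample within the paper's hypotheses is the group case $G=G_1\times G_1$ with $\sigma$ the swap: then $\mathfrak{g}^{-\sigma}\cong\mathfrak{g}_1$ and $\dim\mathfrak{c}=\operatorname{rank}(\mathfrak{g}_1)$, yet for $\mathfrak{g}_1=\mathfrak{sl}(4,\RR)$ there are abelian (nilpotent) subalgebras of dimension $4>3=\operatorname{rank}$. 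So from ``$V$ is commutative and $V\supseteq\mathfrak{c}'$'' you cannot conclude $V=\mathfrak{c}'$ by counting dimensions alone. What the paper does instead is to exploit commutativity once more: since $X\in\mathfrak{c}'\subseteq V$ and $V$ is commutative, every $Y\in V$ satisfies $\ad(X)(Y)=0$, hence $\Ad(x^{-1})(Y)=\Ad(t^{-1})(Y)\in\mathfrak{g}^{-\sigma}$, which places $V$ inside $\mathfrak{g}^{-\sigma}\cap\Ad(t)(\mathfrak{g}^{-\sigma})$. It is in \emph{this} intersection that $\mathfrak{c}'$ is maximal commutative (by the definition of a standard Cartan subset), and that is what forces $V=\mathfrak{c}'$. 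Once you insert this reduction, your argument goes through.
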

\begin{proof}
    ``$\Leftarrow$:
    ``$\supset$'': Write $x=\exp(X)t$ and let $Y\in\mathfrak{c}'$. Then
    \[
        \Ad(x^{-1})(Y) = \Ad(t^{-1})\exp(-\ad(X))(Y)
        = \Ad(t^{-1})(Y)\in\mathfrak{g}^{-\sigma}
    \]
    since $\mathfrak{c}'\le\mathfrak{g}^{-\sigma}\cap\Ad(t)(\mathfrak{g}^{-\sigma})$ by definition.

    ``$\subseteq$'': Since $x=\exp(X)t$ is regular, $\mathfrak{s}:=\mathfrak{g}^{-\sigma}\cap\Ad(x)(\mathfrak{g}^{-\sigma})$ is commutative and contains $\mathfrak{c}'$ by
    the observation above. For $Y\in\mathfrak{s}$ we thus have $\ad(X)(Y)=0$,
    so that
    \[
        \Ad(t^{-1})(Y) = \Ad(x^{-1})\exp(-\ad(X))(Y)
        = \Ad(x^{-1})(Y)\in\mathfrak{g}^{-\sigma}.
    \]
    Consequently,
    \[
    \mathfrak{c}'\le\mathfrak{s}\subseteq\mathfrak{g}^{-\sigma}\cap\Ad(t)(\mathfrak{g}^{-\sigma}),
    \]
    where $\mathfrak{s}$ is commutative and $\mathfrak{c}'$ is maximally
    commutative. Consequently, $\mathfrak{s}\le\mathfrak{c}'$.

    ``$\Rightarrow$'': If $x$ is not regular, then $\mathfrak{g}^{-\sigma}\cap\Ad(x)(\mathfrak{g}^{-\sigma})$ is not commutative. Therefore, it cannot
    be $\mathfrak{c}'$.
\end{proof}

\begin{theorem}[{\cite[Theorem 3(i--iii)]{matsuki}}]\label{sec:thm-matsuki}
    Let $G,H,K,\sigma,\theta$ as before. Fix a fundamental Cartan subset and
    let $(C_i)_{i\in I}$ be representatives of all the conjugacy classes
    of standard Cartan subsets. Then
    \begin{align*}
        G_{\mathrm{ss}} &= \bigcup_{i\in I} HC_i H\\
        G_{\mathrm{rs}} &= \bigsqcup_{i\in I} H (C_i\cap G_{\mathrm{rs}}) H.
    \end{align*}
    This establishes a decomposition as in \eqref{eq:matsuki-decomposition}.
    Every element of the group $J_{C_i}$ we defined for such a case can
    be represented by $(h,h')N_{C_i}$ where $h,h'\in K\cap H$.
\end{theorem}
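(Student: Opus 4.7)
Since this theorem is a direct quotation of Matsuki's Theorem~3, the plan is to sketch the strategy rather than reproduce the full proof. The main tool throughout is the subspace $\mathfrak{s}(g):=\mathfrak{g}^{-\sigma}\cap\Ad(g)(\mathfrak{g}^{-\sigma})$ attached to an element $g\in G$, which was already the protagonist of Lemma~\ref{sec:lem-regular-intersection-c}: for $g$ regular, $\mathfrak{s}(g)$ is commutative and coincides with the $\mathfrak{c}'$ of any standard Cartan subset containing $g$.

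For the covering statement $G_{\mathrm{ss}}=\bigcup_i HC_iH$, the plan is to start from $g\in G_{\mathrm{ss}}$ and use the four-fold eigenspace decomposition
\[
    \mathfrak{g}=\mathfrak{k}^\sigma\oplus\mathfrak{k}^{-\sigma}\oplus\mathfrak{p}^\sigma\oplus\mathfrak{p}^{-\sigma}
\]
(which exists because $\theta$ and $\sigma$ commute) together with a generalised polar decomposition adapted to the symmetric pair $(G,H)$ in order to find $h_1,h_2\in H$ such that $h_1gh_2=\exp(X)t$ with $X\in\mathfrak{p}$ and $t\in\exp(\mathfrak{k}^{-\sigma})$. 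One then enlarges the line through $X$ inside $\mathfrak{g}^{-\sigma}\cap\Ad(t)(\mathfrak{g}^{-\sigma})$ to a maximal commutative subspace $\mathfrak{c}'=\mathfrak{t}'\oplus\mathfrak{a}'$ satisfying the axioms of a standard Cartan subalgebra, and finally conjugates $\mathfrak{c}'$ by a suitable element of $K\cap H$ onto one of the preselected representatives. Finiteness of the index set $I$ will follow from the finiteness of $K\cap H$-conjugacy classes of maximal tori in $\mathfrak{k}^{-\sigma}$.

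For disjointness on $G_{\mathrm{rs}}$, assume $h_1x_1h_2=h_1'x_2h_2'$ with $x_k\in C_{i_k}\cap G_{\mathrm{rs}}$, and set $h:=(h_1')^{-1}h_1$, $h':=h_2(h_2')^{-1}$, so that $x_2=hx_1h^{\prime-1}$. Applying $\Ad$ and invoking Lemma~\ref{sec:lem-regular-intersection-c} gives $\Ad(h)(\mathfrak{c}_{i_1}')=\mathfrak{c}_{i_2}'$; tracking the $T$-components by means of Lemma~\ref{sec:lem-Adt-involution} upgrades this to a $K\cap H$-conjugation of the standard Cartan subsets $C_{i_1}$ and $C_{i_2}$, forcing $i_1=i_2$ by the choice of representatives.

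For the statement on $J_{C_i}$, take $(h,h')\in N_{C_i}$ and polar-decompose $h=k\exp(Y)$, $h'=k'\exp(Y')$ with $k,k'\in K\cap H$ and $Y,Y'\in\mathfrak{h}\cap\mathfrak{p}$. The plan is to show that $(\exp(Y),\exp(Y'))\in Z_{C_i}$: the induced automorphism of the commutative algebra $\mathfrak{c}_i'$ must be trivial once $k,k'$ have been absorbed into the $K\cap H$-representative, so the remaining $\exp(\mathfrak{p})$-factors commute with $\mathfrak{c}_i'$ and hence act trivially on $C_i=\exp(\mathfrak{c}_i')t_i$. The hardest step in the whole argument is the first one, where one must simultaneously produce the covering and prove that only finitely many conjugacy classes of standard Cartan subsets are needed; by comparison, the remaining parts reduce to the regularity lemma and the involution bookkeeping of Lemma~\ref{sec:lem-Adt-involution}.
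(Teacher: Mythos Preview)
The paper does not prove this theorem at all: it is quoted verbatim from \cite{matsuki} without argument, as announced at the beginning of Section~\ref{sec:matsuki}. So there is no ``paper's own proof'' to compare against, and you were right to flag this at the outset and offer only a strategic sketch.

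That said, a couple of points in your sketch would not survive as written. For the covering statement, the phrase ``generalised polar decomposition adapted to $(G,H)$'' hides the real content: Matsuki's argument hinges on the semisimplicity of $\sigma\circ\Ad(g)\circ\sigma\circ\Ad(g)^{-1}$ (the very definition of $g\in G_{\mathrm{ss}}$), which is what allows one to put $g$ in the desired form via $H\times H$; a direct polar decomposition $g=k\exp(X)$ does not by itself land you in $\exp(\mathfrak{g}^{-\sigma})T$. For disjointness, you correctly observe that $\Ad(h)(\mathfrak{c}'_{i_1})=\mathfrak{c}'_{i_2}$, but conjugacy of standard Cartan subsets is defined via their \emph{compact parts} $T'$ under $K\cap H$, so you still owe the passage from an arbitrary $h\in H$ to one in $K\cap H$; this is essentially the same issue as in your third part, and Lemma~\ref{sec:lem-Adt-involution} alone does not supply it. For the $J_{C_i}$ statement, your claim that the non-compact factors $(\exp(Y),\exp(Y'))$ land in $Z_{C_i}$ requires knowing that $\Ad(\exp(Y))$ acts trivially on $\mathfrak{c}'_i$ \emph{and} that the $t_i$-component matches up (cf.\ Proposition~\ref{sec:prop-structure-Z-N}(i)); the sentence ``the induced automorphism \dots\ must be trivial once $k,k'$ have been absorbed'' is circular, since absorbing $k,k'$ presupposes $(k,k')\in N_{C_i}$, which is exactly what you are trying to establish.

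None of this is fatal for a sketch that explicitly defers to Matsuki, but be aware that the hardest step is not only the covering/finiteness part you singled out: the reduction to $K\cap H$ representatives (parts two and three) is where most of the delicate work in \cite{matsuki} actually sits.
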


 As a consequence, we see that any matrix-spherical function for $(G,H)$ is 
 entirely determined by its restrictions to some standard Cartan subsets (Theorem~\ref{sec:thm-matsuki} and Lemma~\ref{sec:lem-restriction-injective}). 
 This defines for us an appropriate setting in which we
 can now start computing
 radial parts of the differential operators encountered in Lemma~\ref{sec:lem-msf-action-diffops}, and especially of the quadratic Casimir element.


\section{Radial Parts}\label{sec:radial-parts}

\subsection{Root Space Decomposition}
\begin{proposition}\label{sec:prop-root-spaces}
    Let $C'=\exp(\mathfrak{c}')t$ be a standard Cartan subset of $G$. For
    $\alpha\in(\mathfrak{c}'_\CC)^*$ define
    \[
        \mathfrak{g}_\alpha := \{X\in\mathfrak{g}_\CC\mid\forall Z\in\mathfrak{c}'_\CC: \quad\ad(Z)(X) = \alpha(Z)X\}.
    \]
    Then
    \begin{enumerate}
        \item $\mathfrak{g}_\CC = \bigoplus_\alpha \mathfrak{g}_\alpha$;
        \item $\comm{\mathfrak{g}_\alpha}{\mathfrak{g}_\beta}\subseteq\mathfrak{g}_{\alpha+\beta}$;
        \item $\sigma(\mathfrak{g}_\alpha)\subseteq\mathfrak{g}_{-\alpha}$
        \item $B(\mathfrak{g}_\alpha,\mathfrak{g}_\beta)=0$ unless $\alpha+\beta=0$.
    \end{enumerate}
    Write $\Sigma(\mathfrak{g}:\mathfrak{c}')$ for the set of $\alpha$ with
    $\dim(\mathfrak{g}_\alpha)>0$.
\end{proposition}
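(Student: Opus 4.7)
The plan is to reduce the proposition to one genuinely non-trivial input, the simultaneous diagonalisability of the commuting family $\{\ad(Z)\mid Z\in\mathfrak{c}'_\CC\}$ on $\mathfrak{g}_\CC$; once this is established, statements (i)--(iv) all follow by formal manipulations that mirror the usual Cartan-subalgebra root-space decomposition.

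Parts (ii)--(iv) are purely formal. For (ii) I would use that $\ad(Z)$ is a derivation for each $Z\in\mathfrak{c}'_\CC$: for $X\in\mathfrak{g}_\alpha$, $Y\in\mathfrak{g}_\beta$,
\[
\ad(Z)\comm{X}{Y} = \comm{\ad(Z)X}{Y} + \comm{X}{\ad(Z)Y} = (\alpha+\beta)(Z)\comm{X}{Y}.
\]
For (iii), $\sigma$ is a Lie-algebra automorphism satisfying $\sigma(Z)=-Z$ on $\mathfrak{c}'_\CC$ (since $\mathfrak{c}'\subseteq\mathfrak{g}^{-\sigma}$), so for $X\in\mathfrak{g}_\alpha$,
\[
\ad(Z)\sigma(X) = \sigma\bigl(\ad(\sigma(Z))X\bigr) = -\alpha(Z)\sigma(X),
\]
which shows $\sigma(X)\in\mathfrak{g}_{-\alpha}$. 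For (iv), the $\ad$-invariance of $B$ gives
\[
0 = B(\comm{Z}{X},Y) + B(X,\comm{Z}{Y}) = (\alpha+\beta)(Z)\,B(X,Y)
\]
for all $Z\in\mathfrak{c}'_\CC$, so that $B(X,Y)=0$ whenever $\alpha+\beta\ne 0$ as an element of $(\mathfrak{c}'_\CC)^*$.

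The core step is (i). Since $\mathfrak{c}'$ is abelian, the family $\{\ad(Z)\mid Z\in\mathfrak{c}'_\CC\}$ consists of pairwise commuting operators, so by the standard fact that commuting families of diagonalisable operators are simultaneously diagonalisable, it suffices to show that each individual $\ad(Z)$ is diagonalisable over $\CC$. Here I would exploit the decomposition $\mathfrak{c}' = \mathfrak{t}'\oplus\mathfrak{a}'$ with $\mathfrak{t}'\subseteq\mathfrak{k}$ and $\mathfrak{a}'\subseteq\mathfrak{p}$ built into a standard Cartan subset, together with the positive-definite inner product $B_\theta(X,Y):=-B(X,\theta Y)$ on $\mathfrak{g}$ coming from the reductive Lie group structure. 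For $Z\in\mathfrak{a}'\subseteq\mathfrak{p}$ one has $\theta(Z)=-Z$ and $\ad(Z)$ is $B_\theta$-self-adjoint, hence diagonalisable over $\RR$ with real eigenvalues; for $Z\in\mathfrak{t}'\subseteq\mathfrak{k}$ one has $\theta(Z)=Z$ and $\ad(Z)$ is $B_\theta$-skew-adjoint, hence diagonalisable over $\CC$ with purely imaginary eigenvalues. Complexifying and applying the simultaneous-diagonalisation fact to the commuting family spanned by $\mathfrak{a}'$ and $\mathfrak{t}'$ then yields the joint eigenspace decomposition $\mathfrak{g}_\CC = \bigoplus_\alpha \mathfrak{g}_\alpha$ in (i).

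The main obstacle I anticipate is the bookkeeping of the complexification: one has to extend $\theta$ and $B$ from $\mathfrak{g}$ to $\mathfrak{g}_\CC$ consistently (with $\theta$ taken $\CC$-linear and $B_\theta$ upgraded to a Hermitian form via the conjugation fixing $\mathfrak{g}$) in order for the self- and skew-adjointness statements to survive to a general $Z\in\mathfrak{c}'_\CC$. Once that is in place, everything else is routine and (i)--(iv) follow uniformly.
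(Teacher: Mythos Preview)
Your proposal is correct and essentially matches the paper's proof. The only difference is cosmetic: instead of splitting $Z$ into its $\mathfrak{t}'$- and $\mathfrak{a}'$-parts to get self- or skew-adjointness separately, the paper computes once that the $B_\theta$-adjoint of $\ad(Z)$ is $-\ad(\theta(Z))$ and observes that this lies in the same commutative family (because $\mathfrak{c}'$ is $\theta$-stable), so every $\ad(Z)$ is normal; your anticipated ``bookkeeping'' with the sesquilinear extension of $B_\theta$ is exactly what the paper does.
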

\begin{proof}
\begin{enumerate}
    \item Since $\mathfrak{c}'$ is commutative, so is $\ad(\mathfrak{c}')$.
    For simultaneous diagonalisability and hence the existence of the claimed
    root space decomposition, it suffices therefore to show semisimplicity.
    We show that there is a (sesquilinear, positive-definite) inner product on 
    $\mathfrak{g}_\CC$ with respect to which $\ad(\mathfrak{c}')$ acts normally.

    Recall that since $(G,K,\theta,B)$ is a reductive Lie group, the bilinear
    form $B_\theta$ is a positive-definite inner product on $\mathfrak{g}$. If
    we extend it sesquilinearly, we therefore obtain an inner product on
    $\mathfrak{g}_\CC$. For any $X,Y\in\mathfrak{g}_\CC$ and $Z\in\mathfrak{c}'$
    we have
    \begin{align*}
        B_\theta(\ad(Z)(X),Y)
        &= -B(\ad(Z)(X), \theta(Y))
        = B(X, \ad(Z)(\theta(Y)))\\
        &= B(X, \theta(\ad(\theta(Z))(Y)))
        = -B_\theta(X, \ad(\theta(Z))(Y)).
    \end{align*}
    Consequently, the adjoint of $\ad(Z)$ (with respect to $B_\theta$)
    is $-\ad(\theta(Z))$. Since by definition, $\mathfrak{c}'$ is
    $\theta$-invariant, the element $-\ad(\theta(Z))$ lies in the commutative
    algebra $\ad(\mathfrak{c}')$ and therefore commutes with $\ad(Z)$.
    \item Let $X\in\mathfrak{g}_\alpha,Y\in\mathfrak{g}_\beta$ and
    $Z\in\mathfrak{c}'_\CC$, then
    \begin{align*}
        \ad(Z)(\comm{X}{Y}) &=
        \comm{\ad(Z)(X)}{Y} + \comm{X}{\ad(Z)(Y)}\\
        &= \comm{\alpha(Z)X}{Y} + \comm{X}{\beta(Z)Y}\\
        &= (\alpha+\beta)(Z)\comm{X}{Y}.
    \end{align*}
    \item Let $X\in\mathfrak{g}_\alpha,Z\in\mathfrak{c}'_\CC$. By definition
    of $\mathfrak{c}'$, we have $\sigma(Z)=-Z$, hence
    \[
        \ad(Z)(\sigma(X)) = \sigma(\ad(\sigma(Z))(X))
        = -\sigma(\ad(Z)(X)) = -\alpha(Z) \sigma(X).
    \]
    \item Let $X\in\mathfrak{g}_\alpha,Y\in\mathfrak{g}_\beta$ and $Z\in\mathfrak{c}'_\CC$, then
    \begin{align*}
        0 &= B(\ad(Z)(X), Y) + B(X, \ad(Z)(Y))\\
        &= \alpha(Z) B(X,Y) + \beta(Z) B(X,Y)\\
        &= (\alpha+\beta)(Z) B(X,Y)
    \end{align*}
    due to $B$'s $\ad$-invariance. If $B(X,Y)\ne0$, the above implies that
    $(\alpha+\beta)(Z)=0$ for all $Z$, and hence $\alpha+\beta=0$.
\end{enumerate}
\end{proof}

\begin{lemma}\label{sec:lem-root-spaces-automorphism}
    Let $\exp(\mathfrak{c}_1)t_1,\exp(\mathfrak{c}_2)t_2$ be two Cartan subsets
    and suppose there exists $\phi\in\Aut(\mathfrak{g}_{\CC})$ with
    $\phi(\mathfrak{c}_1)=\mathfrak{c}_2$. Then
    the reduced root systems $\Sigma(\mathfrak{g}:\mathfrak{c}_1)
    = \phi^*(\Sigma(\mathfrak{g}:\mathfrak{c}_2))$ and
    \[
        \phi(\mathfrak{g}_{\phi^*(\alpha)}) = \mathfrak{g}_{\alpha}.
    \]
    for all $\alpha\in(\mathfrak{c}_{2, \CC})^*$, meaning that the root
    systems and root multiplicities are the same.
\end{lemma}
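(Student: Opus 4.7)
The plan is to verify directly that $\phi$ intertwines the adjoint actions of $\mathfrak{c}_{1,\CC}$ and $\mathfrak{c}_{2,\CC}$, so that root spaces get permuted in the expected way. Since $\phi\in\Aut(\mathfrak{g}_\CC)$ is a bijection sending $\mathfrak{c}_{1,\CC}$ onto $\mathfrak{c}_{2,\CC}$, the pullback $\phi^*\colon (\mathfrak{c}_{2,\CC})^*\to(\mathfrak{c}_{1,\CC})^*$, $\beta\mapsto\beta\circ\phi$, is a linear isomorphism; this is the right bookkeeping device for comparing weights on the two different spaces.

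The key identity is the intertwining property $\ad(Z)\circ\phi=\phi\circ\ad(\phi^{-1}(Z))$ for $Z\in\mathfrak{g}_\CC$, which just expresses that $\phi$ is a Lie algebra homomorphism. Applied to $X\in\mathfrak{g}_{\phi^*(\alpha)}$ and $Z\in\mathfrak{c}_{2,\CC}$ (so $\phi^{-1}(Z)\in\mathfrak{c}_{1,\CC}$), this yields
\[
    \ad(Z)(\phi(X)) = \phi(\ad(\phi^{-1}(Z))(X)) = \phi(\phi^*(\alpha)(\phi^{-1}(Z))\cdot X) = \alpha(Z)\phi(X),
\]
hence $\phi(X)\in\mathfrak{g}_\alpha$; that is, $\phi(\mathfrak{g}_{\phi^*(\alpha)})\subseteq\mathfrak{g}_\alpha$. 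Running the same argument with $\phi^{-1}$ in place of $\phi$ and the weight $\phi^*(\alpha)\in(\mathfrak{c}_{1,\CC})^*$ in place of $\alpha$ yields the reverse inclusion, so in fact $\phi(\mathfrak{g}_{\phi^*(\alpha)})=\mathfrak{g}_\alpha$.

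The statement about the root systems is then immediate: $\alpha\in\Sigma(\mathfrak{g}:\mathfrak{c}_2)$ iff $\mathfrak{g}_\alpha\ne0$ iff $\mathfrak{g}_{\phi^*(\alpha)}\ne0$ iff $\phi^*(\alpha)\in\Sigma(\mathfrak{g}:\mathfrak{c}_1)$, so $\phi^*$ restricts to a bijection $\Sigma(\mathfrak{g}:\mathfrak{c}_2)\to\Sigma(\mathfrak{g}:\mathfrak{c}_1)$; multiplicities coincide by the first part. Since the argument uses nothing beyond $\phi$ being a Lie algebra automorphism, there is no substantial obstacle here; the only point requiring care is the correct placement of $\phi$ versus $\phi^{-1}$ when passing between the two dual spaces, which is the reason the conclusion involves $\phi^*(\alpha)$ rather than $\alpha$ on the left.
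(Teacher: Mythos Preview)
Your proof is correct and follows essentially the same approach as the paper: both compute $\ad(Z)(\phi(X))$ using the intertwining identity $\ad(Z)\circ\phi=\phi\circ\ad(\phi^{-1}(Z))$ to obtain $\phi(\mathfrak{g}_{\phi^*(\alpha)})\subseteq\mathfrak{g}_\alpha$, and then conclude. Your version is in fact slightly more careful, since you take $Z\in\mathfrak{c}_{2,\CC}$ (so that $\phi^{-1}(Z)\in\mathfrak{c}_{1,\CC}$ as needed), whereas the paper writes $Z\in\mathfrak{c}_1$, which appears to be a typo.
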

\begin{proof}
    Let $\alpha\in (\mathfrak{c}_{2,\CC})^*$, then $\phi^*(\alpha)=\alpha\circ\phi\in (\mathfrak{c}_{1,\CC})^*$.
    Let $X\in\mathfrak{g}_{\phi^*(\alpha)}$ and $Z\in\mathfrak{c}_1$, then
    \[
        \ad(Z)(\phi(X)) = \phi(\ad(\phi^{-1}(Z))(X))
        =\phi(\phi^*(\alpha)(\phi^{-1}(Z)) X)
        = \alpha(Z) \phi(X),
    \]
    which shows that $\phi(X)\in\mathfrak{g}_{\alpha}$. Consequently,
    $\phi$ restricts to isomorphisms $\mathfrak{g}_{\phi^*(\alpha)}\cong \mathfrak{g}_{\alpha}$ for all $\alpha\in (\mathfrak{c}_{2,\CC})^*$,
    which in turn also shows that the root systems are isomorphic via $\phi^*$.
\end{proof}

\subsection{Decomposition}\label{sec:general-decomposition}
From now on we shall fix a standard Cartan subset $C'=\exp(\mathfrak{c}')t$. Assume that $\Ad(t)$ leaves
$\mathfrak{c}'$ invariant and can be decomposed
as follows: there is an involution $\phi\in O(\mathfrak{g}_\CC,B)$ that
commutes with $\sigma$, and $\epsilon:\Sigma(\mathfrak{g}:\mathfrak{c}')\to\CC^\times$ such that
\[
    \forall \alpha\in\Sigma(\mathfrak{g}:\mathfrak{c}'),X\in\mathfrak{g}_\alpha:\quad \Ad(t)(X) = \epsilon_\alpha \phi(X).
\]
By Lemma~\ref{sec:lem-Adt-involution}, $\Ad(t)$ is an involution of
$\mathfrak{c}'_\CC$ and hence also of $\Sigma(\mathfrak{g}:\mathfrak{c}')$. Unless this leads to ambiguity, we shall denote this
involution by $t$.

This definition implies two facts about the function $\epsilon$:
\begin{lemma}\label{sec:lem-properties-epsilon}
    For $\alpha\in\Sigma(\mathfrak{g}:\mathfrak{c}')$ we have
    \begin{enumerate}
        \item $\epsilon_\alpha\epsilon_{-\alpha}=1$ and
        \item $\epsilon_\alpha = \epsilon_{t\alpha}$.
    \end{enumerate}
    In particular, $\epsilon_0=\pm1$, which can be absorbed into
    $\phi$.
\end{lemma}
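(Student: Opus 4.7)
The plan is to handle the two parts via two different symmetries. For (i) I will exploit that both $\Ad(t)$ and $\phi$ lie in $O(\mathfrak{g}_\CC,B)$. For (ii) I will exploit that $\sigma(t)=t^{-1}$, which is forced by $t\in\exp(\mathfrak{t})$ with $\mathfrak{t}\subseteq\mathfrak{k}^{-\sigma}$, together with the standing assumption that $\sigma\phi=\phi\sigma$. Throughout, I use that $t$ acts involutively on $\Sigma(\mathfrak{g}:\mathfrak{c}')$ (inherited from Lemma~\ref{sec:lem-Adt-involution}), so that $t^{-1}\alpha=t\alpha$ at the level of roots, and that $\phi$ restricts to a bijection $\mathfrak{g}_\alpha\to\mathfrak{g}_{t\alpha}$, which is forced by $\Ad(t)=\epsilon_\alpha\phi$ on $\mathfrak{g}_\alpha$ with $\epsilon_\alpha\in\CC^\times$.

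For (i), I would pick $X\in\mathfrak{g}_\alpha$ and $Y\in\mathfrak{g}_{-\alpha}$ and compute $B(X,Y)=B(\Ad(t)(X),\Ad(t)(Y))=\epsilon_\alpha\epsilon_{-\alpha}B(\phi(X),\phi(Y))=\epsilon_\alpha\epsilon_{-\alpha}B(X,Y)$. By Proposition~\ref{sec:prop-root-spaces}(iv) the pairing of $\mathfrak{g}_\alpha$ and $\mathfrak{g}_{-\alpha}$ under $B$ is non-degenerate, so $X,Y$ can be chosen with $B(X,Y)\ne0$, giving $\epsilon_\alpha\epsilon_{-\alpha}=1$.

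For (ii), I first observe that the defining relation, evaluated on $\mathfrak{g}_{t\alpha}$ and inverted, yields $\Ad(t^{-1})|_{\mathfrak{g}_\alpha}=\epsilon_{t\alpha}^{-1}\phi$ (using $\phi^2=\mathrm{id}$). Then I apply the identity $\sigma\circ\Ad(t)=\Ad(\sigma(t))\circ\sigma=\Ad(t^{-1})\circ\sigma$ to an element $X\in\mathfrak{g}_\alpha$: the left-hand side equals $\epsilon_\alpha\phi(\sigma(X))$, and, since $\sigma(X)\in\mathfrak{g}_{-\alpha}$ by Proposition~\ref{sec:prop-root-spaces}(iii), the right-hand side equals $\epsilon_{-t\alpha}^{-1}\phi(\sigma(X))$. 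Therefore $\epsilon_\alpha=\epsilon_{-t\alpha}^{-1}$; invoking (i) at $t\alpha$ to rewrite $\epsilon_{-t\alpha}=\epsilon_{t\alpha}^{-1}$ then produces $\epsilon_\alpha=\epsilon_{t\alpha}$. The ``in particular'' statement follows from (i) at $\alpha=0$.

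The only mild obstacle is the bookkeeping: one must track which root space $\Ad(t^{\pm1})$ sends $\mathfrak{g}_\alpha$ into, and ensure $\sigma$ is extended $\CC$-linearly to $\mathfrak{g}_\CC$ so that the scalar $\epsilon_\alpha$ passes through it unchanged. With these conventions fixed, both parts reduce to a one-line manipulation of the appropriate symmetry identity.
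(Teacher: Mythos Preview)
Your proof is correct and essentially identical to the paper's own argument: part (i) uses $B$-invariance of $\Ad(t)$ and $\phi$ exactly as you do, and part (ii) in the paper also hinges on the identity $\sigma\circ\Ad(t)=\Ad(t^{-1})\circ\sigma$ coming from $\sigma(t)=t^{-1}$, unpacked via the $\epsilon_\alpha\phi$ decomposition to obtain $\epsilon_\alpha\epsilon_{-t\alpha}=1$ and then combined with (i). The only cosmetic difference is that the paper inserts the identity as $\Ad(t)\Ad(t)^{-1}$ before applying $\sigma$, whereas you work directly with $\Ad(t^{-1})|_{\mathfrak{g}_\alpha}=\epsilon_{t\alpha}^{-1}\phi$; the content is the same.
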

\begin{proof}
    \begin{enumerate}
        \item Let $0\ne X\in\mathfrak{g}_\alpha$, then due to the
        non-degeneracy of $B$ there exists $Y\in\mathfrak{g}_{-\alpha}$
        such that $B(X,Y)\ne0$. Then due to $B$'s invariance under
        $\Ad(t)$ and $\phi$ we have
        \begin{align*}
            B(X,Y) &= B(\Ad(t)X,\Ad(t)Y) = B(\epsilon_\alpha\phi(X),\epsilon_{-\alpha}\phi(Y))\\
            &= \epsilon_\alpha\epsilon_{-\alpha} B(\phi(X),\phi(Y))
            = \epsilon_\alpha \epsilon_{-\alpha} B(X,Y),
        \end{align*}
        hence $1=\epsilon_\alpha\epsilon_{-\alpha}$ since
        $B(X,Y)\ne0$.
        \item Let $0\ne X\in\mathfrak{g}_{\alpha}$, then we leverage
        that $\sigma(t)=t^{-1}$ and that $\sigma(\Ad(t)(X))\in\mathfrak{g}_{-t\alpha}$:
        \begin{align*}
            \sigma(X) &= \Ad(t)\Ad(t)^{-1}\sigma(X)
            = \Ad(t)\sigma\Ad(t)(X)\\
            &= \epsilon_{-t\alpha}\phi\sigma\Ad(t)(X)\\
            &=\epsilon_\alpha\epsilon_{-t\alpha}
            \phi\sigma\phi(X)
            = \epsilon_\alpha\epsilon_{-t\alpha}
            \sigma(X),
        \end{align*}
        which shows with (i) that $\epsilon_\alpha=\epsilon_{t\alpha}$.
    \end{enumerate}
\end{proof}

We now investigate what $Z_{C'}$ and $N_{C'}$ look and act like.
\begin{proposition}\label{sec:prop-structure-Z-N}
    \begin{enumerate}
        \item $(h,h')\in Z_{C'}$ if and only if $h'=t^{-1}ht$ and
        $\Ad(h)$ acts trivially on $\mathfrak{c}'$.
        \item $(h,h')\in N_{C'}$ if and only if $\Ad(h)\mathfrak{c}'\subseteq
        \mathfrak{c}'$ and $hth^{\prime-1}t^{-1}\in\exp(\mathfrak{c}')$.
        \item If $(h,h')\in N_{C'}$, say $hth^{\prime-1}t^{-1}=\exp(Y)$,
        then $(h,h')$ acts on $C'$ as follows:
        \[
            (h,h')\cdot \exp(X)t = \exp(\Ad(h)(X) + Y)t
        \]
    \end{enumerate}
\end{proposition}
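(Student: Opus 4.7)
The overall strategy is the same for all three parts: evaluate the defining condition on a general element $\exp(X)t\in C'$, extract information at $X=0$, and then let $X$ vary. For (i), the identity $h\exp(X)th^{\prime-1}=\exp(X)t$ at $X=0$ gives $hth^{\prime-1}=t$, i.e.\ $h'=t^{-1}ht$; plugging this back produces $h\exp(X)h^{-1}=\exp(X)$ for all $X\in\mathfrak{c}'$. Rescaling $X\to sX$ and differentiating at $s=0$ gives $\Ad(h)(X)=X$. The converse is the same computation read backwards.

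For (ii), the equality $hC'h^{\prime-1}=C'$ means that for every $X\in\mathfrak{c}'$ there exists some $Y\in\mathfrak{c}'$ with $h\exp(X)th^{\prime-1}=\exp(Y)t$. Writing $a:=hth^{\prime-1}t^{-1}$, this rewrites as $\exp(\Ad(h)X)\cdot a=\exp(Y)$. Taking $X=0$ gives $a\in\exp(\mathfrak{c}')$, and for general $X$ we then obtain $\exp(\Ad(h)X)\in\exp(\mathfrak{c}')$. Replacing $X$ by $sX$ ($s\in\RR$), the one-parameter subgroup $s\mapsto\exp(s\Ad(h)X)$ takes values in the analytic subgroup $\exp(\mathfrak{c}')\le G$ whose Lie algebra is $\mathfrak{c}'$, and by the standard Lie-correspondence characterisation of analytic subgroups this forces $\Ad(h)X\in\mathfrak{c}'$. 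Conversely, if $\Ad(h)(\mathfrak{c}')\subseteq\mathfrak{c}'$ (hence $=\mathfrak{c}'$ by dimension) and $a\in\exp(\mathfrak{c}')$, then commutativity of $\mathfrak{c}'$ gives $hC'h^{\prime-1}=\exp(\Ad(h)\mathfrak{c}')\cdot at=\exp(\mathfrak{c}')t=C'$.

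For (iii), write $a=\exp(Y)$ with $Y\in\mathfrak{c}'$ and compute directly:
\[
    h\exp(X)th^{\prime-1}=\exp(\Ad(h)X)\cdot hth^{\prime-1}=\exp(\Ad(h)X)\exp(Y)\,t=\exp(\Ad(h)X+Y)\,t,
\]
where the last step uses that $\Ad(h)X\in\mathfrak{c}'$ by (ii) and that $\mathfrak{c}'$ is commutative. The main obstacle is the step in (ii) that passes from ``$\exp(sZ)\in\exp(\mathfrak{c}')$ for all $s\in\RR$'' to ``$Z\in\mathfrak{c}'$''; this is a standard fact about analytic subgroups but needs to be handled carefully since $\exp(\mathfrak{c}')$ is only immersed in $G$ and need not be closed.
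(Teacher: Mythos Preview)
Your proof is correct and follows essentially the same route as the paper: evaluate at $X=0$ to extract the ``inhomogeneous'' condition, then vary $X$ (or $sX$) and differentiate at $s=0$ to obtain the $\Ad(h)$-condition on $\mathfrak{c}'$. The paper handles the key step in (ii) by writing $h\exp(sX)th^{\prime-1}t^{-1}=\exp(s\Ad(h)X)\exp(Y)\in\exp(\mathfrak{c}')$ and then ``multiplying by $t^{-1}$ and taking the $s$-derivative at $s=0$'' to conclude $\Ad(h)X\in\mathfrak{c}'$; you spell out this same argument and are in fact more explicit about the subtlety that $\exp(\mathfrak{c}')$ is only an immersed (weakly embedded) analytic subgroup, so that the passage from $\exp(sZ)\in\exp(\mathfrak{c}')$ for all $s$ to $Z\in\mathfrak{c}'$ requires the standard Lie-correspondence fact.
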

\begin{proof}
    \begin{enumerate}
        \item ``$\Rightarrow$'': We know $t\in C'$, then $hth^{\prime-1}=t$,
        whence $h'=t^{-1}ht$. Let $X\in\mathfrak{c}'$, then
        \begin{align*}
            X &= \dv{s}\eval{\exp(tX)tt^{-1}}_{s=0}
            = \dv{s}\eval{h\exp(sX)th^{\prime-1}t^{-1}}_{s=0}\\
            &= \dv{s}\eval{\exp(s\Ad(h)(X))}_{s=0}
            = \Ad(h)(X).
        \end{align*}
        ``$\Leftarrow$'': Let $(h,h')\in H\times H$ satisfy the conditions above.
        Every element of $C'$ can be written as $\exp(X)t$, so that
        \[
            (h,h')\cdot\exp(X)t = h\exp(X)th^{\prime-1}
            = \exp(\Ad(h)(X)) hth^{\prime-1}
            = \exp(X)t,
        \]
        so that $(h,h')\in Z_{C'}$.
        \item ``$\Rightarrow$'': We apply the definition first to $t$, which
        yields
        \[
            hth^{\prime-1} = \exp(Y)t
        \]
        for some $Y\in\mathfrak{c}'$. Let now $X\in\mathfrak{c}'$ and 
        $s\in\RR$, then
        \[
            h\exp(sX)th^{\prime-1} = \exp(\Ad(h)(sX) + Y) t.
        \]
        Multiplying with $t^{-1}$ and taking the $s$-derivative at $s=0$, shows
        that $\Ad(h)(X)\in\mathfrak{c}'$. The reverse implication is clear.
        \item Follows from (ii).
    \end{enumerate}
\end{proof}

\begin{lemma}\label{sec:lem-zero-space}
    Let $\mathfrak{m}':=Z_{\mathfrak{h}}(\mathfrak{c}')$. Then
    \[
        \mathfrak{g}_0 = \mathfrak{c}'_\CC\oplus\mathfrak{m}'_\CC.
    \]
    This direct sum is orthogonal with respect to $B$ and $B_\sigma$.
\end{lemma}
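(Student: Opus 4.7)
The plan is to exploit the $\sigma$-grading on $\mathfrak{g}_0$. Since $\sigma$ acts as $-1$ on $\mathfrak{c}'$, for each $Z \in \mathfrak{c}'_\CC$ we have $\sigma \circ \ad(Z) = -\ad(Z) \circ \sigma$, so $\sigma$ preserves the joint kernel of $\ad(Z)$ for $Z\in\mathfrak{c}'_\CC$, which is exactly $\mathfrak{g}_0$. Hence $\mathfrak{g}_0 = \mathfrak{g}_0^{\sigma} \oplus \mathfrak{g}_0^{-\sigma}$. The $+1$-eigenspace is $\mathfrak{g}_0 \cap \mathfrak{h}_\CC = Z_{\mathfrak{h}_\CC}(\mathfrak{c}') = \mathfrak{m}'_\CC$ by definition, so the decomposition claim reduces to identifying $\mathfrak{g}_0^{-\sigma} = Z_{\mathfrak{g}^{-\sigma}_\CC}(\mathfrak{c}')$ with $\mathfrak{c}'_\CC$, of which the inclusion $\supseteq$ is immediate from the commutativity of $\mathfrak{c}'$.

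For the reverse inclusion the key observation is that $\mathfrak{c}'$ is actually maximal commutative inside all of $\mathfrak{g}^{-\sigma}$, not merely inside $\mathfrak{g}^{-\sigma}\cap\Ad(t)(\mathfrak{g}^{-\sigma})$. Indeed, by definition of a standard Cartan subset $\mathfrak{c}' \subseteq \mathfrak{g}^{-\sigma}$ is commutative with real dimension equal to that of the maximal commutative $\mathfrak{c} \subseteq \mathfrak{g}^{-\sigma}$, so no commutative subspace of $\mathfrak{g}^{-\sigma}$ can properly contain $\mathfrak{c}'$. Hence any $X \in \mathfrak{g}^{-\sigma}$ that commutes with $\mathfrak{c}'$ produces an enlarged commutative subspace $\RR X + \mathfrak{c}'$ of $\mathfrak{g}^{-\sigma}$, forcing $X \in \mathfrak{c}'$; splitting an arbitrary element of $\mathfrak{g}^{-\sigma}_\CC$ into real and imaginary parts extends this conclusion to the complexification.

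Orthogonality is then immediate from the $\sigma$-eigenspace structure: both $B$ and $B_\sigma = -B(\cdot, \sigma(\cdot))$ are $\sigma$-invariant by hypothesis, and $\mathfrak{c}'_\CC, \mathfrak{m}'_\CC$ lie in opposite $\sigma$-eigenspaces, so for any $Z \in \mathfrak{c}'_\CC$, $X \in \mathfrak{m}'_\CC$ one has $B(Z,X) = B(\sigma(Z),\sigma(X)) = -B(Z,X)$, whence $B(Z,X) = 0$ and a fortiori $B_\sigma(Z,X) = -B(Z,X) = 0$. The only non-routine input is the maximality observation in the second paragraph; everything else is bookkeeping with $\sigma$-eigenspaces.
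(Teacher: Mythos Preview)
Your overall strategy matches the paper's: split $\mathfrak{g}_0$ into $\sigma$-eigenspaces, identify $\mathfrak{g}_0^{\sigma}$ with $\mathfrak{m}'_\CC$, and show $\mathfrak{g}_0^{-\sigma}=\mathfrak{c}'_\CC$; the orthogonality paragraph is fine. The gap is in your second paragraph. From $\dim\mathfrak{c}'=\dim\mathfrak{c}$ and ``$\mathfrak{c}$ is maximal commutative in $\mathfrak{g}^{-\sigma}$'' you conclude that no commutative subspace of $\mathfrak{g}^{-\sigma}$ can properly contain $\mathfrak{c}'$. But ``maximal commutative'' is an inclusion statement, not a dimension bound: $\mathfrak{g}^{-\sigma}$ can contain abelian subspaces of dimension strictly larger than $\dim\mathfrak{c}$. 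For a concrete instance take $\mathfrak{g}=\mathfrak{sl}(3,\RR)$, $\theta(X)=-X^T$, and $\sigma$ conjugation by $\operatorname{diag}(1,1,-1)$. Then $\mathfrak{g}^{-\sigma}$ is the $4$-dimensional off-diagonal block, one checks that the fundamental $\mathfrak{c}$ is $\RR(E_{13}-E_{31})$ (one-dimensional), yet $\operatorname{span}\{E_{13},E_{23}\}\subset\mathfrak{g}^{-\sigma}$ is a two-dimensional abelian subspace. So the bare dimension count does not force $\mathfrak{c}'$ to be maximal abelian in $\mathfrak{g}^{-\sigma}$.

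The paper closes this gap differently: it works under the standing hypothesis of Subsection~\ref{sec:general-decomposition} that $\Ad(t)|_{\mathfrak{g}_\alpha}=\epsilon_\alpha\phi$ with $\phi$ a $B$-orthogonal involution commuting with $\sigma$. For $X\in\mathfrak{g}_0^{-}$ (real part) one then computes $\sigma(\Ad(t^{-1})X)=\epsilon_0^{-1}\phi(\sigma X)=-\Ad(t^{-1})X$, so $X\in\Ad(t)(\mathfrak{g}^{-\sigma})$ as well, placing $X$ in the smaller space $\mathfrak{g}^{-\sigma}\cap\Ad(t)(\mathfrak{g}^{-\sigma})$, where maximal commutativity of $\mathfrak{c}'$ is the hypothesis actually available. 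If you want to avoid that structural assumption, a correct route is to exploit $\theta$-stability: decompose $X=X_k+X_p$ along $\mathfrak{k}\oplus\mathfrak{p}$, note each piece still centralises $\mathfrak{c}'$, and observe that $\mathfrak{c}'+\RR X_k$ and $\mathfrak{c}'+\RR X_p$ are $\theta$-stable abelian in $\mathfrak{g}^{-\sigma}$; but then you must invoke the nontrivial fact that all maximal $\theta$-stable abelian subspaces of $\mathfrak{g}^{-\sigma}$ share the same dimension (the rank of $G/H$), which is not a consequence of your dimension equality alone.
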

\begin{proof}
    The sum is evidently direct as $\mathfrak{c}'$ and
    $\mathfrak{m}'$ lie in different eigenspaces of $\sigma$. The
    inclusion ``$\supseteq$'' is clear. For ``$\subseteq$'',
    note that $\sigma$ leaves $\mathfrak{g}_0$ invariant by
    Proposition~\ref{sec:prop-root-spaces}(iii), hence we can decompose
    \[
        \mathfrak{g}_0 = \mathfrak{g}_0^+\oplus\mathfrak{g}_0^-,
    \]
    where $\sigma$ acts as $\pm1$ on $\mathfrak{g}_0^\pm$. Since
    $\sigma$ is orthogonal with respect to both $B$ and $B_\sigma$,
    these two eigenspaces are orthogonal with respect to both bilinear
    forms. It just remains to show that
    $\mathfrak{g}_0^+\subset \mathfrak{m}'_\CC$ and $\mathfrak{g}_0^-\subset \mathfrak{c}'_\CC$ (then these inclusions are equalities
    and we get the claimed orthogonality).

    For the first inclusion, let $X+iY\in\mathfrak{g}_0^+$, then both
    $X$ and $Y$ commute with $\mathfrak{c}'$ and satisfy
    $\sigma(X)=X,\sigma(Y)=Y$, so that $X,Y\in\mathfrak{h}$, and hence
    $X,Y\in\mathfrak{m}'$.

    For the second inclusion, let $X+iY\in\mathfrak{g}_0^-$, then
    $X,Y\in \mathfrak{g}^{-\sigma}$. Furthermore, we have
    \begin{align*}
        \sigma(\Ad(t^{-1})(X))
        &= \sigma(\epsilon_0^{-1}\phi(X))
        = \epsilon_0^{-1}\phi\sigma(X)
        = -\epsilon_0^{-1}\phi(X)\\
        &= -\Ad(t^{-1})(X),
    \end{align*}
    so $X\in\Ad(t)(\mathfrak{g}^{-\sigma})$ as well. Consequently,
    $X\in\mathfrak{g}^{-\sigma}\cap\Ad(t)(\mathfrak{g}^{-\sigma})$
    and commutes with $\mathfrak{c}'$. Since $\mathfrak{c}'$ is
    maximal commutative, we have $X\in\mathfrak{c}'$. Similarly, we also
    see $Y\in\mathfrak{c}'$.
\end{proof}

\begin{definition}
    Let $x\in \exp(X)t\in C'$ and $\alpha\in\Sigma(\mathfrak{g}:\mathfrak{c}')$. Define
    \[
        x^\alpha := \epsilon_\alpha \exp(\alpha(X))\in\CC^\times.
    \]
\end{definition}

\begin{proposition}\label{sec:prop-properties-power}
    \begin{enumerate}
        \item For $x\in C'$ and $Y\in\mathfrak{g}_\alpha$ we have
        \[
            \Ad(x)(Y)=x^{t\alpha} \phi(Y)\qquad
            \Ad(x^{-1})(Y) = x^{-\alpha} \phi(Y),
        \]
        so in particular $x\mapsto x^\alpha$
        is well-defined.
        \item For all $x\in C',\alpha\in\Sigma(\mathfrak{g}:\mathfrak{c}')$ we have $(x^\alpha)^{-1}=x^{-\alpha}$.
        \item With respect to the group homomorphism
        \[
            \exp(\mathfrak{c}')\to\CC^\times,\qquad
            \exp(X')\mapsto\exp(\alpha(X')),
        \]
        the map $x\mapsto x^\alpha$ intertwines the (left) group actions
        of $\exp(\mathfrak{c}')$ on $C'$ and of
        $\CC^\times$ on $\CC^\times$ (is in particular a homomorphism
        of torsors).
        \item Let $(h,h')\in N_{C'}$ and let
        $hth^{\prime-1}=x=\exp(Y)t$. Then
        \[
            \qty(\epsilon_\alpha x^{-\Ad^*(h)(\alpha)})^2=1.
        \]
    \end{enumerate}
\end{proposition}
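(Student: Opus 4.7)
The plan is to derive the identity from the group-theoretic relation $t^2 h t^{-2} h^{-1} = \exp(-2Y)$, which secretly encodes the $\sigma$-invariance of $h'$, and then to evaluate both sides on a root vector $u \in \mathfrak{g}_\alpha$. To obtain the key relation, I would start from $hth^{\prime-1}=\exp(Y)t$ and solve for $h'$ in two ways: directly, $h' = t^{-1}\exp(-Y)ht$; and by applying $\sigma$ to both sides of $h' = \sigma(h')$, using $\sigma|_H = \mathrm{id}$, $\sigma(t) = t^{-1}$ (since $t \in \exp(\mathfrak{k}^{-\sigma})$), and $\sigma(Y) = -Y$ (since $Y \in \mathfrak{c}' \subseteq \mathfrak{g}^{-\sigma}$), which gives $h' = t\exp(Y)ht^{-1}$. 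Equating these two expressions and using that $\Ad(t^2)Y = Y$ for $Y \in \mathfrak{c}'$ (this is because $\Ad(t)$ is involutive on $\mathfrak{g}^{-\sigma}\cap\Ad(t)(\mathfrak{g}^{-\sigma})$, cf.\ Lemma~\ref{sec:lem-Adt-involution}) produces the identity $t^2 h t^{-2} h^{-1} = \exp(-2Y)$.

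Next, I would apply $\Ad$ of both sides to $u \in \mathfrak{g}_\alpha$. The right-hand side acts as $\exp(\ad(-2Y))u = \exp(-2\alpha(Y))u$. For the left-hand side, I would first record a useful lemma: (i), applied to $x=t$, gives $\Ad(t)v = \epsilon_\beta \phi v$ for $v \in \mathfrak{g}_\beta$, so iterating with $\phi^2 = \mathrm{id}$ and Lemma~\ref{sec:lem-properties-epsilon}(ii) (which supplies $\epsilon_{t\beta} = \epsilon_\beta$) yields $\Ad(t^{\pm 2})v = \epsilon_\beta^{\pm 2}\,v$. Denoting $\beta := \Ad^*(h^{-1})\alpha$ (so that $\Ad(h^{-1})u \in \mathfrak{g}_\beta$), the left-hand side telescopes to $\epsilon_\alpha^2\,\epsilon_\beta^{-2}\,u$, where the inner $\Ad(h)\Ad(h^{-1})$ cancels.

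Equating the two expressions gives $\epsilon_\beta^2 = \epsilon_\alpha^2\exp(2\alpha(Y)) = (x^\alpha)^2$, so by part (ii) one has $(\epsilon_{\Ad^*(h^{-1})\alpha}\,x^{-\alpha})^2 = 1$. Substituting $\alpha \mapsto \Ad^*(h)\alpha$ yields the desired identity $(\epsilon_\alpha x^{-\Ad^*(h)\alpha})^2 = 1$. The main obstacle is recognising the algebraic identity $\Ad(t^2)h = \exp(-2Y)h$, which is a nontrivial compatibility condition not manifest in the definitions but forced by the $\sigma$-fixedness of $h'$; once it is in hand, the rest is a routine root-vector calculation exploiting the scalar eigenvalue structure of $\Ad(t^{\pm 2})$ on the root spaces.
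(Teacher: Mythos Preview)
Your argument for part (iv) is correct. You derive the identity $t^{2}ht^{-2}h^{-1}=\exp(-2Y)$ from the two expressions for $h'$ (direct and via $\sigma$), then exploit that $\Ad(t^{\pm2})$ acts on each root space $\mathfrak{g}_\beta$ by the scalar $\epsilon_\beta^{\pm2}$; the computation $\epsilon_\alpha^{2}\epsilon_\beta^{-2}=\exp(-2\alpha(Y))$ with $\beta=\Ad^*(h^{-1})\alpha$ then gives $(\epsilon_\beta x^{-\alpha})^2=1$, and the final substitution is legitimate because $\Ad(h)$ preserves $\mathfrak{c}'$ and hence $\Ad^*(h)$ permutes $\Sigma(\mathfrak{g}:\mathfrak{c}')$. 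Parts (i)--(iii) are not addressed, but they are one-line consequences of the definitions and Lemma~\ref{sec:lem-properties-epsilon}.

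The paper proceeds differently: it works with $h'$ rather than $h$, obtaining $\exp(Y)\,t h' t^{-1}=\exp(-Y)\,t^{-1}h' t$, and then evaluates $\Ad(th't^{-1})$ and $\Ad(t^{-1}h't)$ separately on a root vector, tracking $\phi$ explicitly through expressions like $\phi(\Ad(h')(\phi(X)))$; at the end it invokes that $\Ad^*(th't^{-1})$ and $\Ad^*(h)$ agree on $(\mathfrak{c}'_\CC)^*$. Your route is a genuine simplification: by passing to $t^{\pm2}$ you make $\phi$ disappear (since $\Ad(t^{2})|_{\mathfrak{g}_\beta}=\epsilon_\beta^{2}\cdot\mathrm{id}$), so the whole computation is scalar and the final identification step is just a relabelling $\alpha\mapsto\Ad^*(h)\alpha$ rather than a comparison of two coadjoint actions. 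The paper's version, by contrast, keeps the ``single-$t$'' relation visible, which is closer in spirit to how $\Ad(x)$ is used elsewhere in Section~\ref{sec:radial-parts}; but for proving (iv) alone your approach is cleaner.
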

\begin{proof}
    \begin{enumerate}
        \item We have
        \[
            \Ad(x)(Y) = \exp(\ad(X))\Ad(t)(Y)
            = \epsilon_\alpha \exp(\ad(X)) \phi(Y)
            = \epsilon_\alpha \exp((t\alpha)(X)) \phi(Y),
        \]
        which by Lemma~\ref{sec:lem-properties-epsilon}(ii) equals
        $x^{t\alpha} \phi(Y)$ as claimed. Furthermore, we have
        \[
            \Ad(x^{-1})(Y) = \Ad(t^{-1}) \exp(\ad(-X))(Y)
            = \epsilon_{t\alpha}^{-1} \exp(-\alpha(X)) \phi(Y)
            = x^{-\alpha} \phi(Y)
        \]
        by Lemma~\ref{sec:lem-properties-epsilon}(i,ii).
        \item Follows from Lemma~\ref{sec:lem-properties-epsilon}(i).
        \item Follows from the definition and the fact that
        $\mathfrak{c}'$ is commutative.
        \item Rearranging the definition of $x$ we obtain
        $h = xh't^{-1}$. Applying $\sigma$, this equals
        $tx^{-1}t^{-1}h't$. Writing $x=\exp(Y)t$, this equation becomes
        \[
            \exp(Y)th't^{-1} = \exp(-Y)t^{-1}h't.
        \]
        Let $X\in\mathfrak{g}_\alpha$, then
        $\Ad(th't^{-1})(X)$ and $\Ad(t^{-1}h't)(X)$ only differ by
        a constant:
        \begin{align*}
            \Ad(th't^{-1})(X) &= \frac{\epsilon_{\Ad(th't)(\alpha)}}{\epsilon_\alpha} \phi(\Ad(h')(\phi(X)))\\
            \Ad(t^{-1}h't)(X) &= \frac{\epsilon_\alpha}{\Ad(tht')(\alpha)} \phi(\Ad(h')(\phi(X))).
        \end{align*}
        Consequently, we have
        \[
            \exp(\Ad^*(th't)(\alpha)(Y))
            \frac{\epsilon_{\Ad(th't)}}{\epsilon_\alpha}
            = \exp(-\Ad^*(th't)(\alpha)(Y))
            \frac{\epsilon_\alpha}{\epsilon_{\Ad(th't)(\alpha)}},
        \]
        which implies the claim using the fact that
        $\Ad^*(th't)$ and $\Ad^*(h)$ coincide on $(\mathfrak{c}'_\CC)^*$.
    \end{enumerate}
\end{proof}

\begin{lemma}\label{sec:lem-characterisation-regular}
    We can characterise $C'\cap G_{rs}$ as follows:
    \[
        C'\cap G_{rs} = \{ x\in C' \mid \forall \alpha\in\Sigma:\quad
        x^\alpha\ne x^{-\alpha}\}.
    \]
\end{lemma}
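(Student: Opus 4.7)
The plan is to apply Lemma~\ref{sec:lem-regular-intersection-c}, which reduces the question to deciding when $\mathfrak{g}^{-\sigma}\cap\Ad(x)(\mathfrak{g}^{-\sigma})=\mathfrak{c}'$. Rewriting $\Ad(x)(\mathfrak{g}^{-\sigma})$ as the $-1$-eigenspace of $\sigma_x:=\Ad(x)\sigma\Ad(x^{-1})$, this intersection is the common $-1$-eigenspace of the two involutions $\sigma,\sigma_x$ of $\mathfrak{g}_\CC$, and I will compute it using the root space decomposition from Proposition~\ref{sec:prop-root-spaces}.

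The first step is to analyse how $\sigma_x$ acts on root spaces. Both $\sigma$ and $\sigma_x$ preserve each pair $\mathfrak{g}_\alpha\oplus\mathfrak{g}_{-\alpha}$: for $\sigma$ this is Proposition~\ref{sec:prop-root-spaces}(iii), and for $\sigma_x$ it follows from a short calculation using Proposition~\ref{sec:prop-properties-power}(i), the commutation $\phi\sigma=\sigma\phi$, and the fact that $\phi$ shifts root spaces by the involution $t$. In fact, threading these three ingredients through, I expect to obtain the explicit formula
\[
    \sigma_x(Y)=(x^{-\alpha})^2\,\sigma(Y)\qquad\text{for all }Y\in\mathfrak{g}_\alpha.
\]
Given this, for $\alpha\neq 0$ the $-1$-eigenspace of $\sigma$ in $\mathfrak{g}_\alpha\oplus\mathfrak{g}_{-\alpha}$ is $\{Y-\sigma(Y):Y\in\mathfrak{g}_\alpha\}$, while that of $\sigma_x$ is $\{Y-(x^{-\alpha})^2\sigma(Y):Y\in\mathfrak{g}_\alpha\}$, and the two coincide on a nonzero subspace iff $(x^{-\alpha})^2=1$, which by Proposition~\ref{sec:prop-properties-power}(ii) is equivalent to $x^\alpha=x^{-\alpha}$. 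For $\alpha=0$, Lemma~\ref{sec:lem-zero-space} gives $\mathfrak{g}_0^{-\sigma}=\mathfrak{c}'_\CC$, which always sits inside the intersection.

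Putting the pieces together, $\mathfrak{g}^{-\sigma}_\CC\cap\Ad(x)(\mathfrak{g}^{-\sigma})_\CC$ equals $\mathfrak{c}'_\CC$ precisely when no nonzero pair of roots contributes, i.e.\ when $x^\alpha\neq x^{-\alpha}$ for every $0\neq\alpha\in\Sigma$; by Lemma~\ref{sec:lem-regular-intersection-c} this is exactly the condition $x\in G_{rs}$. The main technical step is establishing the formula for $\sigma_x$ on root spaces: it requires applying Proposition~\ref{sec:prop-properties-power}(i) twice (once to $\mathfrak{g}_\alpha$ and once to $\mathfrak{g}_{-t\alpha}$), carefully tracking how $\sigma$ commutes with $\phi$ and with the complex scalar factor $x^{-\alpha}$, after which the eigenspace intersection is purely linear-algebraic.
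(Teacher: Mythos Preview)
Your proposal is correct and follows essentially the same route as the paper. Both arguments reduce via Lemma~\ref{sec:lem-regular-intersection-c} to analysing $\mathfrak{g}^{-\sigma}\cap\Ad(x)(\mathfrak{g}^{-\sigma})$ root-pair by root-pair; the paper computes $\Ad(x^{-1})(E-\sigma(E))=x^{-\alpha}\phi(E)-x^{\alpha}\sigma(\phi(E))$ directly and reads off the condition, while you package the same calculation as the formula $\sigma_x|_{\mathfrak{g}_\alpha}=(x^{-\alpha})^2\sigma$ for the conjugated involution $\sigma_x=\Ad(x)\sigma\Ad(x^{-1})$ and then compare the $-1$-eigenspaces of $\sigma$ and $\sigma_x$ --- a purely cosmetic difference.
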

\begin{proof}
    Let $x\in C'$. By Lemma~\ref{sec:lem-regular-intersection-c} we have
    $x\in C'\cap G_{rs}$ iff $\mathfrak{c}' = \mathfrak{g}^{-\sigma}\cap \Ad(x)(\mathfrak{g}^{-\sigma})$. So we are going to show
    that this condition holds iff $x^\alpha\ne x^{-\alpha}$ for all roots
    $\alpha\in\Sigma$.

    ``$\Rightarrow$'': Assume that
    $\mathfrak{g}^{-\sigma}\cap\Ad(x)(\mathfrak{g}^{-\sigma})=\mathfrak{c}'$. Let $\alpha\in\Sigma$ and $0\ne E\in\mathfrak{g}_\alpha$. Then
    $E-\sigma(E) \in\mathfrak{g}^{-\sigma}$ is a nontrivial linear
    combination of $\mathfrak{c}'$-root vectors for different root spaces
    and cannot be a root vector itself, in particular not an element
    of $\mathfrak{c}'$. This implies that $E-\sigma(E)\not\in\Ad(x)(\mathfrak{g}^{-\sigma})$. We have
    \[
        \Ad(x^{-1})(E-\sigma(E))
        = x^{-\alpha} \phi(E) -
        x^\alpha \phi(\sigma(E))
        = x^{-\alpha} \phi(E) - x^\alpha \sigma(\phi(E)).
    \]
    That this element is not $\sigma$-antiinvariant, shows that
    $x^\alpha\ne x^{-\alpha}$.

    ``$\Leftarrow$'': Assume $x^\alpha\ne x^{-\alpha}$ for all
    roots $\alpha\in\Sigma$ and let $Y\in\mathfrak{g}^{-\sigma}\cap\Ad(x)(\mathfrak{g}^{-\sigma})$. We can decompose $Y$ according
    to
    \[
        \mathfrak{g}_\CC = \mathfrak{m}'_\CC \oplus
        \mathfrak{c}'_\CC \oplus\bigoplus_{\alpha\in\Sigma}
        \mathfrak{g}_\alpha 
    \]
    as
    \[
        Y = Y_{0,+} + Y_{0,-} + \sum_{\alpha\in\Sigma} Y_\alpha.    
    \]
    That $Y$ is $\sigma$-antiinvariant shows that
    $Y_{0,+}=0$ and $Y_{-\alpha} = -\sigma(Y_\alpha)$ for all $\alpha\in\Sigma$. Furthermore,
    \begin{align*}
        \Ad(x^{-1})(Y) &=
        \epsilon_0 \phi(Y_{0,-})
        + \sum_{\alpha\in\Sigma^+} \qty(x^{-\alpha} \phi(Y_\alpha)
        - x^\alpha \phi(\sigma(Y_\alpha)))\\
        &= \epsilon_0 \phi(Y_{0,-})
        + \sum_{\alpha\in\Sigma^+} \qty(x^{-\alpha} \phi(Y_\alpha)
        - x^\alpha \sigma(\phi(Y_\alpha))).
    \end{align*}
    Since this is $\sigma$-antiinvariant as well, we obtain
    $x^{-\alpha}\phi(Y_\alpha) = x^{\alpha} \phi(Y_\alpha)$. Since
    $x^\alpha\ne x^{-\alpha}$, this reads $\phi(Y_\alpha)=0$ or equivalently $Y_\alpha=0$. Consequently, we have
    $Y=Y_{0,-}\in\mathfrak{c}'$.
\end{proof}

\begin{proposition}\label{sec:prop-es-ito-hs}
    Let $x\in C'\cap G_{rs}$ and $0\ne E\in\mathfrak{g}_\alpha$. Let
    $H:= E+\sigma(E)$, then
    \begin{align*}
        E &= \frac{\Ad(x)(\phi(H)) - x^{-\alpha}H}{x^\alpha-x^{-\alpha}}\\
        \sigma(E) &= \frac{x^\alpha H - \Ad(x)(\phi(H))}{x^\alpha-x^{-\alpha}}\\
        \phi(E) &= \frac{x^\alpha \phi(H) - \Ad(x^{-1})(H)}{x^\alpha-x^{-\alpha}}\\
        \sigma(\phi(E)) &=
        \frac{\Ad(x^{-1})(H) - x^{-\alpha}\phi(H)}{x^\alpha-x^{-\alpha}}
    \end{align*}
\end{proposition}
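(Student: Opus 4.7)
The plan is to reduce everything to two explicit computations, one of $\Ad(x)(\phi(H))$ and one of $\Ad(x^{-1})(H)$, each expressed as a linear combination of two root vectors, and then solve the resulting $2\times 2$ systems.

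First I would set up the pieces: by Proposition~\ref{sec:prop-root-spaces}(iii), $\sigma(E)\in\mathfrak{g}_{-\alpha}$, and since $\phi$ commutes with $\sigma$ and intertwines $\mathfrak{g}_\alpha$ with $\mathfrak{g}_{t\alpha}$ (read off from $\Ad(t)(Y)=\epsilon_\alpha\phi(Y)$ and the fact that $\Ad(t)$ sends $\mathfrak{g}_\alpha$ to $\mathfrak{g}_{t\alpha}$), one has $\phi(E)\in\mathfrak{g}_{t\alpha}$ and $\sigma(\phi(E))=\phi(\sigma(E))\in\mathfrak{g}_{-t\alpha}$. Next I would use Lemma~\ref{sec:lem-Adt-involution} (together with the standing assumption that $\Ad(t)$ preserves $\mathfrak{c}'$) to conclude that the involution $t$ on $\Sigma(\mathfrak{g}:\mathfrak{c}')$ squares to the identity. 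Combined with $\phi^2=\mathrm{id}$, this gives
\[
    \Ad(x)(\phi(E)) = x^{t(t\alpha)}\phi(\phi(E)) = x^\alpha E,
    \qquad
    \Ad(x)(\sigma(\phi(E))) = x^{-\alpha}\sigma(E),
\]
where I apply Proposition~\ref{sec:prop-properties-power}(i) in the first formula of that proposition to each summand separately.

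With these in hand the first two identities drop out directly: expanding
\[
    \Ad(x)(\phi(H)) = x^\alpha E + x^{-\alpha}\sigma(E),
\]
and subtracting either $x^{-\alpha}H$ or $-x^\alpha H$ isolates $(x^\alpha-x^{-\alpha})E$ or $(x^\alpha-x^{-\alpha})\sigma(E)$ respectively. For the remaining two identities I would use the second formula of Proposition~\ref{sec:prop-properties-power}(i),
\[
    \Ad(x^{-1})(H) = x^{-\alpha}\phi(E) + x^\alpha\sigma(\phi(E)),
\]
and similarly play this off against $x^\alpha\phi(H)$ and against $x^{-\alpha}\phi(H)$.

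The only non-routine ingredient is that the denominator $x^\alpha-x^{-\alpha}$ is nonzero, but this is immediate from Lemma~\ref{sec:lem-characterisation-regular}, since $x\in C'\cap G_{rs}$. There is no real obstacle; the main care required is just keeping track of where $t$-conjugated roots versus untouched roots appear, which is what makes the four formulas differ by which of $\{H,\phi(H)\}$ and which of $\{\Ad(x),\Ad(x^{-1})\}$ one pairs together.
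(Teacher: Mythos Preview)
Your proposal is correct and follows essentially the same route as the paper: compute $\Ad(x)(\phi(H))=x^\alpha E+x^{-\alpha}\sigma(E)$ and $\Ad(x^{-1})(H)=x^{-\alpha}\phi(E)+x^{\alpha}\sigma(\phi(E))$ via Proposition~\ref{sec:prop-properties-power}, then solve the resulting $2\times2$ systems (the paper phrases it as inverting the coefficient matrix, you as subtracting multiples of $H$ or $\phi(H)$), with Lemma~\ref{sec:lem-characterisation-regular} guaranteeing the nonvanishing denominator.
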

\begin{proof}
    From Proposition~\ref{sec:prop-properties-power} we get
    \begin{align*}
        \Ad(x)(\phi(H)) &= x^\alpha E + x^{-\alpha}\sigma(E)\\
        \Ad(x^{-1})(H) &= x^{-\alpha}\phi(E) + x^{\alpha}\sigma(\phi(E)),
    \end{align*}
    which leads to the following linear systems of equations.
    \begin{align*}
        \mqty(H\\\Ad(x)(\phi(H))) &= \mqty(1 & 1\\x^\alpha & x^{-\alpha})
        \mqty(E\\\sigma(E))\\
        \mqty(\phi(H)\\\Ad(x^{-1})(H)) &=
        \mqty(1 & 1\\x^{-\alpha} & x^\alpha)\mqty(\phi(E)\\\sigma(\phi(E))).
    \end{align*}
    Solving these systems yields the claimed expressions. Note that
    by Lemma~\ref{sec:lem-characterisation-regular}, we always have
    $x^\alpha\ne x^{-\alpha}$, whence the determinant is nonzero.
\end{proof}

\begin{lemma}\label{sec:lem-decomposition-g}
    For each $x\in C'\cap G_{rs}$ we can decompose $\mathfrak{g}$ as follows:
    \begin{align*}
        \mathfrak{g}_\CC &= \mathfrak{c}'_\CC \oplus (\mathfrak{h}_\CC +
        \Ad(x)(\mathfrak{h}_\CC))\\
        &= \mathfrak{c}'_\CC \oplus (\mathfrak{h}_\CC + \Ad(x^{-1})(\mathfrak{h}_\CC)),
    \end{align*}
    where the intersection of the last two summands is $\mathfrak{m}'_\CC$.
\end{lemma}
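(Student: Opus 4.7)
The plan is to combine the root space decomposition (Proposition~\ref{sec:prop-root-spaces}) with $\mathfrak{g}_0 = \mathfrak{c}'_\CC \oplus \mathfrak{m}'_\CC$ from Lemma~\ref{sec:lem-zero-space}, reducing the statement to (a) the inclusion $\mathfrak{g}_\alpha \subseteq \mathfrak{h}_\CC + \Ad(x)(\mathfrak{h}_\CC)$ for each root $\alpha$, (b) the intersection identity $\mathfrak{h}_\CC \cap \Ad(x)(\mathfrak{h}_\CC) = \mathfrak{m}'_\CC$, and (c) a dimension count forcing the sum with $\mathfrak{c}'_\CC$ to be direct. The $\Ad(x^{-1})$ version will run in parallel using the same ingredients.

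For (a), given $E \in \mathfrak{g}_\alpha$ I set $H := E + \sigma(E) \in \mathfrak{h}_\CC$ and note $\phi(H) \in \mathfrak{h}_\CC$ as well, since $\phi$ commutes with $\sigma$. The first two formulas of Proposition~\ref{sec:prop-es-ito-hs} express $E$ and $\sigma(E)$ as scalar combinations of $H \in \mathfrak{h}_\CC$ and $\Ad(x)(\phi(H)) \in \Ad(x)(\mathfrak{h}_\CC)$, yielding the desired inclusion. Combined with $\mathfrak{m}'_\CC \subseteq \mathfrak{h}_\CC$ this gives $\mathfrak{g}_\CC = \mathfrak{c}'_\CC + \mathfrak{h}_\CC + \Ad(x)(\mathfrak{h}_\CC)$. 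For the $x^{-1}$ side, the remaining two formulas of the same proposition treat $\phi(E), \sigma(\phi(E))$ in terms of $\phi(H)$ and $\Ad(x^{-1})(H)$; since $t$ permutes $\Sigma$ and $\phi(\mathfrak{g}_\alpha) = \mathfrak{g}_{t\alpha}$, these exhaust every root space on the $x^{-1}$ side as well.

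Step (b) is where I expect the real work and is the only place where regularity enters. Take $Y \in \mathfrak{h}_\CC \cap \Ad(x)(\mathfrak{h}_\CC)$, expand $Y = Y_0 + \sum_{\alpha} Y_\alpha$, and use $\sigma(Y) = Y$ to pin $Y_0 \in \mathfrak{m}'_\CC$ and $\sigma(Y_\alpha) = Y_{-\alpha}$. Expanding $\Ad(x^{-1})(Y)$ via Proposition~\ref{sec:prop-properties-power}(i), regrouping the terms into root components (using $\phi(\mathfrak{g}_\alpha) = \mathfrak{g}_{t\alpha}$), and matching the $\beta$- and $(-\beta)$-components under $\sigma$ should produce, for each $\beta \ne 0$, an identity of the shape $(x^{t\beta} - x^{-t\beta}) \phi(Y_{-t\beta}) = 0$. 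Lemma~\ref{sec:lem-characterisation-regular} rules out the scalar factor for $x \in G_{rs}$, so $Y_\alpha = 0$ whenever $\alpha \ne 0$, leaving $Y = Y_0 \in \mathfrak{m}'_\CC$. The reverse inclusion is immediate, since $\Ad(x)|_{\mathfrak{m}'_\CC} = \epsilon_0\phi|_{\mathfrak{m}'_\CC}$ commutes with $\sigma$ and preserves $\mathfrak{m}'_\CC \subseteq \mathfrak{h}_\CC$; the analogous identity with $\Ad(x^{-1})$ follows either by the same argument or simply by applying $\Ad(x)$.

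Finally, (c) is a routine root-pairing count: $\sigma$ swaps $\mathfrak{g}_\alpha \leftrightarrow \mathfrak{g}_{-\alpha}$, so both $\dim\mathfrak{h} - \dim\mathfrak{m}'$ and $\dim\mathfrak{g}^{-\sigma} - \dim\mathfrak{c}'$ equal $\sum_{\alpha \in \Sigma^+} \dim_\CC \mathfrak{g}_\alpha$, giving $\dim(\mathfrak{c}') + 2\dim(\mathfrak{h}) - \dim(\mathfrak{m}') = \dim(\mathfrak{g})$. Combined with (b), this matches the upper bound for $\dim_\CC(\mathfrak{c}'_\CC + \mathfrak{h}_\CC + \Ad(x)(\mathfrak{h}_\CC))$, forcing the ``$+$'' to actually be a ``$\oplus$''. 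The anticipated obstacle is the bookkeeping in (b): it is the single step where regularity is genuinely required, and one must carefully track the $t$-action on root indices together with the commutations of $\sigma$ with $\phi$ and $\Ad(x^{-1})$ in order to isolate where the hypothesis $x^{t\beta} \ne x^{-t\beta}$ bites.
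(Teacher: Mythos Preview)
Your proposal is correct and follows essentially the same route as the paper: steps (a) and (b) mirror the paper's use of Proposition~\ref{sec:prop-es-ito-hs} for spanning and the root-by-root $\sigma$-invariance argument (via Lemma~\ref{sec:lem-characterisation-regular}) for the intersection. The only real difference is in establishing directness of the sum with $\mathfrak{c}'_\CC$: you do a dimension count, whereas the paper argues directly that $\mathfrak{c}'_\CC$ is $B$-orthogonal to $\mathfrak{h}_\CC$ (different $\sigma$-eigenspaces) and then applies $\Ad(x)$-invariance of $\mathfrak{c}'_\CC$ to get orthogonality to $\Ad(x)(\mathfrak{h}_\CC)$ as well---slightly slicker, but your count works equally well.
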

\begin{proof}
    We are showing the claims for the first decomposition. The others
    follow by applying $\Ad(x^{-1})$. Note that we assumed that
    $\Ad(t)$ leave $\mathfrak{c}'$ (and its complexification) invariant. Since $\Ad(t)$ is orthogonal
    with respect to $B$, and because of Lemma~\ref{sec:lem-zero-space}, it therefore also leaves $\mathfrak{m}'$
    (and its complexification) invariant.

    We evidently have the inclusion ``$\supseteq$'', and the inclusion
    ``$\subseteq$'' follows from Proposition~\ref{sec:prop-es-ito-hs}. So it remains to show that
    \[
        \mathfrak{c}'_\CC\cap (\mathfrak{h}_\CC+\Ad(x)(\mathfrak{h}_\CC)) = 0
    \]
    and that
    \[
        \mathfrak{h}_\CC\cap\Ad(x)(\mathfrak{h}_\CC) = \mathfrak{m}'_\CC.
    \]
    For the first note that $\mathfrak{c}'_{\CC}$ and
    $\mathfrak{h}_\CC$ are orthogonal with respect to $B$ because they
    lie in different eigenspaces of $\sigma$, which is orthogonal with
    respect to $B$. Since $B$ restricted to $\mathfrak{c}'_\CC$ is also
    non-degenerate, we conclude that $\mathfrak{c}'_\CC\cap\mathfrak{h}_\CC =0$. Applying $\Ad(x)$ and the fact that
    $\mathfrak{c}'_\CC$ is invariant under $\Ad(x)$ yields the desired
    directness of the sum.

    Let $Y\in\mathfrak{h}_\CC\cap \Ad(x)(\mathfrak{h}_\CC)$. We expand
    \[
        Y = Y_{0,+} + Y_{0,-} + \sum_{\alpha\in\Sigma} Y_\alpha.
    \]
    Then due to $\sigma$-invariance we have $Y_{0,-}=0$ and
    $Y_{-\alpha}=\sigma(Y_\alpha)$. Applying $\Ad(x^{-1})$ yields
    \[
        \Ad(x^{-1})(Y) = \epsilon_0^{-1} \phi(Y_{0,+})
        + \sum_{\alpha\in\Sigma^+}
        \qty(x^{-\alpha} \phi(Y_\alpha)
        + x^\alpha \sigma(\phi(Y_\alpha))).
    \]
    The fact that this is also $\sigma$-invariant and
    $x^\alpha\ne x^{-\alpha}$ (by Lemma~\ref{sec:lem-characterisation-regular}) yields that $Y_\alpha=0$, hence also $Y_{-\alpha}$, so that
    $Y=Y_{0,+}\in\mathfrak{m}'_\CC$.
\end{proof}

\begin{corollary}\label{sec:cor-decomposition-Ug}
    For $x\in C'\cap G_{rs}$ we can decompose
    \[
        U(\mathfrak{g}) = \Ad(x^{-1})(U(\mathfrak{h}))
        S(\mathfrak{c}') U(\mathfrak{h})
        = U(\mathfrak{h})S(\mathfrak{c}')\Ad(x)(U(\mathfrak{h})).
    \]
    with the only ambiguity being $U(\mathfrak{m}')$ acting
    on $\Ad(x^{-1})(U(\mathfrak{h}))$ (resp.
    $U(\mathfrak{h}))$) from the left and on
    $U(\mathfrak{h})$ (resp. $\Ad(x)(U(\mathfrak{h}))$) from the right.
\end{corollary}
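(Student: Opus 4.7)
The plan is to apply the Poincaré--Birkhoff--Witt (PBW) theorem to an ordered basis of $\mathfrak{g}_\CC$ adapted to the vector-space decomposition of Lemma~\ref{sec:lem-decomposition-g}. The preparatory step I would carry out first is to verify that $\Ad(x)$ preserves $\mathfrak{m}'_\CC$. Since $\Ad(t)$ permutes the root spaces $\mathfrak{g}_\alpha$ (because it preserves $\mathfrak{c}'$) and $\Ad(\exp(X))$ for $X\in\mathfrak{c}'$ acts by a scalar on each $\mathfrak{g}_\alpha$, the composition $\Ad(x)=\Ad(\exp X)\Ad(t)$ stabilises $\mathfrak{g}_0=\mathfrak{c}'_\CC\oplus\mathfrak{m}'_\CC$. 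Combined with the $B$-orthogonality of this sum (Lemma~\ref{sec:lem-zero-space}), the $\Ad(x)$-invariance of $\mathfrak{c}'_\CC$, and the fact that $\Ad(x)$ is $B$-orthogonal, this forces $\Ad(x)(\mathfrak{m}'_\CC)=\mathfrak{m}'_\CC$. In particular $\mathfrak{m}'\subseteq\mathfrak{h}\cap\Ad(x^{\pm1})(\mathfrak{h})$, so $U(\mathfrak{m}')$ sits inside each of $U(\mathfrak{h})$, $\Ad(x^{-1})(U(\mathfrak{h}))$, and $\Ad(x)(U(\mathfrak{h}))$.

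Next I would pick a vector-space complement $V$ of $\mathfrak{m}'_\CC$ in $\mathfrak{h}_\CC$. Combining $\mathfrak{m}'\subseteq\Ad(x^{-1})(\mathfrak{h})$ with Lemma~\ref{sec:lem-decomposition-g} yields
\[
    \mathfrak{g}_\CC = \Ad(x^{-1})(\mathfrak{h}_\CC) \oplus \mathfrak{c}'_\CC \oplus V
\]
as a vector-space direct sum; the only nontrivial check is $V\cap\Ad(x^{-1})(\mathfrak{h}_\CC)\subseteq\mathfrak{h}_\CC\cap\Ad(x^{-1})(\mathfrak{h}_\CC)=\mathfrak{m}'_\CC$, which meets $V$ trivially. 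Ordering a basis of $\mathfrak{g}_\CC$ so that the $\Ad(x^{-1})(\mathfrak{h})$-vectors come first, then the $\mathfrak{c}'$-vectors, then the $V$-vectors, PBW provides a basis of $U(\mathfrak{g})$ by ordered monomials. Since $\Ad(x^{-1})(\mathfrak{h})$ is a subalgebra the first block spans $\Ad(x^{-1})(U(\mathfrak{h}))$, since $\mathfrak{c}'$ is abelian the second spans $S(\mathfrak{c}')$, and the third is contained in $U(\mathfrak{h})$. This yields
\[
    U(\mathfrak{g}) \subseteq \Ad(x^{-1})(U(\mathfrak{h}))\cdot S(\mathfrak{c}')\cdot U(\mathfrak{h}) \subseteq U(\mathfrak{g}),
\]
proving the first identity. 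The second follows in the same way from the decomposition $\mathfrak{g}_\CC = V\oplus\mathfrak{c}'_\CC\oplus\Ad(x)(\mathfrak{h}_\CC)$.

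For the ambiguity clause, I would observe that $\mathfrak{m}'=Z_{\mathfrak{h}}(\mathfrak{c}')$ centralises $\mathfrak{c}'$, hence $U(\mathfrak{m}')$ commutes elementwise with $S(\mathfrak{c}')$ inside $U(\mathfrak{g})$. Combined with the inclusions $U(\mathfrak{m}')\subseteq\Ad(x^{-1})(U(\mathfrak{h}))\cap U(\mathfrak{h})$ just established, any factor $m\in U(\mathfrak{m}')$ sitting at the right end of the $\Ad(x^{-1})(U(\mathfrak{h}))$ block can be slid past $S(\mathfrak{c}')$ and absorbed into the left end of $U(\mathfrak{h})$, and conversely. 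That this exhausts the ambiguity boils down to the uniqueness of the PBW expansion with respect to the fixed ordered basis, once the complement $V$ has been chosen. The main obstacle I anticipate is the preparatory check that $\Ad(x)$ stabilises $\mathfrak{m}'_\CC$, since without it the two occurrences of $U(\mathfrak{m}')$ in the formula could fail to match up; the remaining steps are routine once Lemma~\ref{sec:lem-decomposition-g} and PBW are in place.
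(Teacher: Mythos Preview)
Your proposal is correct and follows essentially the same route as the paper: both arguments split $\mathfrak{g}_\CC$ using Lemma~\ref{sec:lem-decomposition-g} together with the $\Ad(x)$-invariance of $\mathfrak{m}'_\CC$, and then invoke PBW on an ordered basis adapted to that direct sum. The paper chooses the complement of $\mathfrak{m}'_\CC$ in $\mathfrak{h}_\CC$ to be $B$-orthogonal and writes the PBW output as a four-factor tensor $\Ad(x^{-1})(U(\mathfrak{q}))\otimes U(\mathfrak{m}')\otimes S(\mathfrak{c}')\otimes U(\mathfrak{q})$ so that the $U(\mathfrak{m}')$-balancing is manifest, whereas you bundle $\mathfrak{m}'$ into the first factor; this is a purely cosmetic difference.
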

\begin{proof}
    From Lemma~\ref{sec:lem-decomposition-g} we obtain that if
    $\mathfrak{q}$ is a $B$-orthogonal complement of $\mathfrak{m}'$
    in $\mathfrak{h}$, we have
    $\mathfrak{g}_\CC = \mathfrak{c}'_\CC \oplus
    \mathfrak{m}_\CC \oplus \mathfrak{q}_\CC \oplus
    \Ad(x)(\mathfrak{q}_\CC)$ (and the same expression with $x$ inverted). By the Poincar\'e--Birkhoff--Witt theorem the multiplication map generates isomorphisms
    \[
        U(\mathfrak{g}) \cong \Ad(x^{-1})(U(\mathfrak{q}))
        \otimes U(\mathfrak{m}')\otimes S(\mathfrak{c}')
        \otimes U(\mathfrak{q})
        \cong U(\mathfrak{q}) \otimes U(\mathfrak{m}')\otimes
        U(\mathfrak{c}')\otimes \Ad(x)(U(\mathfrak{q})).
    \]
    Applying the same to $U(\mathfrak{h})$ we see that
    $U(\mathfrak{h}) = U(\mathfrak{q})\otimes U(\mathfrak{m}')
    = U(\mathfrak{m}')\otimes U(\mathfrak{q})$ and
    \[
        \Ad(x^{-1})(U(\mathfrak{h}))
        = \Ad(x^{-1})(U(\mathfrak{q}))\otimes U(\mathfrak{m}'),\qquad
        \Ad(x)(U(\mathfrak{h})) = U(\mathfrak{m}')\otimes\Ad(x)(U(\mathfrak{q})),
    \]
    so that the above decompositions become
    \begin{align*}
        U(\mathfrak{g}) &= \Ad(x^{-1})(U(\mathfrak{h}))\otimes_{U(\mathfrak{m}')} \qty(S(\mathfrak{c}')\otimes U(\mathfrak{h}))\\
        U(\mathfrak{g}) &= \qty(U(\mathfrak{h})\otimes S(\mathfrak{c}'))\otimes_{U(\mathfrak{m}')} \Ad(x)(U(\mathfrak{h})),
    \end{align*}
    where $U(\mathfrak{m}')$ acts only on the $U(\mathfrak{h})$ components.
\end{proof}

\begin{corollary}
    There are maps $$\Pi,\widetilde{\Pi}:\, U(\mathfrak{g})
    \to C^\infty(C'\cap G_{rs})\otimes S(\mathfrak{c}')\otimes (U(\mathfrak{h})\otimes_{U(\mathfrak{m}')} U(\mathfrak{h}))$$ such
    that for
    \[
        \Pi(p) = \sum_i f_i\otimes p_i\otimes u_i\otimes v_i\qquad
        \widetilde{\Pi}(u) = \sum_j \tilde{f}_j\otimes \tilde{p}_j
        \otimes\tilde{u}_j \otimes\tilde{v}_j
    \]
    we have
    \[
        p = \sum_i f_i(x) \Ad(x^{-1})(u_ip_i)v_i
        = \sum_j \tilde{f}_j(x) \tilde{u}_j
        \tilde{p}_j \Ad(x)(\tilde{v}_j)
    \]
    for all $x\in C'\cap G_{rs}$. Here, $U(\mathfrak{m}')$ acts on
    the two $U(\mathfrak{h})$-tensor factors as follows: the right action (on the 1st factor) is twisted by $\Ad(t)$, and the left
    action is just the usual multiplication.
\end{corollary}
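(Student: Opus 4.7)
The plan is to lift the $\mathfrak{g}_\CC$-level decomposition of Lemma~\ref{sec:lem-decomposition-g} to the universal enveloping algebra by the PBW theorem, and to read off smooth $x$-dependence of the coefficients from the explicit formulas of Proposition~\ref{sec:prop-es-ito-hs}. Fix a $B$-orthogonal complement $\mathfrak{q}$ of $\mathfrak{m}'$ in $\mathfrak{h}$ and, for each $\alpha \in \Sigma(\mathfrak{g}:\mathfrak{c}')$, a basis $\{E_{\alpha,j}\}$ of $\mathfrak{g}_\alpha$; set $H_{\alpha,j} := E_{\alpha,j} + \sigma(E_{\alpha,j}) \in \mathfrak{q}_\CC$. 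Proposition~\ref{sec:prop-es-ito-hs} writes every $E_{\alpha,j}$ and $\sigma(E_{\alpha,j})$ as a linear combination of $H_{\alpha,j}$ and $\Ad(x^{-1})(H_{\alpha,j}) \in \Ad(x^{-1})(\mathfrak{h}_\CC)$ (respectively of $H_{\alpha,j}$ and $\Ad(x)(\phi(H_{\alpha,j})) \in \Ad(x)(\mathfrak{h}_\CC)$), with coefficients rational in $x^{\pm\alpha}$ whose denominators $x^\alpha - x^{-\alpha}$ are non-vanishing on $C' \cap G_{rs}$ by Lemma~\ref{sec:lem-characterisation-regular}. Together with the obvious bases of $\mathfrak{c}'_\CC$ and $\mathfrak{m}'_\CC$, one obtains for each $x \in C' \cap G_{rs}$ a basis of $\mathfrak{g}_\CC$ adapted to the direct-sum decomposition
\[
\mathfrak{g}_\CC = \Ad(x^{-1})(\mathfrak{q}_\CC) \oplus \mathfrak{m}'_\CC \oplus \mathfrak{c}'_\CC \oplus \mathfrak{q}_\CC = \mathfrak{q}_\CC \oplus \mathfrak{m}'_\CC \oplus \mathfrak{c}'_\CC \oplus \Ad(x)(\mathfrak{q}_\CC),
\]
whose transition matrix to any fixed reference basis of $\mathfrak{g}_\CC$ is smooth in $x$.

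PBW then turns these into ordered bases of $U(\mathfrak{g})$ consisting of monomials $\Ad(x^{-1})(e^I)\, m^J\, (c')^K\, e^L$ (for $\Pi$) and $e^L\, m^J\, (c')^K\, \Ad(x)(e^I)$ (for $\widetilde\Pi$). On each finite-dimensional piece of the PBW filtration, the change-of-basis matrix from a fixed reference PBW basis of $U(\mathfrak{g})$ to the $x$-dependent one is invertible (both sides are bases) and smooth in $x$ (its entries are polynomials in the smooth coefficients of the previous paragraph). Its inverse provides smooth coefficients expressing any $p \in U(\mathfrak{g})$ in the $x$-dependent PBW basis; collecting them yields the desired tensors $\Pi(p), \widetilde\Pi(p) \in C^\infty(C' \cap G_{rs}) \otimes S(\mathfrak{c}') \otimes U(\mathfrak{h}) \otimes U(\mathfrak{h})$, with the three tensor factors beyond $C^\infty$ identified as $S(\mathfrak{c}')$ and two copies of $U(\mathfrak{m}') \cdot U(\mathfrak{q}) = U(\mathfrak{h})$.

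Finally, one must check that passing to the quotient by the $\Ad(t)$-twisted $U(\mathfrak{m}')$-action is consistent with this definition. Write $x = \exp(X)\,t$ with $X \in \mathfrak{c}'$; since $[X, \mathfrak{m}'] = 0$ and $[X, \mathfrak{c}'] = 0$, the operators $\Ad(x^{\pm 1})$ and $\Ad(t^{\pm 1})$ agree on $U(\mathfrak{m}') \cdot S(\mathfrak{c}')$. For $m \in U(\mathfrak{m}')$, $u, v \in U(\mathfrak{h})$ and $p_i \in S(\mathfrak{c}')$ this gives
\[
\Ad(x^{-1})\bigl(u\, \Ad(t)(m)\, p_i\bigr)\, v = \Ad(x^{-1})(u\, p_i)\, m\, v,
\]
because $\Ad(t)(m) \in U(\mathfrak{m}')$ commutes with $p_i$ and $\Ad(x^{-1})(\Ad(t)(m)) = m$ in turn commutes with $\Ad(x^{-1})(p_i) \in S(\mathfrak{c}')$. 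Thus substituting $(u\, \Ad(t)(m), v)$ by $(u, m\, v)$ leaves $\sum_i f_i(x)\, \Ad(x^{-1})(u_i p_i)\, v_i$ unchanged, which is exactly the identification in the $\Ad(t)$-twisted $U(\mathfrak{m}')$-tensor product; the analogous computation handles $\widetilde\Pi$. The main obstacle is the smooth-invertibility in the middle step: the PBW reordering between $x$-dependent and fixed bases of $U(\mathfrak{g})$ produces commutator terms living in $\mathfrak{g}_\CC$ that must be re-expanded via the $x$-dependent basis; but since every element of $\mathfrak{g}_\CC$ decomposes smoothly in $x$ by the first step, an induction on PBW degree closes the argument.
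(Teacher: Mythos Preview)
Your proof is correct and follows essentially the same approach as the paper: use the Lie-algebra-level decomposition (Lemma~\ref{sec:lem-decomposition-g}) together with the explicit formulas of Proposition~\ref{sec:prop-es-ito-hs}, lift to $U(\mathfrak{g})$ via PBW (this is the paper's Corollary~\ref{sec:cor-decomposition-Ug}), and then verify the $\Ad(t)$-twisted $U(\mathfrak{m}')$-balancing by moving $m$ past $p_i$ and through $\Ad(x^{-1})$, exactly as the paper does. You add more detail than the paper on the smooth $x$-dependence of the PBW change-of-basis, which is a welcome clarification. One small slip: for the $\Pi$-decomposition the root vectors $E_{\alpha,j}$ are expressed via $H_{\alpha,j}$ and $\Ad(x^{-1})(\phi(H_{\alpha,j}))$, not $\Ad(x^{-1})(H_{\alpha,j})$ (apply the last two formulas of Proposition~\ref{sec:prop-es-ito-hs} to $\phi(E_{\alpha,j})$); since $\phi(H_{\alpha,j})\in\mathfrak{h}_\CC$ as well, this does not affect your argument.
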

\begin{proof}
    Follows from Corollary~\ref{sec:cor-decomposition-Ug} and
    Proposition~\ref{sec:prop-es-ito-hs}. For the $U(\mathfrak{m}')$-actions note that we can write
    \[
        p = \sum_i f_i(x) \Ad(x^{-1})(u_im_ip_i)v_i
    \]
    where $u_i,v_i$ are written entirely in terms of $\mathfrak{q}$,
    and $m_i\in U(\mathfrak{m}')$, then this equals
    \[
        =\sum_i f_i(x) \Ad(x^{-1})(u_ip_i) \Ad(t^{-1})(m_i)v_i,
    \]
    meaning that $\Pi$ maps $p$ to both
    \[
        \sum_i f_i \otimes p_i\otimes u_im_i\otimes v_i
    \]
    and
    \[
        \sum_i f_i \otimes p_i\otimes u_i\otimes \Ad(t^{-1})v_i,
    \]
    which should be equal. Similarly for $\widetilde{\Pi}$.
\end{proof}

\begin{theorem}
    Let $W$ be a finite-dimensional $H$-bimodule and write
    $\pi_{\Le},\pi_{\Ri}$ for the left and right actions, respectively ($\pi_\Ri$ is then a representation of $H^{\operatorname{op}}$).
    There are maps
    \[
        R^W,\widetilde{R}^W:\: U(\mathfrak{g})
        \to C^\infty(C'\cap G_{rs})
        \otimes S(\mathfrak{c}') \otimes
        \Hom(W^{\mathfrak{m}'},W)
    \]
    such that for every $f\in E^W(G,H)$, $x\in C'\cap G_{rs}$, and
    $p\in U(\mathfrak{g})$ we have
    \begin{align*}
        (p\cdot f)(x) &= R^W(p)(x) (f|_{C'})(x)\\
        (f\cdot p)(x) &= \widetilde{R}^W(p)(x) (f|_{C'})(x).
    \end{align*}
    In particular, for $p\in U(\mathfrak{g})^{\mathfrak{m}'}$, the
    matrix parts of $R^W,\widetilde{R}^W$ lie in
    $\End(W^{\mathfrak{m'}})$. Here, $Y\in \mathfrak{m}'$ acts as follows on $v\in W$:
    \[
        Y\cdot_{\mathfrak{m}'} v =  \Ad(t)(Y)\cdot v
        - v\cdot Y.
    \]
\end{theorem}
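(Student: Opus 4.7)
The plan is to use the decomposition of $U(\mathfrak{g})$ at $x$ furnished by Corollary~\ref{sec:cor-decomposition-Ug} (and its explicit form in the immediately following corollary) together with the pointwise relation of Lemma~\ref{sec:lem-msf-action-diffops} and the pull-out rules of Corollary~\ref{sec:cor-pull-out-h} to convert the action of $p$ on $f$ at $x$ into a combination that only sees $f|_{C'}$ and its $\mathfrak{c}'$-directional derivatives. Concretely, I would fix the decomposition $p = \sum_i f_i(x)\Ad(x^{-1})(u_ip_i)v_i$ with $u_i, v_i \in U(\mathfrak{h})$ and $p_i \in S(\mathfrak{c}')$, and compute
\begin{align*}
    (p\cdot f)(x)
    &= \sum_i f_i(x)\bigl(\Ad(x^{-1})(u_ip_i)\cdot(v_i\cdot f)\bigr)(x)\\
    &= \sum_i f_i(x)\bigl((v_i\cdot f)\cdot(u_ip_i)\bigr)(x)\\
    &= \sum_i f_i(x)\,u_i\cdot (f\cdot p_i)(x)\cdot v_i.
\end{align*}
The first line is the homomorphism property of the left action, the second line uses Lemma~\ref{sec:lem-msf-action-diffops} to trade $\Ad(x^{-1})(u_ip_i)$ for a right action at the point $x$, and the third line applies Corollary~\ref{sec:cor-pull-out-h} (combined with the MSF identity $v_i\cdot f = f(\cdot)\cdot v_i$ to commute $v_i$ past the right $U(\mathfrak{g})$-action) to pull $u_i$ and $v_i$ out as left and right multiplications on $W$. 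Since $\exp(\mathfrak{c}')\cdot x\subseteq C'$, the expression $(f\cdot p_i)(x)$ only depends on $f|_{C'}$ and its $\mathfrak{c}'$-directional derivatives at $x$.

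From this formula I would read off
\[
    R^W(p)(x) := \sum_i f_i(x)\otimes p_i \otimes \bigl[w\mapsto u_i\cdot w\cdot v_i\bigr]
\]
as the desired element of $C^\infty(C'\cap G_{rs})\otimes S(\mathfrak{c}')\otimes\Hom(W^{\mathfrak{m}'},W)$. That $(f\cdot p_i)(x)$ indeed lies in $W^{\mathfrak{m}'}$ follows by combining Lemma~\ref{sec:lem-msf-kak-restriction} with Proposition~\ref{sec:prop-structure-Z-N}(i) (giving the pointwise twisted $\mathfrak{m}'$-invariance of $f|_{C'}$) and the elementary fact that the linear subspace $W^{\mathfrak{m}'}$ is preserved under differentiation along the base. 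Well-definedness against the $U(\mathfrak{m}')$-ambiguity built into Corollary~\ref{sec:cor-decomposition-Ug} is exactly matched by the twisted invariance $\Ad(t)(m)\cdot w = w\cdot m$ for $w\in W^{\mathfrak{m}'}$ and $m\in U(\mathfrak{m}')$, which is obtained by extending the defining relation of the twisted $\mathfrak{m}'$-action to the enveloping algebra. The companion map $\widetilde{R}^W$ is built by the symmetric route from the decomposition $U(\mathfrak{g})=U(\mathfrak{h})S(\mathfrak{c}')\Ad(x)(U(\mathfrak{h}))$.

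For the final assertion about $p\in U(\mathfrak{g})^{\mathfrak{m}'}$, I would first establish that the restricted function $(p\cdot f)|_{C'}$ itself takes values in $W^{\mathfrak{m}'}$. Using that $\Ad(t)$ preserves $\mathfrak{m}'$ and that $\mathfrak{m}'$ centralises $\mathfrak{c}'$, one verifies the conjugation identity $\exp(s\Ad(t)(Y))\cdot x = x\cdot\exp(sY)$ for $Y\in\mathfrak{m}'$ and $x\in C'$; combining this with the left $H$-equivariance of $p\cdot f$ and the right equivariance coming from $[Y,p]=0$ and differentiating at $s=0$ yields exactly $Y\cdot_{\mathfrak{m}'}(p\cdot f)(x)=0$. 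The main obstacle, and the step I expect to be the most delicate, is upgrading this image-level constraint into a statement about the matrix parts themselves: one must show that when $p\in U(\mathfrak{g})^{\mathfrak{m}'}$, the element of $U(\mathfrak{h})\otimes_{U(\mathfrak{m}')}U(\mathfrak{h})$ produced by the decomposition lies in the $\mathfrak{m}'$-invariant part, so that its image in $\Hom(W^{\mathfrak{m}'},W)$ in fact sends $W^{\mathfrak{m}'}$ into itself. I anticipate that this requires a careful adjoint-action bookkeeping on the decomposition, guided by the facts that $\ad(Y)$ annihilates $p$ and that $\Ad(x)(Y)=\Ad(t)(Y)\in\mathfrak{m}'$ for $Y\in\mathfrak{m}'$.
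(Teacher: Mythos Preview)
Your construction of $R^W$ and its verification follow the paper's proof exactly: the same chain of Lemma~\ref{sec:lem-msf-action-diffops} then Corollary~\ref{sec:cor-pull-out-h} applied to the decomposition $\Pi(p)$, arriving at $(p\cdot f)(x)=\sum_i f_i(x)\,u_i\cdot(f\cdot p_i)(x)\cdot v_i$. For the $\mathfrak{m}'$-invariance assertion when $p\in U(\mathfrak{g})^{\mathfrak{m}'}$ you take a slightly more direct route than the paper: you use the group identity $\exp(s\Ad(t)(Y))\,x = x\,\exp(sY)$ together with the two equivariances of $p\cdot f$, whereas the paper instead \emph{reverses} the decomposition calculation to rewrite $\pi_\Le(\Ad(t)(Y))\,(p\cdot f)(x)$ as $(Yp\cdot f)(x)$, invokes $Yp=pY$, and then runs the decomposition forward from $(pY\cdot f)(x)$ to reach $(p\cdot f)(x)\cdot Y$. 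Both arguments produce the same image-level conclusion $(p\cdot f)(x)\in W^{\mathfrak{m}'}$; yours avoids re-entering the PBW decomposition, the paper's stays closer to the explicit form of $R^W$.

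Concerning your worry about upgrading this to an operator-level statement on the individual matrix coefficients: the paper's proof also stops at precisely the image-level statement and does not perform the adjoint-action bookkeeping you anticipate. So you have already matched what the paper actually proves; the step you flag as delicate is simply not carried out there either.
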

\begin{proof}
    Define $R^W,\widetilde{R}^W$ by post-composing $\Pi,\widetilde{\Pi}$ with the representations $\pi_\Le,\pi_\Ri$ as follows: the last two tensor legs, $U(\mathfrak{h})\otimes_{U(\mathfrak{m}')} U(\mathfrak{h})$ act as follows on
    $W^{\mathfrak{m}'}$:
    \[
        (a\otimes b)\cdot \psi = a\cdot \psi \cdot b.
    \]
    This is well-defined precisely because $f$ is an $\mathfrak{m}'$-homomorphism twisted in the correct way.

    We now show that $R^W,\widetilde{R}^W$ indeed yield the
    radial parts of the left and right actions on matrix-spherical functions. Let $f\in E^W(G,H), x\in C'\cap G_{rs},p\in U(\mathfrak{g})$, say
    \[
        \Pi(p) = \sum_i f_i\otimes p_i\otimes u_i\otimes v_i,
    \]
    so that
    \[
        \forall x\in C'\cap G_{rs}:\quad
        \sum_i f_i(x) \Ad(x^{-1})(u_ip_i) v_i.
    \]
    For any $x\in C'\cap G_{rs}$ we then have
    \[
        (p\cdot f)(x) = \sum_i f_i(x) \qty(\Ad(x^{-1})(u_ip_i)v_i\cdot f)(x) = \sum_i f_i(x) \qty(v_i\cdot f\cdot u_ip_i)(x)
    \]
    by Lemma~\ref{sec:lem-msf-action-diffops}. Corollary~\ref{sec:cor-pull-out-h} then allows us to write this as
    \begin{align*}
        (p\cdot f)(x) &= \sum_i f_i(x) u_i\cdot (f\cdot p_i)(x)
        \cdot v_i\\
        &= R(p)(x) f(x),
    \end{align*}
    where we interpret $S(\mathfrak{c}')$ as differential operators acting
    on $C^\infty(G,W)$ via right-invariant vector fields.

    To see that $R^W(p)$ for $p\in U(\mathfrak{g})^{\mathfrak{m}'}$ preserves pointwise $\mathfrak{m}'$-invariance, let $Y\in\mathfrak{m}'$, and note that
    \begin{align*}
        \Ad(t)(Y)\circ (p\cdot f)(x) &=
        \sum_i f_i(x) \Ad(t)(Y)u_i\cdot (p_i\cdot f)(x)\cdot v_i\\
        &= \qty(\sum_i f_i(x) Y\Ad(x^{-1})(u_i)p_iv_i\cdot f)(x)\\
        &= \qty(Y p\cdot f)(x).
    \end{align*}
    Now, since $p$ commutes with $\mathfrak{m}'$, this equals
    \begin{align*}
        \qty(pY\cdot f)(x)
        &= \qty(\sum_i f_i(x) \Ad(x^{-1})(u_i) p_iv_iY\cdot f)(x)
        = \sum_i f_i(x) u_i\cdot (f\cdot p_i)(x)\cdot v_iY\\
        &= (p\cdot f)(x)\cdot Y.
    \end{align*}
    The statements about $\widetilde{R}^W$ follow analogously.
\end{proof}

Recall from Lemma~\ref{sec:lem-msf-kak-restriction} that the restriction
of a MSF maps not only to $W^{\mathfrak{m}'}$, but even to
$W^{Z_{C'}}$. If we consider that invariance, we get more out of the previous result.

\begin{definition}
    Define
    \[
        M' := Z_H(\mathfrak{c}') \cap t^{-1}Ht.
    \]
    This is a closed subgroup of $H$ with Lie algebra $\mathfrak{m}'$,
    since $\Ad(t^{-1})$ leaves $\mathfrak{m}'$ invariant.
\end{definition}

\begin{proposition}\label{sec:prop-Mprime-Z}
    The map $h\mapsto (tht^{-1}, h)$ is a (well-defined) isomorphism
    $M'\cong Z_{C'}$ that intertwines between the partially twisted
    action of $M'$ on $W$ and the action of $Z_{C'}$ on $W$. In particular,
    $W^{Z_{C'}}$ is a subspace of $W^{\mathfrak{m}'}$.
\end{proposition}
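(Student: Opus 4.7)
The plan is to verify the map $h\mapsto (tht^{-1},h)$ is well-defined into $Z_{C'}$, check it is a group isomorphism, match the two actions on $W$, and then deduce the inclusion of invariants by differentiation.

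For well-definedness, take $h\in M'=Z_H(\mathfrak{c}')\cap t^{-1}Ht$. Then $tht^{-1}\in H$ and $h\in H$, so the pair lies in $H\times H$. By Proposition~\ref{sec:prop-structure-Z-N}(i) it suffices to check that $h=t^{-1}(tht^{-1})t$, which is immediate, and that $\Ad(tht^{-1})$ acts trivially on $\mathfrak{c}'$. For the latter, the standing assumption that $\Ad(t)$ preserves $\mathfrak{c}'$ together with $\Ad(h)|_{\mathfrak{c}'}=\mathrm{id}$ gives $\Ad(tht^{-1})(X)=\Ad(t)\Ad(h)\Ad(t^{-1})(X)=\Ad(t)\Ad(t^{-1})(X)=X$ for every $X\in\mathfrak{c}'$. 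The homomorphism property is immediate from $(th_1h_2t^{-1},h_1h_2)=(th_1t^{-1},h_1)(th_2t^{-1},h_2)$, and injectivity is clear. For surjectivity, given $(h_1,h_2)\in Z_{C'}$, Proposition~\ref{sec:prop-structure-Z-N}(i) forces $h_2=t^{-1}h_1t$ with $\Ad(h_1)$ trivial on $\mathfrak{c}'$; setting $h:=h_2$ gives $h\in t^{-1}Ht$ and, by the same twist argument applied in reverse, $\Ad(h)=\Ad(t^{-1})\Ad(h_1)\Ad(t)$ acts trivially on $\mathfrak{c}'$, hence $h\in M'$ maps to $(h_1,h_2)$.

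To match the two actions, recall from the proof of Lemma~\ref{sec:lem-msf-kak-restriction} that a pair $(h_1,h_2)\in Z_{C'}$ acts on $v\in W$ via the $H$-bimodule structure as $v\mapsto h_1\cdot v\cdot h_2^{-1}$, so that $W^{Z_{C'}}=\{v\mid h_1\cdot v\cdot h_2^{-1}=v\text{ for all }(h_1,h_2)\in Z_{C'}\}$. Under the isomorphism, $h\in M'$ therefore acts by $v\mapsto (tht^{-1})\cdot v\cdot h^{-1}$. Plugging in $h=\exp(sY)$ for $Y\in\mathfrak{m}'$ and differentiating at $s=0$ yields $\Ad(t)(Y)\cdot v-v\cdot Y$, which is precisely the twisted Lie algebra action $Y\cdot_{\mathfrak{m}'}v$ introduced in the previous theorem. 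In other words, the group isomorphism intertwines the $M'$-action (exponentiating the partially twisted $\mathfrak{m}'$-action) with the natural $Z_{C'}$-action on $W$.

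Finally, if $v\in W^{Z_{C'}}$ then $v$ is fixed by the $M'$-action just described; differentiating along any one-parameter subgroup $\exp(sY)\subset M'$ with $Y\in\mathfrak{m}'$ gives $Y\cdot_{\mathfrak{m}'}v=0$, so $v\in W^{\mathfrak{m}'}$. The only real subtlety in the whole argument is keeping track of which factor of the bimodule the $\Ad(t)$-twist lands in; once the group-level action is written down explicitly and compared to the Lie algebra formula from the theorem, everything else is a mechanical check of Proposition~\ref{sec:prop-structure-Z-N}(i).
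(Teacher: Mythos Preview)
Your proof is correct and follows essentially the same approach as the paper's: invoke Proposition~\ref{sec:prop-structure-Z-N}(i) to identify $Z_{C'}$ explicitly, verify the map is a bijection, write out the $Z_{C'}$-action on $W$ as $v\mapsto (tht^{-1})\cdot v\cdot h^{-1}$, and differentiate to land in $W^{\mathfrak{m}'}$. You spell out the surjectivity and the use of the standing assumption $\Ad(t)(\mathfrak{c}')=\mathfrak{c}'$ more explicitly than the paper does, but there is no substantive difference in strategy.
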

\begin{proof}
    Note that by definition of $M'$, both $tht^{-1}$ and $h$ are
    elements of $H$, so the map is well-defined. Furthermore, by
    Proposition~\ref{sec:prop-structure-Z-N}, it is an isomorphism.

    The partially twisted action of $M'$ on $W$ is defined
    so as to differentiate to the one of $\mathfrak{m}'$:
    \[
        h\cdot_{M'} \psi = tht^{-1}\cdot \psi \cdot h^{-1},
    \]
    which is exactly how $(tht^{-1},h)\in Z_{C'}$ acts on
    $\psi\in W$.

    This shows that $W^{Z_{C'}}=W^{M'}$ is a subspace
    of $W^{\mathfrak{m}'}$, which is equal if $M'$ is the
    analytic subgroup generated by $\mathfrak{m}'$ (i.e. is connected).
\end{proof}

\begin{corollary}
    $R^W$ and $\widetilde{R}^W$ restrict to maps
    \[
        U(\mathfrak{g})^{M'}
        \to C^\infty(C'\cap G_{rs})\otimes S(\mathfrak{c}')
        \otimes \End(W^{M'}).
    \]
\end{corollary}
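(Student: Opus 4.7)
The plan is to reduce to the preceding theorem and then strengthen the invariance from the Lie algebra $\mathfrak{m}'$ to the group $M'$. Since $U(\mathfrak{g})^{M'}\subseteq U(\mathfrak{g})^{\mathfrak{m}'}$ and $W^{M'}\subseteq W^{\mathfrak{m}'}$ (the latter by Proposition~\ref{sec:prop-Mprime-Z}), the preceding theorem already places $R^W(p)$ and $\widetilde{R}^W(p)$ in $C^\infty(C'\cap G_{rs})\otimes S(\mathfrak{c}')\otimes \End(W^{\mathfrak{m}'})$ for every $p\in U(\mathfrak{g})^{M'}$, so only the refinement of the matrix factor from $\End(W^{\mathfrak{m}'})$ to $\End(W^{M'})$ remains to be verified.

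Following the template of the proof of the preceding theorem, I fix $p\in U(\mathfrak{g})^{M'}$, $f\in E^W(G,H)$ and $x\in C'\cap G_{rs}$, and show that $(p\cdot f)(x)\in W^{M'}$. First I record two equivariance properties of $p\cdot f$. On the right: the right $H$-action on $C^\infty(G,W)$ commutes with the left $U(\mathfrak{g})$-action (essentially the commutativity of left and right translations on $G$, cf.\ the proof of Lemma~\ref{sec:lem-msf-action-diffops}), and $f\cdot h'=f$ for any $h'\in H$ because $f$ is matrix-spherical; hence $(p\cdot f)\cdot h' = p\cdot (f\cdot h') = p\cdot f$, i.e.\ $(p\cdot f)(h'g) = h'\cdot (p\cdot f)(g)$ for all $h'\in H$. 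On the left: the identity $h\cdot p\cdot h^{-1}\cdot f = \Ad(h)(p)\cdot f$ from the proof of Lemma~\ref{sec:lem-msf-action-diffops}, combined with $\Ad(h)(p)=p$ for $h\in M'$ and $h^{-1}\cdot f=f$, gives $h\cdot (p\cdot f) = p\cdot f$ for $h\in M'$, so $(p\cdot f)(gh) = (p\cdot f)(g)\cdot h$ for $h\in M'$.

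The crucial geometric input is then Proposition~\ref{sec:prop-Mprime-Z}: the map $h\mapsto (tht^{-1},h)$ realises $M'$ as a subgroup of $Z_{C'}$, so that $tht^{-1}\cdot x\cdot h^{-1} = x$ for every $x\in C'$ and $h\in M'$. Substituting this equality into $(p\cdot f)(x)$ and applying in succession the right $H$-equivariance (with $h'=tht^{-1}\in H$) and the left $M'$-equivariance (with $h^{-1}\in M'$), I compute
\[
    (p\cdot f)(x) = (p\cdot f)(tht^{-1}\cdot x\cdot h^{-1}) = tht^{-1}\cdot (p\cdot f)(x\cdot h^{-1}) = tht^{-1}\cdot (p\cdot f)(x)\cdot h^{-1},
\]
which is precisely the partially twisted $M'$-invariance isolating $W^{M'}$. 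Hence $(p\cdot f)(x)\in W^{M'}$, and the strictly analogous computation using the mirrored commutation relations gives the same conclusion for $(f\cdot p)(x)$, taking care of $\widetilde{R}^W$.

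The main obstacle I anticipate is not the algebra above, which is short, but the implicit step from the pointwise identity $(p\cdot f)(x)\in W^{M'}$ to the operator-theoretic statement that the matrix factor of $R^W(p)(x)$ actually lies in $\End(W^{M'})\subseteq \End(W^{\mathfrak{m}'})$, since a priori $R^W(p)(x)$ depends on jets of $f|_{C'}$ at $x$ rather than only on the value $f(x)$. This is, however, exactly the same implicit step made at the end of the proof of the preceding theorem, where pointwise $\mathfrak{m}'$-invariance on MSFs is transferred into the statement that the matrix coefficients lie in $\End(W^{\mathfrak{m}'})$; the same argument carries over verbatim, with $\mathfrak{m}'$ replaced by $M'$ throughout.
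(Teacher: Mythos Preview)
Your argument is correct and reaches the same conclusion as the paper's proof---namely, that $(p\cdot f)(x)$ is pointwise $M'$-invariant for every MSF $f$---but you get there by a cleaner route. The paper unpacks the decomposition $\Pi(p)=\sum_i f_i\otimes p_i\otimes u_i\otimes v_i$ and pushes $tmt^{-1}\cdot(\,\cdot\,)\cdot m^{-1}$ through each summand, using Lemma~\ref{sec:lem-msf-action-diffops} and Propositions~\ref{sec:prop-structure-Z-N}, \ref{sec:prop-Mprime-Z} to recombine the pieces into $(\Ad(m)(p)\cdot f)(x)$. You instead observe directly that $p\cdot f$ retains the right $H$-equivariance of $f$ (since the left $U(\mathfrak{g})$-action commutes with the right $H$-action) and gains left $M'$-equivariance (since $p$ is $M'$-fixed), then apply both to the identity $tmt^{-1}xm^{-1}=x$. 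This avoids the explicit decomposition entirely; the paper's version has the minor advantage of making visible how the $\Ad(m)$ redistributes over the factors $u_i,p_i,v_i$, which is closer to the spirit of the operator-level statement, but your abstract argument is equally valid and shorter.

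Your final paragraph correctly identifies that both proofs establish only the pointwise statement $(p\cdot f)(x)\in W^{M'}$ and rely on the same implicit passage to the operator-level claim already made in the preceding theorem; there is no additional gap in your version beyond what the paper itself leaves implicit.
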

\begin{proof}
    Let $f\in E^W(G,H)$ and $p\in U(\mathfrak{g})^{M'}$.
    Let $m\in M'$ and $x\in C'\cap G_{rs}$, then
    \begin{align*}
        tmt^{-1}\cdot (p\cdot f)(x)\cdot m^{-1})
        &= \sum_i f_i(x) tmt^{-1}\cdot u_i \cdot (f\cdot p_i)(x)
        \cdot v_i \cdot m^{-1})\\
        &= \sum_i f_i(x) (v_i\cdot m^{-1}\cdot f\cdot tmt^{-1}\cdot u_i p_i)(x)\\
        &= \sum_i f_i(x) \qty(m^{-1}\cdot\Ad(m)(v_i)
        \cdot f\cdot \Ad(tmt^{-1})(u_ip_i)\cdot tmt^{-1})(x)\\
        &= \sum_i f_i(x) \qty(\Ad(m)(v_i)
        \cdot f\cdot \Ad(tmt^{-1})(u_ip_i))(tmt^{-1}xm^{-1}).
    \end{align*}
    By Lemma~\ref{sec:lem-msf-action-diffops}, this equals
    \[
        = \sum_i f_i(x) \qty(\Ad(mx^{-1})(u_ip_i)\Ad(m)(v_i)\cdot f)(tmt^{-1}xm^{-1}).
    \]
    By Propositions~\ref{sec:prop-Mprime-Z} and \ref{sec:prop-structure-Z-N}, this equals
    \begin{align*}
        &= \sum_i f_i(x) \qty(\Ad(m)(\Ad(x^{-1})(u_ip_i)v_i)\cdot f)(x)\\
        &= (\Ad(m)(p)\cdot f)(x)
        = (p\cdot f)(x)
    \end{align*}
    since $p$ is invariant under $M'$.
\end{proof}

\begin{corollary}
    Let $V,W$ be finite-dimensional $H$-modules and assume that
    $V,\overline{W}$ ($\overline{W}$ is the same as $W$, but with the $\mathfrak{m}'$-action twisted by $\Ad(t)$) are semisimple $\mathfrak{m}'$-modules. Then
    \[
        (\Hom(V,W))^{\mathfrak{m}'} = \Hom_{\mathfrak{m}'}(V,\overline{W})
        = \bigoplus_{\rho\in\widehat{\mathfrak{m}'}}
        \CC^{[V:\rho][\overline{W}:\rho]} =: \mathcal{V}.
    \]
    In that case, the maps $R^W,\widetilde{R}^W$ map to
    \[
        U(\mathfrak{g})^{\mathfrak{m}'}
        \to C^\infty(C'\cap G_{rs})
        \otimes S(\mathfrak{c}')\otimes\End(\mathcal{V}).
    \]
    An analogous statement holds when we define $\mathcal{V}$ in
    terms of the group $M'$ instead of $\mathfrak{m}'$.
\end{corollary}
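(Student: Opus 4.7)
The plan is to view this corollary as a specialisation of the previous theorem to the $H$-bimodule $\Hom(V,W)$, combined with an elementary identification of twisted $\mathfrak{m}'$-invariants as intertwiners and a standard application of Schur's lemma. The actual mathematical content is a short chain of definitional rewrites; the task is mainly to check that conventions match up.

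First I would unpack the twisted action. The $H$-bimodule structure on $\Hom(V,W)$ is $h\cdot\psi\cdot h' := \pi_W(h)\circ\psi\circ\pi_V(h')$, so the prescription $Y\cdot_{\mathfrak{m}'}\psi = \Ad(t)(Y)\cdot\psi - \psi\cdot Y$ from the preceding theorem reads
\[
Y\cdot_{\mathfrak{m}'}\psi \;=\; \pi_W(\Ad(t)(Y))\circ\psi \;-\; \psi\circ\pi_V(Y).
\]
Hence $\psi$ lies in $(\Hom(V,W))^{\mathfrak{m}'}$ if and only if $\pi_W(\Ad(t)(Y))\circ\psi = \psi\circ\pi_V(Y)$ for every $Y\in\mathfrak{m}'$, which is exactly the condition $\psi\in\Hom_{\mathfrak{m}'}(V,\overline{W})$ by the definition of $\overline{W}$ as $W$ with the $\mathfrak{m}'$-action precomposed by $\Ad(t)$. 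This gives the first equality.

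Second, invoking the semisimplicity hypothesis I would decompose $V\cong\bigoplus_\rho V_\rho^{\oplus[V:\rho]}$ and $\overline{W}\cong\bigoplus_\rho V_\rho^{\oplus[\overline{W}:\rho]}$ into isotypic components indexed by pairwise inequivalent irreducibles $V_\rho\in\widehat{\mathfrak{m}'}$. Schur's lemma gives $\Hom_{\mathfrak{m}'}(V_\rho,V_{\rho'}) = \delta_{\rho\rho'}\CC$, and bilinearity of $\Hom_{\mathfrak{m}'}(-,-)$ in each isotypic component yields
\[
\Hom_{\mathfrak{m}'}(V,\overline{W}) \;=\; \bigoplus_\rho \CC^{[V:\rho][\overline{W}:\rho]} \;=\; \mathcal{V},
\]
establishing the second equality.

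For the last assertion I would apply the preceding theorem (respectively its $M'$-variant from the corollary immediately above) with the ambient bimodule chosen to be $W' := \Hom(V,W)$. That result already places the radial parts of $p\in U(\mathfrak{g})^{\mathfrak{m}'}$ (resp.\ $U(\mathfrak{g})^{M'}$) inside $C^\infty(C'\cap G_{rs})\otimes S(\mathfrak{c}')\otimes\End((W')^{\mathfrak{m}'})$, and the first two steps identify $(W')^{\mathfrak{m}'}$ with $\mathcal{V}$. No step is genuinely hard; the only subtlety is bookkeeping—one must verify that the twist by $\Ad(t)$ built into $Y\cdot_{\mathfrak{m}'}$ is absorbed into the $W$-factor, producing $\overline{W}$, rather than into the $V$-factor, and that the passage from $\mathfrak{m}'$ to $M'$ in the group version is legitimate, which is Proposition~\ref{sec:prop-Mprime-Z}.
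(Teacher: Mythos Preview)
Your proposal is correct and follows exactly the approach the paper has in mind: the corollary is stated without proof in the paper, being an immediate specialisation of the preceding theorem (and its $M'$-variant) to the bimodule $\Hom(V,W)$, combined with the definitional identification $(\Hom(V,W))^{\mathfrak{m}'}=\Hom_{\mathfrak{m}'}(V,\overline{W})$ and Schur's lemma. Your bookkeeping of where the $\Ad(t)$-twist lands matches the paper's convention for $\overline{W}$.
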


\subsection{Radial Part of the Quadratic Casimir Element $\Omega_{\mathfrak{g}}$
}

In this subsection we will now compute $\Pi$ of the quadratic Casimir
element ($\widetilde{\Pi}$ will turn out to be the same). The application to MSF for a concrete $H$-bimodule $W$ is then straightforward.

\begin{definition}
    Let $\alpha\in(\mathfrak{c}'_\CC)^*$. Write $C_\alpha\in\mathfrak{c}'_\CC$
    for the unique element $X$ such that
    \[
        \forall Y\in\mathfrak{c}'_\CC:\quad \alpha(Y) = B(X,Y).
    \]
    This exists since due to Proposition~\ref{sec:prop-root-spaces}(iv) and
    Lemma~\ref{sec:lem-zero-space} $B$ is non-degenerate when restricted to
    $\mathfrak{c}'_\CC$.
\end{definition}

\begin{lemma}\label{sec:lem-commutator-hs}
    Let $E\in\mathfrak{g}_\alpha$ and $H:=E + \sigma(E)$.
    \begin{enumerate}
        \item Then $\comm{E}{\sigma(E)} = -B_\sigma(E,E) C_\alpha$.
        \item For $x\in C'$ we have $\comm{H}{\Ad(x)(\phi(H))} = \qty(x^\alpha-x^{-\alpha})B_\sigma(E,E) C_\alpha$.
        \item For $x\in C'$ we have $\comm{\Ad(x^{-1})(H)}{\phi(H)}=\qty(x^\alpha-x^{-\alpha})B_\sigma(E,E)C_{t\alpha}$.
    \end{enumerate}
\end{lemma}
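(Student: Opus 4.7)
For (i), the key observation is that $[E,\sigma(E)]$ lies in $\mathfrak{g}_0$ (by Proposition~\ref{sec:prop-root-spaces}(ii)) and is $\sigma$-antiinvariant (apply $\sigma$ and use $\sigma^2=1$), hence by Lemma~\ref{sec:lem-zero-space} it lies in $\mathfrak{c}'_\CC$. I would then test it against an arbitrary $Y\in\mathfrak{c}'_\CC$ using $\operatorname{ad}$-invariance of $B$:
\[
B([E,\sigma(E)],Y)=-B(\sigma(E),[E,Y])=\alpha(Y)B(\sigma(E),E)=-\alpha(Y)B_\sigma(E,E).
\]
By the defining property of $C_\alpha$ this equals $-B_\sigma(E,E)B(C_\alpha,Y)$. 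Since $B$ is non-degenerate on $\mathfrak{c}'_\CC$ (Lemma~\ref{sec:lem-zero-space}) and both sides lie in $\mathfrak{c}'_\CC$, the identity $[E,\sigma(E)]=-B_\sigma(E,E)C_\alpha$ follows.

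For (ii), I would use Proposition~\ref{sec:prop-properties-power}(i) together with the fact that $\phi(E)\in\mathfrak{g}_{t\alpha}$ and $\phi(\sigma(E))\in\mathfrak{g}_{-t\alpha}$ (immediate from the definition $\Ad(t)=\epsilon_\bullet\phi$ on root spaces, since $\Ad(t)$ permutes roots via the involution $t$). This gives $\Ad(x)(\phi(E))=x^{\alpha}E$ and $\Ad(x)(\phi(\sigma(E)))=x^{-\alpha}\sigma(E)$, hence
\[
\Ad(x)(\phi(H))=x^\alpha E+x^{-\alpha}\sigma(E).
\]
Expanding $[H,\Ad(x)(\phi(H))]=[E+\sigma(E),x^\alpha E+x^{-\alpha}\sigma(E)]$, the two cross terms combine into $(x^{-\alpha}-x^\alpha)[E,\sigma(E)]$, and substituting (i) yields the claim.

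For (iii), the analogous expansion via Proposition~\ref{sec:prop-properties-power}(i) gives $\Ad(x^{-1})(H)=x^{-\alpha}\phi(E)+x^\alpha\phi(\sigma(E))$, and the commutator with $\phi(H)=\phi(E)+\phi(\sigma(E))$ reduces to $(x^{-\alpha}-x^\alpha)[\phi(E),\phi(\sigma(E))]$. The remaining step is to identify this commutator. Applying $\Ad(t)$ to (i) and using that $\Ad(t)$ is a Lie-algebra automorphism together with Lemma~\ref{sec:lem-properties-epsilon}(i) (so the $\epsilon_\alpha\epsilon_{-\alpha}$ collapses) produces
\[
[\phi(E),\phi(\sigma(E))]=\Ad(t)([E,\sigma(E)])=-B_\sigma(E,E)\,\Ad(t)(C_\alpha).
\]
It then remains to check $\Ad(t)(C_\alpha)=C_{t\alpha}$, which follows by pairing $\Ad(t)(C_\alpha)$ with any $Y\in\mathfrak{c}'_\CC$ via $\Ad$-invariance of $B$ and unwinding the dual action of $t$.

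The only genuinely delicate point is the third step, where one must carefully track how $\phi$ interacts with the bracket (the quotient $\epsilon_\alpha\epsilon_{-\alpha}/\epsilon_0$ must be trivial, which is exactly the content of Lemma~\ref{sec:lem-properties-epsilon} once $\epsilon_0$ is absorbed into $\phi$), and how the $t$-action on $(\mathfrak{c}'_\CC)^*$ transports through the isomorphism $\alpha\mapsto C_\alpha$; everything else is bookkeeping.
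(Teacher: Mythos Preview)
Your proof is correct and follows essentially the same route as the paper: part (i) is identical, and part (ii) matches the paper's expansion $\Ad(x)(\phi(H))=x^\alpha E+x^{-\alpha}\sigma(E)$ followed by substitution of (i). For part (iii) there is only a cosmetic difference: the paper applies (i) directly to $\phi(E)\in\mathfrak{g}_{t\alpha}$ and uses that $\phi$ is $B_\sigma$-orthogonal to conclude $[\phi(E),\sigma(\phi(E))]=-B_\sigma(E,E)C_{t\alpha}$, whereas you apply $\Ad(t)$ to the identity from (i) and then invoke $\epsilon_\alpha\epsilon_{-\alpha}=1$ together with $\Ad(t)(C_\alpha)=C_{t\alpha}$; both arguments are valid and equally short.
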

\begin{proof}
    \begin{enumerate}
        \item From Proposition~\ref{sec:prop-root-spaces}(iii,ii) we
        see that $\comm{E}{\sigma(E)}\in\mathfrak{g}_0$. Furthermore,
        since $\sigma$ is an involutive Lie algebra homomorphism, we
        have $\sigma(\comm{E}{\sigma(E)})=-\comm{E}{\sigma(E)}$, so that
        by Lemma~\ref{sec:lem-zero-space}, we have
        $\comm{E}{\sigma(E)}\in\mathfrak{c}'_\CC$. Let $Y\in\mathfrak{c}'_\CC$, then
        \[
            B(Y, \comm{E}{\sigma(E)})
            = B(\comm{Y}{E},\sigma(E))
            = \alpha(Y) B(E,\sigma(E))
            = -\alpha(Y)B_\sigma(E,E),
        \]
        hence $\comm{E}{\sigma(E)} = -B_\sigma(E,E) C_\alpha$.
        \item From the proof of Proposition~\ref{sec:prop-es-ito-hs} we know that
        \[
            \comm{H}{\Ad(x)(\phi(H))} = \comm{E + \sigma(E)}{x^\alpha E + x^{-\alpha}\sigma(E)}
            = -\qty(x^\alpha - x^{-\alpha})\comm{E}{\sigma(E)},
        \]
        which together with (i) implies the claim.
        \item From the proof of Proposition~\ref{sec:prop-es-ito-hs} we
        know that
        \begin{align*}
            \comm{\Ad(x^{-1})(H)}{\phi(H)} &=
            \comm{x^{-\alpha}\phi(E) + x^\alpha\sigma(\phi(E))}{\phi(E) + \sigma(\phi(E))}\\
            &= -\qty(x^\alpha-x^{-\alpha})\comm{\phi(E)}{\sigma(\phi(E))}.
        \end{align*}
        Together with (i) applied to $\phi(E)$, the fact that $B_\sigma(\phi(E),\phi(E))=B_\sigma(E,E)$, and the fact that
        $\phi(E)\in\mathfrak{g}_{t\alpha}$, the claim follows.
    \end{enumerate}
\end{proof}

\begin{corollary}\label{sec:cor-anticommutator}
    Let $E\in\mathfrak{g}_\alpha$ and $x\in C'\cap G_{rs}$. Write
    $H:= E+\sigma(E)$. Then
    \begin{align*}
        \acomm{E}{\sigma(E)} &=- 2 \frac{H^2 + \Ad(x)(\phi(H)^2)
        - \qty(x^\alpha+x^{-\alpha}) H\Ad(x)(\phi(H))}{\qty(x^\alpha-x^{-\alpha})^2}\\
        &\qquad - B_\sigma(E,E)\frac{x^\alpha+x^{-\alpha}}{x^\alpha-x^{-\alpha}} C_\alpha\\
        \acomm{\phi(E)}{\sigma(\phi(E))}
        &= -2\frac{\phi(H)^2 + \Ad(x^{-1})(H^2)
        - \qty(x^\alpha+x^{-\alpha})\Ad(x^{-1})(H)\phi(H)}{\qty(x^\alpha-x^{-\alpha})^2}\\
        &\qquad - B_\sigma(E,E)\frac{x^\alpha+x^{-\alpha}}{x^\alpha-x^{-\alpha}}
        C_{t\alpha}
    \end{align*}
\end{corollary}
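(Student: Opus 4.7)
The plan is to substitute the explicit formulas for $E,\sigma(E)$ (resp.\ $\phi(E),\sigma(\phi(E))$) from Proposition~\ref{sec:prop-es-ito-hs} into the anticommutator and then collect terms, using Lemma~\ref{sec:lem-commutator-hs} to dispose of the one surviving commutator. Concretely, for the first identity I will write
\[
    E\sigma(E) = \frac{(\Ad(x)(\phi(H)) - x^{-\alpha}H)(x^\alpha H - \Ad(x)(\phi(H)))}{(x^\alpha-x^{-\alpha})^2}
\]
and the analogous expression for $\sigma(E)E$, then expand both products using $x^\alpha x^{-\alpha}=1$ (Proposition~\ref{sec:prop-properties-power}(ii)) to cancel the cross term $x^\alpha x^{-\alpha}H^2$ into just $H^2$, and similarly for $\Ad(x)(\phi(H))^2$.

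Adding the two products gives a symmetric piece
\[
    -\tfrac{2}{(x^\alpha-x^{-\alpha})^2}\bigl(H^2+\Ad(x)(\phi(H))^2\bigr)
\]
together with a cross term
\[
    \tfrac{x^\alpha+x^{-\alpha}}{(x^\alpha-x^{-\alpha})^2}\bigl(H\Ad(x)(\phi(H)) + \Ad(x)(\phi(H))H\bigr).
\]
I rewrite the symmetrised product via $AB+BA = 2AB - \comm{A}{B}$, which isolates a single commutator. Since $\Ad(x)$ is an algebra automorphism, $\Ad(x)(\phi(H))^2 = \Ad(x)(\phi(H)^2)$, matching the form of the desired right-hand side.

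The last ingredient is Lemma~\ref{sec:lem-commutator-hs}(ii), which gives $\comm{H}{\Ad(x)(\phi(H))} = (x^\alpha-x^{-\alpha})B_\sigma(E,E)C_\alpha$, so the lone commutator contributes exactly
\[
    -\tfrac{x^\alpha+x^{-\alpha}}{x^\alpha-x^{-\alpha}} B_\sigma(E,E) C_\alpha,
\]
completing the first formula. The second identity is entirely parallel: apply Proposition~\ref{sec:prop-es-ito-hs} to $\phi(E)$ and $\sigma(\phi(E))$, which exchanges the roles of $H$ and $\phi(H)$ and of $\Ad(x)$ with $\Ad(x^{-1})$, and then invoke Lemma~\ref{sec:lem-commutator-hs}(iii) to produce $C_{t\alpha}$ in place of $C_\alpha$.

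There is no real obstacle: the computation is algebraic and mechanical, the only subtlety being to keep track of operator ordering so that the non-commutativity in $U(\mathfrak{g})$ produces precisely the stated commutator, and to recognise that $\Ad(x)$ respects the product in $U(\mathfrak{g})$ so that $\Ad(x)(\phi(H))^2$ can be rewritten as $\Ad(x)(\phi(H)^2)$ in the final answer.
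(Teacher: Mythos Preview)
Your proposal is correct and follows essentially the same approach as the paper: substitute the expressions from Proposition~\ref{sec:prop-es-ito-hs}, expand the anticommutator (the paper does this directly via $\acomm{\Ad(x)(\phi(H)) - x^{-\alpha}H}{x^\alpha H - \Ad(x)(\phi(H))}$ rather than computing $E\sigma(E)$ and $\sigma(E)E$ separately, but this is purely cosmetic), rewrite the symmetric cross term as $2H\Ad(x)(\phi(H)) - \comm{H}{\Ad(x)(\phi(H))}$, and invoke Lemma~\ref{sec:lem-commutator-hs}(ii),(iii) to identify the commutator with the $C_\alpha$ (resp.\ $C_{t\alpha}$) term.
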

\begin{proof}
    By Proposition~\ref{sec:prop-es-ito-hs} we have
    \begin{align*}
        \qty(x^\alpha-x^{-\alpha})^2\acomm{E}{\sigma(E)}
        &= \acomm{\Ad(x)(\phi(H)) - x^{-\alpha}H}{x^\alpha H - \Ad(x)(\phi(H))}\\
        &= -2 H^2 - 2\Ad(x)(\phi(H)^2)
        + \qty(x^\alpha+x^{-\alpha})\acomm{H}{\Ad(x)(\phi(H))}.
    \end{align*}
    We can write the anticommutator as
    $2H\Ad(x)(\phi(H)) - \comm{H}{\Ad(x)(\phi(H))}$, so that by
    Lemma~\ref{sec:lem-commutator-hs} we have
    \[
        = - 2H^2 - 2\Ad(x)(\phi(H)^2)
        + 2\qty(x^\alpha+x^{-\alpha}) H\Ad(x)(\phi(H))
        - B_\sigma(E,E)
        \qty(\qty(x^\alpha)^2 - \qty(x^{-\alpha})^2)
        C_\alpha.
    \]
    Similarly, we have
    \begin{align*}
        \qty(x^\alpha-x^{-\alpha})\acomm{\phi(E)}{\sigma(\phi(E))}
        &= \acomm{x^\alpha\phi(H) - \Ad(x^{-1})(H)}{\Ad(x^{-1})(H)-x^{-\alpha}\phi(H)}\\
        &= -2\phi(H)^2 - 2\Ad(x^{-1})(H^2)
        + \qty(x^\alpha + x^{-\alpha})\acomm{\Ad(x^{-1})(H)}{H}.
    \end{align*}
    The anticommutator equals $2\Ad(x^{-1})(H)H - \comm{\Ad(x^{-1})(H)}{H}$, so that by Lemma~\ref{sec:lem-commutator-hs}, we have
    \begin{align*}
        &= -2\phi(H)^2 - 2\Ad(x^{-1})(H^2)
        + 2\qty(x^\alpha+x^{-\alpha})\Ad(x^{-1})(H)H\\
        &\qquad - B_\sigma(E,E)
        \qty(\qty(x^\alpha)^2-\qty(x^{-\alpha})^2)
        C_{t\alpha}.\qedhere
    \end{align*}
\end{proof}

\begin{proposition}\label{sec:prop-operator-A}
    Let $\alpha\in\Sigma$, let $X_1,\dots,X_r$ be an orthonormal basis of
    $\mathfrak{g}_\alpha$ with respect to $B_\sigma$. Define
    \[
        A_\alpha := \sum_{i=1}^r(X_i+\sigma(X_i))\otimes
        (X_i+\sigma(X_i)) \in U(\mathfrak{h})\otimes U(\mathfrak{h}).
    \]
    Then $A_\alpha$ does not depend on the choice of basis. For
    $(h,h')\in N_{C'}$ we have
    \begin{align*}
        (\Ad(h)\otimes\Ad(h))(A_\alpha) &=
        A_{\Ad^*(h)(\alpha)}\\
        (\Ad(h')\otimes\Ad(h'))(A_\alpha) &=
        A_{\Ad^*(h')(\alpha)}
    \end{align*}
    and
    \[
        (\Ad(h)\otimes\Ad(h')\phi)(A_\alpha) =
        \pm (1\otimes\phi)A_{\Ad^*(h)(\alpha)}
    \]
    depending on whether
    \[
        \epsilon_\alpha \qty(hth^{\prime-1})^{-\Ad^*(h)(\alpha)} = \pm1.
    \]
\end{proposition}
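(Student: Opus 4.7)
The plan is to dispatch the four claims in the stated order, with all real content concentrated in the final one. Basis independence of $A_\alpha$ reduces, after applying the linear map $X\mapsto X+\sigma(X)$ to each tensor factor, to the standard observation that for an orthonormal basis $X_1,\ldots,X_r$ of a finite-dimensional inner-product space, the element $\sum_i X_i\otimes X_i$ is unchanged under any orthogonal change of basis.

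For the two equivariances, since $h,h'\in H\le G^\sigma$ both $\Ad(h)$ and $\Ad(h')$ commute with $\sigma$ and hence preserve $B_\sigma$. Proposition~\ref{sec:prop-structure-Z-N}(ii) directly gives $\Ad(h)\mathfrak{c}'\subseteq\mathfrak{c}'$; for $\Ad(h')$ I write $h'=x^{-1}ht$ with $x=hth^{\prime-1}=\exp(Y)t$, $Y\in\mathfrak{c}'$, and observe that $\Ad(\exp(Y))$ acts trivially on the abelian $\mathfrak{c}'$ while $\Ad(t)$ preserves it by assumption, so $\Ad(h')\mathfrak{c}'\subseteq\mathfrak{c}'$ too. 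Both maps therefore permute root spaces isometrically with respect to $B_\sigma$, sending $\mathfrak{g}_\alpha$ to $\mathfrak{g}_{\Ad^*(h)(\alpha)}$ and to $\mathfrak{g}_{\Ad^*(h')(\alpha)}$ respectively. They also commute with $X\mapsto X+\sigma(X)$, so taking the image of $\{X_i\}$ as the new orthonormal basis in the target root space produces the desired identities.

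The final claim carries the actual content, and the main technical point is to recognise that a certain scalar is in fact $\pm1$, which is precisely the content of Proposition~\ref{sec:prop-properties-power}(iv). Concretely, setting $\beta:=\Ad^*(h)(\alpha)$, for $X\in\mathfrak{g}_\alpha$ I compute
\begin{align*}
\Ad(h')\phi(X) &= \Ad(x^{-1})\Ad(h)\Ad(t)\phi(X)\\
&= \epsilon_\alpha\,\Ad(x^{-1})\Ad(h)(X)\\
&= \epsilon_\alpha\,x^{-\beta}\,\phi(\Ad(h)(X)),
\end{align*}
where the second line uses $\Ad(t)\phi(X)=\epsilon_{t\alpha}\phi^2(X)=\epsilon_\alpha X$ by Lemma~\ref{sec:lem-properties-epsilon}(ii) and $\phi^2=1$, while the third uses $\Ad(h)(X)\in\mathfrak{g}_\beta$ together with Proposition~\ref{sec:prop-properties-power}(i). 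Proposition~\ref{sec:prop-properties-power}(iv) then asserts $(\epsilon_\alpha x^{-\beta})^2=1$, so the scalar is $\pm 1$. Because $\phi$, $\Ad(h)$, and $\Ad(h')$ all commute with $\sigma$, the identity extends verbatim when $X$ is replaced by $X+\sigma(X)$. Summing over the orthonormal basis $\{X_i\}$ of $\mathfrak{g}_\alpha$ and using the basis independence of the first step with $\{\Ad(h)X_i\}$ orthonormal in $\mathfrak{g}_\beta$ converts this into
\[
(\Ad(h)\otimes\Ad(h')\phi)(A_\alpha)=\epsilon_\alpha\,x^{-\beta}\,(1\otimes\phi)A_\beta=\pm(1\otimes\phi)A_\beta,
\]
with the sign as stated.
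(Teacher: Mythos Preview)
Your proof is correct and follows essentially the same strategy as the paper: basis independence via orthogonal change of basis, the two diagonal equivariances via $\Ad(h),\Ad(h')$ being $B_\sigma$-isometries that permute root spaces, and the mixed identity via the factorisation $h'=x^{-1}ht$ combined with Proposition~\ref{sec:prop-properties-power}(iv). Your version is in fact slightly more explicit in two places---you justify $\Ad(h')\mathfrak{c}'\subseteq\mathfrak{c}'$ (which the paper only asserts parenthetically) and you compute the second tensor leg directly rather than rewriting the first---but these are cosmetic variations on the same argument.
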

\begin{proof}
    Let $X_1,\dots,X_r$ and $Y_1,\dots,Y_r$ be orthonormal bases, say
    \[
        X_i = \sum_{j=1}^r a_{ij}Y_j,
    \]
    then $a_{ij}=B_\sigma(X_i,Y_j)$ and in particular also
    $Y_i = \sum_{j=1}^r a_{ji}X_j$, so that
    \[
        \sum_{i=1}^r a_{ij}a_{ik}
        = \sum_{i=1}^r B_{\sigma}(a_{ik}X_i,Y_j)
        = B_\sigma(Y_k,Y_j)=\delta_{jk}.
    \]
    Consequently,
    \begin{align*}
        \sum_{i=1}^r\qty(X_i + \sigma(X_i))\otimes\qty(X_i + \sigma(X_i))
        &= \sum_{i=1}^r\sum_{j,k=1}^ra_{ij} a_{ik}\qty(Y_j + \sigma(Y_j))\otimes (Y_k + \sigma(Y_k))\\
        &= \sum_{j,k=1}^r \sum_{i=1}^ra_{ij}a_{ik}
        \qty(Y_j + \sigma(Y_j))\otimes\qty(Y_k+ \sigma(Y_k))\\
        &= \sum_{i=1}^r \qty(Y_i + \sigma(Y_i))\otimes\qty(Y_i +\sigma(Y_i)).
    \end{align*}
    For any Lie algebra automorphism $\psi$ that leaves
    $\mathfrak{c}'$ and $B$ invariant and commutes with $\sigma$ (e.g. $\Ad(h)$ or $\Ad(h')$ for
    $(h,h')\in N_{C'}$), the family $\phi(X_1),\dots,\phi(X_r)$ is a
    $B_\sigma$-orthonormal basis of $\mathfrak{g}_{\phi^*\alpha}$, so
    \[
        (\phi\otimes\phi)(A_\alpha) = A_{\phi^*\alpha}.
    \]
    Lastly, for $(\Ad(h)\otimes\Ad(h')\phi)(A_\alpha)$ note that by
    Proposition~\ref{sec:prop-properties-power}(iv) we have
    \[
        \epsilon_\alpha (hth^{\prime-1})^{-\Ad^*(h)(\alpha)} = \pm1,
    \]
    and in particular
    \begin{align*}
        \Ad(h)(X_i) &= \pm \phi(\Ad(h')(\phi(X_i)))\\
        \Ad(h)(\sigma(X_i)) &= \pm
        \sigma(\phi(\Ad(h')(\phi(X_i)))),
    \end{align*}
    so that we have
    \begin{align*}
        \qty(\Ad(h)\otimes\Ad(h')\phi)A_\alpha &=
        \pm \sum_{i=1}^r
        (\phi\Ad(h')\phi\otimes\Ad(h')\phi)(X_i+\sigma(X_i))\otimes(X_i+\sigma(X_i))\\
        &= \pm (1\otimes\phi)A_{\Ad^*(h)(\alpha)}
    \end{align*}
    seeing as $\phi\Ad(h')\phi$ is orthogonal with respect to
    $B_\sigma$ and commutes with $\sigma$, so that the expression in the
    sum becomes the expression we used to define $A_\alpha$, except that
    \[
        \phi\Ad(h')\phi(X_1),\dots,\phi\Ad(h')\phi(X_r)
    \]
    is now a $B_\sigma$-orthonormal basis of $\mathfrak{g}_{\Ad^*(h)(\alpha)}$.
\end{proof}

\begin{theorem}\label{sec:thm-casimir-decomposition}
    The decompositions of the quadratic Casimir element $\Omega_{\mathfrak{g}}$ with respect to the standard Cartan subset $C'$ are as follows:
    \begin{align*}
        \Pi(\Omega_{\mathfrak{g}}) = \widetilde{\Pi}(\Omega_{\mathfrak{g}}) &=
        \Omega_{\mathfrak{c}'} + \sum_{\alpha\in\Sigma} \frac{n_\alpha}{2} \coth_\alpha C_\alpha + \Omega_{\mathfrak{m}'}\\
        &\qquad + \sum_{\alpha\in\Sigma}
        \frac{\csch_\alpha^2}{4}\qty(m(A_\alpha)\otimes 1 +
        1\otimes m(A_{t\alpha}) + 2 (1\otimes\phi)A_\alpha)\\
        &\qquad -\sum_{\alpha\in\Sigma}
        \frac{\csch_{\alpha/2}^2}{4} (1\otimes\phi)A_\alpha.
    \end{align*}
    where $m: U(\mathfrak{h})\otimes U(\mathfrak{h})\to U(\mathfrak{h})$
    is the multiplication map and $n_\alpha := \dim(\mathfrak{g}_\alpha)$.
\end{theorem}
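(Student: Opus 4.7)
The plan is to compute $\widetilde{\Pi}(\Omega_{\mathfrak{g}})$ directly from the root-space decomposition via Corollary~\ref{sec:cor-anticommutator}, and then deduce the equality $\Pi(\Omega_{\mathfrak{g}}) = \widetilde{\Pi}(\Omega_{\mathfrak{g}})$ from the centrality of $\Omega_{\mathfrak{g}}$.

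By Proposition~\ref{sec:prop-root-spaces}(iv) and Lemma~\ref{sec:lem-zero-space}, the decomposition $\mathfrak{g}_\CC = \mathfrak{c}'_\CC \oplus \mathfrak{m}'_\CC \oplus \bigoplus_{\alpha \in \Sigma}\mathfrak{g}_\alpha$ is $B$-orthogonal except for the pairing $\mathfrak{g}_\alpha \leftrightarrow \mathfrak{g}_{-\alpha}$, so $\Omega_{\mathfrak{g}} = \Omega_{\mathfrak{c}'} + \Omega_{\mathfrak{m}'} + \sum_{\alpha \in \Sigma^+}\Omega_\alpha$, where $\Omega_\alpha$ denotes the piece coupling $\mathfrak{g}_\alpha$ and $\mathfrak{g}_{-\alpha}$. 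I would pick a $B_\sigma$-orthonormal basis $X_1, \dots, X_{n_\alpha}$ of $\mathfrak{g}_\alpha$; the relation $B_\sigma(X,Y) = -B(X,\sigma(Y))$ identifies $-\sigma(X_1),\dots,-\sigma(X_{n_\alpha})$ as the $B$-dual basis of $\mathfrak{g}_{-\alpha}$, which yields
\[
\Omega_\alpha = -\sum_i \{X_i,\sigma(X_i)\}.
\]
Then I substitute the first formula of Corollary~\ref{sec:cor-anticommutator} with $H_i = X_i + \sigma(X_i)$ and $B_\sigma(X_i,X_i) = 1$. Since $\phi$ commutes with $\sigma$, both $H_i$ and $\phi(H_i)$ lie in $\mathfrak{h}$, so each of the three $U(\mathfrak{g})$-monomials appearing there fits cleanly into the $\widetilde{\Pi}$-decomposition $U(\mathfrak{h})\cdot S(\mathfrak{c}')\cdot\Ad(x)(U(\mathfrak{h}))$ of Corollary~\ref{sec:cor-decomposition-Ug}: $H_i^2$ goes to $H_i^2 \otimes 1$, $\Ad(x)(\phi(H_i)^2)$ goes to $1 \otimes \phi(H_i)^2$, and $H_i\Ad(x)(\phi(H_i))$ goes to $H_i \otimes \phi(H_i)$. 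Summing over $i$ and using that $\phi$ sends $B_\sigma$-orthonormal bases of $\mathfrak{g}_\alpha$ to those of $\mathfrak{g}_{t\alpha}$ (Proposition~\ref{sec:prop-operator-A}), the root contribution takes the form
\[
\widetilde{\Pi}(\Omega_\alpha) = \frac{2}{(x^\alpha-x^{-\alpha})^2}\bigl[m(A_\alpha)\otimes 1 + 1\otimes m(A_{t\alpha}) - (x^\alpha+x^{-\alpha})(1\otimes\phi)A_\alpha\bigr] + n_\alpha \coth_\alpha C_\alpha.
\]

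I would then symmetrise $\sum_{\Sigma^+} \to \frac{1}{2}\sum_\Sigma$ (using the invariances $A_{-\alpha}=A_\alpha$, $n_{-\alpha}=n_\alpha$, $\coth_{-\alpha}C_{-\alpha}=\coth_\alpha C_\alpha$), so that the prefactor becomes $(x^\alpha-x^{-\alpha})^{-2} = \csch_\alpha^2/4$. The remaining algebraic manipulation is the trigonometric identity
\[
\csch_{\alpha/2}^2 = \csch_\alpha^2 (x^\alpha + x^{-\alpha} + 2),
\]
which is a consequence of $(x^\alpha-x^{-\alpha})^2 = (x^{\alpha/2}-x^{-\alpha/2})^2 (x^{\alpha/2}+x^{-\alpha/2})^2$ together with $x^\alpha + x^{-\alpha} + 2 = (x^{\alpha/2}+x^{-\alpha/2})^2$. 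Applied to the cross-term coefficient, it rewrites
\[
-(x^\alpha+x^{-\alpha})\frac{\csch_\alpha^2}{4} = \frac{2\csch_\alpha^2}{4} - \frac{\csch_{\alpha/2}^2}{4},
\]
producing exactly the $+2$-on-$\csch_\alpha^2$ and $-1$-on-$\csch_{\alpha/2}^2$ split stated in the theorem.

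Finally, the equality $\Pi(\Omega_{\mathfrak{g}}) = \widetilde{\Pi}(\Omega_{\mathfrak{g}})$ is automatic: a direct inspection of the two recipes in Corollary~\ref{sec:cor-decomposition-Ug} shows that for \emph{any} tensor $T$, the $\Pi$-recipe applied to $T$ equals $\Ad(x^{-1})$ applied to the $\widetilde{\Pi}$-recipe applied to $T$ (the only difference between the two recipes is a single $\Ad(x^{-1})$). Since $\Omega_{\mathfrak{g}} \in Z(U(\mathfrak{g}))$ satisfies $\Ad(x^{-1})(\Omega_{\mathfrak{g}}) = \Omega_{\mathfrak{g}}$, and since the recipes are injective by the PBW argument underlying Corollary~\ref{sec:cor-decomposition-Ug}, the two tensor expressions must coincide. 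The main obstacle is the bookkeeping in the symmetrisation step, combined with spotting that the weight $-(x^\alpha+x^{-\alpha})$ is precisely what the half-angle identity rewrites into the characteristic Heckman--Opdam $\csch_\alpha^2/\csch_{\alpha/2}^2$ pair.
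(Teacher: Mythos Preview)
Your proof is correct and follows essentially the same route as the paper: write $\Omega_{\mathfrak{g}}$ via dual bases adapted to the root-space decomposition, plug in the anticommutator formula of Corollary~\ref{sec:cor-anticommutator}, and rewrite the cross-term coefficient with the half-angle identity. The only genuine difference is in how you obtain $\Pi(\Omega_{\mathfrak{g}})=\widetilde{\Pi}(\Omega_{\mathfrak{g}})$: the paper computes $\Pi$ separately via the \emph{second} formula of Corollary~\ref{sec:cor-anticommutator} and then matches the two expressions using $(\phi\otimes\phi)A_\alpha=A_{t\alpha}$, whereas you argue more abstractly that centrality of $\Omega_{\mathfrak{g}}$ forces the two decompositions to agree because one recipe is literally $\Ad(x^{-1})$ applied to the other. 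Your argument is a bit cleaner conceptually, while the paper's direct computation has the advantage of not relying on the well-definedness of $\Pi,\widetilde{\Pi}$ modulo the $U(\mathfrak{m}')$-ambiguity.
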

\begin{proof}
    Let $C_1,\dots,C_r\in\mathfrak{c}'_\CC$ and
    $M_1,\dots,M_s\in\mathfrak{m}'_\CC$ be orthonormal bases (with respect
    to $B$). For $\alpha\in\Sigma$ let
    $E_{\alpha,1},\dots,E_{\alpha,n_\alpha}\in\mathfrak{g}_\alpha$ be
    an orthonormal basis with respect to $B_\sigma$. Without loss of
    generality assume that $E_{-\alpha,i}=\sigma(E_\alpha,i)$. Then
    the following bases are dual to each other with respect to $B$:
    \begin{align*}
        &C_1,\dots,C_r,M_1,\dots,M_s,(E_{\alpha,1},\dots,E_{\alpha,n_\alpha})_{\alpha\in\Sigma}\\
        &C_1,\dots,C_r,M_1,\dots,M_s,(-\sigma(E_{\alpha,1}),\dots,-\sigma(E_{\alpha,n_\alpha}))_{\alpha\in\Sigma}.
    \end{align*}
    Thus,
    \[
    \Omega_{\mathfrak{g}} = \sum_{i=1}^r C_i^2 + \sum_{i=1}^s M_i^2
    - \sum_{\alpha\in\Sigma}\sum_{i=1}^{n_\alpha} E_{\alpha,i}\sigma(E_{\alpha_i}).
    \]
    The first two sums are just $\Omega_{\mathfrak{c}'}$ and $\Omega_{\mathfrak{m}'}$, respectively. Symmetrising the summands over $\alpha$ somewhat we obtain
    \[
        \Omega_{\mathfrak{g}} = \Omega_{\mathfrak{c}'}+\Omega_{\mathfrak{m}'} - \frac{1}{2}\sum_{\alpha\in\Sigma}\sum_{i=1}^{n_\alpha} \acomm{E_{\alpha,i}}{\sigma(E_{\alpha,i})}.
    \]
    Let $x\in C'\cap G_{rs}$. Substituting in the expression from Corollary~\ref{sec:cor-anticommutator} and setting $H_{\alpha,i} := E_{\alpha,i}+\sigma(E_{\alpha,i}) = E_{\alpha,i}+E_{-\alpha,i}$ we get
    \begin{align*}
        \Omega_{\mathfrak{g}} &= \Omega_{\mathfrak{c}'} + \Omega_{\mathfrak{m}'} + \frac{1}{2}\sum_{\alpha\in\Sigma}
        \sum_{i=1}^{n_\alpha} B_\sigma(E_{\alpha,i},E_{\alpha,i})
        \frac{x^\alpha+x^{-\alpha}}{x^\alpha-x^{-\alpha}} C_\alpha\\
        &\qquad + \sum_{\alpha\in\Sigma}\sum_{i=1}^{n_\alpha}
        \frac{H_{\alpha_i}^2 + \Ad(x)(\phi(H_{\alpha,i})^2)
        - (x^\alpha+x^{-\alpha}) H_{\alpha,i}\Ad(x)(\phi(H_{\alpha,i}))}{\qty(x^\alpha-x^{-\alpha})^2}\\
        &= \Omega_{\mathfrak{c}'} + \Omega_{\mathfrak{m}'} + \sum_{\alpha\in\Sigma} \frac{n_\alpha}{2}
        \frac{x^\alpha + x^{-\alpha}}{x^\alpha-x^{-\alpha}} C_\alpha\\
        &\qquad + \sum_{\alpha\in\Sigma}
        \frac{m(A_\alpha) + \Ad(x)(m(A_{t\alpha}))
        - (x^\alpha+x^{-\alpha})m(1\otimes\Ad(x)\phi)(A_\alpha)}{\qty(x^\alpha-x^{-\alpha})^2}.
    \end{align*}
    Similarly, we have
    \begin{align*}
        \Omega_{\mathfrak{g}} &= \Omega_{\mathfrak{c}'} + \Omega_{\mathfrak{m}'} + \frac{1}{2}\sum_{\alpha\in\Sigma}\sum_{i=1}^{n_\alpha} B_\sigma(\phi(E_{\alpha,i}),\phi(E_{\alpha,i}))
        \frac{x^{t\alpha}+x^{t\alpha}}{x^{t\alpha}-x^{-t\alpha}}
        \Ad(x^{-1})(C_{t\alpha})\\
        &\qquad + \sum_{\alpha\in\Sigma}\sum_{i=1}^{n_\alpha}
        \frac{H_{\alpha,i}^2 + \Ad(x^{-1})(\phi(H_{\alpha,i})^2)
        - (x^{t\alpha} + x^{-t\alpha})\Ad(x^{-1})(\phi(H_{\alpha,i}))H_{\alpha,i}}{\qty(x^{t\alpha}-x^{-t\alpha})^2}\\
        &= \Omega_{\mathfrak{c}'} + \Omega_{\mathfrak{m}'}
        + \sum_{\alpha\in\Sigma}\frac{n_\alpha}{2}
        \frac{x^\alpha+x^{-\alpha}}{x^{\alpha}-x^{-\alpha}}
        \Ad(x^{-1})(C_\alpha)\\
        &\qquad + \sum_{\alpha\in\Sigma}
        \frac{m(A_{t\alpha}) + \Ad(x^{-1})(m(A_\alpha))
        - (x^\alpha+x^{-\alpha})m\qty(\Ad(x^{-1})\phi\otimes 1)A_{t\alpha}}{\qty(x^\alpha-x^{-\alpha})^2}.
    \end{align*}
    Note that these expressions are both in the shape that we can read
    off $\Pi(\Omega_{\mathfrak{g}}),\widetilde{\Omega_{\mathfrak{g}}}$.
    Writing $\coth_\alpha(x) := \frac{x^\alpha+x^{-\alpha}}{x^\alpha-x^{-\alpha}}$ and $\csch_\alpha(x) := 2(x^\alpha-x^{-\alpha})^{-1}$,
    we thus obtain
    \begin{align*}
        \widetilde{\Pi}(\Omega_{\mathfrak{g}}) &= 
        \Omega_{\mathfrak{c}'}
        + \Omega_{\mathfrak{m}'}
        + \sum_{\alpha\in\Sigma} \frac{n_\alpha}{2}\coth_\alpha C_\alpha\\
        &\qquad + \sum_{\alpha\in\Sigma}
        \frac{\csch^2_\alpha}{4} \qty(m(A_\alpha)\otimes 1 +
        1\otimes m(A_{t\alpha}))\\
        &\qquad- \sum_{\alpha\in\Sigma} \frac{\csch_\alpha \coth_\alpha}{2}
        (1\otimes\phi)A_\alpha)\\
        \Pi(\Omega_{\mathfrak{g}}) &=
        \Omega_{\mathfrak{c}'} + \Omega_{\mathfrak{m}'} +
        \sum_{\alpha\in\Sigma} \frac{n_\alpha}{2}\coth_\alpha C_\alpha\\
        &\qquad+ \sum_{\alpha\in\Sigma}
        \frac{\csch^2_\alpha}{4} \qty(1\otimes m(A_{t\alpha}) + m(A_\alpha)\otimes1)\\
        &\qquad -\sum_{\alpha\in\Sigma}
        \frac{\csch_\alpha\coth_\alpha}{2} (\phi\otimes1)A_{t\alpha},
    \end{align*}
    which equals $\widetilde{\Pi}(\Omega_{\mathfrak{g}})$ because
    $(\phi\otimes\phi)A_\alpha = A_{t\alpha}$ (by Proposition~\ref{sec:prop-operator-A}). Lastly, noting that
    \begin{align*}
        \frac{\csch_\alpha(x)\coth_\alpha(x)}{2} &= \frac{x^\alpha + x^{-\alpha} + 2}{\qty(x^\alpha-x^{-\alpha})} - \frac{\csch_\alpha^2(x)}{2}\\
        &= \frac{\qty(1+x^\alpha)(1+x^{-\alpha})}{\qty(1+x^\alpha)(1-x^{-\alpha})\qty(1+x^{-\alpha})\qty(x^{\alpha}-1)}
        - \frac{\csch_\alpha^2(x)}{2}\\
        &= \frac{\csch_{\alpha/2}^2(x)}{4} - \frac{\csch_\alpha^2(x)}{2},
    \end{align*}
    where the square of $\csch_{\alpha/2}$ is a well-defined quantity
    obtained by multiplying out the product. In light of this we can
    also rewrite $\Pi(\Omega_{\mathfrak{g}})=\widetilde{\Pi}(\Omega_{\mathfrak{g}})$ as
    \begin{align*}
        \Pi(\Omega_{\mathfrak{g}}) = \widetilde{\Pi}(\Omega_{\mathfrak{g}})
        &= \Omega_{\mathfrak{c}'} + \sum_{\alpha\in\Sigma} \frac{n_\alpha}{2} \coth_\alpha C_\alpha + \Omega_{\mathfrak{m}'}\\
        &\qquad + \sum_{\alpha\in\Sigma}
        \frac{\csch_\alpha^2}{4}\qty(m(A_\alpha)\otimes 1 +
        1\otimes m(A_{t\alpha}) + 2 (1\otimes\phi)A_\alpha)\\
        &\qquad -\sum_{\alpha\in\Sigma}
        \frac{\csch_{\alpha/2}^2}{4} (1\otimes\phi)A_\alpha.
    \end{align*}
\end{proof}

\begin{corollary}
    In the case where $\Ad(t)=1$, we have
    $(\exp(X)t)^\alpha = \exp(\alpha(X))$, and the radial part of
    $\Omega_{\mathfrak{g}}$ simplifies to
    \begin{align*}
        \Pi(\Omega_{\mathfrak{g}}) &= \Omega_{\mathfrak{c}'} + \sum_{\alpha\in\Sigma} \frac{n_\alpha}{2} \coth_\alpha C_\alpha + \Omega_{\mathfrak{m}'}\\
        &\qquad + \sum_{\alpha\in\Sigma}
        \frac{\csch_\alpha^2}{4}\qty(m(A_\alpha)\otimes 1 +
        1\otimes m(A_{\alpha}) + 2 A_\alpha)\\
        &\qquad -\sum_{\alpha\in\Sigma}
        \frac{\csch_{\alpha/2}^2}{4} A_\alpha
    \end{align*}
    where $\coth_\alpha,\csch_\alpha$ are related to the usual
    hyperbolic functions: if $x=\exp(X)$, then
    \[
        \coth_\alpha(x) = \coth(\alpha(X)),\qquad
        \csch_\alpha(x) = \csch(\alpha(X)).
    \]
\end{corollary}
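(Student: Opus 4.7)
The plan is to obtain the stated formula by direct specialization of Theorem~\ref{sec:thm-casimir-decomposition} to the hypothesis $\Ad(t)=1$, simply tracking how each structural ingredient collapses.

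First I would fix the choice of the data $(\phi,\epsilon)$ introduced in Subsection~\ref{sec:general-decomposition}. Under $\Ad(t)=1$, the relation $\Ad(t)(X)=\epsilon_\alpha\phi(X)$ for $X\in\mathfrak{g}_\alpha$ admits the canonical solution $\phi=\mathrm{id}$ and $\epsilon_\alpha=1$ for every $\alpha\in\Sigma$. This choice satisfies all the structural requirements: $\mathrm{id}\in O(\mathfrak{g}_\CC,B)$, it is trivially an involution, and it commutes with $\sigma$. Moreover, the induced involution of $t$ on $\Sigma$ (i.e., on $(\mathfrak{c}'_\CC)^*$) is the identity, so $t\alpha=\alpha$ and hence $A_{t\alpha}=A_\alpha$ for every root, which is consistent with the conclusions of Proposition~\ref{sec:prop-operator-A}.

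Next I would simplify the ``power'' $x^\alpha$ and the hyperbolic factors. For $x=\exp(X)t\in C'$ the definition gives
\[
x^\alpha = \epsilon_\alpha\exp(\alpha(X)) = \exp(\alpha(X)),
\]
and therefore
\[
\coth_\alpha(x) = \frac{\exp(\alpha(X))+\exp(-\alpha(X))}{\exp(\alpha(X))-\exp(-\alpha(X))} = \coth(\alpha(X)), \qquad \csch_\alpha(x) = \csch(\alpha(X)),
\]
with the analogous identity for $\csch_{\alpha/2}$.

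Finally I would substitute $\phi=\mathrm{id}$, $A_{t\alpha}=A_\alpha$, and the simplified trigonometric expressions into the formula from Theorem~\ref{sec:thm-casimir-decomposition}. The term $(1\otimes\phi)A_\alpha$ appearing in both the $\csch_\alpha^2$-sum and the $\csch_{\alpha/2}^2$-sum collapses to $A_\alpha$, and the sum $1\otimes m(A_{t\alpha})$ becomes $1\otimes m(A_\alpha)$, which yields the corollary verbatim. There is no real obstacle: the whole argument is a bookkeeping exercise of plugging the trivial $(\phi,\epsilon)$ back into the general formula. The only mild point to confirm is that the specialization $\phi=\mathrm{id}$ is admissible within the axiomatic setup at the start of Subsection~\ref{sec:general-decomposition}, which as noted above is immediate.
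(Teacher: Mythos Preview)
Your proposal is correct and matches the paper's intent: the corollary is stated there without proof, as an immediate specialization of Theorem~\ref{sec:thm-casimir-decomposition}, and your argument carries out exactly that specialization by taking $\phi=\mathrm{id}$, $\epsilon_\alpha=1$, $t\alpha=\alpha$, and substituting into the general formula.
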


\section{Mathematical Setup for Conformal Blocks}\label{sec:cb}
Let $p+q=d>2$ be natural numbers with $p\ge q$. Most commonly, we will encounter
$q=0$ (\emph{Euclidean}) or $q=1$ ($\emph{Lorentzian}$). Let furthermore
$\eta$ denote the standard bilinear form of signature $(p,q)$ on
$\RR^{p,q}:=\RR^d$ and by abuse of notation also the one of signature $(p+1,q+1)$ on
$\RR^{p+1,q+1}=\RR^{d+2}$.

We will use lower case Greek letters to denote indices pertaining to $\RR^{p+1,q+1}$ and lower case Latin letters for $\RR^{p,q}$. For both we shall
use the usual index raising and lowering conventions of physics (with the pseudo-inner product $\eta$), such that $A_\mu = A^\mu$ if $\mu\le p$ and
$A_\mu=-A^\mu$ if $\mu>p$ (analogously for $A_i$).

\subsection{Conformal Compactification}
\begin{definition}
    Write $q:\, \RR^{p+1,q+1}\setminus\{0\}\to\RR\mathbb{P}^{d+1}$ for the
    projectivisation map. The real variety
    \[
        \widehat{\RR^{p,q}} := \{q(v)\mid v\in\RR^{p+1,q+1}\setminus\{0\},
        \eta(v,v)=0\}
    \]
    is called the \emph{conformal compactification of $\RR^{p,q}$}.
\end{definition}
Note that since $\widehat{\RR^{p,q}}$ is Zariski closed, it is also closed in
the Euclidean topology. Since $\RR\mathbb{P}^{d+1}$ is compact, that entails
that $\widehat{\RR^{p,q}}$ is also compact, as the name would imply.

We first establish that $\widehat{\RR^{p,q}}$ is a smooth manifold in a
useful sense and that it is indeed a compactification of $\RR^{p,q}$.
\begin{lemma}\label{sec:lem-comp-diffeo}
    $\widehat{\RR^{p,q}}$ is an embedded
    submanifold of $\RR\mathbb{P}^{d+1}$. Furthermore, the map
    \[
        \iota:\, \RR^{p,q}\to\widehat{\RR^{p,q}},\qquad
        v\mapsto (1-\eta(v,v)\,:\,2v\,:\,1+\eta(v,v))
    \]
    is a diffeomorphism with $\iota(\RR^{p,q})$, which is dense in $\widehat{\RR^{p,q}}$.
\end{lemma}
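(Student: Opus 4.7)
The plan splits naturally into two parts: first, showing that $\widehat{\RR^{p,q}}$ is an embedded submanifold; second, exhibiting $\iota$ as a diffeomorphism onto a dense open subset of it.

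For the submanifold property, I would cover $\RR\mathbb{P}^{d+1}$ by the standard affine charts $U_\mu=\{[v]\mid v^\mu\ne0\}$ and note that in each chart the defining condition $\eta(v,v)=0$ becomes a single polynomial equation in the $d+1$ remaining affine coordinates. Its gradient vanishes only if $v$ is a scalar multiple of $e_\mu$; but for a diagonalised non-degenerate $\eta$ with entries $\pm1$ this would force $\eta_{\mu\mu}=0$, which never occurs. Hence $0$ is a regular value in each chart, and the regular value theorem presents $\widehat{\RR^{p,q}}\cap U_\mu$ as an embedded hypersurface in $U_\mu$. These local descriptions patch together to give the desired submanifold structure.

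For $\iota$, I would first verify well-definedness by a one-line computation. Arranging the ambient form of signature $(p+1,q+1)$ so that the extra $+1$ sits in coordinate $0$ and the extra $-1$ in coordinate $d+1$, one has
\[
\eta(\iota(v),\iota(v)) = (1-\eta(v,v))^2 + 4\,\eta(v,v) - (1+\eta(v,v))^2 = 0,
\]
where on the left $\eta$ denotes the large form while inside the parentheses it refers to the small one. Next I would propose as candidate inverse the map
\[
\iota^{-1}\colon V\cap\widehat{\RR^{p,q}}\to\RR^{p,q},\qquad [x^0:\dots:x^{d+1}]\mapsto\frac{(x^1,\dots,x^d)}{x^0+x^{d+1}},
\]
defined on the Zariski-open chart $V=\{x^0+x^{d+1}\ne0\}\subset\RR\mathbb{P}^{d+1}$; this is manifestly smooth. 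A short algebraic check (using the quadric relation to eliminate the small-space form) shows that $\iota^{-1}\circ\iota=\mathrm{id}_{\RR^{p,q}}$ and $\iota\circ\iota^{-1}=\mathrm{id}_{V\cap\widehat{\RR^{p,q}}}$, giving both injectivity of $\iota$ and the diffeomorphism onto $V\cap\widehat{\RR^{p,q}}$.

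Density of $\iota(\RR^{p,q})$ then follows because its complement inside $\widehat{\RR^{p,q}}$ is cut out by the additional linear relation $x^0+x^{d+1}=0$; since, for example, $[1:0:\dots:0:1]$ lies in $\widehat{\RR^{p,q}}$ but not in this hyperplane, the intersection is a proper closed subvariety of strictly lower dimension, so its complement is open and dense. I do not expect a serious obstacle anywhere; the only care required is bookkeeping with sign conventions, i.e.\ making sure the two added signatures are placed so that the explicit formula for $\iota$ lands on the null cone. Every other step reduces to an elementary computation in affine charts.
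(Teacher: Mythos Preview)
Your proposal is correct and follows essentially the same route as the paper: both arguments identify the same smooth inverse $[x^0:\dots:x^{d+1}]\mapsto (x^1,\dots,x^d)/(x^0+x^{d+1})$ on the open set $\{x^0+x^{d+1}\ne0\}$ and check the two compositions. Your treatment of the submanifold property is somewhat more explicit than the paper's, which simply remarks that the quadric is regular because $\eta$ is nondegenerate.

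The one genuine difference is in the density argument. You argue abstractly: the complement of $\iota(\RR^{p,q})$ is the hyperplane section $\{x^0+x^{d+1}=0\}\cap\widehat{\RR^{p,q}}$, which is a proper closed (real-analytic) subset and hence nowhere dense. The paper instead constructs explicit sequences in $\iota(\RR^{p,q})$ converging to each boundary point $(v_0:v:-v_0)$, treating the cases $v=0$ and $v\ne0$ separately. Your argument is shorter and perfectly valid, though to be airtight you should note that $\widehat{\RR^{p,q}}$ is a connected real-analytic manifold, so that a non-identically-vanishing analytic function (here $x^0+x^{d+1}$ in any local chart) has a zero set with empty interior. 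The paper's explicit limits avoid this appeal but are more laborious.
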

\begin{proof}
    Note that $\widehat{\RR^{p,q}}$ is regular since $\eta$ is nondegenerate.
    Next, we define $f: \widehat{\RR^{p,q}}\cap\{(v_0:\dots:v_{d+1})\mid v_0+v_{d+1}\ne0\}\to\RR^{p,q}$ by
    \[
        (v_0\,:\,v\,:\,v_{d+1})\mapsto \frac{v}{v_0+v_{d+1}}.
    \]
    This map is evidently well-defined and smooth. Note also that
    \[
        f(\iota(v)) = v
    \]
    for all $v\in\RR^{p,q}$. If $(v_0:v:v_{d+1})\in\widehat{\RR^{p,q}}$,
    we have $\eta(v,v)=-(v_0+v_{d+1})(v_0-v_{d+1})$, so that
    \begin{align*}
        \iota(f(v_0:v:v_{d+1})) &= \iota\qty(\frac{v}{v_0+v_{d+1}})\\
        &= \qty(1 - \frac{\eta(v,v)}{(v_0+v_{d+1})^2} : \frac{2v}{v_0+v_{d+1}}
        : 1 + \frac{v,v}{(v_0+v_{d+1})^2})\\
        &= \qty(v_0+v_{d+1} + v_0 - v_{d+1}: 2v: v_0+v_{d+1}-v_0 + v_{d+1})\\
        &= (v_0:v:v_{d+1}).
    \end{align*}
    Consequently, $\iota$ is a diffeomorphism and $f$ is its inverse. To
    see that $\iota(\RR^{p,q})$ is dense, note that any element of the form
    \[
        (v_0\,:\, v\,:\, -v_0)
    \]
    can be reached as follows: if $v=0$, then the point $(1:0:-1)=\infty$
    can be reached as
    \begin{align*}
        \lim_{t\to\infty} \iota(tw) &= \lim_{t\to\infty}
        (1-t^2\eta(w,w): 2w: 1+t^2\eta(w,w))\\
        &= \lim_{t\to\infty} (\eta(w,w)-t^{-2}: -2t^{-2}w: -\eta(w,w)-t^{-2})\\
        &= (1:0:-1)
    \end{align*}
    for any $w\in\mathbb{R}^{p,q}$ with $\eta(w,w)\ne0$. If $v\ne0$, there
    is a vector $w$ such that $\eta(v,w)\ne0$. By rescaling, we can choose
    that inner product to be $-v_0$. Note that $\eta(v,v)=0$. Then
    \begin{align*}
        \lim_{t\to\infty}\iota(w+vt)
        &= \lim_{t\to\infty} (1-\eta(w,w)+2tv_0: 2w+2vt: 1+\eta(w,w)-2tv_0)\\
        &= \lim_{t\to\infty} ((1-\eta(w,w))t^{-1}+2v_0: 2t^{-1}w+2v: (1+\eta(w,w))t^{-1} - 2v_0))\\
        &= (v_0:v:-v_0).\qedhere
    \end{align*}
\end{proof}

\subsection{Conformal Group $G$ and its Structure}
\begin{lemma}
    $G:= SO(p+1,q+1)_0$ is the biggest classical connected Lie group acting on
    $\widehat{\RR^{p,q}}$ by projective transformations.
\end{lemma}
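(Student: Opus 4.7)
The plan splits the claim into existence (constructing the $SO(p+1,q+1)_0$-action) and maximality (no larger classical connected Lie group can act by projective transformations on $\widehat{\RR^{p,q}}$).

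For existence, the key observation is that $O(p+1,q+1)$ preserves $\eta$ and hence the null cone $N := \{v \in \RR^{d+2} \setminus \{0\} : \eta(v,v) = 0\}$. Since the action is linear, it commutes with the $\RR^{\times}$-dilation and therefore descends to an action on $N/\RR^{\times} = \widehat{\RR^{p,q}}$. Restricting to the identity component $SO(p+1,q+1)_0$ gives the required connected classical Lie group acting by projective transformations; this checks that $SO(p+1,q+1)_0$ is at least \emph{a} candidate.

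For maximality, I lift any $\phi \in PGL(d+2,\RR)$ preserving $\widehat{\RR^{p,q}}$ to some $A \in GL(d+2,\RR)$ and invoke the rigidity fact that a smooth irreducible quadric in $\RR\mathbb{P}^{d+1}$ determines its defining quadratic form up to a nonzero scalar. Hence $A^{T}\eta A = \lambda\eta$ for some $\lambda \in \RR^{\times}$, and rescaling $A$ by $|\lambda|^{-1/2}$ yields a representative either in $O(p+1,q+1)$ (if $\lambda > 0$) or satisfying $A^{T}\eta A = -\eta$ (if $\lambda < 0$, which is only possible when $p = q$, the case in which $\eta \cong -\eta$). Either way $\phi$ lies in $PO(p+1,q+1)$, at worst $\ZZ/2$-extended. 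Passing to identity components, any connected Lie group $H$ acting on $\widehat{\RR^{p,q}}$ by projective transformations factors through $PSO(p+1,q+1)_0$, and the largest classical connected Lie group covering this is $SO(p+1,q+1)_0$ itself, with only a finite central kernel $\{\pm I\} \cap SO(p+1,q+1)_0$.

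The main obstacle is the rigidity step -- that $\widehat{\RR^{p,q}}$ determines $\eta$ up to scalar -- since everything else reduces to standard structure theory of $O(p+1,q+1)$ and its identity component. The rigidity relies on $\widehat{\RR^{p,q}}$ being a non-degenerate (hence irreducible) projective quadric, which holds under the paper's standing assumption $d > 2$ together with $p,q \ge 0$.
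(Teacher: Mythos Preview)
Your proposal is correct and follows the same architecture as the paper: identify the full stabiliser of $\widehat{\RR^{p,q}}$ inside $PGL(d+2,\RR)$, show that any lift $A$ must rescale $\eta$, reduce to $O(p+1,q+1)$ up to a sign and the centre, and pass to identity components. The one substantive difference is in how the rigidity step is justified. You invoke as a black box the classical fact that a smooth irreducible projective quadric determines its defining quadratic form up to a nonzero scalar; the paper instead proves this explicitly by complexifying, using that the real zero locus of $\eta$ is Zariski-dense in the complex one, and then applying Hilbert's Nullstellensatz (plus a degree count) to conclude that $\eta\circ g$ is a scalar multiple of $\eta$. Your citation is legitimate and slightly cleaner; the paper's argument is more self-contained. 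Your explicit handling of the $\lambda<0$ branch (possible only when $p=q$, i.e.\ when the signature $(p+1,q+1)$ is neutral) is in fact sharper than the paper's, which simply absorbs both signs into the statement ``$\eta\circ g=\pm\eta$'' without comment.
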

\begin{proof}
    The group of projective transformations of $\RR\mathbb{P}^{d+1}$
    is $PGL(d+2)$, i.e. $GL(d+2)/\RR^\times$. Evidently, those that
    leave $\widehat{\RR^{p,q}}$ invariant are
    \begin{align*}
        &\{g\in GL(d+2)\mid\forall v\in\RR^{p+1,q+1}:\quad 
        \eta(v,v)=0\Rightarrow \eta(gv, gv)=0\}/\RR^{\times}\\
        =&\{g\in GL(d+2)\mid\forall v\in\CC^{p+1,q+1}:\quad \eta(v,v)=0\Rightarrow \eta(gv, gv)=0\}/\RR^{\times}\\
        =&\{g\in GL(d+2)\mid \mathbb{V}(\eta)\subseteq\mathbb{V}(\eta\circ g)\}/\RR^\times,
    \end{align*}
    where we used that we can also complexify $\eta$ and $\RR^{p+1,q+1}$ and the fact that the real restriction of
    $\eta$'s zero set is Zariski dense. By Hilbert's Nullstellensatz,
    $\mathbb{V}(\eta)\subseteq\mathbb{V}(\eta\circ g)$ implies
    that the polynomial $\eta\circ g$ is a multiple of $\eta$ (as the ideals generated by both are radical). Since they are both homogeneous
    of degree 2, this multiple must be a nonzero scalar, and since
    $\eta,g$ have real coefficients, it must be a nonzero real, i.e.
    \[
        \{g\in GL(d+2)\mid\exists C\in\RR^\times: \eta\circ g = C\eta\}/\RR^{\times}
    \]
    is the group of projective transformations leaving $\widehat{\RR^{p,q}}$ invariant. Since positive numbers have arbitrary
    roots in $\RR$, we can get rid of some of the actions
    \[
        \{g\in GL(d+2)\mid \eta\circ g=\pm \eta\}/\{\pm1\},
    \]
    thus this group is doubly covered by an extension of $O(p+1,q+1)$
    by $\{\pm1\}$. If we take the unit component, we get a group
    that is (potentially) doubly covered by $SO(p+1,q+1)_0=G$.
\end{proof}

We therefore see that it is reasonable to study $G$.

\begin{lemma}[{\cite[\S VII.2, example 2]{knapp}}]
    Together with $K=SO(p+1)\times SO(q+1)$ and the maps
    \begin{alignat*}{2}
        \theta: \mathfrak{g}&\to\mathfrak{g},\qquad
        &X&\mapsto -X^T = \eta X \eta\\
        B: \mathfrak{g}\otimes\mathfrak{g}&\to\RR,
        &X\otimes Y&\mapsto \tr(XY),
    \end{alignat*}
    $G$ is a reductive Lie group.
\end{lemma}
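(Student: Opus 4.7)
The plan is to verify, point by point, the axioms of Knapp's definition of a reductive Lie group (Definition in \S VII.2 of \cite{knapp}) for the quadruple $(G,K,\theta,B)$. These amount to: (i) $\mathfrak{g}$ is a real reductive Lie algebra and $G$ has finitely many components; (ii) $\theta$ is an involutive Lie algebra automorphism of $\mathfrak{g}$ with $\mathfrak{g}^\theta=\mathfrak{k}:=\mathrm{Lie}(K)$; (iii) $B$ is a nondegenerate symmetric form, invariant under $\Ad(G)$ and $\theta$, which is negative definite on $\mathfrak{k}$ and positive definite on $\mathfrak{p}:=\mathfrak{g}^{-\theta}$, with $\mathfrak{k}\perp\mathfrak{p}$; (iv) $\theta$ lifts to an involution of $G$ whose fixed-point group has Lie algebra $\mathfrak{k}$, and the multiplication map $K\times\mathfrak{p}\to G$, $(k,X)\mapsto k\exp(X)$, is a diffeomorphism.

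For (i), the Lie algebra $\mathfrak{g}=\mathfrak{so}(p+1,q+1)$ is simple (the dimension $d+2\geq 5$ puts us outside the exceptional low-dimensional isomorphisms that might break simplicity), hence reductive, and $G$ is connected by definition. For (ii), first check that $\theta$ preserves $\mathfrak{g}$: from $X\in\mathfrak{g}\iff X^T\eta=-\eta X$, a short manipulation gives $\theta(X)^T\eta=-\eta\theta(X)$. Involutivity follows from $\eta^2=\1$, and $\theta([X,Y])=[\theta(X),\theta(Y)]$ is immediate from the transpose identity $(XY)^T=Y^TX^T$. To compute $\mathfrak{g}^\theta$, combine $X=-X^T$ with $X^T\eta=-\eta X$ to deduce $X\eta=\eta X$, so that $X$ is block-diagonal in the $(p+1)+(q+1)$ splitting and skew within each block; this identifies $\mathfrak{g}^\theta$ with $\mathfrak{so}(p+1)\oplus\mathfrak{so}(q+1)=\mathrm{Lie}(K)$.

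For (iii), symmetry and $\Ad(G)$-invariance of $B$ are the cyclic property of $\tr$, and $\theta$-invariance comes from $\tr(X^TY^T)=\tr((YX)^T)=\tr(YX)$. On $\mathfrak{k}$, elements $X$ are real skew, so $B(X,X)=\tr(X^2)=-\tr(X^TX)=-\sum_{ij}X_{ij}^2\leq 0$ with equality only for $X=0$; on $\mathfrak{p}$ elements are symmetric matrices in $\mathfrak{g}$, giving $B(X,X)=\tr(X^TX)\geq 0$, again strict unless $X=0$. Orthogonality $B(\mathfrak{k},\mathfrak{p})=0$ follows from the trace of (skew)(symmetric) being equal to its negative. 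Nondegeneracy is then automatic from definiteness on each summand of $\mathfrak{g}=\mathfrak{k}\oplus\mathfrak{p}$.

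For (iv), lift $\theta$ to $G$ by $\Theta(g):=\eta g\eta$: since $\eta\in O(p+1,q+1)$ with $\eta^2=\1$, $\Theta$ is an involutive automorphism of the ambient group preserving $G$, its derivative at $1$ is $\theta$, and its fixed-point subgroup has Lie algebra $\mathfrak{k}$, containing $K$ with matching identity component. The remaining claim -- that $(k,X)\mapsto k\exp(X)$ is a diffeomorphism $K\times\mathfrak{p}\to G$ -- is the Cartan decomposition at the group level, and is the only substantive step. The plan here is to use the standard polar decomposition on $GL(d+2,\RR)$: any $g\in G$ factors uniquely as $g=u\exp(S)$ with $u\in O(d+2)$ and $S=S^T$, and one shows the factors individually lie in $G$. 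The key observation is that $g^Tg=\exp(2S)\in G$ (since $G$ is $\theta$-stable under $g\mapsto (g^{-1})^T=\eta g\eta$ inverted), whence $S\in\mathfrak{g}\cap\mathrm{Sym}=\mathfrak{p}$ and then $u=g\exp(-S)\in G\cap O(d+2)=K$. Uniqueness and smoothness follow from the uniqueness and smoothness of the matrix polar decomposition. I expect this last step to be the main obstacle in a fully self-contained write-up; as the author cites Knapp's Example VII.2.2, it suffices to reference that treatment.
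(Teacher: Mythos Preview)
The paper gives no proof of this lemma at all: it simply records the statement and cites Knapp's example directly. Your verification of the axioms is correct and considerably more detailed than anything the paper provides, so there is nothing to compare approaches against; you have supplied what the paper deliberately outsourced to the reference.
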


\begin{definition}
    For $\mu,\nu,\sigma,\rho=0,\dots,d+1$ define
    \[
        \tensor{(E_{\mu\nu})}{^\sigma_\rho} = \delta^\sigma_\nu \eta_{\nu\rho}
    \]
    (so that $\tensor{(\tensor{E}{_\mu^\nu})}{^\sigma_\rho} = \delta^\sigma_\mu\delta^\nu_\rho$)
    and
    \[
        F_{\mu\nu} := E_{\mu\nu} - E_{\nu\mu}.
    \]
\end{definition}
The set $(F_{\mu\nu})_{0\le \mu < \nu \le d+1}$ is a basis of $\mathfrak{g}$ and
a short calculation will convince the reader that
\[
    \comm{F_{\mu\nu}}{F_{\rho\sigma}} =
    \eta_{\nu\rho} F_{\mu\sigma} + \eta_{\mu\sigma}F_{\nu\rho}
    - \eta_{\mu\rho}F_{\nu\sigma} - \eta_{\nu\sigma} F_{\mu\rho}.
\]
Note in addition that $F_{\mu\nu}=-F_{\nu\mu}$ and that
$\theta(F_{\mu\nu})=F^{\mu\nu}$, which equals $F_{\mu\nu}$ if
$\mu,\nu\le p$ or $p<\mu,\nu$, and which equals $-F_{\mu\nu}$ otherwise.

\begin{proposition}
    $\mathfrak{k}$ is spanned by $F_{\mu\nu}$ where $\mu,\nu$ are either both
    $\le p$ or both $>p$ and $\mathfrak{p}$ is spanned by $F_{\mu\nu}$ met
    $0\le \mu\le p <\nu \le d+1$. For $a:=0,\dots,q$ define
    $D_a := F^{a,d+1-a} = - F_{a,d+1-a}$. Then
    \[
        \mathfrak{a}_{\mathfrak{p}} :=\operatorname{span}\{D_0,\dots,D_q\}
    \]
    is a maximal commutative subspace of $\mathfrak{p}$.
\end{proposition}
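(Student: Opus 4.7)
The plan is to handle the three claims in turn. For the identifications of $\mathfrak{k}$ and $\mathfrak{p}$, I will use the Cartan involution directly: since $\theta(F_{\mu\nu})=F^{\mu\nu}$, the element $F_{\mu\nu}$ is $\theta$-fixed iff both indices lie on the same side of $p$, and is $(-\theta)$-fixed iff they lie on opposite sides. Because $\{F_{\mu\nu}\}_{0\le\mu<\nu\le d+1}$ is a basis of $\mathfrak{g}$ and $\mathfrak{k},\mathfrak{p}$ are the $\pm 1$-eigenspaces of $\theta$, the two spanning statements follow at once.

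For $\mathfrak{a}_\mathfrak{p}$, I first note that each $D_a$ with $a\in\{0,\dots,q\}$ satisfies $a\le q\le p<p+1=d+1-q\le d+1-a$, so $D_a\in\mathfrak{p}$. Commutativity I plan to obtain from the given bracket formula applied to $[D_a,D_b]$: when $a\ne b$, each of the four $\eta$-factors pairs indices that are either on opposite sides of $p$ or are distinct same-side indices, so all four terms vanish by the diagonality of $\eta$; when $a=b$ the bracket is trivially zero.

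The main step is maximality. Here I would take an arbitrary $X=\sum_{\mu\le p<\nu}c_{\mu\nu}F_{\mu\nu}\in\mathfrak{p}$ commuting with every $D_a$ and compute $[D_a,X]$ by the same bracket formula. Two of the four resulting terms vanish for the same diagonality reason; the other two leave
\[
    [D_a,X]=\sum_{\nu>p}c_{a,\nu}F_{d+1-a,\nu}-\sum_{\mu\le p}c_{\mu,d+1-a}F_{a,\mu}.
\]
The first sum lies in the $\mathfrak{so}(q+1)$-factor of $\mathfrak{k}$ and the second in the $\mathfrak{so}(p+1)$-factor, so vanishing of $[D_a,X]$ forces $c_{a,\nu}=0$ for $\nu>p$, $\nu\ne d+1-a$, together with $c_{\mu,d+1-a}=0$ for $\mu\le p$, $\mu\ne a$. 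As $a$ ranges over $\{0,\dots,q\}$ the indices $d+1-a$ exhaust $\{p+1,\dots,d+1\}$ (since $d+1-q=p+1$), so the second family of conditions kills every $c_{\mu\nu}$ with $\mu\le p$ and $\mu\ne d+1-\nu$. The only surviving coefficients are $c_{a,d+1-a}$ for $a\in\{0,\dots,q\}$, hence $X\in\mathfrak{a}_\mathfrak{p}$, which establishes maximality. I expect the only real work to be careful sign and index bookkeeping, which is routine but easy to slip on.
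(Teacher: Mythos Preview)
Your proposal is correct and follows essentially the same approach as the paper: identify $\mathfrak{k},\mathfrak{p}$ via the action of $\theta$ on the basis $F_{\mu\nu}$, note that the $D_a$ have disjoint index pairs so they commute, and for maximality expand a general $X\in\mathfrak{p}$ and compute $[D_a,X]$, using that the two surviving sums land in the $\mathfrak{so}(q+1)$- and $\mathfrak{so}(p+1)$-factors of $\mathfrak{k}$ respectively. The only cosmetic difference is that you explicitly observe that the second family of conditions alone already kills all off-diagonal coefficients (since $d+1-a$ ranges over all of $\{p+1,\dots,d+1\}$), whereas the paper records both families before drawing the same conclusion.
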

\begin{proof}
    By previous observations, the $F_{\mu\nu}$ are contained in
    $\mathfrak{k},\mathfrak{p}$, respectively. Since together they form a basis,
    and since $\mathfrak{g}=\mathfrak{k}\oplus\mathfrak{p}$, the appropriate
    $F_{\mu\nu}$ span $\mathfrak{k},\mathfrak{p}$, respectively.

    Note that two $F_{\mu\nu}$ commute automatically if their indices don't
    collide, so that $\mathfrak{a}_{\mathfrak{p}}$ as defined is indeed
    commutative. Let now
    \[
        X = \sum_{\mu\le p<\nu} A^{\mu\nu}F_{\mu\nu} \in\mathfrak{p}
    \]
    commute with $\mathfrak{a}_{\mathfrak{p}}$. In particular,
    \begin{align*}
        \comm{D_a}{X} &= \sum_{\mu\le p<\nu} A^{\mu\nu} \comm{F^{a,d+1-a}}{F_{\mu\nu}}\\
        &= \sum_{\mu\le p<\nu} A^{\mu\nu} \qty(-\delta^a_\mu \tensor{F}{^{d+1-a}_\nu}
        - \delta^{d+1-a}_\nu \tensor{F}{^a_\mu})\\
        &= -\sum_{\substack{\nu=p+1\\\nu\ne d+1-a}}^{d+1} A^{a,\nu} \tensor{F}{^{d+1-a}_\nu}
        - \sum_{\substack{\mu=0\\\mu\ne a}}^p A^{\mu,d+1-a} \tensor{F}{^a_\mu}.
    \end{align*}
    The first sum contains elements of $0\oplus \mathfrak{so}(q+1)$ and
    the second sum contains elements of $\mathfrak{so}(p+1)\oplus 0$ and are
    therefore linearly independent. Since this is zero, we obtain
    $A^{\mu\nu}=0$ for all $0\le \mu\le p < \nu\le d+1$ satisfying
    $\mu+\nu\ne d+1$. Consequently, $X\in\mathfrak{a}_{\mathfrak{p}}$.
\end{proof}

\begin{proposition}\label{sec:prop-rrs-decomposition}
    Define $\epsilon_0,\dots,\epsilon_q\in\mathfrak{a}_{\mathfrak{p}}^*$ by
    $\epsilon_a(D_b) := \delta_{a,b}$. Then $\mathfrak{g}$ has the following
    restricted root space decomposition with respect to $\mathfrak{a}_{\mathfrak{p}}$:
    \[
        \mathfrak{g} = \mathfrak{m}_{\mathfrak{p}} \oplus
        \mathfrak{a}_{\mathfrak{p}} \oplus\bigoplus_{\alpha\in R}
        \mathfrak{g}_\alpha
    \]
    where $\mathfrak{m}_{\mathfrak{p}}$ is spanned by
    $F_{\mu\nu}$ for $q<\mu<\nu\le p$ and
    \[
        R = \{\pm\epsilon_a\mid 0\le a\le q\}
        \cup\{\epsilon_a\pm\epsilon_b,-\epsilon_a\pm\epsilon_b\mid
        0\le a<b\le q\}
    \]
    is its restricted root system which is of type $B_{q+1}$.
    The root multiplicities are $p-q-1$ for the short roots
    and 1 for the long roots.
\end{proposition}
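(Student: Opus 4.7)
The plan is to diagonalise the commuting family $\ad(D_0),\dots,\ad(D_q)$ on the basis $(F_{\mu\nu})_{0\le\mu<\nu\le d+1}$ of $\mathfrak{g}$. Writing $\bar a := d+1-a$ and using $D_a = -F_{a,\bar a}$, the bracket relation
\[
    [F_{\mu\nu},F_{\rho\sigma}] = \eta_{\nu\rho}F_{\mu\sigma}+\eta_{\mu\sigma}F_{\nu\rho}-\eta_{\mu\rho}F_{\nu\sigma}-\eta_{\nu\sigma}F_{\mu\rho}
\]
shows that $[D_a,F_{\mu\nu}]$ is nonzero only when one of $\mu,\nu$ equals $a$ or $\bar a$. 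The plan is to partition the basis into three families of blocks that are jointly stable under all $\ad(D_c)$ and diagonalise within each family.

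First I would isolate the \emph{middle--middle block}: vectors $F_{ij}$ with $q<i<j\le p$. Since neither index lies in $\{a,\bar a\mid 0\le a\le q\}$, every $\ad(D_c)$ annihilates them, so they span a subspace on which $\mathfrak{a}_{\mathfrak{p}}$ acts trivially. By construction they are $\theta$-invariant, so they lie in $\mathfrak{k}$, and form the candidate for $\mathfrak{m}_{\mathfrak{p}}$. A dimension count (together with the other blocks) will later show that nothing else contributes to the zero root space apart from $\mathfrak{a}_{\mathfrak{p}}$ itself, proving the asserted description of $\mathfrak{m}_{\mathfrak{p}}$.

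Second I would treat the \emph{short-root blocks}. For each $a\in\{0,\dots,q\}$ and $i\in\{q+1,\dots,p\}$, a direct computation using the bracket formula shows that the two-dimensional subspace $\mathrm{span}\{F_{a,i},F_{i,\bar a}\}$ is preserved by $\ad(D_a)$ (all other $\ad(D_c)$ annihilate it), and that $\ad(D_a)$ acts as an involution with eigenvalues $\pm 1$. The resulting eigenvectors are root vectors for $\pm\epsilon_a$; ranging over $i$ yields the desired multiplicity.

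Third I would treat the \emph{long-root blocks}. For each pair $a<b$ in $\{0,\dots,q\}$, the four vectors $F_{a,b},F_{a,\bar b},F_{b,\bar a},F_{\bar b,\bar a}$ span a subspace preserved jointly by $\ad(D_a)$ and $\ad(D_b)$, both acting as sign-twisted swaps while all other $\ad(D_c)$ annihilate the subspace. Simultaneous diagonalisation produces four 1-dimensional joint eigenspaces for the sign pairs $(\pm 1,\pm 1)$, giving one root vector for each of $\pm\epsilon_a\pm\epsilon_b$.

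Finally I would check that these blocks, together with $\mathfrak{a}_{\mathfrak{p}}=\mathrm{span}\{F_{a,\bar a}\}$, exhaust the basis $\{F_{\mu\nu}\}_{\mu<\nu}$, so the direct sum is complete. Reading off the nonzero joint eigenvalues of the $D_a$ identifies $R$ with the $B_{q+1}$ root system, and matching the block dimensions against $\dim\mathfrak{so}(p+1,q+1)=\binom{d+2}{2}$ is a straightforward consistency check. The main obstacle is bookkeeping rather than conceptual: the antisymmetry $F_{\mu\nu}=-F_{\nu\mu}$ means each eigenvector can be written under two names, and the sign flip $\eta_{\bar a\bar a}=-1$ vs.\ $\eta_{aa}=+1$ enters every bracket. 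Fixing once and for all the convention that paired indices are written $(a,\bar a)$ with $a\le q$, and tracking each basis element through exactly one block, keeps the calculation clean.
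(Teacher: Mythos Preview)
Your proposal is correct and follows essentially the same route as the paper: identify the zero block $\{F_{ij}:q<i<j\le p\}$, diagonalise the two-dimensional spans $\{F_{a,i},F_{i,\bar a}\}$ to obtain the $\pm\epsilon_a$ root vectors, diagonalise the four-dimensional spans $\{F_{a,b},F_{a,\bar b},F_{b,\bar a},F_{\bar b,\bar a}\}$ for the long roots, and close with a dimension count against $\dim\mathfrak{g}=\binom{d+2}{2}$. The only cosmetic difference is that the paper writes down the joint eigenvectors directly and then invokes the dimension count, whereas you first isolate the $\ad(\mathfrak{a}_{\mathfrak p})$-stable blocks and diagonalise inside them; the content is the same.
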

\begin{proof}
    From \cite[\S VI.4]{knapp} it is known that such a decomposition
    exists (for $R$ the set of restricted roots) and is direct. It therefore
    suffices to find the roots and root spaces and $\mathfrak{m}_{\mathfrak{p}}$.
    \begin{description}
        \item[Short roots] Let $0\le a,b\le q$
        and $q<\mu\le p$, then
        \begin{align*}
            \comm{D_a}{F^{b,\mu}\pm F^{d+1-b,\mu}}
             &= -\eta^{a,b} F^{d+1-b,\mu}
             \pm \eta^{d+1-a,d+1-b} F^{i,\mu}\\
             &= \mp\delta_{a,b} \qty(F^{b,\mu}\pm F^{d+1-b,\mu})\\
             &= (\mp\epsilon_b)(D_a)\qty(F^{b,\mu}\pm F^{d+1-b,\mu}),
        \end{align*}
        so that $F^{b,\mu}\pm F^{d+1-b,\mu}\in\mathfrak{g}_{\mp\epsilon_b}$. The root multiplicity of $\pm\epsilon_b$
        is therefore at least $p-q$.
        \item[Long roots] Let $a,b,c\in [0,q]$ with
        $i\ne j$, then
        \begin{align*}
            &\comm{D_c}{F^{a,b} \pm F^{a,d+1-b} + F^{d+1-a,b} \pm F^{d+1-a,d+1-b}}\\
            =& (-\epsilon_a\mp\epsilon_b)(D_c)
            \qty(F^{a,b} \pm F^{a,d+1-b} + F^{d+1-a,b} \pm F^{d+1-a,d+1-b}),
        \end{align*}
        so that we found an element of $\mathfrak{g}_{-\epsilon_a\mp\epsilon_b}$. By applying $\theta$, we also
        find an element of $\mathfrak{g}_{\epsilon_a\pm\epsilon_b}$. The root multiplicity of
        $\epsilon_a\pm\epsilon_b,-\epsilon_a\mp\epsilon_b$
        is therefore at least 1.
        \item[Zero spaces] For $q<\mu<\nu\le p$ the indices
        of $F_{\mu\nu}$ are disjoint from $(a,d+1-a)$ for
        $a=0,\dots,q$, therefore
        $\comm{D_a}{F_{\mu\nu}}=0$. The dimension of 
        $\mathfrak{m}_{\mathfrak{p}}$ is therefore at
        least $\frac{(p-q)(p-q-1)}{2}$.
    \end{description}
    We therefore see that the roots from the claim are indeed
    roots of $\mathfrak{g}$. To see that we have indeed found
    all roots and fully described the root spaces, note that
    \begin{align*}
        \frac{(d+2)(d+1)}{2} &= \dim(\mathfrak{g})
        = \dim(\mathfrak{m}_{\mathfrak{p}})
        + \dim(\mathfrak{a}_{\mathfrak{p}})
        + \sum_{\alpha\in R} \dim(\mathfrak{g}_\alpha)\\
        &\ge \frac{(p-q)(p-q-1)}{2}
        + (q+1)
        + (q+1)\cdot 2\cdot (p-q)
        + \frac{q(q+1)}{2}\cdot 4\\
        &= \frac{(d-2q)(d-2q-1)
        + 2(q+1) + 4(q+1)(d-2q)
        + 4q(q+1)}{2}\\
        &= \frac{(d+2)(d+1)}{2},
    \end{align*}
    so that all inequalities are sharp, and we are done.
\end{proof}

\begin{lemma}
    $G$ acts transitively on $\widehat{\RR^{p,q}}$. Let $Q$ be the stabiliser
    of $(1:0:1)=\iota(0)$. It is a closed subgroup. In particular $\widehat{\RR^{p,q}}\cong G/Q$.
\end{lemma}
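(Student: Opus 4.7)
The plan is to establish transitivity first, since closedness of $Q$ is automatic (it is the preimage of the point $\iota(0)$ under the continuous orbit map $g\mapsto g\cdot\iota(0)$), and the diffeomorphism $G/Q\cong\widehat{\RR^{p,q}}$ then follows from the standard fact that a transitive smooth action of a Lie group on a manifold induces such a diffeomorphism onto the coset space of the point-stabiliser.

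For transitivity I would argue in two stages. First, the larger group $O(p+1,q+1)$ already acts transitively on $\widehat{\RR^{p,q}}$: given two nonzero null vectors $v,w\in\RR^{p+1,q+1}$, Witt's extension theorem extends the isometry $\RR v\to\RR w$, $v\mapsto w$, between the one-dimensional totally isotropic subspaces to an element of $O(p+1,q+1)$, and this element maps $[v]$ to $[w]$.

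Second, to descend to the identity component $G=SO(p+1,q+1)_0$ I would use a connectedness argument. The subgroup $G$ is normal of finite index in $O(p+1,q+1)$, so $\widehat{\RR^{p,q}}$ decomposes into finitely many $G$-orbits, namely the $G$-orbit of $\iota(0)$ together with its images under coset representatives, each of which is a homeomorphism of $\widehat{\RR^{p,q}}$. The $G$-orbit of $\iota(0)$ is open, because $\mathfrak{g}$ contains a $d$-dimensional abelian ``translation'' subalgebra (the nilradical of a suitable parabolic, built out of the $F_{\mu\nu}$) whose integrated action on $\iota(\RR^{p,q})$ coincides with ordinary translation of $\RR^{p,q}$; hence the orbit already contains the open dense subset $\iota(\RR^{p,q})$, which is open in $\widehat{\RR^{p,q}}$ since its complement lies in the zero set of the homogeneous linear form $v_0+v_{d+1}$. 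Moreover $\widehat{\RR^{p,q}}$ is connected — one sees this by identifying the projective null cone with $(S^p\times S^q)/\{\pm 1\}$ via normalising the ``positive'' and ``negative'' components of a null vector to unit length — so a cover by finitely many disjoint nonempty open $G$-orbits forces there to be just one, equal to all of $\widehat{\RR^{p,q}}$.

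The main obstacle is this descent from $O(p+1,q+1)$ to $G$: one must identify a translation subgroup inside $G$ to get openness of the orbit, and separately verify connectedness of $\widehat{\RR^{p,q}}$. Everything else (closedness of $Q$, the identification $G/Q\cong\widehat{\RR^{p,q}}$) is formal.
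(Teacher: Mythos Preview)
Your proposal is correct, but the paper's argument is much more direct. Rather than going through $O(p+1,q+1)$ via Witt and then descending, the paper shows outright that the maximal compact subgroup $K=SO(p+1)\times SO(q+1)\subset G$ already acts transitively: given $q(v)\in\widehat{\RR^{p,q}}$, split $v=v_1\oplus v_2$ along $\RR^{p+1}\oplus\RR^{q+1}$; the null condition forces $\|v_1\|=\|v_2\|$, and transitivity of $SO(n)$ on $S^{n-1}$ produces $(g,g')\in K$ sending $e_0\oplus e_{d+1}$ to a positive multiple of $v$. This avoids any appeal to Witt, to the translation subgroup, or to connectedness of $\widehat{\RR^{p,q}}$, and in fact proves the stronger statement that $K$ acts transitively. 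Your route is more conceptual and would transfer to situations where no convenient compact subgroup is available, but it front-loads facts (existence of the translation nilradical, the $(S^p\times S^q)/\{\pm1\}$ model) that the paper either develops later or never needs; the paper's two-line computation with $K$ is the cleaner fit here.
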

\begin{proof}
    Let $q(v)\in\widehat{\RR^{p,q}}$ and decompose $v\in\RR^{p+1,q+1}$ under
    $\RR^{p+1,q+1}=\RR^{p+1}\oplus\RR^{q+1}$ as $v=v_1\oplus v_2$. Since $v$
    is a null vector, we have $\norm{v_1}=\norm{v_2}$ (Euclidean norms).
    Since $SO(p+1)$ and $SO(q+1)$ work transitively on the respective unit spheres, there are $g\in SO(p+1),g'\in SO(q+1)$ such that
    \[
        ge_0 = \frac{v_1}{\norm{v_1}},\qquad
        g'e_q = \frac{v_2}{\norm{v_2}}
    \]
    (where $e_0,\dots,e_p$ are the respective standard unit vectors). Let
    $h:= g\oplus g'$, then $h\in SO(p+1,q+1)_0$ and
    \[
        h\mqty(e_0 + e_{d+1}) = \mqty(g & 0\\0 & g') \mqty(e_0\\e_q)
        = \frac{1}{\norm{v_1}}\mqty(v_1\\v_2),
    \]
    so that $h\cdot(1:0:1)=q(v)$ (where $(1:0:1)$ is split up as $1+d+1$).

    The stabiliser $Q$ is the preimage of
    the closed set $\{\iota(0)\}$ under the continuous function
    $G\to\widehat{\RR^{p,q}},g\mapsto g\cdot\iota(0)$, and hence
    closed.
\end{proof}

We are now interested in what the corresponding Lie subalgebra
$\mathfrak{q}$ looks like.
\begin{lemma}
    Let
    \[
        \Gamma = R\setminus\{-\epsilon_0, -\epsilon_0\pm\epsilon_a\mid 1\le a\le q\},
    \]
    then
    \[
        \mathfrak{q} = \mathfrak{m}_{\mathfrak{p}}
        \oplus \mathfrak{a}_{\mathfrak{p}}
        \oplus\bigoplus_{\alpha\in\Gamma}\mathfrak{g}_\alpha.
    \]
\end{lemma}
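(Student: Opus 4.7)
The plan is to identify $\mathfrak{q}$ with the parabolic subalgebra attached to the semisimple element $D_0\in\mathfrak{a}_{\mathfrak{p}}$, and to close by a dimension count.

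First, I translate the stabiliser condition into linear algebra. The point $\iota(0) = (1:0:1)\in\widehat{\RR^{p,q}}$ is the projectivisation of $v_0 := e_0 + e_{d+1}\in\RR^{p+1,q+1}$, so
\[
\mathfrak{q} = \{X\in\mathfrak{g} : Xv_0\in\RR v_0\}.
\]

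Second, I decompose the defining representation into $D_0$-eigenspaces. A short computation from $D_0 = -F_{0,d+1}$ shows that on $\RR^{p+1,q+1}$ the operator $D_0$ has eigenvalues $+1,0,-1$, with eigenspaces $\RR v_0$, $\operatorname{span}(e_1,\dots,e_d)$, and $\RR(e_0-e_{d+1})$ respectively. For any $X\in\mathfrak{g}$ with $\comm{D_0}{X}=kX$ one then finds $D_0(Xv_0)=(k+1)Xv_0$, so that $Xv_0$ automatically lies in the $(k+1)$-eigenspace of $D_0$.

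Third, I verify the inclusion $\mathfrak{m}_{\mathfrak{p}}\oplus\mathfrak{a}_{\mathfrak{p}}\oplus\bigoplus_{\alpha\in\Gamma}\mathfrak{g}_\alpha\subseteq\mathfrak{q}$ summand by summand. For $\mathfrak{m}_{\mathfrak{p}}$, the basis vectors $F_{\mu\nu}$ with $q<\mu<\nu\le p$ annihilate $v_0$ since neither index equals $0$ or $d+1$. For $\mathfrak{a}_{\mathfrak{p}}$, $D_0$ preserves the line $\RR v_0$ while $D_a$ with $a\ge 1$ annihilates $v_0$. For each $\alpha\in\Gamma$, going through the list from Proposition~\ref{sec:prop-rrs-decomposition} shows that $\Gamma$ is precisely the set $\{\alpha\in R : \alpha(D_0)\ge 0\}$, so by the second step $Xv_0$ lands in the $+1$- or $+2$-eigenspace of $D_0$, i.e.\ in $\RR v_0$ or $\{0\}$.

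Finally, I close by dimensions. Since $\widehat{\RR^{p,q}}\cong G/Q$ has dimension $d$ by Lemma~\ref{sec:lem-comp-diffeo}, one has $\dim\mathfrak{q}=\dim\mathfrak{g}-d$. The complementary set $R\setminus\Gamma=\{-\epsilon_0\}\cup\{-\epsilon_0\pm\epsilon_a : 1\le a\le q\}$ consists of one short root and $2q$ long roots, whose total root-space dimension (using the multiplicities from Proposition~\ref{sec:prop-rrs-decomposition}) equals $d$; hence the proposed subspace also has codimension $d$ in $\mathfrak{g}$, forcing equality. The only real (minor) obstacle is consistent bookkeeping of signs when evaluating $F_{\mu\nu}$ on $e_\rho$ across the split between the signature-$p{+}1$ and signature-$q{+}1$ blocks, but each step is otherwise routine.
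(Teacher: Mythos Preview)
Your proof is correct and takes a genuinely different route from the paper's. The paper first shows $\mathfrak{a}_{\mathfrak{p}}\subseteq\mathfrak{q}$, then uses $\ad(\mathfrak{a}_{\mathfrak{p}})$-invariance of $\mathfrak{q}$ to reduce to computing $\mathfrak{g}_\alpha\cap\mathfrak{q}$ for each root type individually, evaluating explicit basis vectors of every root space on $e_0+e_{d+1}$. Your argument replaces this case analysis by the single structural observation that $D_0$ has eigenvalues $+1,0,-1$ on the defining representation with the $+1$-eigenspace equal to $\RR v_0$; the relation $D_0(Xv_0)=(\alpha(D_0)+1)Xv_0$ then makes the characterisation $\Gamma=\{\alpha\in R:\alpha(D_0)\ge 0\}$ do all the work, with a dimension count replacing the verification that the excluded root spaces meet $\mathfrak{q}$ trivially. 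This is cleaner and exhibits $\mathfrak{q}$ transparently as the parabolic attached to the grading element $D_0$; the paper's explicit computations, on the other hand, produce the concrete root-space bases that are reused downstream. One small caveat: your dimension count needs the short-root multiplicity to be $p-q$ (so that $(p-q)+2q=d$); the \emph{statement} of Proposition~\ref{sec:prop-rrs-decomposition} records $p-q-1$, which appears to be a typo, while its \emph{proof} and the dimension check there give $p-q$.
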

\begin{proof}
    We have $g\in Q$ iff $g(e_0+e_{d+1})\in\RR(e_0+e_{d+1})$. In
    particular, $X\in\mathfrak{q}$ iff the same condition holds.
    We shall first show $\mathfrak{a}_{\mathfrak{p}}\le\mathfrak{q}$. Let $1\le a\le q$, then $D_a$ has zeroes
    in the 0th and $d+1$-st columns, so that
    \[
        D_a(e_0 + e_{d+1}) = 0,
    \]
    whence $D_a\in\mathfrak{q}$. Lastly,
    \[
        D_0(e_0+e_{d+1}) = F^{0,d+1}(e_0+e_{d+1})
        = e_0 + e_{d+1},
    \]
    so also $D_0\in\mathfrak{q}$. Consequently $\mathfrak{a}_{\mathfrak{p}}\le\mathfrak{q}$.

    Now, since $\mathfrak{q}$ is a Lie algebra, the algebra
    $\ad(\mathfrak{a}_{\mathfrak{p}})$ leaves $\mathfrak{q}$
    invariant, so that
    \[
        \mathfrak{q} = (\mathfrak{a}_{\mathfrak{p}}\oplus
        \mathfrak{m}_{\mathfrak{p}})\cap\mathfrak{q}
        \oplus\bigoplus_{\alpha\in R} (\mathfrak{g}_\alpha\cap
        \mathfrak{q}).
    \]
    We just need to see what those intersections are.
    \begin{description}
        \item[Zero]
        Let $q<\mu<\nu\le p$, then $F_{\mu\nu}$ has zeroes
        in the 0th and $d+1$-st column, hence
        \[
            F_{\mu\nu}(e_0 + e_{d+1}) = 0,
        \]
        so that $F_{\mu\nu}\in\mathfrak{q}$. Consequently,
        $\mathfrak{m}_{\mathfrak{p}}\oplus\mathfrak{a}_{\mathfrak{p}}\le\mathfrak{q}$.
        \item[$\pm\epsilon_0$]
        For $q<\mu\le p$ we have
        \[
            (F^{0,\mu} \pm F^{d+1,\mu})(e_0+e_{d+1})
            = (-1\mp 1)e_\mu,
        \]
        which is a multiple of $e_0+e_{d+1}$ only in the $-$-case. From Proposition~\ref{sec:prop-rrs-decomposition} we conclude that $\mathfrak{g}_{\epsilon_0}\subseteq\mathfrak{q}$ and
        $\mathfrak{g}_{-\epsilon_0}\cap\mathfrak{q}=0$.
        \item[Other short roots]
        For $a=1,\dots,q$ and $q<\mu\le p$ the matrix
        $F^{a,\mu} \pm F^{d+1-a,\mu}$ has zeroes in the
        0th and $d+1$-st columns, and is hence contained in
        $\mathfrak{q}$. Consequently,
        $\mathfrak{g}_{\mp\epsilon_a}\subseteq\mathfrak{q}$.
        \item[$\pm\epsilon_0\pm\epsilon_a$:]
        Let $1\le a\le q$, then
        \begin{align*}
            (F^{0,a} \pm F^{0,d+1-a} + F^{d+1,a} \pm F^{d+1,d+1-a})(e_0+e_{d+1}) &= -2e_a \pm 2e_{d+1-a}\ne0\\
            (F^{0,a} \mp F^{0,d+1-a} - F^{d+1,a} \pm F^{d+1,d+1-a})(e_0+e_{d+1}) &= 0,
        \end{align*}
        so that $\mathfrak{g}_{-\epsilon_0\mp\epsilon_a}\cap\mathfrak{q}=0$ and
        $\mathfrak{g}_{\epsilon_0\pm\epsilon_a}\subseteq\mathfrak{q}$.
        \item[Other long roots]
        Let $1\le a<b\le q$. The matrices contained in
        $\mathfrak{g}_{\epsilon_a\pm\epsilon_b}$ and
        $\mathfrak{g}_{-\epsilon_a\pm\epsilon_b}$ only have
        nonzero entries in the $a$-th, $b$-th, $d+1-a$-th,
        $d+1-b$-th columns, so in particular only zeroes in
        the 0th and $d+1$-st column. Consequently, all these
        weight spaces are contained in $\mathfrak{q}$.
    \end{description}
    We found that $\mathfrak{g}_\alpha\cap\mathfrak{q}
    = \mathfrak{g}_\alpha$ unless $\alpha=-\epsilon_0$
    or $\alpha=-\epsilon_0\pm\epsilon_a$ for $1\le a\le q$,
    as claimed.
\end{proof}

We pick the positive subsystem
\[
    R^+ := \{\epsilon_a\mid 0\le a\le q\}
    \cup\{\epsilon_a\pm\epsilon_b\mid 0\le a<b\le q\}
\]
and the corresponding set $S$ of simple roots:
\[
    S := \{\epsilon_0-\epsilon_1,\dots,\epsilon_{q-1}-\epsilon_q, \epsilon_q\}.
\]
Let $S':= S\setminus\{\epsilon_0-\epsilon_1\}$. Note that
\[
    \Gamma = R^+ \cup (R \cap\operatorname{span}_\RR(S')).
\]
\begin{corollary}
    $\mathfrak{q}\le\mathfrak{g}$ is maximal parabolic with
    respect to our choices of positive subsystem and $\mathfrak{a}$.
\end{corollary}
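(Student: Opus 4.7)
The plan is to recognise that the description of $\mathfrak{q}$ obtained in the previous lemma is, essentially by construction, the description of a standard parabolic subalgebra of $\mathfrak{g}$ with respect to our choices of positive system $R^+$, simple roots $S$, and the subset $S' \subseteq S$. Recall that for any $F \subseteq S$, the associated standard parabolic is
\[
\mathfrak{q}_F := \mathfrak{m}_{\mathfrak{p}} \oplus \mathfrak{a}_{\mathfrak{p}} \oplus \bigoplus_{\alpha \in R^+ \cup (R \cap \operatorname{span}_\RR(F))} \mathfrak{g}_\alpha,
\]
and $\mathfrak{q}_F$ is a \emph{maximal} proper parabolic precisely when $F$ differs from $S$ by a single simple root.

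With this picture in mind, the only thing to check is the identity $\Gamma = R^+ \cup (R \cap \operatorname{span}_\RR(S'))$ that is noted immediately before the statement of the corollary: combined with the previous lemma, this identity yields $\mathfrak{q} = \mathfrak{q}_{S'}$ on the nose. This identity is a short combinatorial observation. The roots removed in the definition of $\Gamma$, namely $\{-\epsilon_0\} \cup \{-\epsilon_0 \pm \epsilon_a : 1 \le a \le q\}$, are precisely those elements of $R$ whose $\epsilon_0$-coefficient equals $-1$. Every remaining root has $\epsilon_0$-coefficient either $+1$, and hence lies in $R^+$, or $0$, and hence lies in $R \cap \operatorname{span}_\RR(S')$ (which is precisely the $B_q$-type sub-root-system supported on the indices $1,\dots,q$). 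So $\Gamma$ is exactly this union.

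Finally, since $S' = S \setminus \{\epsilon_0 - \epsilon_1\}$ is obtained from $S$ by removing a single simple root, the parabolic $\mathfrak{q}_{S'}$ is a maximal proper parabolic, and therefore so is $\mathfrak{q}$. I do not anticipate any substantive obstacle here: all the structural input has already been produced by the explicit root-space bookkeeping in the preceding proposition and lemma, and the corollary is really just a matter of matching that description against the standard definitional form of a maximal parabolic subalgebra.
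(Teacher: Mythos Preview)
Your proposal is correct and follows essentially the same approach as the paper: both identify $\mathfrak{q}$ with the standard parabolic $\mathfrak{q}_{S'}$ via the identity $\Gamma = R^+ \cup (R \cap \operatorname{span}_\RR(S'))$ and then conclude maximality from the fact that $S'$ omits exactly one simple root, the paper citing \cite[Proposition~VII.7.76]{knapp} for the inclusion-preserving bijection between subsets of $S$ and parabolics containing the minimal one. The only cosmetic difference is that you spell out the combinatorial verification of the $\Gamma$-identity (which the paper merely states before the corollary), and the paper phrases maximality as ``no Lie algebra strictly between $\mathfrak{q}$ and $\mathfrak{g}$'' rather than invoking the standard fact about $|S\setminus F|=1$ directly.
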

\begin{proof}
    $\mathfrak{q}$ is parabolic since the  minimal parabolic subalgebra
    \[
        \mathfrak{m}_{\mathfrak{p}} \oplus
        \mathfrak{a}_{\mathfrak{a}} \oplus
        \bigoplus_{\alpha\in R^+}\mathfrak{g}_\alpha
    \]
    is contained therein (recall that $R^+\subseteq\Gamma$).
    By \cite[Proposition~VII.7.76]{knapp}, there is an inclusion-preserving 
    1-1 correspondence between subsets $S'\subseteq S$ and the
    parabolic subalgebras containing the minimal parabolic subalgebra that corresponds to our choices
    of $\mathfrak{a}_{\mathfrak{p}},R^+$. A correspondence
    which associates our $S'$ to $\mathfrak{q}$. Any Lie
    algebra lying between $\mathfrak{q}$ and $\mathfrak{g}$
    must therefore correspond to a subset lying between
    $S'$ and $S$. Since these sets differ by only one element,
    there is no subset properly between $S'$ and $S$. Consequently, there are no Lie algebras between $\mathfrak{q}$ and $\mathfrak{g}$.
\end{proof}
With this result in hand we shall now make some definitions
in the usual parlance of parabolic subalgebras.

\begin{definition}\label{sec:def-parabolic-subalgebras}
    We define the subalgebras
    \begin{align*}
        \mathfrak{a}&:=\RR D_0\\
        \mathfrak{a}_M &:= \operatorname{span}_\RR\{D_1,\dots,D_q\}\\
        \mathfrak{m}&:=\operatorname{span}_\RR\{F_{\mu\nu}\mid
        1\le\mu<\nu\le d\}\\
        \mathfrak{n}&:=\operatorname{span}_\RR\{K^i \mid 1\le i\le d\}\\
        \mathfrak{n}_M &:= \operatorname{span}_\RR
        \{F^{a,\nu}-F^{d+1-a,\nu}\mid
        1\le a\le q, 1\le\nu\le d\}\\
        \overline{\mathfrak{n}} &:= \operatorname{span}_\RR\{P^i \mid 1\le i\le d\}
    \end{align*}
    where
    \[
        K^i := F^{0,i} - F^{d+1,\mu},\qquad
        P^i := -F^{0,i} - F^{d+1,i}
    \]
    ($i=1,\dots,d$). Let furthermore $A,A_M,N,N_M$ be the analytic
    subgroups of $\mathfrak{a},\mathfrak{a}_M,\mathfrak{n},\mathfrak{n}_M$ and let
    $M:=\: ^0Z_G(\mathfrak{a})$ (notation from e.g. \cite[\S VII.7.2]{knapp}).
\end{definition}

\begin{lemma}
    $Q=MAN$, which is the parabolic subgroup associated to
    $\mathfrak{q}$.
\end{lemma}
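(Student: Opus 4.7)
The plan is to verify first the Lie-algebra identity $\mathfrak{q} = \mathfrak{m} \oplus \mathfrak{a} \oplus \mathfrak{n}$ and then establish the two inclusions $MAN \subseteq Q$ and $Q \subseteq MAN$ separately. The second part of the claim --- that $MAN$ is the parabolic subgroup associated to $\mathfrak{q}$ --- is then an instance of the general structure theory of parabolic subgroups of reductive Lie groups, which we take from \cite[\S~VII.7]{knapp}.

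For the Lie-algebra identity, the previous lemma gives $\mathfrak{q} = \mathfrak{m}_{\mathfrak{p}} \oplus \mathfrak{a}_{\mathfrak{p}} \oplus \bigoplus_{\alpha \in \Gamma} \mathfrak{g}_\alpha$ with $\Gamma = R^+ \cup (R \cap \operatorname{span}_{\RR}(S'))$. By definition, $\mathfrak{a}_{\mathfrak{p}} = \mathfrak{a} \oplus \mathfrak{a}_M$, and a direct check against the explicit root-space generators constructed in the proof of Proposition~\ref{sec:prop-rrs-decomposition} identifies $\mathfrak{m}$ (spanned by $F_{\mu\nu}$, $1 \le \mu < \nu \le d$) with $\mathfrak{m}_{\mathfrak{p}} \oplus \mathfrak{a}_M \oplus \bigoplus_{\alpha \in R \cap \operatorname{span}(S')} \mathfrak{g}_\alpha$, and $\mathfrak{n}$ (spanned by the $K^i$) with $\bigoplus_{\alpha \in R^+ \setminus \operatorname{span}(S')} \mathfrak{g}_\alpha$. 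Adding these yields $\mathfrak{m} \oplus \mathfrak{a} \oplus \mathfrak{n} = \mathfrak{q}$.

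The inclusion $MAN \subseteq Q$ splits into three pieces. Since $A$ and $N$ are connected and have their Lie algebras inside $\mathfrak{q}$, both lie in $Q$. For $M = {}^0 Z_G(\mathfrak{a})$, every element centralises $D_0$ and therefore preserves each of its eigenspaces on $\RR^{p+1,q+1}$; direct computation shows $D_0(e_0 + e_{d+1}) = e_0 + e_{d+1}$, $D_0(e_0 - e_{d+1}) = -(e_0 - e_{d+1})$, and $D_0 e_i = 0$ for $1 \le i \le d$, so $\RR(e_0 + e_{d+1})$ is the entire $(+1)$-eigenspace of $D_0$ and is preserved by all of $M$ irrespective of its connected components.

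For the reverse inclusion $Q \subseteq MAN$, I would use that $\mathfrak{q}$ is the Lie algebra of the closed subgroup $Q$, so $\Ad(Q)$ stabilises $\mathfrak{q}$ and $Q \subseteq N_G(\mathfrak{q})$; on the other hand, parabolic subgroups of a reductive group are self-normalising, i.e.\ $N_G(\mathfrak{q}) = MAN$, which together with the first inclusion gives $Q = MAN$. The main obstacle is ensuring that the ``parabolic subgroup associated to $\mathfrak{q}$'' in Knapp's framework really coincides with our $MAN$ --- in particular, that the group ${}^0 Z_G(\mathfrak{a})$ from Definition~\ref{sec:def-parabolic-subalgebras} agrees with Knapp's $M$-factor in the Langlands decomposition. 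This is a matter of bookkeeping about connected components and continuous characters, routine but needing some care to nail down.
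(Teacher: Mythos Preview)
Your proof is correct and follows the same overall architecture as the paper's: both inclusions are established via $Q \subseteq N_G(\mathfrak{q}) = MAN$ (citing Knapp, Proposition~VII.7.83(b)) and by checking $M$, $A$, $N$ separately for $MAN \subseteq Q$.

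The one genuine difference is how you handle $M \subseteq Q$. The paper invokes the decomposition $M = (M \cap K) A_M N_M$ from \cite[Proposition~VII.7.82(c)]{knapp}, disposes of $A_M$ and $N_M$ via connectedness, and then checks $K \cap M \le Q$ by an explicit block-matrix computation. Your argument is more direct: since $M \subseteq Z_G(\mathfrak{a})$ centralises $D_0$, it preserves each eigenspace of $D_0$ on $\RR^{p+1,q+1}$, and the $(+1)$-eigenspace is exactly the line $\RR(e_0 + e_{d+1})$. This is cleaner and in fact shows the stronger statement $Z_G(\mathfrak{a}) \subseteq Q$ in one stroke, bypassing the need to decompose $M$. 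The paper's route, on the other hand, makes the structure of $M$ more explicit, which is used later (e.g.\ in Lemma~\ref{sec:lem-g-action-pairs}).

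Your closing caveat about matching ${}^0 Z_G(\mathfrak{a})$ with Knapp's $M$-factor is a fair point of hygiene; the paper takes this identification for granted as well, and it is indeed just the observation that the paper's $\mathfrak{a} = \RR D_0$ is the $\mathfrak{a}_{S'}$ of Knapp's Langlands decomposition for the subset $S' \subseteq S$.
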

\begin{proof}
    ``$\subseteq$'': By \cite[Proposition~VII.7.83(b)]{knapp},
    $MAN=N_G(\mathfrak{q})$, so it suffices to show that
    every $g\in Q$ normalises $\mathfrak{q}$. Let $X\in\mathfrak{q}$, then
    \[
        \exp(t\Ad(g)(X))\iota(0)
        = g\exp(tX)g^{-1}\iota(0) = \iota(0),
    \]
    for all $t\in\RR$. Taking the derivative at $t=0$,
    we see that $\Ad(g)(X)\in\mathfrak{q}$ as well.\\
    ``$\supset$'': Since $\mathfrak{a},\mathfrak{n}$ fix $\iota(0)$, so do their analytic subgroups, i.e. $A,N\le Q$.
    By \cite[Proposition~VII.7.82(c)]{knapp} we have
    $M=(M\cap K)A_MN_M$. Now, both
    $\mathfrak{a}_M,\mathfrak{n}_M$ fix $\iota(0)$, hence
    so do their analytic subgroups. It therefore remains to
    check if $K\cap M\le Q$. Assume
    \[
        g=\mqty(A & 0\\0 & B)\in K\cap M
    \]
    (written as a $p+1$-block and a $q+1$-block). Then
    $\Ad(g)(D_0)=0$. If we block decompose $D_0$ this means,
    \[
        \mqty(A & 0\\0 & B)
        \mqty(0 & e_1e_{q+1}^T\\e_{q+1}e_1^T & 0)
        \mqty(A^T & 0 \\ 0 & B^T)
        = \mqty(0 & Ae_1 (Be_{q+1})^T\\
        Be_{q+1} (Ae_1)^T & 0),
    \]
    i.e. $Ae_1(Be_{q+1}) = e_1e_{q+1}^T$. I.e. there exists
    $\lambda\in\RR$ such that $Ae_1=\lambda e_1$ and $Be_{q+1}=\lambda^{-1}e_{q+1}$. Since $\pm1$ are the only
    two possible real eigenvalues an orthogonal matrix could have,
    we have $\lambda=\lambda^{-1}$. Consequently,
    \[
        g\iota(0) =
        q\qty(\mqty(A & 0\\0 & B)\mqty(e_1\\e_{q+1}))
        = q\qty(e_1\\e_{q+1}) = \iota(0).
    \]
    Thus $g\in Q$. Consequently, $K\cap M\le Q$.
\end{proof}

Lastly, for future reference, we shall also establish what exactly the group
$M$ looks like
\begin{lemma}
    For every $A\in SO(p,q)_0$, there exists $g\in M$ such that
    \[
        \forall x\in \RR^{p,q}:\quad g\cdot \iota(x) = \iota(Ax).
    \]
\end{lemma}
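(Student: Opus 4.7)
The plan is to exhibit $g$ explicitly as a block-diagonal embedding of $A$, verify that this element lies in $M$, and then compute the action on $\iota(x)$ directly. Concretely, I propose to take
\[
    g_A := \begin{pmatrix} 1 & 0 & 0 \\ 0 & A & 0 \\ 0 & 0 & 1 \end{pmatrix}
\]
in the block decomposition of $\RR^{p+1,q+1}$ corresponding to the index partition $\{0\}\sqcup\{1,\dots,d\}\sqcup\{d+1\}$, where the middle block carries the restriction of $\eta$ of signature $(p,q)$. Since $A\in SO(p,q)_0$ preserves this restricted $\eta$ and has determinant $1$, the matrix $g_A$ preserves $\eta$ on $\RR^{p+1,q+1}$ and has determinant $1$; applying $A\mapsto g_A$ to a continuous path from $A$ to $I$ in $SO(p,q)_0$ produces a corresponding path from $g_A$ to $I$ in $SO(p+1,q+1)$, whence $g_A\in SO(p+1,q+1)_0=G$.

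Next I would show $g_A\in M={}^0Z_G(\mathfrak{a})$. The matrix $D_0$ has all of its nonzero entries in the $(0,d+1)$ and $(d+1,0)$ positions, on which $g_A$ acts as the identity, so $g_A D_0=D_0 g_A$, i.e.\ $g_A\in Z_G(\mathfrak{a})$. To refine to $M$, observe that the derivative of the smooth homomorphism $A\mapsto g_A$ at the identity lands inside $\operatorname{span}_\RR\{F_{\mu\nu}\mid 1\le\mu<\nu\le d\}=\mathfrak{m}$. Hence the image of the connected group $SO(p,q)_0$ is contained in the analytic subgroup of $G$ with Lie algebra $\mathfrak{m}$; since $M$ is a Lie subgroup of $G$ with Lie algebra $\mathfrak{m}$ by Definition~\ref{sec:def-parabolic-subalgebras}, this analytic subgroup coincides with $M_0\subseteq M$, and therefore $g_A\in M$.

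Finally, the compatibility with $\iota$ is a one-line computation: $g_A$ acts on $\RR^{p+1,q+1}$ coordinate-wise as $(v_0,v,v_{d+1})\mapsto(v_0,Av,v_{d+1})$, so
\[
    g_A\cdot\iota(x) = (1-\eta(x,x):2Ax:1+\eta(x,x)) = \iota(Ax),
\]
where the last equality uses $\eta(Ax,Ax)=\eta(x,x)$, which holds because $A\in O(p,q)$.

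I expect the only delicate point to be the refinement from $g_A\in Z_G(\mathfrak{a})$ to $g_A\in M$; by contrast, the verifications that $g_A\in G$ and that $g_A$ implements $A$ under $\iota$ are essentially mechanical. The Lie-algebraic argument pinning $g_A$ inside $M_0$ relies crucially on the explicit description of $\mathfrak{m}$ from Definition~\ref{sec:def-parabolic-subalgebras} together with connectedness of $SO(p,q)_0$, both of which are readily at hand.
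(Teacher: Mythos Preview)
Your proof is correct and follows essentially the same approach as the paper: both take $g$ to be the block-diagonal embedding $\operatorname{diag}(1,A,1)$, argue that it lies in $M$ via the analytic subgroup generated by $\mathfrak{m}\cong\mathfrak{so}(p,q)$, and verify the action on $\iota(x)$ by direct computation. Your version is simply more explicit (the intermediate check that $g_A\in Z_G(\mathfrak{a})$ is harmless but redundant, since the Lie-algebra argument already places $g_A$ inside $M_0\subseteq M$).
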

\begin{proof}
    Since $M\le G$ is a closed Lie subgroup with Lie algebra $\mathfrak{m}$
    (which is isomorphic to $\mathfrak{so}(p,q)$), it contains the corresponding
    analytic subgroup, which is isomorphic to $SO(p,q)_0$. Let $g\in SO(p,q)_0$, the corresponding element of $G$ is
    \[
        \mqty(1 & 0 & 0\\0 & g & 0\\0 & 0 & 1),
    \]
    which transforms $\iota(x)$ into $\iota(gx)$. 
\end{proof}

\subsection{Point Configurations}
When considering the action of $G$ not on a single point of $\widehat{\RR^{p,q}}$ but on tuples, we quickly run into singular strata, i.e. ``thin'' sets of orbits
that prevent $\widehat{\RR^{p,q}}^n/G$ from becoming a manifold. Part of these
can be eliminated by considering $\operatorname{Conf}(\widehat{\RR^{p,q}},n)$,
the \emph{configuration space of $n$ points} in $\widehat{\RR^{p,q}}$, which
consists of tuples of \emph{distinct} points. However, since our action
is one by means of projective transformations and not all homeomorphisms, this is not sufficient and we 
have to resort to terminology from projective geometry.

\begin{definition}
    In projective geometry, a tuple $(q(v_1),\dots, q(v_m))\in(\RR\mathbb{P}^n)^m$ is said to
    be \emph{in general position} if the vectors $v_1,\dots,v_m\in\RR^{n+1}$
    are linearly independent. (Here, $q:\, \RR^{n+1}\setminus\{0\}\to\RR\mathbb{P}^n$ is the projectivisation map.)
\end{definition}

Since our transformation group is not all of $PGL(n)$ either, but involves
the inner product, we have to add even more conditions to obtain our smooth
stratum.

\begin{definition}
    For the purposes of this paper, a tuple $(q(v_1),\dots,q(v_m))\in
    (\widehat{\RR^{p,q}})^m$ is said to be \emph{in general position} if
    the vectors $v_1,\dots,v_m\in\RR^{d+2}$ are linearly independent and no
    pair of them is orthogonal. Write $\GP(\widehat{\RR^{p,q}},m)=\GP(G/Q,m)$
    for the set of all $m$-tuples of points in general position. It is an open
    dense subset as the map $(v_1,\dots,v_m)\mapsto (q(v_1),\dots,q(v_m))$ is a 
    quotient map and since the conditions of being linearly independent
    and being pairwise non-orthogonal with respect to $\eta$ are Zariski-open
    (and hence define a dense open subset).
\end{definition}

Another interpretation of the non-orthogonality condition can be found if
we consider two embedded points from $\RR^{p,q}$ and check if they satisfy
the condition to be in general position.

\begin{lemma}\label{sec:lem-gp-lightlike-separation}
    Let $x,y\in\RR^{p,q}$. Then $(\iota(x),\iota(y))$ is in general position
    iff $x$ and $y$ are not light-like separated or coincide, i.e. iff
    $\eta(x-y,x-y)\ne0$.
\end{lemma}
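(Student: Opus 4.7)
The proof is a direct computation together with the observation that, for two points, being in general position splits into two conditions (linear independence of representatives and non-orthogonality under $\eta$), each of which can be individually translated into a statement about $\eta(x-y,x-y)$.

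First I would pick concrete representatives $u_x := (1-\eta(x,x),\, 2x,\, 1+\eta(x,x))$ and $u_y := (1-\eta(y,y),\, 2y,\, 1+\eta(y,y))$ in $\RR^{p+1,q+1}=\RR\oplus\RR^{p,q}\oplus\RR$, where, by the sign conventions fixed at the beginning of Section~\ref{sec:cb}, the first $\RR$-factor has positive signature and the last has negative signature. A short expansion then gives
\[
    \eta(u_x,u_y) = (1-\eta(x,x))(1-\eta(y,y)) + 4\eta(x,y) - (1+\eta(x,x))(1+\eta(y,y)),
\]
and the cross-terms in the first and third summands cancel pairwise, leaving $\eta(u_x,u_y) = -2\,\eta(x-y,x-y)$. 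So $u_x$ and $u_y$ are $\eta$-orthogonal if and only if $\eta(x-y,x-y)=0$.

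Next I would deal with the linear independence condition. By Lemma~\ref{sec:lem-comp-diffeo}, $\iota$ is a diffeomorphism onto its image, hence in particular injective, so $\iota(x)=\iota(y)$ in $\widehat{\RR^{p,q}}\subseteq\RR\mathbb{P}^{d+1}$ if and only if $x=y$. Equivalently, $u_x$ and $u_y$ are linearly dependent in $\RR^{d+2}$ if and only if $x=y$ (both $u_x$ and $u_y$ are nonzero, so linear dependence is the same as proportionality, which is the same as equality of the projective classes).

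Combining the two, $(\iota(x),\iota(y))$ is in general position iff $x\ne y$ and $\eta(x-y,x-y)\ne 0$. But the second of these conditions already implies the first, since $x=y$ forces $\eta(x-y,x-y)=0$. Hence the characterisation reduces to the single condition $\eta(x-y,x-y)\ne 0$, as claimed. There is no real obstacle here; the only thing one has to be careful about is keeping track of the signature convention so that the 0th and $(d+1)$-st coordinates of $\iota$ enter with opposite signs, which is what makes the cancellation in the computation of $\eta(u_x,u_y)$ go through.
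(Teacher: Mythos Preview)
Your proof is correct and follows essentially the same route as the paper's: both compute $\eta(u_x,u_y)=-2\eta(x-y,x-y)$ to handle the orthogonality condition, and both reduce linear independence of the representatives to $x\ne y$ via the injectivity of $\iota$. The only cosmetic difference is that you explicitly cite Lemma~\ref{sec:lem-comp-diffeo} for the injectivity and then observe that $\eta(x-y,x-y)\ne0$ already subsumes $x\ne y$, whereas the paper argues the two implications separately; the content is the same.
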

\begin{proof}
    We have $\iota(x)=q(v)$ and $\iota(y)=q(w)$ for
    \[
        v = \mqty(1-\eta(x,x)\\2x\\1+\eta(x,x)),\qquad
        w = \mqty(1-\eta(y,y)\\2y\\1+\eta(y,y)).
    \]
    ``$\Rightarrow$'': Let $(\iota(x),\iota(y))$ be in general position,
    Note that
    \begin{align*}
        \eta(v,w)
        &= (1-\eta(x,x))(1-\eta(y,y))
        + 4\eta(x,y)
        - (1-\eta(x,x))(1-\eta(y,y))\\
        &= 4\eta(x,y) - 2\eta(x,x) - 2\eta(y,y)\\
        &= -2\eta(x-y,x-y).
    \end{align*}
    Since we know that $\eta(v,w)\ne0$, we conclude that $\eta(x-y,x-y)\ne0$,
    so that $x,y$ are not light-like separated.\\
    ``$\Leftarrow$'': Since $x,y$ are not light-like separated, the above calculation implies that $\eta(v,w)\ne0$. Furthermore, we know
    $x\ne y$, so that $\iota(x)\ne\iota(y)$ and in particular, $v,w$ are not
    multiples of each other. Since they are also both nonzero, they
    are linearly independent and consequently, $(q(v),q(w))$ are in general
    position.
\end{proof}

\begin{lemma}
    $G$'s action on $\widehat{\RR^{p,q}}$ extends naturally to
    $\GP(\widehat{\RR^{p,q}},n)$.
\end{lemma}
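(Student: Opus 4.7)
The plan is to observe that the $G$-action on $\widehat{\RR^{p,q}}$ is induced by the linear action of $G\le GL(d+2)$ on $\RR^{p+1,q+1}$ via the projectivisation $q$, so that the diagonal action on $(\widehat{\RR^{p,q}})^n$ is simply
\[
    g\cdot(q(v_1),\dots,q(v_n)) := (q(gv_1),\dots,q(gv_n)).
\]
This is well-defined (independent of the choice of representatives $v_i$) since $q(\lambda v_i) = q(v_i)$ for $\lambda\in\RR^\times$ and $g$ is linear. The only thing to verify is that this action preserves the subset $\GP(\widehat{\RR^{p,q}},n)$.

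So suppose $(q(v_1),\dots,q(v_n))$ is in general position. First, linear independence of $v_1,\dots,v_n$ is preserved under any invertible linear map, in particular under $g\in G\subseteq GL(d+2)$, so $gv_1,\dots,gv_n$ are linearly independent. Second, since $G=SO(p+1,q+1)_0$ preserves the bilinear form $\eta$, we have
\[
    \eta(gv_i,gv_j) = \eta(v_i,v_j)
\]
for all $i,j$, so the non-orthogonality of the pairs is preserved as well. Hence $(q(gv_1),\dots,q(gv_n))\in\GP(\widehat{\RR^{p,q}},n)$, and the diagonal action restricts to an action on $\GP(\widehat{\RR^{p,q}},n)$.

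There is no real obstacle here; the statement is essentially a direct consequence of the defining properties of $G$ (invertibility and $\eta$-invariance) together with the fact that both defining conditions of general position are homogeneous under rescaling of the $v_i$. Continuity/smoothness of this restricted action follows from that of the action on $(\widehat{\RR^{p,q}})^n$ combined with openness of $\GP(\widehat{\RR^{p,q}},n)$.
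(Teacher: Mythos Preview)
Your proof is correct and follows essentially the same approach as the paper: both verify that linear independence is preserved because $g\in GL(d+2)$ and that pairwise non-orthogonality is preserved because $G$ leaves $\eta$ invariant. Your additional remarks on well-definedness and smoothness are a slight elaboration but do not deviate from the paper's argument.
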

\begin{proof}
    Let $g\in G$ and $(q(v_1),\dots,q(v_n))$ be in general position. Since
    $g\in GL(d+2)$, the vectors $gv_1,\dots,gv_n$ are also linearly
    independent. Furthermore, for any two $i,j\in\{1,\dots,n\}$ we have
    \[
        \eta(gv_i,gv_j)=\eta(v_i,v_j)\ne0.
    \]
    So $(q(gv_1),\dots,q(gv_n))=(g\cdot q(v_1),\dots,g\cdot q(v_n))$
    is again in general position.
\end{proof}

\begin{lemma}\label{sec:lem-g-action-pairs}
    The action of $G$ on $\GP(G/Q,2)$ is transitive with $MA$ being a typical
    stabiliser.
\end{lemma}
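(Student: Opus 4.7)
The plan is to choose the base pair $(\iota(0),\iota(\infty))$, where $\iota(\infty):=q(e_0-e_{d+1})$, verify it is in general position, and then decouple the proof into a transitivity argument and a stabiliser computation. The pair is in general position since $e_0\pm e_{d+1}$ are linearly independent and $\eta(e_0+e_{d+1},e_0-e_{d+1})=\eta_{00}-\eta_{d+1,d+1}=2\neq0$.

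For transitivity, I would first use the already-established transitivity of $G$ on $G/Q$ to reduce any $(P_1,P_2)\in\GP(G/Q,2)$ to the case $P_1=\iota(0)$; it then suffices to show that $N$ alone acts transitively on $\mathcal{U}:=\{q(w)\mid (\iota(0),q(w))\in\GP(G/Q,2)\}$. A direct calculation using Definition~\ref{sec:def-parabolic-subalgebras} gives $K^j(e_0-e_{d+1})=-2\eta_{jj}e_j$, $K^j(e_0+e_{d+1})=0$, and $K^i(e_j)=0$ for $i\neq j$, so $X:=\sum_jt_jK^j$ satisfies $X^3(e_0-e_{d+1})=0$ and
\[
\exp(X)(e_0-e_{d+1})=(e_0-e_{d+1})-2\sum_j\eta_{jj}t_je_j-\Bigl(\sum_j\eta_{jj}t_j^2\Bigr)(e_0+e_{d+1}).
\]
For any $q(w)\in\mathcal{U}$, rescale $w$ so that $\eta(w,e_0+e_{d+1})=2$ and decompose $w=\alpha(e_0+e_{d+1})+(e_0-e_{d+1})+u$ with $u\in\operatorname{span}(e_1,\dots,e_d)$; nullness of $w$ forces $\alpha=-\eta(u,u)/4$, and solving for $t_j$ to match the $e_j$-coefficients of $u$ produces an $n\in N$ with $n\cdot\iota(\infty)=q(w)$, giving transitivity.

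For the stabiliser, $A=\exp(\RR D_0)$ scales $e_0\pm e_{d+1}$ by $e^{\pm t}$ and $M={}^{0}Z_G(\mathfrak{a})$ preserves the eigenspaces of $\ad(D_0)$, including the one-dimensional $(-1)$-eigenspace $\RR(e_0-e_{d+1})$, so $MA$ fixes the base pair. Conversely, any stabilising element lies in $Q=MAN$; writing it as $man$, the condition $\iota(\infty)=man\cdot\iota(\infty)=ma\cdot(n\cdot\iota(\infty))$ reduces to $n\cdot\iota(\infty)=\iota(\infty)$, which by the explicit formula above forces every $t_j$ to vanish, so $n=1$. Hence the stabiliser of $(\iota(0),\iota(\infty))$ is exactly $MA$, and every stabiliser in $\GP(G/Q,2)$ is $G$-conjugate to $MA$.

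The only technically delicate step is the nilpotent exponentiation, which truncates at second order because the iterated brackets $[K^i,K^j]$ act trivially on $e_0-e_{d+1}$; this reflects the usual picture of $N\cdot\iota(\infty)$ as a big Bruhat cell and makes both substeps above reduce to the same elementary computation.
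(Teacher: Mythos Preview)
Your proof is correct. The transitivity argument is essentially the mirror image of the paper's: the paper first moves the second point to $\infty$ using transitivity on $G/Q$ and then uses $\exp(-u\cdot P)\in\overline{N}$ (which fixes $\infty$) to send the first point to $\iota(0)$, whereas you move the first point to $\iota(0)$ and then use $\exp(t\cdot K)\in N$ (which fixes $\iota(0)$) to handle the second point. These are the same idea with the roles of $N$ and $\overline{N}$ swapped.

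The stabiliser computations genuinely differ. The paper argues structurally: it introduces the Weyl element $w$ swapping $\iota(0)$ and $\infty$, identifies the stabiliser as $Q\cap wQw^{-1}=MAN\cap MA\overline{N}$, and then invokes \cite[Proposition~VII.7.83(e)]{knapp} to conclude that $\overline{N}\cap Q$ is trivial. Your argument is more elementary and self-contained: having already computed the $N$-orbit of $\iota(\infty)$ explicitly, you read off directly that $n\cdot\iota(\infty)=\iota(\infty)$ forces $n=1$, avoiding any appeal to general parabolic structure theory. The paper's route generalises more readily (it would work for any maximal parabolic without redoing matrix computations), while yours has the advantage of recycling the transitivity calculation and needing no external citations. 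Incidentally, your closing remark can be strengthened: $\mathfrak{n}$ is actually abelian here (all sums of the relevant positive roots fall outside the root system), so $[K^i,K^j]=0$ identically, not merely on $e_0-e_{d+1}$.
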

\begin{proof}
    Let $(x,y)\in\GP(G/Q,2)$ and pick $g\in G$ such that $g\cdot y=\infty=(1:0:-1)$.
    Write $g\cdot x=: q(v)$ with say $v=(v_0,\underline{v},v_{d+1})$
    (split as $1+d+1$). Since $(g\cdot x,g\cdot y)$ are in general position, we have
    \[
        0\ne \eta(v,(1,0,-1)^T) = v_0+v_{d+1}.
    \]
    Recalling the proof of Lemma~\ref{sec:lem-comp-diffeo}, we can thus
    conclude that $g\cdot x = \iota(u)$ were $u:=\frac{\underline{v}}{v_0+v_{d+1}}$. If we let the matrix $u^\bullet$ equal the column vector $u$
    and correspondingly let $u_\bullet$ be the diagonal matrix $\eta$ multiplied
    with $u$ (standard index raising/lowering notation), we obtain
    \[
        \exp(-u\cdot P) = \exp(-\sum_{i=1}^d u_i P^i)
        = \mqty(1-\frac{u^2}{2} & u^T_\bullet & -\frac{u^2}{2}\\
        -u^\bullet & 1 & -u^\bullet\\
        \frac{u^2}{2} & -u^T_\bullet & 1 + \frac{u^2}{2})
    \]
    ($u^2 = \eta(u,u)$). Then we have
    \begin{align*}
        \exp(-u\cdot P)\cdot q(v) &= q\mqty(1\\0\\1) = \iota(0)\\
        \exp(-u\cdot P)\cdot \infty &= 
        &=q\mqty(1\\0\\-1) = \infty.
    \end{align*}
    Thus we have found an element $\exp(-u\cdot P)g\in G$ that maps $(x,y)$ to
    $(\iota(0),\infty)$. This shows transitivity.
    
    For a typical stabiliser we consider the point $(\iota(0),\infty)$
    and let $H\le G$ be its stabiliser. Let $w\in G$ be an element
    satisfying $g\cdot\iota(0)=\infty$. Then
    \[
        H=\{g\in G\mid g\in Q, w^{-1}gw\in G\}
        = Q\cap wQw^{-1}.
    \]
    In particular, because of what $\iota(0)$ and $\infty$ look like,
    we can choose $H$ diagonal with its diagonal entries being $\pm1$, say $w=\operatorname{diag}(w_0,\dots,w_{d+1})$ with $w_0=-w_{d+1}$. Then $w^2=1$ and $w\in K$. 
    
    As a consequence, we have
    $\Ad(w)(D_0)=-D_0$ and $\Ad(w)(D_a)=\pm D_a$ for $a=1,\dots,q$.
    This shows that $\Ad^*(w)(\Gamma)=-\Gamma$ and thus
    $w Nw^{-1}=\overline{N}$. Furthermore, by \cite[Proposition VII.7.82(a)]{knapp},
    $MA$ is the centraliser of $\mathfrak{a}$ in $G$, thus if $g\in MA$ we have
    \[
        \Ad(wgw^{-1})(D_0) = -\Ad(wg)(D_0) = -\Ad(w)(D_0)=D_0,
    \]
    so that $wgw^{-1}\in MA$ as well, hence $wMAw^{-1}=MA$. Consequently, we have
    \[
        Q\cap wQw^{-1}
        = MAN \cap wMAw^{-1} wNw^{-1}
        = MAN\cap MA\overline{N}.
    \]
    Let $g\in H$, say $g=man = m'a'\overline{n}'$,
    then $\overline{n}' = a^{\prime-1}m^{\prime-1}man\in\overline{N}\cap Q$, which is trivial
    by \cite[Proposition VII.7.83(e)]{knapp}. Consequently, $g\in MA$.
\end{proof}
We can thus conclude that $\GP(G/Q,2)\cong G/MA$ as smooth $G$-sets.

\begin{lemma}\label{sec:lem-conf-frame}
    Let $n\ge2$ and $(x_1,\dots,x_n)\in\GP(G/Q,n)$. Then there exists $g\in G$ such that
    \[
        g\cdot x_1 = \iota(0),\qquad
        g\cdot x_2 = \infty,\qquad
        g\cdot x_i = \iota(p_i)\quad (i=3,\dots,n).
    \]
    where $p_3,\dots,p_n\in\RR^{p,q}$ are linearly independent and none of
    $p_i$ ($i=3,\dots,n$) and $p_i-p_j$ ($i\ne j\in\{3,\dots,n\}$) is isotropic (i.e. light-like).
\end{lemma}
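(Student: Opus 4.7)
The plan is to reduce to a specific standard configuration using the 2-transitivity of $G$ on configurations of two points in general position, and then to read off the remaining conditions from the definition of general position.

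First I would invoke Lemma~\ref{sec:lem-g-action-pairs} to obtain a $g\in G$ such that $g\cdot x_1 = \iota(0)$ and $g\cdot x_2 = \infty = (1:0:-1)$. Since the action of $G$ on $\GP(G/Q,n)$ sends configurations in general position to configurations in general position (as proved just before Lemma~\ref{sec:lem-g-action-pairs}), the tuple $(g\cdot x_1,\dots,g\cdot x_n)$ is still in general position.

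Next I would show that each $g\cdot x_i$ for $i\ge3$ lies in $\iota(\RR^{p,q})$. The points of $\widehat{\RR^{p,q}}\setminus\iota(\RR^{p,q})$ are precisely those of the form $(v_0:v:-v_0)$, and any such point satisfies $\eta((v_0,v,-v_0),(1,0,-1)) = 0$, so it would be $\eta$-orthogonal to the representative of $\infty$, contradicting general position of $(g\cdot x_i, g\cdot x_2)$. Hence $g\cdot x_i = \iota(p_i)$ for some $p_i\in\RR^{p,q}$. Then Lemma~\ref{sec:lem-gp-lightlike-separation} applied to the pairs $(g\cdot x_1, g\cdot x_i) = (\iota(0), \iota(p_i))$ and $(g\cdot x_i, g\cdot x_j) = (\iota(p_i), \iota(p_j))$ yields $\eta(p_i,p_i)\ne0$ and $\eta(p_i-p_j, p_i-p_j)\ne 0$, which are exactly the non-isotropy conditions claimed.

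Finally, for the linear independence of $p_3,\dots,p_n$ I would use the representatives
\[
    v_1 = (1,0,1),\quad v_2 = (1,0,-1),\quad v_i = (1-\eta(p_i,p_i),\, 2p_i,\, 1+\eta(p_i,p_i))\;\;(i\ge 3),
\]
which lift the projective points $\iota(0),\infty,\iota(p_i)$. By the definition of general position, $v_1,\dots,v_n$ are linearly independent in $\RR^{d+2}$. Quotienting $\RR^{d+2}$ by $\operatorname{span}_\RR(v_1,v_2)$, which is spanned by the first and last standard basis vectors, the images of $v_3,\dots,v_n$ become $(0, 2p_3, 0),\dots,(0, 2p_n, 0)$. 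Linear independence of the $v_i$ modulo $\operatorname{span}_\RR(v_1,v_2)$ is therefore equivalent to linear independence of $p_3,\dots,p_n$ in $\RR^{p,q}$, which concludes the proof.

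The steps are all essentially bookkeeping once Lemma~\ref{sec:lem-g-action-pairs} and Lemma~\ref{sec:lem-gp-lightlike-separation} are in hand; the only mildly subtle point is the final linear independence translation, which requires choosing the right representatives so that the quotient map becomes transparent.
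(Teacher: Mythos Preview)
Your proof is correct and follows essentially the same approach as the paper: invoke Lemma~\ref{sec:lem-g-action-pairs} to place $(x_1,x_2)$ at $(\iota(0),\infty)$, use non-orthogonality to $\infty$ to land the remaining points in $\iota(\RR^{p,q})$, apply Lemma~\ref{sec:lem-gp-lightlike-separation} for the non-isotropy claims, and read off linear independence of the $p_i$ from that of the lifts $v_1,\dots,v_n$. Your phrasing of the last step via the quotient by $\operatorname{span}_\RR(v_1,v_2)$ is a bit more explicit than the paper's, but the content is the same.
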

\begin{proof}
    Write $x_i=q(v_i)$ for $v_1,\dots,v_n\in\RR^{d+2}\setminus\{0\}$.
    We have $(x_1,x_2)\in\GP(G/Q,2)$, hence by Lemma~\ref{sec:lem-g-action-pairs}, there exists
    $g\in G$ mapping $(x_1,x_2)$ to $(\iota(0),\infty)$. Then since none of $gv_3,\dots,gv_n$ is
    orthogonal to $gv_2=(1,0,-1)^T$, we can again employ the same argument as in the transitivity
    proof of Lemma~\ref{sec:lem-g-action-pairs}, so that we find $p_3,\dots,p_n$ such that
    $g\cdot x_i = \iota(p_i)$ ($i=3,\dots,n$). By definition, the vectors
    \[
        \mqty(1\\0\\1),\mqty(1\\0\\-1),\mqty(1-p_3^2\\2p_3^\bullet\\1+p_3^2),\dots,
        \mqty(1-p_n^2\\2p_n^\bullet\\ 1+p_n^2)
    \]
    are linearly independent, which implies that $p_3,\dots,p_n$ are linearly independent.
    Furthermore, by applying Lemma~\ref{sec:lem-gp-lightlike-separation} to
    $(g\cdot x_i,g\cdot x_j)$ for $i,j\in\{1,3,\dots,n\}$ in turn, we obtain that none of the
    vectors $p_i-p_j$ ($i\ne j\in\{3,\dots,n\}$) and $p_i-0$ ($i=3,\dots,n$) is isotropic.
\end{proof}

For the rest of this paper we shall be concerned with four-point configurations, i.e. with
$n=4$.
\begin{definition}\label{sec:def-uv}
    Define $u,v:\, \GP(G/Q,4)\to\RR$ by
    \begin{align}
        u(q(v_1),\dots,q(v_4)) &:= \frac{\eta(v_1,v_2)\eta(v_3,v_4)}{\eta(v_1,v_3)\eta(v_2,v_4)}\\
        v(q(v_1),\dots,q(v_4)) &:= \frac{\eta(v_1,v_4)\eta(v_2,v_3)}{\eta(v_1,v_3)\eta(v_2,v_4)}.
    \end{align}
    They are both well-defined, smooth, $G$-invariant and
    for the parameters $\iota(x_1),\dots,\iota(x_4)$ they reduce
    to the well-known expressions for the cross-ratios from e.g.
    \cite[\S III.C.3]{bootstrapReview}.
\end{definition}
\begin{proof}
    Firstly, note that both right-hand sides are homogeneous
    in $v_1,\dots,v_4$ of degree $(0,0,0,0)$, and by definition
    neither $\eta(v_1,v_3)$ nor $\eta(v_2,v_4)$ are zero. Thus,
    the functions are well-defined and continuous. Since $q$ is
    a smooth quotient map and our definition of $u,v$ in terms of
    $v_1,\dots,v_4$ is also smooth, $u,v$ are smooth maps
    on $\GP(G/Q,4)$. Since $G$ leaves $\eta$ invariant, we have
    $u(q(gv_1),\dots,q(gv_4))=u(q(v_1),\dots,q(v_4))$ (same for $v$), 
    so that $u,v$ are $G$-invariant.

    To see that our definition coincides with the usual one, recall
    that as shown in the proof of Lemma~\ref{sec:lem-gp-lightlike-separation},
    for $q(v_i)=\iota(p_i)$ (for the usual choice of $v_i$) we have
    $\eta(v_i,v_j)=-2(p_i-p_j)^2$, so that
    \[
        u(\iota(p_1),\dots,\iota(p_4)) = \frac{(p_1-p_2)^2(p_3-p_4)^2}{(p_1-p_3)^2(p_2-p_4)^2}
    \]
    and similarly for $v$.
\end{proof}

\begin{corollary}\label{sec:cor-uv-conf-frame}
    Let $(x_1,\dots,x_4)$ be in the same $G$-orbit as $(\iota(0),\infty,\iota(x),\iota(y))$ as in
    Lemma~\ref{sec:lem-conf-frame}, then
    \[
        u(x_1,\dots,x_4) = \frac{(x-y)^2}{x^2} \qquad
        v(x_1,\dots,x_4) = \frac{y^2}{x^2}.
    \]
\end{corollary}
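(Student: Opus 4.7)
The plan is to leverage the $G$-invariance of $u$ and $v$ established in Definition~\ref{sec:def-uv} to reduce the problem to computing these invariants on the specific representative $(\iota(0), \infty, \iota(x), \iota(y))$. Once we fix such a representative, everything reduces to computing six pseudo-inner products in $\RR^{p+1,q+1}$ and substituting.

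First I would choose the canonical lifts
\[
v_1 = \bpm 1\\0\\1\epm,\quad v_2 = \bpm 1\\0\\-1\epm,\quad
v_3 = \bpm 1-\eta(x,x)\\2x\\1+\eta(x,x)\epm,\quad
v_4 = \bpm 1-\eta(y,y)\\2y\\1+\eta(y,y)\epm,
\]
where the lifts of $\iota(x),\iota(y)$ come directly from Lemma~\ref{sec:lem-comp-diffeo} and the lift of $\infty$ was computed explicitly in the density argument there. Recall that in $\RR^{p+1,q+1}$ the $0$-th coordinate is space-like and the $(d+1)$-st is time-like.

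Next I compute the six pairwise inner products. The three inner products that do not involve $v_2$ have already been essentially done in the proof of Lemma~\ref{sec:lem-gp-lightlike-separation}: $\eta(v_1,v_3) = -2\eta(x,x) = -2x^2$, $\eta(v_1,v_4) = -2y^2$, and $\eta(v_3,v_4) = -2\eta(x-y,x-y) = -2(x-y)^2$. The three involving $v_2$ are a direct, very short calculation: $\eta(v_1,v_2) = 1\cdot 1 - 1\cdot(-1) = 2$, and for $v_2,v_3$ we get
\[
\eta(v_2,v_3) = (1-x^2) - (-1)(1+x^2) = 2,
\]
and analogously $\eta(v_2,v_4) = 2$.

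Finally I substitute into the definitions:
\[
u(x_1,\ldots,x_4) = \frac{\eta(v_1,v_2)\,\eta(v_3,v_4)}{\eta(v_1,v_3)\,\eta(v_2,v_4)} = \frac{2\cdot(-2(x-y)^2)}{(-2x^2)\cdot 2} = \frac{(x-y)^2}{x^2},
\]
and
\[
v(x_1,\ldots,x_4) = \frac{\eta(v_1,v_4)\,\eta(v_2,v_3)}{\eta(v_1,v_3)\,\eta(v_2,v_4)} = \frac{(-2y^2)\cdot 2}{(-2x^2)\cdot 2} = \frac{y^2}{x^2}.
\]

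There is no genuine obstacle here; the only thing to watch is the sign convention for $\eta$ on the ambient $\RR^{p+1,q+1}$, in particular that the $(d+1)$-st direction is time-like (so that the lift $v_2$ of $\infty$ has $\eta(v_2,v_2)=0$ as required), which is what makes the $v_2$-inner products come out to $+2$ rather than $-2$. Everything else is bookkeeping, and $G$-invariance does the heavy lifting.
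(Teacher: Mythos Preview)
Your proof is correct and follows exactly the approach implicit in the paper: the corollary is stated there without proof, since it is immediate from the $G$-invariance of $u,v$ and the inner-product computation $\eta(v_i,v_j)=-2(p_i-p_j)^2$ already done in the proof of Definition~\ref{sec:def-uv} and Lemma~\ref{sec:lem-gp-lightlike-separation}. The only additional ingredient is the elementary evaluation of the three inner products involving $v_2$, which you carry out correctly.
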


\begin{lemma}\label{sec:lem-configuration-homeomorphism}
    The map $\psi:\, G\to \GP(G/Q,2)^{\times 2}$ mapping
    \[
        g\mapsto (\iota(0),\infty, g\iota(0),g\infty)
        \cong (Q, wQ, gQ, gwQ)
    \]
    ($w\in K$ as in the proof for the stabiliser in Lemma~\ref{sec:lem-g-action-pairs}) descends, when composed with the quotient map to $\GP(G/Q,2)^{\times 2}/G$,
    to a homeomorphism $MA\backslash G/MA\cong \GP(G/Q,2)^{\times 2}/G$.
\end{lemma}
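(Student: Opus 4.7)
The plan is to reduce the claim to the standard fact that for any closed subgroup $H$ of a topological group $G$, the orbit space of the diagonal $G$-action on $(G/H)^{\times 2}$ is canonically homeomorphic to the double-coset space $H\backslash G/H$ via $G\cdot(g_1 H, g_2 H)\leftrightarrow H g_1^{-1} g_2 H$. First, I would use Lemma~\ref{sec:lem-g-action-pairs} to identify $\GP(G/Q,2)\cong G/MA$ as homogeneous $G$-spaces, with the base point $(\iota(0),\infty)=(Q,wQ)$ corresponding to $eMA$; as both sides are smooth homogeneous $G$-manifolds this bijection is a homeomorphism. Under this identification $\psi$ becomes the map $G\to (G/MA)^{\times 2}$, $g\mapsto (eMA, gMA)$, post-composed with the orbit map $q:(G/MA)^{\times 2}\to (G/MA)^{\times 2}/G$.

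Second, I would verify that $q\circ\psi$ factors through $MA\backslash G/MA$ via a bijection. Well-definedness on double cosets is the observation that for $m_1,m_2\in MA$,
\[
    m_1^{-1}\cdot(eMA,\, m_1 g m_2\, MA) = (eMA,\, g\, MA),
\]
so $q\circ\psi(m_1 g m_2) = q\circ\psi(g)$. Surjectivity: every orbit of the diagonal action on $(G/MA)^{\times 2}$ has a representative of the form $(eMA, g\, MA)$, obtained by translating the first coordinate to the base point. Injectivity: if $h\cdot(eMA, g\, MA) = (eMA, g'MA)$, then $h\in MA$ and $g'\in hgMA\subseteq MAgMA$, so the two double cosets coincide.

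Third, I would promote the induced bijection $\bar\psi:\, MA\backslash G/MA \to \GP(G/Q,2)^{\times 2}/G$ to a homeomorphism. Continuity is automatic from the universal property of the quotient $G\to MA\backslash G/MA$, since $q\circ\psi$ is a composition of continuous maps. For openness, I would use that both quotient maps $G\to MA\backslash G/MA$ and $(G/MA)^{\times 2}\to (G/MA)^{\times 2}/G$ are open, being quotients by continuous group actions. For any open $V\subseteq G$ with image $\pi(V)\subseteq MA\backslash G/MA$, the set $\bar\psi(\pi(V))$ lifts in $G\times G$ to the preimage of $MA\cdot V\cdot MA$ under the continuous map $(a_1,a_2)\mapsto a_1^{-1}a_2$; this preimage is open, and remains open after passage through the two open quotient maps $G\times G\to (G/MA)^{\times 2}\to (G/MA)^{\times 2}/G$.

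The algebraic content is essentially automatic from the stabiliser calculation in Lemma~\ref{sec:lem-g-action-pairs}, and continuity needs no work. The one step requiring real attention is the openness bookkeeping, where one has to correctly identify the saturation and preimages under the cascade of quotient maps; I expect this to be the main, though fairly mild, technical obstacle.
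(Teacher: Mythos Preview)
Your proof is correct and takes a genuinely different route from the paper. The paper constructs an explicit continuous inverse
\[
    f:\ G(\iota(0),\infty,\iota(x),\iota(y))\ \longmapsto\ MA\,\exp(x\cdot P)\exp\!\Big(\tfrac{y-x}{(y-x)^2}\cdot K\Big)\,MA,
\]
checks well-definedness under the $MA$-ambiguity in the choice of $(x,y)$, and then verifies by direct matrix computation that $\Psi$ and $f$ are mutual inverses. You instead invoke the abstract identification $\GP(G/Q,2)\cong G/MA$ coming from Lemma~\ref{sec:lem-g-action-pairs} (using the standard fact that a transitive smooth action of a second-countable Lie group yields a diffeomorphism $G/G_x\cong M$), reduce to the general statement $(G/H)^{\times 2}/G\cong H\backslash G/H$, and handle continuity of the inverse by an openness argument through the cascade of open quotient maps. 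Your approach is shorter and works uniformly for any closed subgroup, with no computations specific to the conformal group; the paper's approach, by contrast, produces a concrete section $f$ landing in $\overline{N}N$, which is useful later when one wants explicit cross-ratio formulas on the Cartan subsets (cf.\ Lemma~\ref{sec:lem-cross-ratios-corners} and Lemma~\ref{sec:lem-cartan-chi}).
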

\begin{proof}
    Define
    \begin{align*}
        f:\quad\GP(G/Q,2)^{\times 2}/G&\to MA\backslash G/MA,\\
        G(\iota(0),\infty, \iota(x),\iota(y))&\mapsto
        MA \exp(x\cdot P) \exp(\frac{y-x}{(y-x)^2}\cdot K) MA.
    \end{align*}
    This is well-defined since every element of $\GP(G/Q,2)^{\times 2}/G$ can be written
    in the form $G(\iota(0),\infty,\iota(x),\iota(y))$ and since any other choice
    of $x,y$ would be related by $MA$, say there is $m\in MA$ and $\alpha\in\RR$ such that
    $m\exp(\alpha D_0)\cdot\iota(x)=\iota(x')$ and the same for $y,y'$. Then
    \[
        \Ad(m\exp(\alpha D_0))(x\cdot P)
        = x'\cdot P
    \]
    and
    \[
        \Ad(m\exp(\alpha D_0))\frac{y-x}{(y-x)^2}\cdot K
        = \frac{y'-x'}{(y'-x')^2}\cdot K
    \]
    due to $\exp(\alpha D_0)$ acting on $\iota(x)$ and the $P^i$ as scaling by $\exp(-\alpha)$, and on the $K^i$ as scaling by $\exp(\alpha)$. Consequently,
    \begin{align*}
        &MA \exp(x'\cdot P)\exp(\frac{y'-x'}{(y'-x')^2}\cdot K) MA\\
        =& MA m\exp(\alpha D_0) \exp(x\cdot P)\exp(\frac{y-x}{(y-x)^2}\cdot K)
        m^{-1}\exp(-\alpha D_0) MA\\
        =& MA \exp(x\cdot P)\exp(\frac{y-x}{(y-x)^2}\cdot K) MA.
    \end{align*}
    The map $f$ is continuous by definition.
    
    Write $\tilde{\Psi}: G\to\GP(G/Q,2)^{\times 2}/G$ for the composition of $\psi$ with the
    quotient map. Then $\tilde{\Psi}$ is $MA$-biinvariant:
    \begin{align*}
        \tilde{\Psi}(magm'a') &= G(\iota(0),\infty, magm'a'\cdot\iota(0),
        magm'a'\cdot\infty)\\
        &= G(a^{-1}m^{-1}\iota(0),a^{-1}m^{-1}\cdot\infty,
        gm'a'\cdot\iota(0),gm'a'\cdot\infty)
    \end{align*}
    Since $MA$ fixes $\iota(0),\infty$, this equals
    $\tilde{\Psi}(g)$. It therefore descends to a continuous map
    $\Psi: MA\backslash G/MA\to\GP(G/Q,2)^{\times 2}/G$.

    We shall now show that $\Psi$ and $f$ are inverses of each other. For this
    we consider
    \begin{align*}
        \Psi(f(G(\iota(0),\infty,\iota(x),\iota(y))))
        &= \Psi\qty(MA \exp(x\cdot P)\exp(\frac{y-x}{(y-x)^2}\cdot K) MA)\\
        &= G(\iota(0),\infty, \iota(x), \iota(y)).
    \end{align*}
    Here we used that for $b:=\frac{y-x}{(y-x)^2}$ we can compute the action of
    $g=\exp(x\cdot P)\exp(b\cdot K)$ on $\iota(0),\infty$ as follows:
    \begin{align*}
        g\mqty(1\\0\\1) &=
        \mqty(1-\frac{x^2}{2} & -x^T_\bullet & -\frac{x^2}{2}\\
        x^\bullet & 1 & x^\bullet\\
        \frac{x^2}{2} & x^T_\bullet & 1 + \frac{x^2}{2})
        \mqty(1\\0\\1)
        = \mqty(1-x^2\\2x\\1+x^2)\\
        g\mqty(1\\0\\-1) &=
        \mqty(1-\frac{x^2}{2} & -x^T_\bullet & -\frac{x^2}{2}\\
        x^\bullet & 1 & x^\bullet\\
        \frac{x^2}{2} & x^T_\bullet & 1 + \frac{x^2}{2})
        \mqty(1-\frac{b^2}{2} & b^T_\bullet & \frac{b^2}{2}\\
        -b^\bullet & 1 & b^\bullet\\
        -\frac{b^2}{2} & b^T_\bullet & 1+\frac{b^2}{2})
        \mqty(1\\0\\-1)\\
        &= \mqty(1-b^2+b^2x^2\\-2(b + b^2x)\\-1-b^2-b^2x^2)
    \end{align*}
    the former of which projectivises to $\iota(x)$ and the latter to $\iota(y)$.

    Let now $g\in G$ and let $MAhMA=f(\Psi(MAgMA))$, then $\Psi(MAgMA)=\Psi(f(\Psi(MAgMA)))$, i.e.
    \[
        G(\iota(0),\infty,g\cdot\iota(0),g\cdot\infty)
        = G(\iota(0),\infty,h\cdot\iota(0),h\cdot\infty),
    \]
    then there is $k\in G$ such that
    \[
        k\cdot\iota(0) = \iota(0),\qquad
        k\cdot\infty = \infty,\qquad
        kg\cdot\iota(0) = h\cdot\iota(0),\qquad
        kg\cdot\infty = h\cdot\infty.
    \]
    This shows that both $k$ and $h^{-1}kg$ fix $\iota(0),\infty$. By
    Lemma~\ref{sec:lem-g-action-pairs}, this shows that $k,h^{-1}kg\in MA$. In
    other words:
    \[
        g = k^{-1}h(h^{-1}kg) \in MA h MA.
    \]
    Consequently, $f\circ\Psi$ is also the identity.
\end{proof}
Using this homeomorphism, we can identify $\GP(G/Q,4)/G$ with a dense open subset
of $MA\backslash G/MA$, say $MA\backslash \tilde{G}/MA$ where $\tilde{G}\subseteq G$
is open, dense, and satisfies $MA\tilde{G}MA\subseteq \tilde{G}$
(more on this in Corollary~\ref{sec:cor-characterisation-g-tilde}).

\subsection{4-Point Functions as MSF}
\begin{definition}
    Let $(V_i,\pi_i)$ ($i=1,\dots,4$) be finite-dimensional $Q$-modules. A smooth function $f:\, G^{\times 4}\to V_1\otimes\cdots\otimes V_4$ is said to \emph{satisfy the Ward identities}, if
    for all $g,g_1,\dots,g_4\in G$ and $q_1,\dots,q_4\in Q$ we have
    \[
        f(gg_1q_1,\dots,gg_4q_4) = \pi_1(q_1)^{-1}\otimes\cdots\otimes
        \pi_4(q_4)^{-1} f(g_1,\dots g_4),
    \]
    i.e. if $f$ is a $G\times Q^{\times 4}$-intertwiner for the following $G\times Q^{\times 4}$-actions:
    \begin{align*}
        \text{on }G^{\times 4}:&\quad (g,q_1,\cdots,q_4) \cdot (g_1,\dots,g_4)
        := (gg_1q_1^{-1},\dots gg_4q_4^{-1})\\
        \text{on }V_1\otimes\cdots\otimes V_4:&\quad
        (g,q_1,\dots,q_4)\cdot (v_1\otimes\cdots\otimes v_4)
        := (q_1\cdot v_1\otimes\cdots\otimes q_4\cdot v_4).
    \end{align*}
    Let $U\subseteq G^{\times 4}$ (also open and dense) be the preimage of $\GP(G/Q,4)$ under the quotient map
    $G^{\times 4}\to (G/Q)^{\times 4}$. It is invariant under the action of $G\times Q^{\times 4}$, thus we can define
    an analogous notion of \emph{satisfying the Ward identities} for
    functions defined on $U$.\footnote{This definition can also be naturally put in the framework of generalised spherical functions associated to moduli spaces of principal connections on a corresponding star-shaped graph, see \cite{RS-2}. We thank Jasper Stokman for explaining this point of view to us.}
\end{definition}

Fix an element $w\in G$ that maps $\iota(0)$ to $\infty$ and vice-versa. For simplicity, we can choose $w$ to be a diagonal matrix that squares to 1. Then conjugation with $w$ is an autormorphism of $MA$ (which inverts elements of $A$).
\begin{theorem}\label{sec:thm-injection-msf}
    Let $f:\, U\to V_1\otimes\cdots\otimes V_4$ solve the Ward identities. 
    Write $W:=V_1\otimes\cdots\otimes V_4$ and define
    $F:\, \tilde{G}\to W$ as $g\mapsto f(1,w,g,gw)$. Then
    $F$ is a matrix-spherical function for the symmetric subgroup $MA$ and
    the following left and right actions:
    \begin{align*}
        (ma)\cdot (v_1\otimes\cdots\otimes v_4)
        &:= (ma\cdot v_1)\otimes (wmaw\cdot v_2)\otimes v_3\otimes v_4\\
        (v_1\otimes\cdots\otimes v_4)\cdot (ma)
        &:= v_1\otimes v_2\otimes (m^{-1}a^{-1}\cdot v_3)\otimes
        (wm^{-1}a^{-1}w\cdot v_4).
    \end{align*}
    The map $f\mapsto F$ is injective.
\end{theorem}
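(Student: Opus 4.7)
The plan is to handle the two assertions in turn: verify the matrix-spherical transformation law for $F$, then establish injectivity of the map $f\mapsto F$. Both steps reduce to choosing a suitable $G\times Q^{\times 4}$-element and invoking the Ward identity; the identifications are dictated by Lemma~\ref{sec:lem-g-action-pairs}.

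For the MSF property I want to compute $F(h_1 g h_2) = f(1, w, h_1 g h_2, h_1 g h_2 w)$ for $h_1, h_2 \in MA$ by transforming the arguments back to $(1, w, g, g w)$. The $G$-factor I use is $g_0 := h_1$ and the $Q$-factors are
\[
(q_1, q_2, q_3, q_4) := (h_1,\; w^{-1} h_1 w,\; h_2^{-1},\; w^{-1} h_2^{-1} w).
\]
All four lie in $Q$: the first and third because $MA \le Q = MAN$, and the second and fourth because conjugation by $w$ preserves $MA$. A direct check confirms that $(g_0 g_i q_i^{-1})_{i=1}^4 = (1, w, h_1 g h_2, h_1 g h_2 w)$ when $(g_1, \ldots, g_4) = (1, w, g, g w)$, so rewriting the Ward identity as $f(g g_i q_i^{-1}) = (\pi_1(q_1) \otimes \cdots \otimes \pi_4(q_4)) f(g_i)$ yields
\[
F(h_1 g h_2) = \bigl(\pi_1(h_1) \otimes \pi_2(w^{-1} h_1 w) \otimes \pi_3(h_2^{-1}) \otimes \pi_4(w^{-1} h_2^{-1} w)\bigr) F(g).
\]
Using $w^{-1} = w$ (since $w^2 = 1$), this is precisely the prescribed bimodule action $h_1 \cdot F(g) \cdot h_2$.

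For injectivity, suppose $F \equiv 0$ on $\tilde G$ and take any $(g_1, \ldots, g_4) \in U$; I will show $f(g_1, \ldots, g_4) = 0$. General position of the 4-tuple forces both $(g_1 Q, g_2 Q)$ and $(g_3 Q, g_4 Q)$ to lie in $\GP(G/Q, 2)$. By Lemma~\ref{sec:lem-g-action-pairs} the first of these yields $h \in G$ and $q_1, q_2 \in Q$ with $h g_1 q_1^{-1} = 1$ and $h g_2 q_2^{-1} = w$, while the second gives $g_3^{-1} g_4 \in Q w Q$; writing $g_3^{-1} g_4 = q_3^{-1} w q_4$ with $q_3, q_4 \in Q$ and setting $g := h g_3 q_3^{-1}$, a direct check shows $(h g_i q_i^{-1})_{i=1}^4 = (1, w, g, g w)$, and this tuple lies in $U$ because the $G\times Q^{\times 4}$-action preserves $U$, so $g \in \tilde G$. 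The Ward identity then rewrites
\[
F(g) = \bigl(\pi_1(q_1) \otimes \pi_2(q_2) \otimes \pi_3(q_3) \otimes \pi_4(q_4)\bigr) f(g_1, \ldots, g_4),
\]
and since $F(g) = 0$ and each $\pi_i(q_i)$ is an invertible linear operator, $f(g_1, \ldots, g_4) = 0$ follows.

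The main obstacle is bookkeeping rather than anything conceptual: ensuring that the chosen $Q$-factors $q_2$ and $q_4$ really lie in $Q$ (which crucially uses that conjugation by $w$ stabilises $MA$), and correctly tracking the $\pi_i(q_i)^{-1}$ versus $\pi_i(q_i)$ convention when inverting the $Q$-factors in the Ward identity. Once the correct $G \times Q^{\times 4}$-transformation has been isolated, both parts of the theorem reduce to a single application of the intertwining property.
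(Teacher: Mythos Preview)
Your proof is correct and follows essentially the same approach as the paper's: both parts reduce to a single application of the Ward identity after choosing appropriate $G\times Q^{\times 4}$-elements. For injectivity you invoke Lemma~\ref{sec:lem-g-action-pairs} directly (transitivity on $\GP(G/Q,2)$ applied twice), whereas the paper packages this through Lemma~\ref{sec:lem-configuration-homeomorphism}; the content is the same.

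One small omission: the theorem also asserts that $MA$ is a \emph{symmetric} subgroup, and the paper's proof includes a verification of this (exhibiting the involution $\sigma$ as conjugation by $\operatorname{diag}(-1,1,\dots,1,-1)$ and checking $(G^\sigma)_0\le MA\le G^\sigma$). Your argument establishes the MSF property and injectivity but does not address this clause; you should add a sentence or two covering it.
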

\begin{proof}
    We begin by checking the biequivariance. Let $ma,m'a'\in MA$ and $g\in \tilde{G}$,
    then
    \begin{align*}
        F(magm'a') &= f(1,w,magm'a', magm'a'w)
        = f(a^{-1}m^{-1}, a^{-1}m^{-1}w, gm'a', gm'a'w)\\
        &= \pi_1(ma)\otimes\pi_2(wmaw)\otimes
        \pi_3(m'a')^{-1}\otimes \pi_4(wm'a'w)^{-4} f(1,w,g,gw)\\
        &= ma\cdot F(g)\cdot m'a'.
    \end{align*}
    For injectivity let $F$ and $F'$ arise from $f,f':\, U\to V_1\otimes\cdots\otimes V_4$ and $F=F'$. Let $(g_1,\dots,g_4)\in U$,
    then $(g_1Q,\dots, g_4Q)\in\GP(G/Q, 4)$. Then by Lemma~\ref{sec:lem-configuration-homeomorphism} and definition of $\tilde{G}$, there is $h\in \tilde{G}$ such that
    \[
    G\psi(h) = G(Q, wQ, hQ, hwQ) = G(g_1Q, g_2Q, g_3Q, g_4Q).
    \]
    This implies that there are $g\in G$ and $q_1,\dots,q_4\in Q$ such that
    \[
    (1,w,h,hw) = (gg_1q_1,\dots,gg_4q_4).
    \]
    Then
    \begin{align*}
        f(g_1,\dots,g_4) &= f(g^{-1}q_1^{-1}, g^{-1}wq_2^{-1}, g^{-1}hq_3^{-1}, g^{-1}hwq_4^{-1})\\
        &= \pi_1(q_1)\cdots\pi_4(q_4) f(1,w,h,hw)\\
        &= \pi_1(q_1)\cdots\pi_4(q_4) F(h) = \pi_1(q_1)\cdots\pi_4(q_4) F'(h) \\
        &= f'(g_1,\dots,g_4).
    \end{align*}
    Lastly, we need to show that $MA\le G$ is symmetrising subgroup. Define
    $\sigma: G\to G$ be conjugation with the matrix
    $\operatorname{diag}(-1,1,\dots,1,-1)$, then $G^\sigma$ consists of the
    elements of $\sigma$ that can be written as a block matrix of the following
    shape:
    \[
        \mqty(a & 0 & b\\0 & c & 0\\d & 0 & e)
    \]
    for $a,b,d,e\in\RR, e\in\RR^{d\times d}$. Every element of $MA$ can be
    written this way, consequently, $MA\le G^\sigma$. Furthermore, the unit
    component of $G^\sigma$ is the analytic subgroup for the subalgebra
    $\mathfrak{m}\oplus\mathfrak{a}$, which is contained in $MA$ since
    $MA$ has Lie algebra $\mathfrak{m}\oplus\mathfrak{a}$. Consequently,
    $(G^\sigma)_0\le MA\le G^\sigma$.
\end{proof}

Using the injection from Theorem~\ref{sec:thm-injection-msf}, we can view solutions
to the Ward identities as matrix-spherical functions for $(G,MA)$.
Since we can no longer assume that these functions are defined on
all $G$, we shall write $E^W(\tilde{G},MA)$.

\begin{lemma}
    The set of functions $f:\, U\to V_1\otimes\cdots\otimes V_4$ satisfying the
    Ward equations carries an action of $Z(U(\mathfrak{g}))$ that the injection
    from Theorem~\ref{sec:thm-injection-msf} intertwines with the right action
    described in Lemma~\ref{sec:lem-msf-action-diffops}.
\end{lemma}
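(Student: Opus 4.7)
The plan is to define a $Z(U(\mathfrak{g}))$-action on Ward-solution functions via the coproduct $\Delta\colon U(\mathfrak{g})\to U(\mathfrak{g})\otimes U(\mathfrak{g})$, $X\mapsto X\otimes 1+1\otimes X$, combined with the two commuting right actions of $U(\mathfrak{g})$ in the third and fourth arguments (in the sense of Lemma~\ref{sec:lem-msf-action-diffops}). Writing $R_i$ for the right action in the $i$-th slot, and writing $\Delta(z)=\sum_k z_k'\otimes z_k''$ schematically, one sets
\[
    (z\cdot f)(g_1,\ldots,g_4) := \sum_k R_3(z_k')\,R_4(z_k'')\, f(g_1,\ldots,g_4).
\]
On primitive generators $X\in\mathfrak{g}$ this reads $X\cdot f = R_3(X)f + R_4(X)f$. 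Since $R_3$ and $R_4$ commute (they operate on independent variables) and $\Delta$ is an algebra homomorphism, this defines an action of $U(\mathfrak{g})$ on $C^\infty(U,V_1\otimes\cdots\otimes V_4)$, which I would then restrict to $Z(U(\mathfrak{g}))$.

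Next I would verify that the restriction to $Z(U(\mathfrak{g}))$ preserves the Ward identities. The right $Q^{\times 4}$-equivariance in slots 3 and 4 is preserved because left and right translations on $G$ commute, so $R_3$ and $R_4$ commute with the $Q$-actions there; in slots 1 and 2 it is preserved trivially. For diagonal left $G$-equivariance, the key observation is that $\Delta$ is $\Ad$-equivariant, so for central $z$ the tensor $\Delta(z)\in U(\mathfrak{g})\otimes U(\mathfrak{g})$ is invariant under the diagonal $\Ad\otimes\Ad$-action. A short calculation then shows that diagonal left translation by $\tilde{g}\in G$ passes through $\sum_k R_3(z_k')R_4(z_k'')$ without picking up an $\Ad$-twist, so that $z\cdot f$ remains a Ward solution.

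For the intertwining, let $F(g):=f(1,w,g,gw)$. For $X\in\mathfrak{g}$ the chain rule gives
\[
    (F\cdot X)(g) = \dv{t}\eval{f(1,w,\exp(tX)g,\exp(tX)gw)}_{t=0} = \bigl(R_3(X) + R_4(X)\bigr) f(1,w,g,gw) = (X\cdot f)(1,w,g,gw).
\]
Iterating for a monomial $z=X_1\cdots X_r\in Z(U(\mathfrak{g}))$ and differentiating $F(\exp(t_1 X_1)\cdots\exp(t_r X_r) g)$ in $t_1,\ldots,t_r$ distributes each $\dv{t_i}$ across the two $g$-dependent slots via Leibniz, yielding precisely $\prod_i (R_3(X_i) + R_4(X_i)) f(1,w,g,gw) = (z\cdot f)(1,w,g,gw)$. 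Hence $(F\cdot z)(g)=(z\cdot f)(1,w,g,gw)$, which is the required intertwining; combined with the injectivity of $f\mapsto F$ from Theorem~\ref{sec:thm-injection-msf} this transports the right action on $E^W(\tilde G, MA)$ back to the claimed $Z(U(\mathfrak{g}))$-action on Ward solutions.

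The main obstacle is the $\Ad$-invariance check underpinning the $G$-equivariance of the action: for non-central $z\in U(\mathfrak{g})$ the argument genuinely fails (the coproduct-based action would twist by $\Ad(\tilde{g}^{-1})$), and this is exactly why the statement is restricted to $Z(U(\mathfrak{g}))$. The invariance itself — $\Delta(z)$ is $(\Ad\otimes\Ad)$-invariant for $z$ central — is standard, since $\Delta$ is a $G$-equivariant algebra homomorphism with respect to $\Ad$ on the source and $\Ad\otimes\Ad$ on the target.
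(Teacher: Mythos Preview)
Your proposal is correct and follows essentially the same approach as the paper: define the action via $1\otimes 1\otimes\Delta(z)$ acting through the right-invariant vector fields in slots 3 and 4, use $\Ad\otimes\Ad$-invariance of $\Delta(z)$ for central $z$ to preserve the diagonal $G$-equivariance, and verify the intertwining on primitive $X\in\mathfrak{g}$ via the Leibniz rule at $t=0$. The paper packages the invariance step slightly more abstractly (as $1\otimes 1\otimes\Delta(Z(U(\mathfrak{g})))\subset (U(\mathfrak{g})^{\otimes 4})^G$) and checks the intertwining only on $\mathfrak{g}$, leaving the extension to $U(\mathfrak{g})$ implicit since both sides are algebra actions; your explicit iteration over monomials achieves the same end.
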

\begin{proof}
    Let $X$ be the set of functions $f:\, U\to V_1\otimes\cdots\otimes V_4$ satisfying
    \[
        \forall q_1,\dots,q_4\in Q:\quad  f(g_1q_1,\dots,g_4q_4)
        = \pi_1(q_1)^{-1}\cdots\pi_4(q_4)^{-1} f(g_1,\dots,g_4).
    \]
    $X$ is acted upon from the right by $G$ and by four copies of $\mathfrak{g}$:
    \[
        (f\cdot h)(g_1,\dots,g_4) = f(hg_1,\dots,hg_4)
    \]
    and the corresponding infinitesimal actions for each of the four inputs.
    This gives rise to an action of $(U(\mathfrak{g})^{\otimes 4})^G$ (diagonal
    action of $G$) on $G$-invariant of $X$, i.e. on solutions to the Ward
    identities. An algebra contained therein is $1\otimes 1\otimes\Delta(Z(U(\mathfrak{g})))$ (where $\Delta$ is the comultiplication).

    Write $\Psi$ for the injection from Theorem~\ref{sec:thm-injection-msf}
    (extended to all of $X$ using the same formula).
    To see that $\Psi$ intertwines these actions of $Z(U(\mathfrak{g}))$,
    note that for $X\in\mathfrak{g}$ and $f$ satisfying the Ward identities
    we have
    \begin{align*}
        \Psi(f\cdot X)(g) &= (f\cdot X)(1,w,g,gw)\\
        &= (f\cdot (1\otimes1\otimes X\otimes1 + 1\otimes1\otimes1\otimes X))(1,w,g,gw)\\
        &= \dv{t} \qty(f(1,w,\exp(tX)g, gw) + f(1,w,g,\exp(tX)gw))_{t=0}\\
        &= \dv{t} \eval{f(1,w,\exp(tX)g, \exp(tX)gw)}_{t=0}\\
        &= \dv{t} \eval{\Psi(f)(\exp(tX)g)}_{t=0}\\
        &= \Psi(f)\cdot X.\qedhere
    \end{align*}
\end{proof}

\begin{definition}
    Let $\chi:\, Z(U(\mathfrak{g}))\to\CC$ be a central character of $\mathfrak{g}$.
    A smooth function $f:\, U\to V_1\otimes\cdots\otimes V_4$ (where
    $V_1,\dots,V_4$ are finite-dimensional $Q$-modules) or more generally
    $f\in E^W(\tilde{G},MA)$ (for any finite-dimensional $MA$-bimodule $W$)
    is said to be a \emph{conformal block} for $\chi$ if
    \[
    \forall z\in Z(U(\mathfrak{g})):\quad  f\cdot z = \chi(z)f.\footnote{In physics literature, one usually distinguishes between conformal blocks and conformal partial waves, depending on the boundary conditions and monodromy properties that one imposes on the (linear combinations of) eigenfunctions of the considered invariant differential operators. Since we do not discuss the solution theory in this paper, we also don't distinguish between the two in our terminology.}
    \]
\end{definition}
Conformal blocks are the fundamental building blocks used to decompose and put constraints on the correlation functions in conformal field theory.
In order to find them, it is necessary to know what the above
eigenvalue equation looks like for functions $f$ satisfying the Ward identities,
or more generally for $(G,MA)$-matrix-spherical functions. Since $(G,MA)$ is
a symmetric pair and $G$ is a reductive Lie group, we can now employ the theory
established in the first half of this paper.

\section{Obtaining the Casimir Equation}\label{sec:casimir-eq}
In this section we shall focus on the eigenvalue equation for the quadratic Casimir
element $\Omega_{\mathfrak{g}}$. In order to apply the results from Section~\ref{sec:radial-parts}, we first need to re-examine the structure of
$G$. This time we will focus more on Cartan subsets and more generally, on the role
that the involution $\sigma$ plays for $G$ and $\mathfrak{g}$.

\subsection{Cartan Subsets $(C_i)_{i\in I}$ of $G$}
Our first goal here will be choosing a fundamental Cartan subset $C$.
Let us first describe the decomposition of $\mathfrak{g}$ with respect to $\theta,\sigma$. For that we will need to introduce a bit more notation.

\begin{lemma}
    We have $\mathfrak{g} = \mathfrak{k}\oplus\mathfrak{p} = \mathfrak{h}\oplus\mathfrak{g}^{-\sigma} = \mathfrak{k}^\sigma \oplus \mathfrak{k}^{-\sigma} \oplus \mathfrak{p}^\sigma \oplus\mathfrak{p}^{-\sigma}$ with
    \begin{align*}
        &\mathfrak{k}^\sigma =\operatorname{span}\{F_{ij}\mid 1\le i,j\le p\quad\text{or}\quad
        p<i,j\le d\}\\
        &\mathfrak{k}^{-\sigma} =
        \operatorname{span}\{F_{0,i}, F_{j,d+1}\mid
        1\le i\le p < j\le d\}\\
        &\mathfrak{p}^\sigma = \mathfrak{a}\oplus
        \operatorname{span}\{F_{i,j}\mid 1\le i\le p < j\le d\}\\
        &\mathfrak{p}^{-\sigma} =
        \operatorname{span}\{F_{0,j}, F_{i,d+1}\mid
        1\le i\le p < j\le d\}.
    \end{align*}
    Here $\mathfrak{h}=\mathfrak{g}^\sigma = \mathfrak{k}^\sigma\oplus\mathfrak{p}^\sigma=\mathfrak{m}\oplus\mathfrak{a}$ in previous notation.
\end{lemma}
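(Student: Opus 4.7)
The plan is to exploit the fact that $\theta$ and $\sigma$ are commuting involutions, so that $\mathfrak{g}$ splits as the direct sum of their four joint $\pm 1$-eigenspaces $\mathfrak{k}^{\pm\sigma}\oplus\mathfrak{p}^{\pm\sigma}$; this yields the third decomposition directly, while the first is the given Cartan decomposition (obtained by summing over $\sigma$-eigenvalues) and the second follows from $\sigma^2=\mathrm{id}$ by identifying $\mathfrak{h}=\mathfrak{g}^\sigma=\mathfrak{k}^\sigma\oplus\mathfrak{p}^\sigma$. Commutativity of $\theta$ and $\sigma$ is immediate, since $\theta(X)=\eta X\eta$ and $\sigma(X)=JXJ$ with $J:=\operatorname{diag}(-1,1,\dots,1,-1)$ (from the proof of Theorem~\ref{sec:thm-injection-msf}), and two conjugations by diagonal matrices commute.

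Next I would diagonalise both actions on the basis $(F_{\mu\nu})_{0\le\mu<\nu\le d+1}$. The $\theta$-eigenvalue was recorded just before Proposition~\ref{sec:prop-rrs-decomposition}: $F_{\mu\nu}$ is $\theta$-invariant iff both indices are $\le p$ or both $>p$, and $\theta$-anti-invariant iff exactly one is $\le p$. For $\sigma$, since $J$ is diagonal a direct computation gives $\sigma(F_{\mu\nu})=JF_{\mu\nu}J=J_{\mu\mu}J_{\nu\nu}F_{\mu\nu}$, so $F_{\mu\nu}$ is $\sigma$-invariant iff its indices are either both in $\{0,d+1\}$ or both in $\{1,\dots,d\}$, and $\sigma$-anti-invariant iff exactly one index lies in $\{0,d+1\}$.

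The four explicit descriptions are then obtained by intersecting the two parity conditions on $(\mu,\nu)$. For instance, $F_{\mu\nu}\in\mathfrak{k}^\sigma$ forces the two indices to lie on the same side of $p$ and simultaneously in $\{1,\dots,d\}$ (the alternative $\{\mu,\nu\}=\{0,d+1\}$ is already $\theta$-anti-invariant), which yields exactly the claimed basis; the case analyses for $\mathfrak{k}^{-\sigma}$, $\mathfrak{p}^\sigma$ and $\mathfrak{p}^{-\sigma}$ are entirely analogous. In particular, $F_{0,d+1}=-D_0$ is the unique basis element in $\mathfrak{p}^\sigma$ with both indices in $\{0,d+1\}$, accounting for the summand $\mathfrak{a}=\mathbb{R}D_0$.

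Finally, it remains to match $\mathfrak{h}=\mathfrak{g}^\sigma=\mathfrak{k}^\sigma\oplus\mathfrak{p}^\sigma$ with $\mathfrak{m}\oplus\mathfrak{a}$: the joint span is $F_{0,d+1}$ together with all $F_{\mu\nu}$ for $1\le\mu<\nu\le d$, which by Definition~\ref{sec:def-parabolic-subalgebras} is precisely $\mathfrak{a}\oplus\mathfrak{m}$. There is no genuine technical obstacle; the whole argument is a pure index-bookkeeping exercise, and the only care required is in correctly enumerating the allowed index pairs for each combination of $\theta$- and $\sigma$-parities.
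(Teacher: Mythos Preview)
Your proposal is correct and follows essentially the same approach as the paper: both determine the $\theta$- and $\sigma$-parities of each basis element $F_{\mu\nu}$ and sort them into the four joint eigenspaces, then use a dimension count (or directness of the eigenspace decomposition) to conclude equality. Your write-up is slightly more explicit in verifying that $\theta$ and $\sigma$ commute and in computing $\sigma(F_{\mu\nu})=J_{\mu\mu}J_{\nu\nu}F_{\mu\nu}$, but there is no substantive difference.
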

\begin{proof}
    The vector $F_{\mu,\nu}$ is $\sigma$-invariant if none or both of $\mu,\nu$ are $0,d+1$, and $\sigma$-antiinvariant if exactly one index
    is $0,d+1$. Similarly, $F_{\mu,\nu}$ is $\theta$-invariant if none or
    both of $\mu,\nu$ are $>p$ and $\theta$-antiinvariant if exactly one
    index is $>p$. This shows ``$\supset$'' for all four claimed equations.
    For ``$\subseteq$'' we recall that the sum
    \[
        \mathfrak{g} = \mathfrak{k}^\sigma \oplus
        \mathfrak{k}^{-\sigma} \oplus \mathfrak{p}^\sigma \oplus \mathfrak{p}^{-\sigma}
    \]
    is direct and that the spaces that make up the right-hand sides
    add up to $\mathfrak{g}$.
\end{proof}

\begin{definition}
    If $q=0$, define
    \[
        \mathfrak{t} := \RR F_{01},\qquad
        \mathfrak{a} := \RR F_{d,d+1};
    \]
    if $q>0$, define
    \[
        \mathfrak{t} := \operatorname{span}\{F_{0,1},F_{d,d+1}\},\qquad
        \mathfrak{a} := 0.
    \]
    Let $\mathfrak{c}:= \mathfrak{t}\oplus\mathfrak{a}$.
\end{definition}

\begin{lemma}
    $\mathfrak{t}\subseteq\mathfrak{k}^{-\sigma}$ is maximally commutative as is
    $\mathfrak{c}\subseteq\mathfrak{g}^{-\sigma}$.
\end{lemma}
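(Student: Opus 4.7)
The plan is to verify the two maximality statements via direct computation with the basis $(F_{\mu\nu})_{0\le\mu<\nu\le d+1}$ and the explicit structure constants
\[
\comm{F_{\mu\nu}}{F_{\rho\sigma}} =
\eta_{\nu\rho} F_{\mu\sigma} + \eta_{\mu\sigma}F_{\nu\rho}
- \eta_{\mu\rho}F_{\nu\sigma} - \eta_{\nu\sigma} F_{\mu\rho}
\]
written down earlier. The containments $\mathfrak{t}\subseteq\mathfrak{k}^{-\sigma}$ and $\mathfrak{c}\subseteq\mathfrak{g}^{-\sigma}$ follow directly from the description of the four eigenspaces in the previous lemma: $F_{0,1}$ has exactly one index in $\{0,d+1\}$ with the other index $\le p$, so it lies in $\mathfrak{k}^{-\sigma}$; $F_{d,d+1}$ likewise lies in $\mathfrak{k}^{-\sigma}$ if $d>p$ (the case $q>0$) and in $\mathfrak{p}^{-\sigma}$ if $d=p$ (the case $q=0$). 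Commutativity of $\mathfrak{t}$ and of $\mathfrak{c}$ reduces to checking $\comm{F_{0,1}}{F_{d,d+1}}=0$: all four $\eta$-coefficients in the bracket formula vanish because the index sets $\{0,1\}$ and $\{d,d+1\}$ are disjoint (using $d\ge2$, which is forced by $d>2$ in the standing assumption) and $\eta$ is diagonal.

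For the maximality of $\mathfrak{t}$ in $\mathfrak{k}^{-\sigma}$, I would take a general
\[
X = \sum_{1\le i\le p} a_i F_{0,i} + \sum_{p<j\le d} b_j F_{j,d+1}\in\mathfrak{k}^{-\sigma}
\]
(the second sum being empty when $q=0$) and impose $\comm{F_{0,1}}{X}=0$ and, when $q>0$, also $\comm{F_{d,d+1}}{X}=0$. The bracket formula gives
\[
\comm{F_{0,1}}{F_{0,i}} = -F_{1,i}\quad(i\ge 2),\qquad
\comm{F_{0,1}}{F_{j,d+1}} = 0\quad(j>p),
\]
\[
\comm{F_{d,d+1}}{F_{0,1}} = 0,\qquad
\comm{F_{d,d+1}}{F_{j,d+1}} = F_{d,j}\quad(p<j<d).
\]
The outputs $F_{1,i}$ ($2\le i\le p$) and $F_{d,j}$ ($p<j<d$) are linearly independent basis vectors, so the commutation relations kill every coefficient except $a_1$ and (if $q>0$) $b_d$, leaving $X\in\mathfrak{t}$.

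For the maximality of $\mathfrak{c}$ in $\mathfrak{g}^{-\sigma}$, the argument is the same with additional summands from $\mathfrak{p}^{-\sigma}$: I expand
\[
X = \sum_{i=1}^p a_i F_{0,i} + \sum_{p<j\le d} b_j F_{j,d+1} + \sum_{p<j\le d} a'_j F_{0,j} + \sum_{i=1}^p b'_i F_{i,d+1}
\]
and again impose brackets with $F_{0,1}$ and $F_{d,d+1}$. The only new nonzero brackets are
\[
\comm{F_{0,1}}{F_{0,j}} = -F_{1,j}\ (j>p),\qquad
\comm{F_{0,1}}{F_{i,d+1}} = \delta_{1,i}F_{0,d+1},
\]
\[
\comm{F_{d,d+1}}{F_{i,d+1}} = F_{d,i}\ (i\le p),
\]
all of whose outputs are again linearly independent new basis vectors. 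This forces every $a'_j$, every $b'_i$ (including $b'_1$ via the $F_{0,d+1}$ summand), and all coefficients except $a_1$ and $b_d$ (in the case $q>0$), or $a_1$ and $c_d$ (in the case $q=0$, where we instead have the summand $c_d F_{d,d+1}$ from $\mathfrak{p}^{-\sigma}$), to vanish. Hence $X\in\mathfrak{c}$.

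The only real subtlety is bookkeeping: the description of $\mathfrak{t}$ differs between the cases $q=0$ and $q>0$, so I would split the argument accordingly, but in both cases every spurious bracket output lies in one of $\mathfrak{k}^\sigma$, $\mathfrak{p}^\sigma$ or $\mathfrak{p}^{-\sigma}\setminus\RR F_{d,d+1}$, and these are independent of the generators of $\mathfrak{t}$ (resp.\ $\mathfrak{c}$). There is no deep step here; the main care required is to ensure that no two bracket outputs coincide (hence cancel) — this is visible from the index structure, since the outputs $F_{1,i}, F_{d,j}, F_{0,d+1}$ all involve distinct index pairs.
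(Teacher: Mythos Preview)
Your approach is correct and is essentially the same direct bracket computation as the paper's. Two small remarks: (1) in your list of ``new nonzero brackets'' you omit $\comm{F_{d,d+1}}{F_{0,d}}$, which also produces an $F_{0,d+1}$ term --- harmless, since $a'_d$ is already killed by the $F_{1,d}$-coefficient of $\comm{F_{0,1}}{X}$; (2) the paper streamlines the bookkeeping by not splitting along $\theta$ (writing simply $X=\sum_i(a_iF_{0,i}+b_iF_{i,d+1})$) and by observing that for $q>0$ the first maximality claim follows from the second since then $\mathfrak{t}=\mathfrak{c}$.
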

\begin{proof}
    For the first claim, we distinguish between $q=0,q>0$. If $q=0$, suppose
    $X\in\mathfrak{k}^{-\sigma}$ commutes with $F_{0,1}$, say
    \[
        X = \sum_{i=1}^d a_i F_{0,i}
    \]
    (note that there is no $p<j\le d$). Then
    \[
        0 = \comm{F_{0,1}}{X} = -\sum_{i=1}^d a_i F_{1,i}
        = -\sum_{i=2}^d a_i F_{1,i}.
    \]
    Since all the $F_{1,i}$ appearing in the last sum are linearly independent, we have $a_i=0$ for $i=2,\dots,d$, showing that $X=a_1F_{0,1}$. This shows that $\mathfrak{t}\subseteq\mathfrak{k}^{-\sigma}$ is maximally commutative.

    The $q>0$ case is covered by the second claim, so we will show that one
    instead. Let $X\in\mathfrak{g}^{-\sigma}$ commute with $F_{0,1},F_{d,d+1}$, say
    \[
        X = \sum_{i=1}^d (a_i F_{0,i} + b_i F_{i,d+1}).
    \]
    Then we have
    \begin{align*}
        0 &= [F_{0,1}, X] = \sum_{i=1}^d
        (-a_i F_{1,i} + \delta_{i,1} b_i F_{0,d+1})\\
        &= b_1 F_{0,d+1} - \sum_{i=2}^d a_i F_{1,i}\\
        &= [F_{d,d+1}, X] = \sum_{i=1}^d
        (a_i \delta_{i,d} F_{0,d+1} - b_i F_{i,d})\\
        &= a_d F_{0,d+1} - \sum_{i=1}^{d-1} b_i F_{i,d}.
    \end{align*}
    Since all $F_{\mu,\nu}$ appearing in the 2nd and 4th line are linearly
    independent, we conclude that all coefficients except for $a_1$ and $b_d$ are zero, whence $X\in\mathfrak{c}$.
\end{proof}

\begin{definition}
    Write $C:=\exp(\mathfrak{c})$ for the corresponding Cartan subset and $T:=\exp(\mathfrak{t})$ for its compact torus. The elements
    of $T$ are denoted by
    \[
        t_\phi := \exp(\phi F_{0,1}) = \mqty(\cos(\phi) & \sin(\phi) & 0\\-\sin(\phi) & \cos(\phi) & 0\\0 & 0 & 1)
    \]
    (written as $1+1+d$-block matrix)
    for $q=0$ and
    \[
        t_{\phi,\psi} := \exp(\phi F_{0,1} + \psi F_{d,d+1})
        = \mqty(\cos(\phi) & \sin(\phi) & 0 & 0 & 0\\-\sin(\phi) & \cos(\phi) & 0 & 0 & 0\\0 & 0 & 1 & 0 & 0\\
        0 & 0 & 0 & \cos(\psi) & -\sin(\psi)\\
        0 & 0 & 0 & \sin(\psi) & \cos(\psi))
    \]
    (written as $1+1+(d-2)+1+1$-block matrix).
\end{definition}

Since Cartan subsets (with respect to $C$) are of the shape $\exp(\mathfrak{c}')t$ for $\mathfrak{c}'\subseteq \mathfrak{g}^{-\sigma}\cap\Ad(t)(\mathfrak{g}^{-\sigma})$ of dimension 2, it is useful to know
what this intersection looks like.
\begin{proposition}\label{sec:prop-intersection-adjoint}
    \begin{enumerate}
        \item 
    For $q=0$ and $\phi\in\RR$ the space $\mathfrak{g}^{-\sigma}\cap\Ad(t_\phi)(\mathfrak{g}^{-\sigma})$ is spanned by $\mathfrak{c}$ and
    \[
        \begin{cases}
            F_{2,d+1},\dots,F_{d-1,d+1} & \phi\not\in \pi\ZZ\\
            F_{0,2},\dots, F_{0,d},F_{1,d+1},\dots,F_{d-1,d+1} & \phi\in\pi\ZZ.
        \end{cases}.
    \]
    \item For $q>0$ and $\phi,\psi\in\RR$, the space
    $\mathfrak{g}^{-\sigma}\cap\Ad(t_{\phi,\psi})(\mathfrak{g}^{-\sigma})$
    is spanned by $\mathfrak{c}$ and
    \[
        \begin{cases}
            0 & \phi,\psi,\psi+\phi,\psi-\phi\not\in \pi\ZZ\\
            F_{0,d}\mp F_{1,d+1} & \phi,\psi,\psi\pm\phi\not\in\pi\ZZ,
            \psi\mp \phi\in\pi\ZZ\\
            F_{0,d},F_{1,d+1} & \phi,\psi\in \frac{\pi}{2} + \pi\ZZ,\\
            F_{2,d+1},\dots,F_{d-1,d+1} & \phi\not\in\pi\ZZ,
            \psi\in\pi\ZZ,\\
            F_{0,2},\dots, F_{0,d-1} & \psi\not\in\pi\ZZ,\phi\in\pi\ZZ,\\
            F_{0,2},\dots,F_{0,d},F_{1,d+1},\dots,F_{d-1,d+1} & \phi,\psi\in\pi\ZZ.
        \end{cases}
    \]
    \end{enumerate}
\end{proposition}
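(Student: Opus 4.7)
The strategy is to reduce both parts of the proposition to a single eigenspace calculation for $\Ad(t^2)$, using that $t \in T \subseteq \exp(\mathfrak{k}^{-\sigma})$ satisfies $\sigma(t) = t^{-1}$. For any $X \in \mathfrak{g}^{-\sigma}$ the identity $\sigma \circ \Ad(t^{-1}) = \Ad(\sigma(t^{-1})) \circ \sigma = \Ad(t) \circ \sigma$ gives $\sigma(\Ad(t^{-1})(X)) = -\Ad(t)(X)$, so $\Ad(t^{-1})(X) \in \mathfrak{g}^{-\sigma}$ iff $\Ad(t^2)(X) = X$. The first step is thus to establish
\[
\mathfrak{g}^{-\sigma} \cap \Ad(t)(\mathfrak{g}^{-\sigma}) \;=\; \mathfrak{g}^{-\sigma} \cap \ker\bigl(\Ad(t^2) - \operatorname{id}\bigr),
\]
and with $t_\phi^2 = t_{2\phi}$ (resp.\ $t_{\phi,\psi}^2 = t_{2\phi, 2\psi}$) to describe the right-hand side explicitly.

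The second step is to decompose the basis $\{F_{\mu\nu}\}$ with respect to the commuting operators $\ad(F_{0,1})$ and (for $q>0$) $\ad(F_{d, d+1})$, both of which have only eigenvalues $0, \pm i$. This splits $\mathfrak{g}$ into: (a) the common kernel, containing $F_{0,1}$, $F_{d, d+1}$, and the $F_{\mu\nu}$ whose indices both lie in $\{2, \ldots, d-1\}$, on which $\Ad(t^2)$ acts as the identity; (b) for each $j \in \{2, \ldots, d-1\}$, the two-plane $\operatorname{span}\{F_{0, j}, F_{1, j}\}$ on which $\Ad(t^2)$ is rotation by $2\phi$; (c) for $q>0$, the two-planes $\operatorname{span}\{F_{j, d}, F_{j, d+1}\}$ ($2 \le j \le d-1$) on which $\Ad(t^2)$ is rotation by $2\psi$; and (d) for $q>0$, the four-plane $V = \operatorname{span}\{F_{0, d}, F_{0, d+1}, F_{1, d}, F_{1, d+1}\}$ on which both $\ad$-actions are nontrivial and commute independently. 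In (a) the intersection with $\mathfrak{g}^{-\sigma}$ picks out exactly $\mathfrak{c}$, and in (b), (c) the $1$-eigenspace is trivial unless $\phi \in \pi\ZZ$ (resp.\ $\psi \in \pi\ZZ$), whence $\mathfrak{g}^{-\sigma}$-membership contributes the listed vectors $F_{0, j}$ (resp.\ $F_{j, d+1}$).

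The heart of the argument is case (d). Over $\CC$ I would simultaneously diagonalize the two $\ad$-operators on $V_\CC$ to obtain four joint eigenlines for $\Ad(t^2)$ with eigenvalues $\exp(\pm 2i(\phi+\psi))$ and $\exp(\pm 2i(\phi - \psi))$. A direct computation then shows that the real $1$-eigenspace of $V$ equals $\operatorname{span}_\RR\{F_{0, d} + F_{1, d+1},\, F_{1, d} - F_{0, d+1}\}$ when only $\phi+\psi \in \pi\ZZ$, equals $\operatorname{span}_\RR\{F_{0, d} - F_{1, d+1},\, F_{1, d} + F_{0, d+1}\}$ when only $\phi-\psi \in \pi\ZZ$, and is the direct sum of the two when both conditions hold. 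Since $F_{0, d+1}$ and $F_{1, d}$ are $\sigma$-invariant (both indices in, respectively neither index in, $\{0, d+1\}$), one has $\mathfrak{g}^{-\sigma} \cap V = \operatorname{span}\{F_{0, d}, F_{1, d+1}\}$; intersecting with the real $1$-eigenspace forces the coefficients of $F_{1, d}, F_{0, d+1}$ to vanish and yields exactly $F_{0, d} \mp F_{1, d+1}$ when $\psi \mp \phi \in \pi\ZZ$ is the unique contributing condition, and the whole $\operatorname{span}\{F_{0, d}, F_{1, d+1}\}$ when both conditions are present. This last scenario occurs precisely for $\phi, \psi \in \tfrac{\pi}{2}\ZZ$; it splits into the subcase $\phi, \psi \in \tfrac{\pi}{2} + \pi\ZZ$ (where (b), (c) contribute nothing, giving the proposition's $F_{0, d}, F_{1, d+1}$ line) and the subcase $\phi, \psi \in \pi\ZZ$ (where (b), (c) contribute the remaining listed vectors and the full $\mathfrak{g}^{-\sigma}$ is recovered).

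Assembling the contributions case by case, together with the ever-present $\mathfrak{c}$ from (a), reproduces the proposition. The main obstacle is case (d): setting up the simultaneous $\CC$-diagonalization cleanly, and matching the sign in the proposition's ``$\mp$'' convention to the sign appearing in the real $1$-eigenspace of $V$. Everything else reduces to routine two-dimensional rotation analysis on commuting invariant subspaces.
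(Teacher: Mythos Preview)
Your approach is correct and genuinely different from the paper's. The paper proceeds by brute force: it writes a general $X = \sum_i (a_i F_{0,i} + b_i F_{i,d+1}) \in \mathfrak{g}^{-\sigma}$, computes $\Ad(t_\phi)(X)$ (resp.\ $\Ad(t_{\phi,\psi})(X)$) explicitly in the $F_{\mu\nu}$ basis, and reads off which coefficients of the $\sigma$-invariant basis vectors must vanish. In the $q>0$ case this leads to the conditions $a_i\sin\phi = 0$, $b_i\sin\psi = 0$ for $2\le i\le d-1$, together with a $2\times 2$ linear system in $(a_d,b_1)$ whose determinant is $\sin(\psi+\phi)\sin(\psi-\phi)$ (up to a factor); the six cases are then obtained by a direct case split on which of $\sin\phi,\sin\psi,\sin(\psi\pm\phi)$ vanish.

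Your route first trades the condition ``$\Ad(t^{-1})(X)\in\mathfrak{g}^{-\sigma}$'' for ``$\Ad(t^2)(X)=X$'' via $\sigma(t)=t^{-1}$, and then exploits the joint spectral decomposition of $\ad(F_{0,1})$ and $\ad(F_{d,d+1})$ to split $\mathfrak{g}$ into invariant planes on which $\Ad(t^2)$ acts by rotations with angles $2\phi$, $2\psi$, $2(\phi\pm\psi)$. This is more conceptual: it explains \emph{why} precisely the quantities $\phi,\psi,\phi\pm\psi$ govern the answer (they are the characters of $T$ on $\mathfrak{g}^{-\sigma}$), and it makes the four-plane $V$ the only nontrivial piece. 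The paper's direct computation is shorter to write down but hides this structure in a matrix calculation; your approach pays a small setup cost (the reduction lemma and the identification of the invariant planes) but then each case becomes a one-line rotation check. Both arrive at the same case table, and your handling of the sign-matching in the $F_{0,d}\mp F_{1,d+1}$ case is consistent with the paper's convention.
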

\begin{proof}
\begin{enumerate}
    \item Let $X\in\mathfrak{g}^{-\sigma}$, say
    \[
        X = \sum_{i=1}^d (a_i F_{0,i} + b_i F_{i,d+1}),
    \]
    then
    \begin{align*}
        \Ad(t_\phi)(X) &= a_1 F_{0,1} + b_1(\cos(\phi)F_{1,d+1} + \sin(\phi)F_{0,d+1})\\
        &\quad + \sum_{i=2}^d \qty(
        a_i (\cos(\phi)F_{0,i} - \sin(\phi)F_{1,i})
        + b_i F_{i,d+1}),
    \end{align*}
    which is an element of $\mathfrak{g}^{-\sigma}$ iff
    $b_1\sin(\phi) = a_i \sin(\phi) = 0$ ($i=2,\dots,d$).
    If $\phi\not\in\pi\ZZ$, we have $\sin(\phi)\ne0$, so
    that $b_1=a_2=\cdots=a_d=0$, and thus $X$ lies in the
    span of $F_{0,1}$ and $F_{i,d+1}$ ($i=2,\dots,d$).

    \item Let $X\in\mathfrak{g}^{-\sigma}$, say
    \[
        X = \sum_{i=1}^d (a_i F_{0,i} + b_i F_{i,d+1}),
    \]
    then
    \begin{align*}
        \Ad(t_{\phi,\psi})(X) &= \sum_{i=2}^{d-1}
        \qty(a_i (\cos(\phi)F_{0,i} - \sin(\phi)F_{1,i})
        + b_i (\cos(\psi) F_{i,d+1} - \sin(\psi)F_{i,d}))\\
        &\quad + a_1 F_{0,1} + b_d F_{d,d+1}\\
        &\quad + a_d(\cos(\psi)\cos(\phi) F_{0,d}
        - \cos(\psi)\sin(\phi) F_{1,d}\\
        &\qquad + \sin(\psi)\cos(\phi) F_{0,d+1}
        - \sin(\psi)\sin(\phi) F_{1,d+1})\\
        &\quad + b_1(\cos(\psi)\cos(\phi) F_{1,d+1}
        + \cos(\psi)\sin(\phi) F_{0,d+1}\\
        &\qquad - \sin(\psi)\cos(\phi) F_{1,d}
        - \sin(\psi)\sin(\phi) F_{0,d})\\
        &= \sum_{i=2}^{d-1}
        \qty(a_i (\cos(\phi)F_{0,i} - \sin(\phi)F_{1,i})
        + b_i (\cos(\psi) F_{i,d+1} - \sin(\psi)F_{i,d}))\\
        &\quad +\qty(a_d \cos(\psi)\cos(\phi) - b_1 \sin(\psi)\sin(\phi))F_{0,d}\\
        &\quad +\qty(a_d \sin(\psi)\cos(\phi) + b_1 \cos(\psi)\sin(\phi))F_{0,d+1}\\
        &\quad -\qty(a_d \cos(\psi)\sin(\phi) + b_1 \sin(\psi)\cos(\phi)) F_{1,d}\\
        &\quad +\qty(-a_d \sin(\psi)\sin(\phi) + b_1 \cos(\psi)\cos(\phi)) F_{1,d+1},
    \end{align*}
    which is contained in $\mathfrak{g}^{-\sigma}$ iff
    \begin{align*}
        0 &= a_i \sin(\phi) \qquad (i=2,\dots,d-1)\\
        0 &= b_i \sin(\psi) \qquad (i=2,\dots,d-1)\\
        0 &= \mqty(\sin(\psi)\cos(\phi) & \cos(\psi)\sin(\phi)\\
        \cos(\psi)\sin(\phi) & \sin(\psi)\cos(\phi))\mqty(a_d\\b_1).
    \end{align*}
    If none of $\phi,\psi,\psi+\phi,\psi-\phi$ are contained in $\pi\ZZ$, we have
    $\sin(\phi),\sin(\psi)\ne0$ and the matrix is regular. This shows that
    $a_i=b_i=0$ ($i=2,\dots,d-1$), as well as $a_d=b_1=0$, so that
    $X\in\mathfrak{c}$.

    If $\phi,\psi,\psi\pm\phi\not\in\pi\ZZ$ but $\psi\mp\phi\in\pi\ZZ$, the
    third condition reads
    \[
        0 = \sin(\psi)\cos(\phi) \mqty(1 & \pm1\\\pm1 & 1)\mqty(a_d\\b_1),
    \]
    where $\sin(\psi)\cos(\phi)\ne0$ (by assumption $\sin(\psi)\ne0$; if
    $\cos(\phi)=0$, we had $2\phi\in \pi\ZZ$, but then $\psi\pm\phi = (\psi\mp\phi) \pm 2\phi \in \pi\ZZ$ as well). This shows that
    $a_i=b_i=0$ ($i=2,\dots,d-1$) and $a_d\pm b_1=0$. Consequently, $X$
    is spanned by $\mathfrak{c}$ and $F_{0,d}\mp F_{1,d+1}$.

    If $\phi,\psi\not\in\pi\ZZ$ but $\psi+\phi,\psi-\phi\in\pi\ZZ$, we have
    $\phi,\psi\in\frac{\pi}{2} + \pi\ZZ$ and thus the conditions read
    \[
        a_i=b_i=0 \qquad (i=2,\dots,d-1),
    \]
    so that $X$ lies in the span of $\mathfrak{c}$ and $F_{0,d},F_{1,d+1}$.

    If $\phi\not\in\pi\ZZ$ but $\psi\in\pi\ZZ$, the conditions read
    \[
        0 = a_i\qquad (i=2,\dots,d-1)\qquad
        0 = \cos(\psi)\sin(\phi)\mqty(0 & 1\\1 & 0)\mqty(a_d\\b_1),
    \]
    where neither $\cos(\psi)$ nor $\sin(\phi)$ are zero. Thus we have
    $a_i=0$ ($i=2,\dots,d-1$) and $a_d=b_1=0$. Thus, $X$ lies in the span
    of $\mathfrak{c}$ and $F_{2,d+1},\dots,F_{d-1,d+1}$.

    If conversely $\psi\not\in\pi\ZZ$ and $\phi\in\pi\ZZ$, we get
    $b_i=0$ ($i=2,\dots,d-1$) and $a_d=b_1=0$, thus $X$ lies in the span
    of $\mathfrak{c}$ and $F_{0,2},\dots,F_{0,d-1}$.

    Lastly, if $\psi,\phi\in\pi\ZZ$, we also have $\psi+\phi,\psi-\phi\in\pi\ZZ$. Thus, all conditions are satisfied and thus $X$ lies in the
    span of $\mathfrak{c}$ and $F_{0,2},\dots,F_{0,d},F_{1,d+1},\dots,F_{d-1,d+1}$.
\end{enumerate}
\end{proof}

We now first classify the Cartan subsets of $G$ for the case $q=0$.
\begin{proposition}
    If $q=0$, $C$ is the only Cartan subset of $G$ (relative to $C$).
\end{proposition}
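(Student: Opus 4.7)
The plan is to show that every standard Cartan subset $C' = \exp(\mathfrak{c}')t$ actually coincides with $C$, which a fortiori makes it conjugate to $C$.

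Recall that, by definition, $\mathfrak{c}' = \mathfrak{t}' \oplus \mathfrak{a}'$ with $\mathfrak{t}' \le \mathfrak{t}$, $\mathfrak{a} \le \mathfrak{a}' \subseteq \mathfrak{p}$, $\mathfrak{c}'$ commutative, and $\dim(\mathfrak{c}') = 2$. For $q = 0$ both $\mathfrak{t}$ and $\mathfrak{a}$ are one-dimensional, so the only possibilities for $(\dim \mathfrak{t}', \dim \mathfrak{a}')$ are $(1, 1)$ and $(0, 2)$.

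The crux is to rule out the $(0, 2)$ case. From $\mathfrak{c}' \le \mathfrak{g}^{-\sigma}$ together with $\mathfrak{a}' \le \mathfrak{p}$ we obtain $\mathfrak{a}' \le \mathfrak{p} \cap \mathfrak{g}^{-\sigma} = \mathfrak{p}^{-\sigma}$, which for $q = 0$ is spanned by $\{F_{i, d+1}\}_{i=1}^d$. A one-line calculation with the structure constants yields
\[
    [F_{i, d+1}, F_{j, d+1}] = F_{i, j} \qquad (1 \le i \ne j \le d),
\]
so no two linearly independent vectors of $\mathfrak{p}^{-\sigma}$ commute. Hence every commutative subspace of $\mathfrak{p}^{-\sigma}$ is at most one-dimensional, and the inclusion $\mathfrak{a} \le \mathfrak{a}'$ forces $\mathfrak{a}' = \mathfrak{a}$. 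The dimension count then gives $\mathfrak{t}' = \mathfrak{t}$, hence $\mathfrak{c}' = \mathfrak{c}$, and since $t \in T \le \exp(\mathfrak{c}) = C$ we conclude $C' = \exp(\mathfrak{c})t = Ct = C$.

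I do not expect any real obstacle here: the argument essentially reduces to the observation that $\mathfrak{p}^{-\sigma}$ contains no two-dimensional abelian subspace, which is immediate from the bracket above. In principle one could alternatively pass through the explicit description of $\mathfrak{g}^{-\sigma} \cap \Ad(t_\phi)(\mathfrak{g}^{-\sigma})$ furnished by Proposition~\ref{sec:prop-intersection-adjoint} and check commutativity case by case (distinguishing $\phi \in \pi\ZZ$ from $\phi \notin \pi\ZZ$), but the direct bracket argument on $\mathfrak{p}^{-\sigma}$ is cleaner and independent of $t$.
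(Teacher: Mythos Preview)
Your proof is correct and follows essentially the same approach as the paper: split into cases according to $\dim\mathfrak{t}'$, and in the case $\mathfrak{t}'=0$ show that $\mathfrak{p}^{-\sigma}$ contains no two-dimensional commutative subspace. The only cosmetic difference is that the paper uses the constraint $\mathfrak{a}\le\mathfrak{a}'$ to fix one element as $F_{d,d+1}$ and then shows nothing else in $\mathfrak{p}^{-\sigma}$ commutes with it, whereas you compute $[F_{i,d+1},F_{j,d+1}]=F_{i,j}$ directly and conclude (via the $2\times 2$ minor argument, which you leave implicit) that any two commuting elements are proportional; both arguments are equivalent in content and length.
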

\begin{proof}
    Let $C'=\exp(\mathfrak{c}')t_\phi$ be a Cartan subset, with
    $\mathfrak{c}'=\mathfrak{t}'\oplus\mathfrak{a}'$.

    If $\mathfrak{t}'=\mathfrak{t}$, we have $\mathfrak{c}'=\mathfrak{c}$,
    then $t$ can be absorbed into $\exp(\mathfrak{c}')$. Consequently,
    $C'=C$.

    Otherwise, $\mathfrak{t}'=0$ and $\mathfrak{a}'$ is a two-dimensional
    commutative subspace of $\mathfrak{p}^{-\sigma}\cap\Ad(t_\phi)(\mathfrak{g}^{-\sigma})$ spanned say by $F_{d,d+1}$ and $X$. Expand $X\in\mathfrak{p}^{-\sigma}$ as
    \[
        X = \sum_{i=1}^d a_i F_{i,d+1}.
    \]
    Note that
    \[
    0 = \comm{F_{d,d+1}}{X} = - \sum_{i=1}^d a_i F_{i,d}
    = -\sum_{i=1}^{d-1} a_i F_{i,d},
    \]
    which implies that $a_1=\cdots=a_{d-1}=0$, whence $X$ is a multiple of
    $F_{d,d+1}$ and $\mathfrak{a}'$ isn't two dimensional. Thus, there is no
    Cartan subset with $\mathfrak{t}'=0$.
\end{proof}

Interestingly, we will now demonstrate that there are more options for Cartan subsets in cases $q>0$:
\begin{proposition}\label{sec:prop-cartan-subsets-Lorentzian}
    For $q>0$, the Cartan subsets of $G$ are $C'=\exp(\mathfrak{c}')t$ for
    \begin{enumerate}
        \item $\mathfrak{c}'=\mathfrak{c}$ and $t=1$;
        \item $\mathfrak{c}'$ is spanned by $F_{0,1}\pm F_{d,d+1}$
        and $F_{0,d}\mp F_{1,d+1}$ and $t$ is $1$ or $t_{0,\pi}$;
        \item $\mathfrak{t}'=\RR F_{0,1}$ and
        $\mathfrak{a}'$ is a 1-dimensional subspace of the span of
        $F_{2,d+1},\dots, F_{p,d+1}$, and $t$ is either $1$ or $t_{0,\pi}$;
        \item $\mathfrak{t}'=\RR F_{d,d+1}$ and
        $\mathfrak{a}'$ is a 1-dimensional subspace of the span of
        $F_{0,p+1},\dots, F_{0,d-1}$ and $t$ is either $1$ or $t_{\pi,0}$;
        \item $\mathfrak{c}'=\RR F_{0,d}\oplus\RR F_{1,d+1}$ and $t=t_{\phi,\psi}$ for $\phi,\psi\in\frac{\pi}{2}+\pi\ZZ$;
        \item $\mathfrak{t}'=0$ and $\mathfrak{a}'$ is a two-dimensional
        commutative subalgebra of the span of $F_{0,p+1},\dots,F_{0,d},F_{1,d+1},\dots,F_{p,d+1}$ with
        $t=t_{\phi,\psi}$ for $\phi,\psi\in\pi\ZZ$.
    \end{enumerate}
\end{proposition}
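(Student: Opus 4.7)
The plan is to proceed case-by-case, following the seven mutually exclusive regimes for $(\phi,\psi)$ covered by Proposition~\ref{sec:prop-intersection-adjoint}. Since $\dim\mathfrak{c}=2$ and $\mathfrak{a}=0$ for $q>0$, a standard Cartan subset must be of the form $\exp(\mathfrak{c}')t$ with $t=t_{\phi,\psi}\in T$ and $\mathfrak{c}'$ a 2-dimensional commutative subspace of $V_t:=\mathfrak{g}^{-\sigma}\cap\Ad(t)(\mathfrak{g}^{-\sigma})$ admitting a decomposition $\mathfrak{c}'=\mathfrak{t}'\oplus\mathfrak{a}'$ with $\mathfrak{t}'\subseteq\mathfrak{t}$ and $\mathfrak{a}'\subseteq\mathfrak{p}$. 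For each regime I will enumerate all admissible $\mathfrak{c}'$ using the explicit $V_t$ furnished by Proposition~\ref{sec:prop-intersection-adjoint}, then cut down the corresponding set of parameters $t$ via the redundancy $\exp(\mathfrak{c}')t=\exp(\mathfrak{c}')(ts)$ for $s\in\exp(\mathfrak{t}')\subseteq T$.

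The substance of the proof is a short collection of Lie-bracket computations in $\mathfrak{g}=\mathfrak{so}(p+1,q+1)$. The key facts are: (a) $[F_{0,d},F_{1,d+1}]=0$, accounting for case~(v); (b) a direct calculation shows $[F_{0,1}\pm F_{d,d+1},\,F_{0,d}\mp F_{1,d+1}]=0$ while the mixed-sign combinations are nonzero, which yields case~(ii); (c) for $i\neq j$ in $\{2,\ldots,d-1\}$ one has $[F_{i,d+1},F_{j,d+1}]=F_{i,j}\neq 0$ and $[F_{0,i},F_{0,j}]=-F_{i,j}\neq 0$, while $[F_{d,d+1},F_{i,d+1}]=F_{d,i}\neq 0$ for $i<d$ and $[F_{0,1},F_{0,j}]=0$ for $j\geq 2$, so the only surviving pairings in the regimes of Proposition~\ref{sec:prop-intersection-adjoint} that add $F_{i,d+1}$'s or $F_{0,j}$'s are those giving cases~(iii) and~(iv); (d) in the zero-zero regime the cross-commutator $[F_{0,j},F_{i,d+1}]=\eta_{i,j}F_{0,d+1}$ vanishes whenever the index ranges $i\in\{1,\ldots,p\}$ and $j\in\{p+1,\ldots,d\}$ are disjoint, which is automatic, and the resulting 2-dimensional commutative subspaces inside $\mathfrak{p}\cap V_{1}$ are exactly those of case~(vi). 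The decomposition constraint $\mathfrak{c}'=\mathfrak{t}'\oplus\mathfrak{a}'$ forces any spanning vector of $\mathfrak{c}'$ to refine into a pure $\mathfrak{t}$-part and a pure $\mathfrak{p}$-part, so together with the brackets above this pins down exactly the six shapes listed.

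With the shape of $\mathfrak{c}'$ fixed, the admissible $t$'s are those for which $\mathfrak{c}'\subseteq V_t$, an explicit congruence condition on $(\phi,\psi)$ modulo $\pi$ read off from Proposition~\ref{sec:prop-intersection-adjoint}. The equivalence $t\sim ts$ with $s\in\exp(\mathfrak{t}')$ cuts the 2-torus $T$ down to a coset space of real dimension $2-\dim\mathfrak{t}'$, and intersecting the admissible region with this quotient yields the sets listed: a single coset in case~(i); two cosets (represented by $t=1$ and by $t_{0,\pi}$ or $t_{\pi,0}$) in cases~(ii)--(iv); and the explicit four-element sets $\phi,\psi\in\pi/2+\pi\ZZ$ respectively $\phi,\psi\in\pi\ZZ$ in cases~(v) and~(vi).

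The main obstacle will be the casework in regimes (v) and (vi), where $V_t$ becomes four-dimensional or larger and a priori admits many 2-dimensional commutative subspaces; there one must systematically combine the split condition with the bracket identities above to discard all but the listed shapes. A secondary bookkeeping nuisance is that the case~(i) subset and the case~(ii) shapes are available in several of the seven regimes simultaneously, so when aggregating the results one must be careful not to overcount; this is handled transparently by parametrising each Cartan subset by the coset of $t$ in $T/\exp(\mathfrak{t}')$, which is the only remaining invariant after the redundancies have been quotiented out.
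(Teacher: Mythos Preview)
Your approach is sound and uses the same ingredients as the paper --- Proposition~\ref{sec:prop-intersection-adjoint} plus a handful of bracket identities --- but organised differently. The paper stratifies first by $\dim\mathfrak{t}'\in\{0,1,2\}$ rather than by the $(\phi,\psi)$-regime: since $\mathfrak{t}'\le\mathfrak{t}$ and $\dim\mathfrak{t}=2$, this gives only three top-level cases. In the $\dim\mathfrak{t}'=1$ case one writes $\mathfrak{t}'=\RR(aF_{0,1}+bF_{d,d+1})$ and $\mathfrak{a}'=\RR Y$ with a general $Y\in\mathfrak{p}^{-\sigma}$, and the single bracket $[aF_{0,1}+bF_{d,d+1},Y]$ forces exactly one of $a,b,a+b,a-b$ to vanish, producing cases~(ii)--(iv) in one stroke; the admissible $t$'s are then read off from Proposition~\ref{sec:prop-intersection-adjoint} and reduced modulo $\exp(\mathfrak{t}')$ exactly as you outline. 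This sidesteps the aggregation over regimes that you flag as a bookkeeping nuisance, since each shape $\mathfrak{c}'$ is produced exactly once.

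One concrete correction: in your item~(c) the claim $[F_{0,1},F_{0,j}]=0$ is false --- one has $[F_{0,1},F_{0,j}]=-F_{1,j}\neq0$ for $j\ge2$. The identity you actually need there (to see that $\RR F_{0,1}$ pairs with the $F_{i,d+1}$'s to give case~(iii)) is $[F_{0,1},F_{i,d+1}]=0$ for $i\ge2$; the nonvanishing of $[F_{0,1},F_{0,j}]$ is precisely what rules out $\mathfrak{t}'=\RR F_{0,1}$ in the regime that adds $F_{0,j}$'s, and dually $[F_{d,d+1},F_{0,j}]=0$ is what permits $\mathfrak{t}'=\RR F_{d,d+1}$ in case~(iv). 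With that swap your sketch goes through.
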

\begin{proof}
    $\mathfrak{t}'$ is a subspace of $\mathfrak{t}$, so it can have dimension 2, 1, or 0.

    If $\dim(\mathfrak{t}')=2$, then $\mathfrak{c}'=\mathfrak{t}'=\mathfrak{t}=\mathfrak{c}$. Furthermore, $t$ can be absorbed into
    $\exp(\mathfrak{c}')$, whence $C'=C$, which is the 1st case

    If $\dim(\mathfrak{t}')=1$, say spanned by $X=aF_{0,1}+bF_{d,d+1}$.
    Then $\mathfrak{a}'$ is spanned by $Y$, which lies in $\mathfrak{p}^{-\sigma}$. We can expand $Y$ as
    \[
        Y = \sum_{i=p+1}^d c_i F_{0,i} + \sum_{i=1}^p d_i F_{i,d+1},
    \]
    then
    \begin{align*}
        \comm{X}{Y} &= -\sum_{i=p+1}^d a c_i F_{1,i} + ad_1 F_{0,d+1}
        - \sum_{i=1}^p bd_i F_{i,d} + bc_d F_{0,d+1}\\
        &= - (ac_d + bd_1) F_{1,d} 
        + (ad_1 + bc_d) -\sum_{i=p+1}^{d-1} a c_i F_{1,i}
        - \sum_{i=2}^p bd_i F_{i,d}.
    \end{align*}
    All basis vectors in the last equation are linearly independent, so we 
    require
    \begin{align*}
        0 &= a c_i \qquad (i=p+1,\dots,d-1)\\
        0 &= b d_i \qquad (i=2,\dots,p)\\
        0 & \mqty(a & b\\b & a)\mqty(c_d\\d_1).
    \end{align*}
    If the matrix is regular, i.e. $a^2-b^2\ne0$, and $a,b\ne0$ as well,
    then $Y=0$, which contradicts $\dim(\mathfrak{a}')=1$. Consequently, at least one of $a,b,a+b,a-b$ must be zero.
    If more than one is zero, all others must be zero as well, which implies
    $X=0$, which contradicts $\dim(\mathfrak{t}')=1$.

    In case $a\pm b=0$, say $X$ being a multiple of $F_{0,1}\mp F_{d,d+1}$,
    then $Y$ must be a multiple of $F_{0,d}\pm F_{1,d+1}$. By Proposition~\ref{sec:prop-intersection-adjoint}(ii), this is an element
    of $\mathfrak{g}^{-\sigma}\cap\Ad(t_{\phi,\psi})(\mathfrak{g}^{-\sigma})$ iff $\psi\pm\phi\in\pi\ZZ$. Then
    $t_{\phi,\psi} = t_{\phi,\mp\phi} t_{0,\psi\pm\phi}$, where
    $t_{\phi,\mp\phi}\in\exp(\mathfrak{c}')$ and $t_{0,\psi\pm\phi}=1$ or
    $t_{0,\pi}$. This is the 2nd case with signs reversed.

    In case $b=0$, we can take $X=F_{0,1}$ without loss of generality.
    Then $Y$ lies in the span of $F_{2,d+1},\dots, F_{p,d+1}$.
    By Proposition~\ref{sec:prop-intersection-adjoint}(ii), such an element is contained
    in the proper intersection iff $\psi\in\pi\ZZ$.
    Furthermore,
    $t=t_{\phi,\psi}=t_{\phi,0} t_{0,\psi}$ with $t_{\phi,0}\in\exp(\mathfrak{c}')$ and $t_{0,\psi}=1$ or $t_{0,\pi}$.
    This is the 3rd case.

    In case $a=0$, we can take $X=F_{d,d+1}$ without loss of generality.
    Then we have $Y$ in the span of $F_{0,p+1},\dots, F_{0,d-1}$.
    By Proposition~\ref{sec:prop-intersection-adjoint}(ii), such an element is contained
    in the proper intersection iff $\phi\in\pi\ZZ$. Furthermore,
    $t=t_{\phi,\psi}=t_{0,\psi} t_{\phi,0}$ with $t_{0,\psi}\in\exp(\mathfrak{c}')$ and $t_{\phi,0}=1$ or $t_{\pi,0}$. This
    is the 4th case.

    This covers all the cases for $\dim(\mathfrak{t}')=1$, let's now turn our attention to
    $\mathfrak{t}'=0$. In this case, $\mathfrak{a}'$ has to be a two-dimensional subalgebra of $\mathfrak{p}^{-\sigma}\cap\Ad(t_{\phi,\psi})(\mathfrak{g}^{-\sigma})$, which is spanned by
    \[
        \begin{cases}
            0 & \phi,\psi,\psi+\phi,\psi-\phi\not\in \pi\ZZ\\
            F_{0,d}\mp F_{1,d+1} & \phi,\psi,\psi\pm\phi\not\in\pi\ZZ,
            \psi\mp \phi\in\pi\ZZ\\
            F_{0,d},F_{1,d+1} & \phi,\psi\in \frac{\pi}{2} + \pi\ZZ,\\
            F_{2,d+1},\dots,F_{p,d+1} & \phi\not\in\pi\ZZ,
            \psi\in\pi\ZZ,\\
            F_{0,p+1},\dots, F_{0,d-1} & \psi\not\in\pi\ZZ,\phi\in\pi\ZZ,\\
            F_{0,p+1},\dots,F_{0,d},F_{1,d+1},\dots,F_{p,d+1} & \phi,\psi\in\pi\ZZ.
        \end{cases}
    \]
    This already excludes the first two lines. In case
    $\phi,\psi\in\frac{\pi}{2}+\pi\ZZ$, we can take $\mathfrak{a}'=\RR F_{0,d}\oplus\RR F_{1,d+1}$, which is a commutative subalgebra. This
    is the 5th case.

    In case one of $\phi,\psi$ is contained in $\pi\ZZ$ but the other is not,
    there exists no 2-dimensional commutative subalgebra, so the only remaining possible case is $\phi,\pi\in\pi\ZZ$, which is the 6th case.
\end{proof}

Next, we are interested which of these Cartan subsets are conjugate to each other. For this we need to investigate the action of
\[
    N_K^T := \{(h,h')\in (H\cap K)^2\mid hTh^{\prime-1}=T\}
\]
on $T$.
\begin{lemma}\label{sec:lem-normaliser-action-torus}
    If $q>1$,  $N_K^T$ acts on $T$ as an affine reflection group generated by
    \begin{align*}
        s_0 : t_{\phi,\psi} &\mapsto t_{\pi-\phi,\pi-\psi}\\
        s_1 : t_{\phi,\psi} &\mapsto t_{\phi,-\psi}\\
        s_2 : t_{\phi,\psi} &\mapsto t_{-\phi,\phi}.
    \end{align*}
\end{lemma}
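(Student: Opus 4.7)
My plan is to decompose the action of $N_K^T$ on $T$ into a translation part and a Weyl-type conjugation part, and enumerate each. For $(h,h')\in N_K^T$, evaluating $hTh'^{-1}=T$ at the identity gives $hh'^{-1}\in T\cap H$; writing $t:=hh'^{-1}$, the action $x\mapsto hxh'^{-1}=t\cdot(h'xh'^{-1})$ factorises as conjugation by $h'\in N_{K\cap H}(T)$ followed by left translation by $t\in T\cap H$. Conversely $(t,e)\in N_K^T$ for any $t\in T\cap H$, and $(h,h)\in N_K^T$ for any $h\in N_{K\cap H}(T)$. So the image of $N_K^T$ in $\Aut(T)$ is the affine group generated by (i) translations by $T\cap H$, and (ii) the Weyl-like image $\Ad(N_{K\cap H}(T))|_{\mathfrak{t}}$.

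For (i), since $T$ is compact and $A$ a one-parameter subgroup of boosts, $T\cap H=T\cap MA=T\cap M$. A direct computation of $\Ad(t_{\phi,\psi})(F_{0,d+1})$ shows this element equals $F_{0,d+1}$ precisely when $\cos\phi\cos\psi=1$, i.e.\ when $(\phi,\psi)\equiv(0,0)$ or $(\pi,\pi)\pmod{2\pi\ZZ}$; hence $T\cap H=\{e,t_{\pi,\pi}\}$, contributing exactly one non-trivial translation.

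For (ii), the assumption $q>1$ (together with $p\ge q$) provides enough room to realise both sign flips within $K\cap M$. Take $h_2:=\operatorname{diag}(1,-1,-1,1,\ldots,1)$ with $-1$'s at positions $1$ and $2$: it lies in $SO(p+1)\times\{1\}\subseteq K\cap M$ (using $p\ge 2$), fixes $e_0,e_{d+1}$, and satisfies $\Ad(h_2)F_{0,1}=-F_{0,1}$, $\Ad(h_2)F_{d,d+1}=F_{d,d+1}$; hence $(h_2,h_2)$ implements $s_2$. Analogously take $h_1:=\operatorname{diag}(1,\ldots,1,-1,1,\ldots,1,-1,1)$ with $-1$'s at positions $p+1$ and $d$ (using $q>1$ so that $p+1\ne d$): then $(h_1,h_1)$ implements $s_1$. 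For $s_0$, use $(t_{\pi,\pi}h_1h_2,\,h_1h_2)\in N_K^T$, which is well-defined since $t_{\pi,\pi}\in K\cap H$ by (i), and whose action is $t_{\phi,\psi}\mapsto t_{\pi,\pi}\cdot t_{-\phi,-\psi}=t_{\pi-\phi,\pi-\psi}$.

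To finish, I would show these generate everything. Since any $h\in K\cap M$ is block-diagonal in $SO(p+1)\times SO(q+1)$ and preserves each of $e_0,e_{d+1}$ up to a common sign, $\Ad(h)$ leaves $\RR F_{0,1}$ and $\RR F_{d,d+1}$ invariant and acts by $\pm 1$ on each, so $N_{K\cap H}(T)/Z_{K\cap H}(T)\hookrightarrow\{\pm 1\}^2$. Combined with $|T\cap H|=2$, the image in $\Aut(T)$ has order at most $8$, and one checks it equals $\langle s_0,s_1,s_2\rangle$ (with $s_1s_2$ giving the full sign reversal and $s_0s_1s_2$ giving translation by $t_{\pi,\pi}$). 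The main subtlety will be verifying that the non-identity connected component of $K\cap M$, though it contains $t_{\pi,\pi}$, yields no new Weyl-like map: any component-swapping element sends $e_0\to -e_0$ and $e_{d+1}\to -e_{d+1}$ simultaneously, and its compensating sign flips on the middle coordinates only reproduce elements of $\{\pm 1\}^2$ already realised by $h_1,h_2$.
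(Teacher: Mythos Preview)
Your argument is correct and takes a genuinely different route from the paper. The paper computes $h\,t_{\phi,\psi}\,h'^{-1}$ directly for generic $h,h'\in K\cap H$ (parametrised as $1{+}p{+}q{+}1$ block matrices), derives the eigenvector constraints $Ae_1=\alpha e_1$, $Be_q=\beta e_q$, etc., and reads off the resulting transformation of $(\phi,\psi)$ case by case. You instead factor the action as (translation by $t=hh'^{-1}\in T\cap H$)$\circ$(conjugation by $h'\in N_{K\cap H}(T)$), determine each factor separately, and reassemble. Your approach is more structural: the identification $T\cap H=\{e,t_{\pi,\pi}\}$ via $\Ad(t_{\phi,\psi})(D_0)=D_0$ and the bound $N_{K\cap H}(T)/Z_{K\cap H}(T)\hookrightarrow\{\pm1\}^2$ (using that $K$ preserves the splitting $\mathfrak{so}(p{+}1)\oplus\mathfrak{so}(q{+}1)$ of $\mathfrak{t}$) give an a priori order bound of $8$, after which exhibiting $s_0,s_1,s_2$ suffices. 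The paper's direct computation has the advantage of being entirely self-contained, while yours makes the affine structure transparent and would generalise more readily.

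Two minor remarks. First, the parenthetical ``$SO(p+1)\times\{1\}\subseteq K\cap M$'' is false as stated (a rotation in the $(0,1)$-plane does not centralise $D_0$); what you need, and what is true, is that your specific $h_2$ lies in $K\cap H$ because it fixes both $e_0$ and $e_{d+1}$. Second, your final paragraph about the ``main subtlety'' is unnecessary: once you have shown the image lies in a group of order $8$ and contains the three generators $s_0,s_1,s_2$ (which visibly generate a group of order $8$, since $s_1,s_2$ give the full linear part $\{\pm1\}^2$ and $s_0s_1s_2$ gives the nontrivial translation), you are done.
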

\begin{proof}
    Let $h\in K\cap H$, then $h$ can be written as a $1+p+q+1$-block matrix as
    \[
        \mqty(A & 0 & 0 & \\0 & B & 0 & 0\\0 & 0 & C & 0\\0 & 0 & 0 & D)
    \]
    for $A,D\in\RR$ and $B\in\RR^{p\times p},C\in\RR^{q\times q}$, with
    $A^2=1, D^2=1$ and $B\in O(p), C\in O(q)$ with
    $\det(B)=A$ and $\det(C)=D$. Since $H$ isn't all of $G^\sigma$, but
    only those elements that stabilise $D_0$, we also have $A=D$. Consequently,
    \[
        h = \mqty(\pm 1 & 0 & 0 & 0\\0 & A & 0 & 0\\0 & 0 & B & 0\\0 & 0 & 0 & \pm1)
    \]
    with $A\in O(p),B\in O(q)$ with $\det(A)=\det(B)=\pm1$. Furthermore, we can write $t_{\phi,\psi}$ in the same $1+p+q+1$-block matrix form as
    \[
        \mqty(\cos(\phi) & \sin(\phi)e_1^T & 0 & 0\\-\sin(\phi)e_1 & 1 + (\cos(\phi)-1)e_1e_1^T & 0 & 0\\
        0 & 0 & 1+(\cos(\psi)-1)e_q e_q^T & -\sin(\psi)e_q\\
        0 & 0 & \sin(\psi)e_q^T & \cos(\psi)).
    \]
    If $h,h'\in K\cap H$ are block diagonal matrices with
    $\epsilon,A,B,\epsilon$ and $\zeta,C,D,\zeta$, we have
    \[
        ht_{\phi,\psi}h^{\prime-1}
        = 
        \mqty(X & 0\\0 & Y)
    \]
    with
    \begin{align*}
        X &= \mqty(\epsilon\zeta\cos(\phi) & \epsilon\sin(\phi) (Ce_1)^T\\
        -\zeta\sin(\phi) Ae_1 & AC^T + (\cos(\phi)-1)Ae_1(Ce_1)^T)\\
        Y &= \mqty(BD^T + (\cos(\psi)-1)Be_q (De_q)^T & -\zeta\sin(\psi)Be_q\\
        \epsilon\sin(\psi) (De_q)^T &
        \epsilon\zeta\cos(\psi))
    \end{align*}
    Since this lies in $T$ again, we have that $Ae_1, Ce_1, e_1$ are proportional,
    as are $Be_q, De_q, e_q$, say
    \[
        Ae_1 = \alpha e_1,\quad
        Be_q = \beta e_q,\quad
        Ce_1 = \gamma e_1,\quad
        De_q = \delta e_q.
    \]
    Since $A,C,B,D$ are orthogonal, we have $\alpha,\beta,\gamma,\delta\in\{\pm1\}$. Then
    \[
        \epsilon\gamma = \alpha\zeta,\qquad
        \epsilon\zeta = \alpha\gamma,\qquad
        \epsilon\delta = \beta\zeta,\qquad
        \epsilon\zeta = \beta\delta.
    \]
    If $\epsilon\zeta=1$, then $\alpha=\gamma$ and $\beta=\delta$. Then
    $ht_{\phi,\psi}h^{\prime-1}=t_{\epsilon\gamma\phi,\epsilon\delta\psi}$.
    All four possible combinations of signs of $\epsilon\gamma,\beta\zeta$
    yield an action on $T$ that is generated by $s_1,s_2$.

    If $\epsilon\zeta=-1$, we have $\alpha=-\gamma$ and $\beta=-\delta$, then
    $ht_{\phi,\psi}h^{\prime-1} = t_{\pi-\gamma\epsilon\phi, \pi-\delta\epsilon\psi}$. This transformation is definitely contained in the
    group generated by $s_0,s_1,s_2$.

    Conversely, $s_0$ is effected by
    \[
        h = \operatorname{diag}(1,-1,-1,1,\dots,1,-1,-1,1),\qquad
        h' = \operatorname{diag}(-1,1,-1,1\dots,1,-1,1,-1).
    \]
    Since we assumed $q>1$ (and hence also $p>1$), all matrices are elements of $K$. Furthermore, they
    commute with $D_0$, so that they are elements of $K\cap H$. The transformation
    $s_1$ is effected by
    \[
        h=h' = \operatorname{diag}(1,\dots,1,-1,-1,1)
    \]
    and $s_2$ by
    \[
        h=h' = \operatorname{diag}(1,-1,-1,\dots,1).
    \]
\end{proof}

\begin{lemma}\label{sec:lem-normaliser-action-torus-q1}
    For $q=1$, the action of $N_K^T$ on $T$ is generated by the reflection $t_{\phi,\psi}\mapsto t_{-\phi,\psi}$ and the translation $t_{\phi,\psi}\mapsto t_{\phi+\pi,\psi+\pi}$.
\end{lemma}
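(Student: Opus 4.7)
The plan is to closely mirror the proof of Lemma~\ref{sec:lem-normaliser-action-torus}, but specialize the block analysis to $q=1$. As before, any $h\in K\cap H$ takes the form $h=\operatorname{diag}(\epsilon, A, B, \epsilon)$ in the $1+p+q+1$-block decomposition, with $A\in O(p)$, $B\in O(q)$, and $\det A=\det B=\epsilon$. The essential difference is that now $B\in O(1)=\{\pm1\}$, so $\det(B)=B$ itself, and the condition $\det B=\epsilon$ forces $B=\epsilon$. Thus $B$ is entirely determined by the corner sign and cannot be chosen independently; likewise $D=\zeta$ in $h'=\operatorname{diag}(\zeta, C, D, \zeta)$. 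This rigidity is what will cut the symmetry group down from the $q>1$ case.

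Next I would compute $ht_{\phi,\psi}h'^{-1}$ and split it into the top-left $(p+1)\times(p+1)$ block $X$ and the bottom-right $(q+1)\times(q+1)=2\times2$ block $Y$. The analysis of $X$ is word-for-word that of Lemma~\ref{sec:lem-normaliser-action-torus} and yields $Ae_1=\alpha e_1$, $Ce_1=\gamma e_1$ with $\alpha,\gamma\in\{\pm1\}$, $\alpha\gamma=\epsilon\zeta$, $\cos\phi'=\epsilon\zeta\cos\phi$, and $\sin\phi'=\alpha\zeta\sin\phi$. The analysis of $Y$ simplifies dramatically: because $B,D$ are scalars with $B=\epsilon$ and $D=\zeta$, one finds $Y=\epsilon\zeta\begin{pmatrix}\cos\psi & -\sin\psi\\ \sin\psi & \cos\psi\end{pmatrix}$, which is a genuine rotation matrix giving $\psi'=\psi$ (when $\epsilon\zeta=1$) or $\psi'=\pi+\psi$ (when $\epsilon\zeta=-1$). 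Enumerating the four combinations of $\epsilon\zeta$ and $\alpha\zeta$ then produces exactly the identity, the maps $(\phi,\psi)\mapsto(-\phi,\psi)$, $(\phi,\psi)\mapsto(\pi-\phi,\pi+\psi)$, and $(\phi,\psi)\mapsto(\pi+\phi,\pi+\psi)$. Since the third is the composition of the other two, the group is generated by the reflection and the diagonal translation, as claimed.

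Finally, I would check realizability by producing explicit $h,h'\in K\cap H$: for the reflection, take $\epsilon=\zeta=1$ with $A=C=\operatorname{diag}(-1,-1,1,\dots,1)\in SO(p)$ (valid since $d>2$ and $p\ge q=1$ force $p\ge2$); for the translation, take $\epsilon=1$, $\zeta=-1$ with $A=I_p$ and $C=\operatorname{diag}(-1,1,\dots,1)$. The main obstacle is not any one computation but the sign bookkeeping needed to confirm that the reflection $(\phi,\psi)\mapsto(\phi,-\psi)$, available for $q>1$, is genuinely unattainable here: realising it would force $\beta\delta=-1$ while $\epsilon\zeta=1$, but the forced identities $\beta=B=\epsilon$ and $\delta=D=\zeta$ imply $\beta\delta=\epsilon\zeta$, ruling it out. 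This is the precise algebraic reason the third generator of the $q>1$ reflection group disappears in the $q=1$ setting.
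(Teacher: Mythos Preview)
Your proposal is correct and follows essentially the same approach as the paper: parametrise $K\cap H$ via the $1+p+1+1$ block form, observe that the $O(1)$-block is forced to equal the corner sign, compute the resulting action on $T$ case by case, and exhibit explicit elements realising the two generators. Your explicit matrices for the reflection and the translation coincide with the paper's, and your added remark explaining why the reflection $(\phi,\psi)\mapsto(\phi,-\psi)$ is unattainable (since $\beta=\epsilon$, $\delta=\zeta$ forces $\beta\delta=\epsilon\zeta$) is a nice clarification that the paper leaves implicit.
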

\begin{proof}
    An element $h\in H\cap K$ has the following
    $1+p+1+1$-block matrix shape:
    \[
        h = \mqty(\epsilon & 0 & 0 & 0\\0 & A & 0 & 0\\0 & 0 & \epsilon & 0\\0 & 0 & 0 & \epsilon)
    \]
    for $A\in SO(p)$ and $\epsilon=\pm1$. Two such elements (with 
    variables $\epsilon,A$ and $\delta, B$) give rise to an element of $N_K^T$
    if $Ae_1=\alpha e_1,Be_1=\beta e_1$ and $\epsilon\delta=\alpha\beta$ and
    $\epsilon\beta = \alpha\delta$. If $\epsilon\delta=1$, we have
    $\alpha=\beta$, which corresponds to $t_{\phi,\psi}\mapsto t_{-\phi,\psi}$;
    and if $\epsilon\delta=-1$, we have $\alpha=-\beta$, which corresponds to
    $t_{\phi,\psi}\mapsto t_{\pi-\epsilon\beta\phi,\psi+\pi}$.

    Conversely, the reflection is effected by
    \[
        h=h' = \operatorname{diag}(1,-1,-1,1,\dots,1),
    \]
    which exists because $p=d-1>1$ (by the original assumption that $d>2$),
    and the translation by
    \[
        h = 1, h'=\operatorname{diag}(-1,-1,1,\dots,1,-1,-1).
    \]
\end{proof}

\begin{corollary}\label{sec:cor-cartan-subsets}
    For $q>0$, the equivalence classes of Cartan subsets can be represented by:
    \begin{enumerate}
        \item $C$;
        \item $\exp(\RR (F_{0,1}+F_{d,d+1}) \oplus \RR (F_{0,d}-F_{1,d+1}))$;
        \item $\exp(\RR (F_{0,1}+F_{d,d+1}) \oplus \RR (F_{0,d}-F_{1,d+1}))t_{0,\pi}$;
        \item $\exp(\RR F_{0,1}\oplus \RR F_{2,d+1})$;
        \item $\exp(\RR F_{d,d+1}\oplus \RR F_{0,d-1})$ (this doesn't exist for $q=1$);
        \item $\exp(\RR F_{0,d}\oplus\RR F_{1,d+1})$;
        \item $\exp(\RR F_{0,d}\oplus\RR F_{1,d+1})t_{\pi/2,\pi/2}$;
        \item $\exp(\RR F_{0,d}\oplus\RR F_{1,d+1})t_{0,\pi}$.
    \end{enumerate}
\end{corollary}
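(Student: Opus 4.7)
The plan is to start from the exhaustive list of standard Cartan subsets in Proposition~\ref{sec:prop-cartan-subsets-Lorentzian} and cut it down to conjugacy classes. By definition, two standard Cartan subsets $\exp(\mathfrak{c}'_1)T'_1$ and $\exp(\mathfrak{c}'_2)T'_2$ are conjugate iff $hT'_1 h^{\prime-1}=T'_2$ for some $h,h'\in K\cap H$, and any two with the same $T'$ are automatically conjugate. Hence the task reduces to classifying the compact parts $T'=\exp(\mathfrak{t}')t$ under the action of $N_K^T$, described in Lemma~\ref{sec:lem-normaliser-action-torus} (for $q\ge 2$) and Lemma~\ref{sec:lem-normaliser-action-torus-q1} (for $q=1$). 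The subspace $\mathfrak{a}'$ plays no role beyond its existence, so I can pick any convenient representative for $\mathfrak{a}'$ in each case.

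I would traverse the six cases of Proposition~\ref{sec:prop-cartan-subsets-Lorentzian} in order. Case~(i) produces class~1 directly. In case~(ii), the reflection $s_1$ sends $\mathfrak{t}'_{+}:=\RR(F_{0,1}+F_{d,d+1})$ to $\mathfrak{t}'_{-}:=\RR(F_{0,1}-F_{d,d+1})$ and simultaneously swaps the two sign choices for $\mathfrak{a}'$, so only one sign choice needs to be retained; then the two translations $t=1$ and $t=t_{0,\pi}$ remain non-conjugate because no element of $N_K^T$ whose linear part preserves the direction $(1,1)$ has an affine translation $(c,f)$ with $f-c\equiv\pi\pmod{2\pi}$ (the translations available are only $(0,0)$ and $(\pi,\pi)$). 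This yields classes~2 and~3. For case~(iii), $s_0$ preserves $\mathfrak{t}'=\RR F_{0,1}$ at the linear level but sends $\exp(\mathfrak{t}')=\{t_{\phi,0}\}$ onto $\{t_{\phi,\pi}\}=\exp(\mathfrak{t}')\cdot t_{0,\pi}$, so the two translation choices are conjugate, producing class~4. Case~(iv) is entirely analogous and gives class~5 for $q\ge 2$; for $q=1$ the ambient span $\operatorname{span}\{F_{0,p+1},\dots,F_{0,d-1}\}$ is zero and the case is empty. In case~(v), a direct orbit computation (using $s_1,s_2$ for $q\ge 2$ and the reflection/translation generators of Lemma~\ref{sec:lem-normaliser-action-torus-q1} for $q=1$) shows that the four candidate translations $t_{\pm\pi/2,\pm\pi/2}$ form a single $N_K^T$-orbit, giving class~7. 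Finally, in case~(vi), the translations $t\in\{1,t_{0,\pi},t_{\pi,0},t_{\pi,\pi}\}$ split into the two orbits $\{1,t_{\pi,\pi}\}$ and $\{t_{0,\pi},t_{\pi,0}\}$ in both settings, producing classes~6 and~8.

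The main obstacle is the translation-subgroup bookkeeping in case~(ii): one must enumerate the subgroup of $N_K^T$ whose linear parts preserve the diagonal direction in $\mathfrak{t}$, and verify that the associated translation parts are exhausted by $\{(0,0),(\pi,\pi)\}$, so that no conjugation between $t=1$ and $t=t_{0,\pi}$ exists. The other cases reduce to routine orbit enumeration once one notes that the $\mathfrak{a}'$-data is immaterial for conjugacy.
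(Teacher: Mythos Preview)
Your proposal is correct and follows essentially the same strategy as the paper: start from the list in Proposition~\ref{sec:prop-cartan-subsets-Lorentzian}, ignore the $\mathfrak{a}'$-data (since conjugacy only sees the compact parts), and reduce the compact parts $T'=\exp(\mathfrak{t}')t$ using the $N_K^T$-action from Lemmas~\ref{sec:lem-normaliser-action-torus} and~\ref{sec:lem-normaliser-action-torus-q1}. The only differences are cosmetic choices of which generator to apply: in case~(ii) you use $s_1$ where the paper uses $\phi\mapsto-\phi$ (i.e.\ $s_2$), and in case~(iii) you use $s_0$ where the paper invokes the $(\pi,\pi)$-shift directly; both choices achieve the same identifications. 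Your discussion of the non-conjugacy in case~(ii), via the observation that the available translation parts modulo $2\pi$ are exhausted by $\{(0,0),(\pi,\pi)\}$, is in fact more explicit than what the paper writes---the paper simply asserts that ``no others are related'' in the analogous step of case~(vi) and leaves the separation of the two classes in case~(ii) implicit.
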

\begin{proof}
    The compact parts of the Cartan subsets described in Proposition~\ref{sec:prop-cartan-subsets-Lorentzian} consists of $t_{\phi,\psi}$ with
    \begin{enumerate}
        \item no conditions, this is the first equivalence class.
        \item $\phi\equiv\pm\psi\pmod{2\pi\ZZ}$ or 
        $\phi\equiv\pi\pm\psi\pmod{2\pi\ZZ}$. The first two cases ($\pm$) and
        cases three and four are
        related by $\phi\mapsto\-\phi$, which is a transformation that can be
        enacted using $N_K^T$ (cf. Lemmas~\ref{sec:lem-normaliser-action-torus},\ref{sec:lem-normaliser-action-torus-q1}). This yields the second
        and third equivalence class, respectively.
        \item $\psi\in\pi\ZZ$, i.e. $\psi\equiv 0\pmod{2\pi\ZZ}$ or $\psi\equiv \pi\pmod{2\pi\ZZ}$. Both cases are related via a shift
        $(\psi,\phi)\mapsto (\psi+\pi,\phi+\pi)$, which can be effected using
        $N_K^T$. This yields the fourth equivalence class.
        \item The same for $\phi$, which yields the fifth equivalence class.
        \item $(\phi,\psi)$ being congruent (modulo $2\pi\ZZ\oplus 2\pi\ZZ$) to one of
        \[
        \qty(\frac{\pi}{2},\frac{\pi}{2}),\qty(\frac{3\pi}{2},\frac{3\pi}{2}),
        \qty(\frac{\pi}{2},\frac{3\pi}{2}),\qty(\frac{\pi}{2},\frac{3\pi}{2}).
        \]
        The first and second (and third and fourth) element are related via
        the shift $(\phi,\psi)\mapsto(\phi+\pi,\psi+\pi)$, and the
        first and third element are related using the reflection
        $(\phi,\psi)\mapsto(-\phi,\psi)$, thus they all are equivalent, which
        is the seventh equivalence class.
        \item $(\phi,\psi)$ being congruent (modulo $2\pi\ZZ\oplus 2\pi\ZZ$)
        to one of
        \[
            (0,0),(\pi,\pi),(0,\pi),(\pi,0).
        \]
        Of these, the first and second (and third and forth) are related via
        the translation, and no others are related via $N_K^T$, which is
        the sixth and eighth equivalence class, respectively.
    \end{enumerate}
\end{proof}

\subsection{Coordinates}\label{sec:coords}
We shall now introduce unified coordinates for all Cartan subsets and answer the question to what extent the functions $u,v$ define a smooth manifold structure on
$\GP(G/Q,4)/G$ (or $MA\backslash\tilde{G}/MA$ via $\psi$). 

\begin{definition}
    Let
    \[
        D:= \left\{(\chi_1,\chi_2)\in\CC^2\mid \chi_1,\chi_2,\frac{\chi_1+\chi_2}{2},\frac{\chi_1-\chi_2}{2}\not\in i\pi\ZZ\right\}.
    \]
    Define $f,g: D\to\CC^2$ by
    \begin{align*}
        f: (\chi_1,\chi_2) &\mapsto \mqty(\sech^2(\chi_1/2)\sech^2(\chi_2/2)\\
    \tanh^2(\chi_1/2)\tanh^2(\chi_2/2))\\
    g: (\chi_1,\chi_2) &\mapsto \mqty(\sinh^2(\chi_1/2)\sinh^2(\chi_2/2)\\
    \cosh^2(\chi_1/2)\cosh^2(\chi_2/2)).
    \end{align*}
\end{definition}
Note that $\chi_i\not\in i\pi\ZZ$ implies that $\frac{\chi_i}{2}\not\in
\qty(\frac{i\pi}{2} + i\pi\ZZ)$, so that $\cosh(\frac{\chi_i}{2})\ne0$ and $f$ is
well-defined.

\begin{lemma}\label{sec:lem-local-diff}
    $f$ is a local diffeomorphism.
\end{lemma}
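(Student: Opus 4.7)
The plan is to compute the holomorphic Jacobian of $f$ and show it is nowhere zero on $D$; since $\sech$ and $\tanh$ are meromorphic and the defining inequalities of $D$ exclude all poles of the expressions involved, $f$ is holomorphic on $D$, and nonvanishing Jacobian then yields a local biholomorphism of open subsets of $\CC^2$, in particular a local diffeomorphism.

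For the computation, I would set $a:=\chi_1/2,\ b:=\chi_2/2$ and use the elementary identities $(\sech^2)' = -2\sech^2\tanh$ and $(\tanh^2)' = 2\tanh\sech^2$. The four entries of $Df$ are then monomials in $\sech(a),\sech(b),\tanh(a),\tanh(b)$, and after collecting terms the $2\times 2$ determinant factors as
\[
  \det Df(\chi_1,\chi_2) = \sech^2(a)\sech^2(b)\tanh(a)\tanh(b)\bigl(\sech^2(a)\tanh^2(b) - \sech^2(b)\tanh^2(a)\bigr).
\]
Applying $\sech^2 = 1 - \tanh^2$ collapses the bracket to $\tanh^2(b) - \tanh^2(a)$, so that
\[
  \det Df(\chi_1,\chi_2) = \sech^2(a)\sech^2(b)\tanh(a)\tanh(b)\bigl(\tanh^2(b) - \tanh^2(a)\bigr).
\]

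It then remains to check that each of these five factors is nonzero on $D$. The zeros of $\tanh$ are $i\pi\ZZ$ and the poles of $\sech$ lie in $\tfrac{i\pi}{2} + i\pi\ZZ$, so nonvanishing (and finiteness) of $\sech(a),\sech(b),\tanh(a),\tanh(b)$ follows directly from $\chi_1,\chi_2\notin i\pi\ZZ$. Finally, $\tanh^2(a) = \tanh^2(b)$ forces $a\pm b \in i\pi\ZZ$, i.e. one of $(\chi_1\pm\chi_2)/2$ to lie in $i\pi\ZZ$, both of which are excluded by the remaining two conditions in the definition of $D$. The only step requiring care is the algebraic simplification of the bracket via $\sech^2 = 1 - \tanh^2$; no substantive obstacle is anticipated, and the rest is routine.
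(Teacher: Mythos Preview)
Your proof is correct and follows essentially the same approach as the paper: compute the Jacobian determinant, factor it, and check that each factor is nonzero on $D$ using precisely the four exclusion conditions defining $D$. The only difference is cosmetic: the paper first passes to the auxiliary map $g(\chi_1,\chi_2)=(\sinh^2(\chi_1/2)\sinh^2(\chi_2/2),\cosh^2(\chi_1/2)\cosh^2(\chi_2/2))$, related to $f$ by $f_1=1/g_2$ and $f_2=g_1/g_2$, and computes $\det g'$ instead, obtaining a product of four $\sinh$ factors whose zero sets are manifestly the excluded hyperplanes. Your direct computation with $f$ reaches the same conclusion with the same amount of work, and your use of $\sech^2=1-\tanh^2$ to collapse the bracket is entirely routine, as you anticipated.
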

\begin{proof}
    Note that
    \[
        f(\chi_1,\chi_2) = \mqty(\frac{1}{g_2(\chi_1,\chi_2)}\\\frac{g_1(\chi_1,\chi_2)}{g_2(\chi_1,\chi_2)}),
    \]
    so that $f$ is a local diffeomorphism iff $g$ is. We now compute the Jacobian
    of $g$:
    \[
        g'(\chi_1,\chi_2) = \mqty(\sinh(\frac{\chi_1}{2})\cosh(\frac{\chi_1}{2})\sinh[2](\frac{\chi_2}{2})
    & \sinh[2](\frac{\chi_1}{2})\sinh(\frac{\chi_2}{2})\cosh(\frac{\chi_2}{2})\\
    \sinh(\frac{\chi_1}{2})\cosh(\frac{\chi_1}{2})\cosh[2](\frac{\chi_2}{2})
    & \cosh[2](\frac{\chi_1}{2})\sinh(\frac{\chi_2}{2})\cosh(\frac{\chi_2}{2})),
    \]
    whose determinant is
    \begin{align*}
    \det(g'(\chi_1,\chi_2)) &= \sinh(\frac{\chi_1}{2})\cosh(\frac{\chi_1}{2})
    \sinh(\frac{\chi_2}{2})\cosh(\frac{\chi_2}{2})\\
    &\qquad
    \cdot \qty(\sinh[2](\frac{\chi_2}{2})\cosh[2](\frac{\chi_1}{2})
    - \cosh[2](\frac{\chi_2}{2})\sinh[2](\frac{\chi_1}{2}))\\
    &= 16 \sinh(\chi_1)\sinh(\chi_2)\sinh(\frac{\chi_2-\chi_1}{2})\sinh(\frac{\chi_2+\chi_1}{2}).
    \end{align*}
    Since none of $\chi_1,\chi_2,\frac{\chi_1+\chi_2}{2},\frac{\chi_1-\chi_2}{2}$ lies
    in the zero locus of $\sinh$, we have $\det(g'(\chi_1,\chi_2)) \neq 0$. By the
    inverse function theorem, $g$ is a local diffeomorphism, hence so is $f$.
\end{proof}
Since the complement of $D$ is a union of (locally finitely many) codimension 2
real subspaces, $D$ is still connected and therefore a covering space
of $f(D)$. In particular, $D$ carries an action of the fundamental groupoid
of $f(D)$. A way of approaching this is to first consider some obvious
symmetries of $f$ and $g$, and then look what remains.

\begin{lemma}
    Define $s_0,s_1,s_2: D\to D$ by
    \begin{align*}
        s_0 : (\chi_1,\chi_2)&\mapsto(\chi_1, 2\pi i-\chi_2)\\
        s_1 : (\chi_1,\chi_2)&\mapsto(\chi_2,\chi_1)\\
        s_2 : (\chi_1,\chi_2)&\mapsto(-\chi_1,\chi_2).
    \end{align*}
    These three transformations generate a Coxeter group $\tilde{W}$ of type $\tilde{C}_2$
    (and its (scaled) affine action on $\CC^2$) of symmetries of $f$.
\end{lemma}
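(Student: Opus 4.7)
The plan is to verify the statement in three stages: (a) each $s_i$ preserves $D$, (b) each $s_i$ is a symmetry of $f$, and (c) the three generators satisfy the defining relations of $\tilde{C}_2$ and the resulting action on $\CC^2$ is the standard scaled affine one, so the surjection from the abstract Coxeter group onto our generated group is actually an isomorphism.

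For (a) and (b): stability of $D$ and invariance of $f$ under $s_1$ are immediate from the manifest symmetry of the defining expressions in $\chi_1,\chi_2$. For $s_2$, both conditions follow from evenness of $\sinh^2$ and $\cosh^2$. For $s_0$ I will use the elementary identities $\sinh(\pi i - z/2) = \sinh(z/2)$ and $\cosh(\pi i - z/2) = -\cosh(z/2)$ (both consequences of $\sinh(\pi i)=0$, $\cosh(\pi i)=-1$); after squaring, every $\chi_2$-factor appearing in $f$ is invariant, and the non-vanishing conditions defining $D$ are also preserved since $\sinh$ is $2\pi i$-periodic.

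For (c) I will first compute the Coxeter-type relations directly on $(\chi_1,\chi_2)$: clearly $s_i^2 = 1$, and $(s_0 s_2)^2 = 1$ since $s_0,s_2$ act on different coordinates. The relation $(s_1 s_2)^4 = 1$ is exactly the statement that $s_1,s_2$ generate the finite Weyl group $W(C_2)$ of signed permutations. The relation $(s_0 s_1)^4 = 1$ is verified by one direct calculation. This shows that there is a surjection $\tilde{W}(\tilde{C}_2) \twoheadrightarrow \langle s_0,s_1,s_2 \rangle$. To upgrade this to an isomorphism I will exhibit the action as the standard scaled affine action: $s_1, s_2$ realise $W(C_2)$ as signed permutations on $\CC^2$, while $s_0$ is the affine reflection in the hyperplane $\chi_2 = \pi i$. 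Computing $s_0 \cdot (s_1 s_2 s_1)$ produces the translation $\chi_2 \mapsto \chi_2 + 2\pi i$, and conjugating by $s_1$ yields the analogous translation in $\chi_1$; hence after rescaling $\chi_j \mapsto \chi_j/(\pi i)$ we recover the faithful standard action of $\tilde{C}_2$ on $\RR^2$ with translation lattice $\ZZ^2$. Faithfulness of this action forces our surjection to be an isomorphism.

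The main obstacle is identifying the Coxeter type correctly: the relations $m(s_0,s_1) = m(s_1,s_2) = 4$ and $m(s_0,s_2) = 2$ match the $\tilde{C}_2$ Coxeter matrix (and also $\tilde{B}_2$, but the two are isomorphic as Coxeter groups), so one must be careful to phrase the conclusion in terms of Coxeter groups together with a specified affine action — and to rule out a proper quotient by explicitly exhibiting the faithful affine realisation described above.
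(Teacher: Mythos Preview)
Your proof is correct and follows essentially the same route as the paper: verify that each $s_i$ is a symmetry of $f$ (the paper does this for $g$, which is equivalent) and then check the orders of the pairwise products $s_is_j$ to identify the Coxeter type. Your argument is in fact more complete than the paper's, since you explicitly verify that the $s_i$ preserve $D$ and, more importantly, you address faithfulness by exhibiting the translation $s_0(s_1s_2s_1)$ and thereby the full affine realisation---the paper stops after computing the orders $4,2,4$, which strictly speaking only yields a surjection from $\tilde W(\tilde C_2)$ onto the generated group.
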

\begin{proof}
    As before, $f\circ s_i=f$ iff $g\circ s_i=g$, so we check for $g$. Since
    $\sinh^2,\cosh^2$ are even and periodic with period $\pi i$, we see that
    $s_0,s_2$ are symmetries. Furthermore, $g$ is symmetric with respect
    to exchanging $\chi_1,\chi_2$, so that $s_1$ is also a symmetry.

    To see that $s_0,s_1,s_2$ generate a Coxeter group of type $\tilde{C}_2$, note
    that they are all involutions, that
    \begin{align*}
        s_0s_1: (\chi_1,\chi_2)&\mapsto(\chi_2,2\pi i-\chi_1),\\
        s_0s_2: (\chi_1,\chi_2)&\mapsto(-\chi_1,2\pi i-\chi_2),\\
        s_1s_2: (\chi_1,\chi_2)&\mapsto(\chi_2,-\chi_1)
    \end{align*}
    have orders $4, 2, 4$, respectively, whence we see that they generate an
    affine Coxeter group of type $\tilde{C}_2$.
\end{proof}

\begin{proposition}\label{sec:prop-fundamental-domain}
    Consequently, it suffices to study $D/\tilde{W}$, of which a fundamental domain is
    given by
    \[
        X = \bigsqcup_{I\subseteq\{0,1,2\}} X_I
    \]
    with
    \begin{alignat*}{2}
    X_\emptyset &=\RR^2+\{(a,b)\mid 0<a<b<\pi\}i\qquad &X_{\{0\}}
    &= \RR\times\RR_{>0} + \{(a,\pi)\mid 0<a<\pi\}i\\
    X_{\{1\}} &= \{(a,b)\mid a<b\} + (0,\pi)i(1,1)
    & X_{\{2\}} &= \RR_{>0}\times\RR + \{(0,a)\mid 0<a<\pi\}i\\
    X_{\{0,1\}} &= \{(a,b)\mid 0<a<b\} + (\pi,\pi)i
    &X_{\{0,2\}} &= \RR_{>0}^2 + (0,\pi)i\\
    X_{\{1,2\}} &= \{(a,b)\mid 0< a< b\},
    \end{alignat*}
    where every $X_I$ is fixed by $\tilde{W}_I$, the parabolic subgroup
    generated by $s_i$ ($i\in I$).
\end{proposition}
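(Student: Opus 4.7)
The plan is to split $(\chi_1,\chi_2)\in\CC^2$ into real and imaginary parts, $\chi_j=x_j+iy_j$, and track the $\tilde{W}$-action on the imaginary coordinates $(y_1,y_2)$ and on the real coordinates $(x_1,x_2)$ separately. This reduces the problem to the standard alcove geometry of the affine Coxeter group $\tilde{C}_2$. Reading off the generators, $s_0,s_1,s_2$ act on the imaginary parts by the affine reflections $y_2\mapsto 2\pi-y_2$, $(y_1,y_2)\mapsto(y_2,y_1)$ and $y_1\mapsto -y_1$, which are precisely the simple reflections of the standard geometric realization of $\tilde{C}_2$ on $\RR^2$, whose closed fundamental alcove is $\bar{A}=\{0\le y_1\le y_2\le\pi\}$. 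Because the preceding lemma identifies $\tilde{W}$ abstractly with the Coxeter group of type $\tilde{C}_2$, and because this geometric realization is faithful, I would conclude that the imaginary-part action identifies $\tilde{W}$ with the affine Weyl group of type $\tilde{C}_2$. Meanwhile, on the real parts the same generators act linearly as $s_0:x_2\mapsto-x_2$, $s_1$: swap, $s_2:x_1\mapsto-x_1$.

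Having made that identification, the classical alcove description tells me that every $\tilde{W}$-orbit meets $\bar{A}$, that the faces of $\bar{A}$ are indexed by subsets $I\subseteq\{0,1,2\}$ (the point lies on the walls $i\in I$ and on no others), and that the pointwise stabilizer of the open face $F_I$ is the parabolic subgroup $\tilde{W}_I=\langle s_i:i\in I\rangle$. Lifting back to $\CC^2$, a fundamental domain for $\tilde{W}$ is the disjoint union, over $I\subseteq\{0,1,2\}$, of the open face $F_I$ (for the imaginary coordinates) crossed with a fundamental chamber for $\tilde{W}_I$ acting on $\RR^2_{(x_1,x_2)}$ by the linear reflections above. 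I would then work through the seven cases: for $I=\emptyset$ the stabilizer is trivial, so the real parts are free; for each rank-one $I$ it is a single $\ZZ/2$ giving one of the half-planes $\{x_2>0\}$, $\{x_1<x_2\}$, $\{x_1>0\}$; for $I=\{0,1\}$ and $I=\{1,2\}$ the stabilizer is a dihedral group of order $8$ whose four reflection hyperplanes are $\{x_1=0\},\{x_2=0\},\{x_1=\pm x_2\}$, i.e.\ the Weyl group of type $B_2$, with fundamental chamber $\{0<x_1<x_2\}$; and for $I=\{0,2\}$ it is $(\ZZ/2)^2$ with chamber $\{x_1>0,x_2>0\}$. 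Each of these chambers matches the corresponding $X_I$ in the statement line by line.

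Finally, I would verify that every point of each $X_I$ actually lies in $D$. This is a face-by-face check: on a given face, each of the four $D$-conditions $\chi_j,\tfrac{\chi_1\pm\chi_2}{2}\notin i\pi\ZZ$ is either vacuous (because the relevant imaginary component lies strictly between $0$ and $\pi$, or has imaginary part a non-integer multiple of $\pi$) or becomes a linear equation in $x_1,x_2$ coinciding with one of the reflection hyperplanes of $\tilde{W}_I$, and is therefore already excluded by passing to the open $\tilde{W}_I$-chamber. The only genuinely nontrivial step is the identification of the imaginary-part action with the standard geometric $\tilde{C}_2$ realization; once that is in place, the remainder is the systematic case analysis sketched above.
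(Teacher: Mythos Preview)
Your proposal is correct and follows essentially the same approach as the paper: reduce to the affine $\tilde{C}_2$ action on the imaginary parts, take the closed alcove $\{0\le y_1\le y_2\le\pi\}$, stratify by faces $F_I$ with stabilisers $\tilde{W}_I$, then cut down the real parts by a fundamental chamber for the linear $\tilde{W}_I$-action and observe that the $D$-conditions on each face coincide with the removed wall hyperplanes. The paper carries out the same case-by-case analysis explicitly; your presentation is slightly more streamlined in that you package the real-part reductions as standard $B_2$ and $(\ZZ/2)^2$ chamber choices and note uniformly that the $D$-conditions either have non-integral imaginary part or cut out a $\tilde{W}_I$-wall, but the substance is identical.
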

\begin{proof}
    We first focus on the imaginary parts. Since the action is just a rescaled
    version of the affine Weyl group of a $B_2$ root system, a fundamental
    domain is given by the fundamental alcove of said root system. This shows that the set
    \[
        \tilde{X}:= \{(\chi_1,\chi_2)\in D\mid 0\le\Im(\chi_1)\le\Im(\chi_2)\le\pi\}
    \]
    (the preimage of a rescaled fundamental alcove under the projection onto
    the imaginary parts) touches every orbit, but not necessarily that it touches
    every orbit once: let $(\chi_1,\chi_2)\in\tilde{X}$ and $w\in\tilde{W}$, then we have either $w\cdot(\chi_1,\chi_2)\not\in\tilde{X}$ or
    $\Im(w\cdot(\chi_1,\chi_2)) = \Im(\chi_1,\chi_2)$. However, since $\tilde{W}$
    also acts on the real parts, this is not enough to conclude that
    $w\cdot(\chi_1,\chi_2)=(\chi_1,\chi_2)$. In particular, if
    $\Im(\chi_1,\chi_2)$ lies in the face stabilised by the
    parabolic subgroup $\tilde{W}_I$, we need to consider $\tilde{W}_I$'s
    action on the real parts and restrict to a fundamental domain for that, too.

    We now proceed by the preimages of the faces of the fundamental alcove, indexed by its stabiliser subgroup $\tilde{W}_I$.
    \begin{description}
        \item[$I=\emptyset$] We have $0<\Im(\chi_1)<\Im(\chi_2)<\pi$, i.e.
        $(\chi_1,\chi_2)\in X_\emptyset$. Since the stabiliser subgroup is the
        trivial group, a fundamental domain is given by allowing all real values.
        We consequently end up with $X_{\emptyset}$.
        \item[$I=\{0\}$] We have $0<\Im(\chi_1)<\Im(\chi_2)=\pi$. The stabiliser subgroup
        is generated by $s_0$, which acts like the reflection $s_1s_2s_1$ on
        the real parts. Consequently, a fundamental domain is given by requiring
        \[
            0<\Im(\chi_1)<\Im(\chi_2)=\pi,\qquad
            \Re(\chi_2)\ge0.
        \]
        Since $\chi_2\ne\pi i$, we also know that $\Re(\chi_2)>0$ is in fact a
        strict inequality. This defines $X_{\{0\}}$.
        \item[$I=\{1\}$] We have $0<\Im(\chi_1)=\Im(\chi_2)<\pi$. The
        stabiliser subgroup is generated by $s_1$, which acts on the real
        parts by swapping them. A fundamental domain is therefore given by
        requiring
        \[
            0<\Im(\chi_1)=\Im(\chi_2)<\pi,\qquad
            \Re(\chi_1)\le\Re(\chi_2).
        \]
        Since $\frac{\chi_1-\chi_2}{2}\ne0$, we additionally know that
        $\Re(\chi_1)<\Re(\chi_2)$,
        which defines $X_{\{1\}}$ ($\subseteq D$).
        \item[$I=\{2\}$] We have $0=\Im(\chi_1)<\Im(\chi_2)<\pi$. The
        stabiliser subgroup is generated by $s_2$, which acts on the real parts
        by negating the first. Consequently, a fundamental domain is given by
        \[
             0=\Im(\chi_1)<\Im(\chi_2)<\pi,\qquad
            \Re(\chi_1)\ge0.
        \]
        Since $\chi_1\ne0$ (as required by $D$), we also know that $\Re(\chi_1)>0$,
        which defines $X_{\{2\}}$.
        \item[$I=\{0,1\}$] We have $\Im(\chi_1)=\Im(\chi_2)=\pi$. The stabiliser
        subgroup is generated by $s_0,s_1$, which act on the real parts like
        the Weyl group of $B_2$. Consequently, a fundamental domain is given by
        requiring
        \[
            \Im(\chi_1)=\Im(\chi_2)=\pi,\qquad
            0\le\Re(\chi_1)\le\Re(\chi_2).
        \]
        Since $\chi_1\ne\pi i$, we can in particular choose $0<\Re(\chi_1)$,
        and since $\frac{\chi_1-\chi_2}{2}\ne0$, we can also choose
        $\Re(\chi_1)<\Re(\chi_2)$,
        which defines $X_{\{0,1\}}$.
        \item[$I=\{0,2\}$] We have $0=\Im(\chi_1)<\Im(\chi_2)=\pi$. The
        stabiliser subgroup is generated by $s_0,s_2$, which act on the real
        parts by negating one or the other. Consequently, a fundamental domain
        is given by
        \[
            0 = \Im(\chi_1)<\Im(\chi_2)=\pi,\qquad
            0\le\Re(\chi_1),\Re(\chi_2).
        \]
        Since $\chi_1\ne 0,\chi_2\ne\pi i$, we in particular have
        $0<\Re(\chi_1),\Re(\chi_2)$, which describes $X_{\{0,2\}}$.
        \item[$I=\{1,2\}$] We have $0=\Im(\chi_1)=\Im(\chi_2)$. The
        stabiliser subgroup is generated by $s_1,s_2$, which act on the real
        parts like the Weyl group of $B_2$, so that a fundamental domain is given
        by
        \[
            0=\Im(\chi_1)=\Im(\chi_2),\qquad
            0\le\Re(\chi_1)\le\Re(\chi_2).
        \]
        Since $\chi_1,\frac{\chi_1-\chi_2}{2}\ne0$, we additionally require
        $0<\Re(\chi_1)<\Re(\chi_2)$, which describes $X_{\{1,2\}}$.
        \item[$I=\{0,1,2\}$] There are no elements stabilised by $s_0,s_1,s_2$.
    \end{description}
    Consequently, the union of all $X_I$ touches every orbit exactly once,
    and every element of $\tilde{X}$ (and hence of $D$) is related by
    $\tilde{W}$ to an element of one of the $X_I$.
\end{proof}

\begin{lemma}\label{sec:lem-f-injective}
    $f$ is injective on $X$. In particular, $\tilde{W}$ is the group of all symmetries of $f$.
\end{lemma}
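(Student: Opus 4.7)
The plan is to compute the fibres of $g$ (equivalently $f$) explicitly, show that they coincide with the $\tilde W$-orbits in $D$, and then invoke the fundamental-domain property of $X$ established in Proposition~\ref{sec:prop-fundamental-domain}. Since $f$ and $g$ differ only by the invertible substitution $(x,y)\mapsto(y^{-1},xy^{-1})$ used in the proof of Lemma~\ref{sec:lem-local-diff}, they have identical fibres, so it suffices to work with $g$.

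Suppose $g(\chi_1,\chi_2)=g(\chi_1',\chi_2')$, and set $a_i:=\sinh^2(\chi_i/2)$, $a_i':=\sinh^2(\chi_i'/2)$. Using the identity $\cosh^2(\chi/2)=1+\sinh^2(\chi/2)$, the two coordinate equations of $g$ become
\[
a_1 a_2 = a_1' a_2',\qquad (1+a_1)(1+a_2)=(1+a_1')(1+a_2'),
\]
whence $a_1+a_2=a_1'+a_2'$ and therefore $\{a_1,a_2\}=\{a_1',a_2'\}$ as multisets. After applying $s_1$ if necessary, I may assume $a_i=a_i'$ for $i=1,2$. A short single-variable argument shows that $\sinh^2(\chi/2)=\sinh^2(\chi'/2)$ holds iff $\chi'\equiv\pm\chi\pmod{2\pi i\ZZ}$. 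All such operations lie in $\tilde W$: the negations are effected by $s_2$ and by $s_1 s_2 s_1$, while translations by $2\pi i$ in either variable can be written as explicit words in $s_0,s_1,s_2$ (for instance $s_2 s_1 s_0 s_1$ sends $(\chi_1,\chi_2)$ to $(\chi_1-2\pi i,\chi_2)$). Thus each fibre of $g$ is a single $\tilde W$-orbit.

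Once the fibres are identified with $\tilde W$-orbits, injectivity on $X$ is immediate: if $\chi,\chi'\in X$ and $f(\chi)=f(\chi')$ then $\chi'\in\tilde W\cdot\chi$, and by Proposition~\ref{sec:prop-fundamental-domain} the set $X$ meets each orbit exactly once, forcing $\chi=\chi'$. The ``in particular'' clause follows from the same fibre description: any symmetry of $f$ permutes its fibres, each of which is a $\tilde W$-orbit, so the induced group of symmetries coincides with $\tilde W$.

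The main technical obstacle will be verifying that the strict inequalities built into the definition of $D$ and into each boundary stratum $X_I$ genuinely cut $X$ down to a single representative per $\tilde W$-orbit, rather than merely one representative per $\tilde W/\tilde W_I$-coset. This amounts to reviewing the case distinctions in the proof of Proposition~\ref{sec:prop-fundamental-domain} to confirm that no residual identifications under the stabilisers $\tilde W_I$ survive inside the interior of each $X_I$; the strictness of the inequalities (enforced by the condition that no $\chi_i$ or $\frac{\chi_1\pm\chi_2}{2}$ lies in $i\pi\ZZ$) is what closes this gap.
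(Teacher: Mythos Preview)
Your proof is correct and follows essentially the same route as the paper's: both reduce to $g$, show that $g(\chi_1,\chi_2)=g(\chi_1',\chi_2')$ forces the unordered pairs to coincide via a sum/product argument, and then resolve the remaining sign/period ambiguity by elements of $\tilde W$. The only cosmetic difference is that the paper works with $\cosh(\chi_i)=1+2\sinh^2(\chi_i/2)$ whereas you work directly with $a_i=\sinh^2(\chi_i/2)$; these are affinely related, so the algebra is identical. Your final paragraph about ``the main technical obstacle'' is unnecessary hedging: Proposition~\ref{sec:prop-fundamental-domain} already establishes that $X$ meets each $\tilde W$-orbit exactly once, so you may simply cite it.
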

\begin{proof}
    We show the result for $g$. Assume $(\chi_1,\chi_2),(\chi'_1,\chi'_2)\in D$
    with
    $g(\chi_1,\chi_2)=g(\chi'_1,\chi'_2)$. We are going to show that $(\chi_1,\chi_2),(\chi'_1,\chi'_2)$ lie in the same $\tilde{W}$-orbit.
    We have
    \begin{align*}
        \frac{1}{4}(\cosh(\chi_1)-1)
        (\cosh(\chi_2)-1)
        &= 
        \sinh[2](\frac{\chi_1}{2})
        \sinh[2](\frac{\chi_2}{2})\\
        &= \sinh[2](\frac{\chi'_1}{2})
        \sinh[2](\frac{\chi'_2}{2})\\
        &=  \frac{1}{4}(\cosh(\chi'_1)-1)
        (\cosh(\chi'_2)-1),\\
        \frac{1}{4}(\cosh(\chi_1)+1)(\cosh(\chi_2)+1)
        &=
        \cosh[2](\frac{\chi_1}{2})
        \cosh[2](\frac{\chi_2}{2})\\
        &= \sinh[2](\frac{\chi'_1}{2})
        \sinh[2](\frac{\chi'_2}{2})\\
        &= \frac{1}{4}(\cosh(\chi'_1)+1)(\cosh(\chi'_2)+1),
    \end{align*}
    which shows that $\cosh(\chi_1)+\cosh(\chi_2)=\cosh(\chi'_1)+\cosh(\chi'_2)$
    and $\cosh(\chi_1)\cosh(\chi_2)=\cosh(\chi'_1)\cosh(\chi'_2)$.
    By standard algebra this shows that
    either $(\cosh(\chi_1),\cosh(\chi_2))=(\cosh(\chi'_1),\cosh(\chi'_2))$
    or $=(\cosh(\chi'_2),\cosh(\chi'_1))$. In both cases, $(\chi'_1,\chi'_2)$
    is at most an application of $s_1$ removed from satisfying the first equation
    i.e. $\cosh(\chi_1)=\cosh(\chi'_1),\cosh(\chi_2)=\cosh(\chi'_2)$. So without
    loss of generality, we can assume that the equation is satisfied.

    The equation can be rewritten as $\exp(\chi_i)+\exp(-\chi_i)
    = \exp(\chi'_i) + \exp(-\chi'_i)$ for $i=1,2$. By the same algebra argument
    we conclude $\exp(\chi_i)=\exp(\chi'_i)$ or $=\exp(-\chi'_i)$ for $i=1,2$.
    This yields four possibilities, all of which can be related to
    $\exp(\chi_1)=\exp(\chi'_1),\exp(\chi_2)\exp(\chi'_2)$ using the sign flips
    $s_2$ and $s_1s_2s_1$. Without loss of generality assume therefore that this
    equation holds.

    Since $\exp: (\CC,+)\to(\CC^\times,\cdot)$ is a group homomorphism with kernel
    $2\pi i\ZZ$, this implies that $(\chi'_1,\chi'_2)-(\chi_1,\chi_2)\in (2\pi i\ZZ)^2$. Such a translation can be effected using the elementary translations
    $s_0s_1s_2s_1$ and $s_1 s_0 s_1 s_2$.

    This shows that $(\chi_1,\chi_2),(\chi'_1,\chi'_2)$ lie in the same $\tilde{W}$-orbit.
\end{proof}

Next, we investigate the preimage of $\RR^2$ under $f$ (equivalently, $g$).

\begin{lemma}
    The preimage of $\RR^2$ under $f$ consists of all the $\tilde{W}$-orbits of
    \[
        Y = \bigsqcup_{I\subseteq\{0,1,2\}} Y_I
    \]
    where
    \begin{alignat*}{2}
        Y_\emptyset &=i\{(a,b)\mid 0<a<b<\pi\}\qquad &
        Y_{\{0\}} &= \{0\}\times\RR_{>0} + i\{(a,\pi)\mid 0<a<\pi\}\\
        Y_{\{1\}} &= \RR_{>0}(-1,1) + (0,\pi)i (1,1)
        & Y_{\{2\}} &= \RR_{>0}\times\{0\} + i\{(0,a)\mid 0<a<\pi\}\\
    Y_{\{0,1\}} &= \{(a,b)\mid 0<a<b\} + i(\pi,\pi)
    &Y_{\{0,2\}} &= \RR_{>0}^2 + i(0,\pi)\\
    Y_{\{1,2\}} &= \{(a,b)\mid 0< a< b\}.
    \end{alignat*}
\end{lemma}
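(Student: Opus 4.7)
The plan is to reduce the condition $f(\chi_1,\chi_2)\in\RR^2$ to an algebraic condition on $\cosh(\chi_1)$ and $\cosh(\chi_2)$, and then to examine what this condition says on each face $X_I$ of the fundamental domain $X$ from Proposition~\ref{sec:prop-fundamental-domain}. Since $\tilde W$ acts by biholomorphisms preserving $f$ (hence preserving the real locus), and since every $\tilde W$-orbit meets $X$ exactly once, it suffices to identify the intersection of $f^{-1}(\RR^2)$ with each $X_I$, which is exactly what the claimed $Y_I$ should be.

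\noindent\textbf{Step 1 (Reduction to $\cosh$).} Using the identities $\sinh^2(\chi/2)=\tfrac12(\cosh\chi-1)$ and $\cosh^2(\chi/2)=\tfrac12(\cosh\chi+1)$, the map $g$ becomes
\[
g(\chi_1,\chi_2)=\tfrac14\bigl((\cosh\chi_1-1)(\cosh\chi_2-1),\,(\cosh\chi_1+1)(\cosh\chi_2+1)\bigr).
\]
Since $f$ is a rational function of $g$ with nonvanishing denominators on $D$ (Lemma~\ref{sec:lem-local-diff}), $f(\chi_1,\chi_2)\in\RR^2$ is equivalent to $g(\chi_1,\chi_2)\in\RR^2$, which is in turn equivalent to
$\cosh(\chi_1)+\cosh(\chi_2)\in\RR$ and $\cosh(\chi_1)\cosh(\chi_2)\in\RR$. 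By elementary Vieta-type reasoning, this happens iff either both $\cosh(\chi_j)\in\RR$ or $\cosh(\chi_2)=\overline{\cosh(\chi_1)}$.

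\noindent\textbf{Step 2 (Translation to $\chi_j$).} Writing $\chi_j=s_j+it_j$ one has $\cosh(\chi_j)=\cosh(s_j)\cos(t_j)+i\sinh(s_j)\sin(t_j)$, so $\cosh(\chi_j)\in\RR$ holds iff $s_j=0$ or $t_j\in\pi\ZZ$. For the conjugate condition, using that $\cosh$ is even and $2\pi i$-periodic and that $\cosh(\bar z)=\overline{\cosh(z)}$, the equation $\cosh(\chi_2)=\overline{\cosh(\chi_1)}=\cosh(\bar{\chi}_1)$ is equivalent to $\chi_2\equiv\pm\bar\chi_1\pmod{2\pi i\ZZ}$, i.e.\ either $(s_2,t_2)=(s_1,-t_1+2\pi k)$ or $(s_2,t_2)=(-s_1,t_1+2\pi k)$ for some $k\in\ZZ$.

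\noindent\textbf{Step 3 (Case analysis on $X_I$).} For each of the seven faces $X_I$ in Proposition~\ref{sec:prop-fundamental-domain} (which prescribes the imaginary parts precisely), we apply Step~2.
\begin{itemize}
  \item On $X_{\{1,2\}}$, $X_{\{0,1\}}$, $X_{\{0,2\}}$ the imaginary parts lie in $\pi\ZZ$, so $\cosh(\chi_j)\in\RR$ automatically and $Y_I=X_I$.
  \item On $X_\emptyset$ ($0<t_1<t_2<\pi$), neither $t_j\in\pi\ZZ$, so the "both real" branch forces $s_1=s_2=0$; checking that the conjugate branch is incompatible with $0<t_1<t_2<\pi$ (both sign choices and all shifts $k$ are ruled out by the strict interval), one gets $Y_\emptyset=i\{(a,b):0<a<b<\pi\}$.
  \item On $X_{\{0\}}$ ($t_2=\pi$, $0<t_1<\pi$) and $X_{\{2\}}$ ($t_1=0$, $0<t_2<\pi$), one of the two $\cosh(\chi_j)$ is automatically real, and the other forces the remaining $s_j=0$; the conjugate branch reduces to this case since one component is already real. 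This yields the stated $Y_{\{0\}}$ and $Y_{\{2\}}$.
  \item On $X_{\{1\}}$ ($t_1=t_2=t\in(0,\pi)$, $s_1<s_2$), the "both real" branch forces $s_1=s_2=0$ contradicting $s_1<s_2$; and the conjugate branch forces $s_2=-s_1$ with $s_1<0$, giving $Y_{\{1\}}=\RR_{>0}(-1,1)+(0,\pi)i(1,1)$.
\end{itemize}

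The main obstacle is the bookkeeping in Step~3: each face requires checking that the relevant sign and shift choices for $\chi_2\equiv\pm\bar\chi_1\pmod{2\pi i\ZZ}$ either are ruled out by the strict inequalities defining $X_I$, or reproduce the "both real" branch. Once this case analysis is done, the conclusion follows by taking $\tilde W$-orbits, since the real locus $f^{-1}(\RR^2)$ is $\tilde W$-invariant (by Lemma~\ref{sec:lem-f-injective}, $\tilde W$ is exactly the symmetry group of $f$) and $X$ is a fundamental domain for $\tilde W\curvearrowright D$.
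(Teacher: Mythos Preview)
Your proposal is correct, and the case analysis you outline does go through (in particular, on $X_\emptyset$ both sign choices in $\chi_2\equiv\pm\bar\chi_1\pmod{2\pi i\ZZ}$ are indeed excluded by $0<t_1<t_2<\pi$, and on $X_{\{1\}}$ only the minus sign with $k=0$ survives, forcing $s_2=-s_1>0$).

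The route, however, differs from the paper's. The paper does not pass to $\cosh(\chi_1),\cosh(\chi_2)$ via half-angle formulae; instead it works directly with $c_1=\sinh(\chi_1/2)\sinh(\chi_2/2)$ and $c_2=\cosh(\chi_1/2)\cosh(\chi_2/2)$, observes that $g=(c_1^2,c_2^2)\in\RR^2$ forces each $c_i$ to be purely real or purely imaginary, and then treats the four resulting cases using the addition formula $c_2\pm c_1=\cosh\bigl((\chi_1\pm\chi_2)/2\bigr)$. Your Vieta argument (``$a+b\in\RR$ and $ab\in\RR$ iff $a,b\in\RR$ or $b=\bar a$'') is a cleaner way to encode the same reality constraint, and it lets you organise the case analysis directly by the faces $X_I$, which is what the statement is about. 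The paper's organisation by the parity of $(c_1,c_2)$ is a bit more computational but has the advantage that the four cases correspond transparently to the sign information encoded in $(u,v)$. Either way the remaining work is the same kind of bookkeeping you flag in Step~3.
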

\begin{proof}
    Let $(\chi_1,\chi_2)\in D$, without loss of generality $(\chi_1,\chi_2)\in X_I$ for an $I\subseteq\{0,1,2\}$, and write
    \begin{align*}
        c_1 &:= \sinh(\frac{\chi_1}{2})\sinh(\frac{\chi_2}{2})\\
        c_2 &:= \cosh(\frac{\chi_1}{2})\cosh(\frac{\chi_2}{2}).
    \end{align*}
    We now need to investigate when $c_1^2,c_2^2$ are real numbers.
    For this to happen, $c_i$ needs to be purely real or purely imaginary,
    making for four cases:
    \begin{description}
        \item[Both real] In this case, $c_1\pm c_2$ are both also real numbers.
        We have
        \[
            c_2\pm c_1 = \cosh(\frac{\chi_1\pm\chi_2}{2}).
        \]
        If $\chi_1 = 2a + 2\phi$ and $\chi_2=2b+2\psi$, this reads
        \[
            c_2\pm c_1 = \cosh(a\pm b)\cos(\phi\pm\psi)
            + i\sinh(a\pm b)\sin(\phi\pm\psi).
        \]
        For this to be purely imaginary, we need $a\pm b$ or $\phi\pm\psi\in\pi\ZZ$.

        If both $\phi+\psi,\phi-\psi\not\in\pi\ZZ$, we need $a+b,a-b=0$, which
        implies $a=b=0$. Furthermore, the inequalities imply that
        $0<\phi<\psi<\frac{\pi}{2}$, which in turn implies that
        $(\chi_1,\chi_2)\in X_\emptyset$, with zero real part, hence
        $(\chi_1,\chi_2)\in Y_\emptyset$.

        If $\phi-\psi\in\pi\ZZ$ and $\phi+\psi\not\in\pi\ZZ$, we need
        $a+b=0$. Furthermore, we have $0<\phi=\psi<\frac{\pi}{2}$,
        which implies that $(\chi_1,\chi_2)\in X_{\{1\}}$. Adding in the
        fact that $a=-b$, we obtain $(\chi_1,\chi_2)\in Y_{\{1\}}$.

        If $\phi+\psi\in\pi\ZZ$, we can have either $\phi=\psi=0$ or
        $\phi=\psi=\frac{\pi}{2}$. In both cases we also have
        $\phi-\psi\in\pi\ZZ$. In both cases there are no further restrictions on
        $a,b$, which implies
        $(\chi_1,\chi_2)\in X_{\{1,2\}}=Y_{\{1,2\}}$ or $X_{\{0,1\}}=Y_{\{0,1\}}$, respectively.

        \item[Both imaginary] In this case, $c_1\pm c_2$ are purely imaginary
        numbers, which equal
        \[
            c_2\pm c_1 = \cosh(a\pm b)\cos(\phi\pm\psi)
            + i\sinh(a\pm b)\sin(\phi\pm\psi).
        \]
        Since the $\cosh$ of any real number is nonzero, we obtain
        $\cos(\phi+\psi)=\cos(\phi-\psi)=0$, meaning that $\phi+\psi,\phi-\psi\in\frac{\pi}{2} + \pi\ZZ$. Since we chose $0\le\phi\le\psi\le\frac{\pi}{2}$, the only possible combinations are
        $\phi=0,\psi=\frac{\pi}{2}$, which implies that
        $(\chi_1,\chi_2)\in X_{\{0,2\}}=Y_{\{0,2\}}$.

        \item[$c_1$ imaginary, $c_2$ real] In this case $c_2\pm c_1$ are complex
        conjugates of each other, meaning that
        \[
           \cosh(\frac{\chi_1+\chi_2}{2})
           = \overline{\cosh(\frac{\chi_1-\chi_2}{2})}
           = \cosh(\frac{\overline{\chi_1-\chi_2}}{2}),
        \]
        which shows that one of
        \begin{align*}
            2\pi i\ZZ \ni &\frac{\chi_1+\chi_2}{2}-\frac{\overline{\chi_1}-\overline{\chi_2}}{2}\\
            =& a + b + i\phi + i\psi - (a - b - i\phi + i\psi)
            = 2b + 2i\phi\\
            2\pi i\ZZ\ni &\frac{\chi_1+\chi_2}{2}+\frac{\overline{\chi_1}-\overline{\chi_2}}{2}\\
            =& a+b+i\phi + i\psi + (a-b-i\phi+i\psi)
            = 2a + 2i\psi.
        \end{align*}
        If the first is true, we need $b=0$ and $\phi\in \pi\ZZ$. $b=0$ is
        only allowed for $X_\emptyset, X_{\{1\}}, X_{\{2\}}$, of which only
        $X_{\{2\}}$ allows $\phi\in\pi\ZZ$ (namely $\phi=0$). Since $b=0$,
        we in particular also have $(\chi_1,\chi_2)\in Y_{\{2\}}$.

        If the second is true, we need $a=0$ and $\psi\in\pi\ZZ$, which
        means that $(\chi_1,\chi_2)$ is not contained in any $X_I$, which is
        a contradiction.

        \item[$c_1$ real, $c_2$ imaginary] In this case, $c_1\pm c_2$ are
        complex conjugates of each other, meaning that
        \[
            \cosh(\frac{\chi_1+\chi_2}{2})
           = -\overline{\cosh(\frac{\chi_1-\chi_2}{2})}
           = -\cosh(\frac{\overline{\chi_1-\chi_2}}{2}),
        \]
        which shows that one of
        \begin{align*}
            \pi i + 2\pi i\ZZ \ni & 2b + 2i\phi\\
            \pi i + 2\pi i\ZZ \ni & 2a + 2i\psi.
        \end{align*}
        In the first case, we have $b=0$ and $\phi\in\frac{\pi}{2} + \pi\ZZ$,
        i.e. $\phi=\frac{\pi}{2}$, which implies that $(\chi_1,\chi_2)$ is not
        contained in any $X_I$, which is a contradiction.

        In the second case, we have $a=0$ and $\psi=\frac{\pi}{2}$, which implies
        that $(\chi_1,\chi_2)\in Y_{\{0\}}\subseteq X_{\{0\}}$.
    \end{description}
    We thus obtain the following:\\
    \begin{tabular}{l|l|l}
         $c_1$ & $c_2$ & Real Faces \\\hline
             re & re & $Y_\emptyset, Y_{\{1\}}, Y_{\{0,1\}}, Y_{\{1,2\}}$\\
             re & im & $Y_{\{0\}}$\\
             im & re & $Y_{\{2\}}$\\
             im & im & $Y_{\{0,2\}}$.
    \end{tabular}
\end{proof}

\begin{corollary}
    Restricted to $Y$, $f$ is a diffeomorphism onto
    $f(Y)$, which is given by
    \[
        \{(u,v)\in\RR^2\mid u,v,1+u^2+v^2-2u-2v-2uv\ne0\}.
    \]
\end{corollary}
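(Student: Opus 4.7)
The plan is to combine three ingredients already in place. First, by Lemma~\ref{sec:lem-f-injective}, $f$ is injective on $X\supseteq Y$, and by Lemma~\ref{sec:lem-local-diff}, $f$ is a complex local diffeomorphism at every point of $D\supseteq Y$. Inspection of the defining parametrisations reveals that each piece $Y_I$ is a smooth $2$-dimensional real submanifold of $\CC^2$ on which $f$ takes values in $\RR^2$. The complex derivative $df_p$ at any $p\in Y_I$, being a $\CC$-linear isomorphism, restricts to an injective $\RR$-linear map $T_pY_I\to T_{f(p)}\RR^2$ between two-dimensional real vector spaces, hence to an $\RR$-linear isomorphism. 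The inverse function theorem combined with the global injectivity then yields that $f|_Y:Y\to f(Y)$ is a diffeomorphism; what remains is to pin down $f(Y)$.

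To identify $f(Y)$ I would substitute $s_i := \sinh^2(\chi_i/2)$ and set $a:=(s_1+1)(s_2+1)$, $b:=s_1s_2$, so that $u=1/a$ and $v=b/a$. A direct expansion yields the key identity
\[
    1 + u^2 + v^2 - 2u - 2v - 2uv \;=\; \frac{(a-b-1)^2 - 4b}{a^2},
\]
whose numerator is precisely the discriminant of the monic quadratic $t^2-(a-b-1)t+b$ whose roots are $s_1,s_2$. Consequently the three inequalities defining the target set translate respectively into $s_i\neq -1$ (equivalent to $u\neq 0$, i.e.\ $a\neq 0$), $s_i\neq 0$ (equivalent to $v\neq 0$, i.e.\ $b\neq 0$), and $s_1\neq s_2$ (equivalent to the discriminant being nonzero).

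Necessity of the three conditions for $(u,v)\in f(Y)$ is now immediate from the defining inequalities of $D$: using the standard fact $\sinh^2(\alpha)=\sinh^2(\beta)$ iff $\alpha\equiv\pm\beta \pmod{i\pi\ZZ}$, the constraints $\chi_i\notin i\pi\ZZ$ give $s_i\notin\{0,-1\}$, and $\tfrac{\chi_1\pm\chi_2}{2}\notin i\pi\ZZ$ gives $s_1\neq s_2$. For the converse, given any $(u,v)\in\RR^2$ satisfying the three inequalities I would set $a=1/u$, $b=v/u$, extract the two distinct roots $s_1,s_2\in\CC\setminus\{0,-1\}$ of the resulting real-coefficient quadratic, pick any $\chi_i$ with $\sinh^2(\chi_i/2)=s_i$, and verify $(\chi_1,\chi_2)\in D$ from the translated conditions. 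Proposition~\ref{sec:prop-fundamental-domain} then permits relocation of the pair into $X$ by an element of $\tilde W$, and since $f$ is $\tilde W$-invariant and equal to $(u,v)\in\RR^2$ on the relocated pair, the representative lies in $Y$ by the very definition of $Y$. I do not anticipate a deeper obstacle here; the only subtle point is that $s_1,s_2$ may be a complex conjugate pair, but this is automatically absorbed by the case analysis underlying Proposition~\ref{sec:prop-fundamental-domain}, which already enumerates all the strata $Y_I$ on which the various reality patterns of $s_1,s_2$ are realised.
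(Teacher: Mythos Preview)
Your argument is correct and, for the diffeomorphism statement, follows essentially the same route as the paper: each $Y_I$ is a two-real-dimensional slice of $D$ on which the complex Jacobian (nonsingular by Lemma~\ref{sec:lem-local-diff}) restricts to a real isomorphism, and global injectivity comes from Lemma~\ref{sec:lem-f-injective}.

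For the identification of $f(Y)$, your treatment is actually more complete than the paper's. The paper only sketches the forward inclusion $f(Y)\subset\{\cdots\}$ and only explicitly addresses the conditions $u,v\ne 0$; the discriminant condition and the reverse inclusion are not argued there. Your substitution $s_i=\sinh^2(\chi_i/2)$, $a=(1+s_1)(1+s_2)$, $b=s_1s_2$ and the identity
\[
1+u^2+v^2-2u-2v-2uv=\frac{(a-b-1)^2-4b}{a^2}=\frac{(s_1-s_2)^2}{a^2}
\]
makes the correspondence between the three inequalities and the $D$-conditions transparent, and the surjectivity step (solve the real quadratic for $s_1,s_2$, lift to $\chi_i$, move into $X$ by $\tilde W$, and invoke the preceding lemma that $f^{-1}(\RR^2)\cap X=Y$) is clean.

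One small wording issue: you write ``$s_i\ne -1$ (equivalent to $u\ne 0$, i.e.\ $a\ne 0$)''. Strictly speaking $u=1/a$ is never zero where defined; the content here is rather that $s_i\ne -1$ is equivalent to $a\ne 0$, which in the forward direction guarantees $u$ is finite (automatic on $D$) and in the reverse direction is what allows you to set $a=1/u$. This is cosmetic and does not affect the argument.
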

\begin{proof}
    Note that the submanifolds $Y_I$
    are real slices of $D$, meaning that
    infinitesimally, there is always one degree of freedom in $\chi_1$-direction
    and one in $\chi_2$-direction remaining. More concretely, this means that
    at any point $p\in Y$ we have $T_p Y \otimes\CC = T_p D$. In particular, up 
    to scalar factors, the Jacobian is the same as when we consider all of $D$.
    Consequently, by the definition of $D$, both $f$ and $g$ are 
    local diffeomorphisms on $Y_I$. $Y$ is now the disjoint union of 
    (real) submanifolds of $D$ of dimension 2, consequently it is itself a
    submanifold on which $f$ is a local diffeomorphism. 
    Furthermore, by Lemma~\ref{sec:lem-f-injective}, $f$ is injective on $X$, hence in particular on $Y$, hence it is a diffeomorphism.

    To see that $f(Y)\subset$ the set indicated, note that
    $\chi_1,\chi_2\not\in i\pi\ZZ$ implies that $\sech(\frac{\chi_i}{2}),\tanh(\frac{\chi_i}{2})$ ($i=1,2$) are finite and nonzero.
\end{proof}

We now have a closer look at what the Cartan subsets look like when viewed through the lens of $\psi:\, \tilde{G}\to \GP(G/Q,4)$ and the coordinate functions
$u,v$. But first we need a shorthand to determine
$(u,v)(\psi(g))$ from the entries of $x$ for $x\in\tilde{G}$.

\begin{lemma}\label{sec:lem-cross-ratios-corners}
    Let $x\in\tilde{G}$ be written as a $1+d+1$-block matrix as follows:
    \[
        x = \mqty(A & B & C\\D & E & F\\G & H & I).
    \]
    Then
    \begin{align*}
        u(\psi(x)) &= \frac{4}{(A-I)^2 - (C-G)^2}\\
        v(\psi(x)) &= \frac{(A+I)^2-(C+G)^2}{(A-I)^2-(C-G)^2}.
    \end{align*}
\end{lemma}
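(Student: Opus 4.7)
The plan is a direct computation from the definitions of $\psi$, $\iota$, $\infty$, and the cross-ratios $u,v$. Write $e_\pm := (1,0,\dots,0,\pm 1)^T \in \RR^{d+2}$, so that $\iota(0) = q(e_+)$ and $\infty = q(e_-)$ (recall Lemma~\ref{sec:lem-comp-diffeo} and the proof of Lemma~\ref{sec:lem-g-action-pairs}). Then
\[
\psi(x) = (\iota(0),\infty,x\cdot\iota(0),x\cdot\infty) = (q(e_+),q(e_-),q(xe_+),q(xe_-)),
\]
so by Definition~\ref{sec:def-uv},
\[
u(\psi(x)) = \frac{\eta(e_+,e_-)\,\eta(xe_+,xe_-)}{\eta(e_+,xe_+)\,\eta(e_-,xe_-)}, \qquad v(\psi(x)) = \frac{\eta(e_+,xe_-)\,\eta(e_-,xe_+)}{\eta(e_+,xe_+)\,\eta(e_-,xe_-)}.
\]

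Next I would read off the components of $xe_\pm$ from the block decomposition: $xe_\pm = (A\pm C,\, D\pm F,\, G\pm I)^T$. Because our convention makes $\eta_{00}=1$ and $\eta_{d+1,d+1}=-1$, pairing against $e_\pm$ only picks out the first and last components: for any $v = (v_0, v_{\bullet}, v_{d+1})^T$ we have $\eta(e_+, v) = v_0 - v_{d+1}$ and $\eta(e_-, v) = v_0 + v_{d+1}$. Applying this to $xe_\pm$ yields
\begin{align*}
\eta(e_+, xe_+) &= (A+C)-(G+I), & \eta(e_-, xe_-) &= (A-C)+(G-I),\\
\eta(e_+, xe_-) &= (A-C)-(G-I), & \eta(e_-, xe_+) &= (A+C)+(G+I).
\end{align*}
Moreover $\eta(e_+,e_-) = 2$ and, since $x\in G$ preserves $\eta$, also $\eta(xe_+,xe_-) = 2$.

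Substituting into the formula for $u$ gives a numerator of $4$ and a denominator equal to
\[
\bigl((A-I)+(C-G)\bigr)\bigl((A-I)-(C-G)\bigr) = (A-I)^2 - (C-G)^2,
\]
after grouping terms and applying the difference-of-squares identity. Exactly the same identity applied to the numerator of $v$ produces $(A+I)^2 - (C+G)^2$, while the denominator is the same as for $u$. This yields the stated expressions. The only conceivable obstacle is a sign slip in the pairing conventions, so I would explicitly double-check the signature computation $\eta(e_+,e_-)=2$ (and the vanishing $\eta(e_\pm,e_\pm)=0$, which re-confirms that $e_\pm$ are null) before concluding.
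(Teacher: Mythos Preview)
Your proof is correct and follows essentially the same direct-computation approach as the paper. The one notable difference is in how you handle the numerator of $u$: the paper computes $\eta(xe_+,xe_-)$ explicitly (picking up the middle blocks $D,F$) and then invokes the column-norm constraints $A^2+\eta(D,D)-G^2=1$ and $C^2+\eta(F,F)-I^2=-1$ to simplify, whereas you bypass this entirely by using $\eta$-invariance of $x$ to get $\eta(xe_+,xe_-)=\eta(e_+,e_-)=2$ directly. Your route is slightly cleaner, but both arguments use the orthogonality of $x$ in an essential way and are otherwise identical.
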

\begin{proof}
    We have
    \[
        \psi(x) = \qty(\iota(0),\infty, q\mqty(A + C\\D + F\\G+I),
        q\mqty(A - C\\D-F\\G-I)),
    \]
    So by definition of $u,v$ we have
    \begin{align*}
        u(\psi(x)) &= 2 \frac{A^2-C^2-G^2+I^2 - \eta(D,D) - \eta(F,F)}{(A-I)^2-(C-G)^2}\\
        v(\psi(x)) &= \frac{(A+I)^2-(C+G)^2}{(A-I)^2-(C-G)^2}.
    \end{align*}
    Note that since $g\in O(p+1,q+1)$, the first and last column of $g$ are vectors
    with square length $1$ and $-1$, respectively, whence
    \[
        A^2 + \eta(D,D) - G^2 = 1\qquad C^2 + \eta(F,F) - I^2 = -1,
    \]
    so that
    \[
        u(\psi(x)) = \frac{4}{(A-I)^2 - (C-G)^2}.
    \]
    We can thus infer $u(\psi(x)),v(\psi(x))$ from the four corner entries
    $A,C,G,I$.
\end{proof}

\begin{corollary}\label{sec:cor-characterisation-g-tilde}
    $\tilde{G}$ consists of those matrices in $G$ with corners
    $A,C,G,I$ such that
    \[
        (A-I)^2-(C-G)^2 \ne 0\ne (A+I)^2 - (C+G)^2,
    \]
    so it is indeed a dense open subset.
\end{corollary}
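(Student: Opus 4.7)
The plan is to unfold the definition of $\tilde G$ directly via the homeomorphism $\Psi$ from Lemma~\ref{sec:lem-configuration-homeomorphism} and reduce general position of the associated 4-tuple to polynomial conditions on the corners. Since the preceding discussion identifies $MA\backslash\tilde G/MA$ as the preimage of $\GP(G/Q,4)/G$, membership $g \in \tilde G$ is equivalent to $\psi(g) = (\iota(0), \infty, g\iota(0), g\infty)$ lying in $\GP(G/Q,4)$; in turn this asks that the canonical null representatives
\[
v_1 = (1,0,1)^T, \quad v_2 = (1,0,-1)^T, \quad v_3 = (A+C, D+F, G+I)^T, \quad v_4 = (A-C, D-F, G-I)^T
\]
of the four points (as set up in the proof of Lemma~\ref{sec:lem-cross-ratios-corners}) be linearly independent and pairwise $\eta$-non-orthogonal.

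First I would compute all six pairwise inner products. Two are automatic: $\eta(v_1,v_2)=2$ by inspection, and $\eta(v_3,v_4)=\eta(gv_1,gv_2)=2$ since $g\in G$ preserves $\eta$. The remaining four, using the explicit forms of $v_3,v_4$, collect themselves in two pairs whose products telescope into differences of squares of the corner entries:
\[
\eta(v_1,v_3)\,\eta(v_2,v_4) = (A-I)^2 - (C-G)^2, \qquad \eta(v_1,v_4)\,\eta(v_2,v_3) = (A+I)^2 - (C+G)^2.
\]
Thus the two stated inequalities are exactly equivalent to the simultaneous non-vanishing of all six pairwise $\eta$-pairings, which is the non-orthogonality half of the general-position condition; equivalently, by Lemma~\ref{sec:lem-cross-ratios-corners}, they are the conditions that $u$ and $v$ take finite nonzero real values on $\psi(g)$.

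The remaining piece is linear independence. The natural argument is to compare the two $\eta$-hyperbolic $2$-planes $V_0 := \mathrm{span}(v_1,v_2)$ and $V_1 := gV_0 = \mathrm{span}(v_3,v_4)$: any null element of $V_0\cap V_1$ must be proportional to one of the $v_i$ and thereby force one of the four cross inner products to vanish, which is excluded by the corner conditions. A hypothetical non-null intersection vector is handled by passing, via the $MA$-bi-invariance of both $\tilde G$ and the two corner polynomials, to a Matsuki Cartan representative from Corollary~\ref{sec:cor-cartan-subsets}, where non-orthogonality and linear independence can be checked simultaneously and directly.

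Finally, openness and density fall out for free: the two corner expressions are polynomials on $G$, so their joint non-vanishing locus is Zariski-open, and density follows from the connectedness of $G$ together with the observation that a generic element of the fundamental Cartan subset $C$ already satisfies both inequalities. The inclusion $MA\tilde G MA\subseteq\tilde G$ is a short computation showing that $(A-I)^2 - (C-G)^2$ and $(A+I)^2 - (C+G)^2$ are invariant under the boost action of $A$ on the corners and are unchanged by left or right multiplication by $M$. I expect the linear-independence step to be the main technical obstacle: the corner polynomials alone literally control only pairwise inner products, so disentangling them from the spanning condition requires either the Matsuki reduction above or an appeal to the fact that the two loci agree on the dense open set where $(u,v)$ lands in the image $f(Y)$ identified in Subsection~\ref{sec:coords}.
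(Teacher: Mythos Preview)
The paper's own proof is essentially your non-orthogonality computation without the linear-independence discussion: it writes $\psi(g)=(\iota(0),\infty,\iota(a),\iota(b))$, asserts that general position is equivalent to $a^2,b^2,(a-b)^2\ne0$, identifies this via Corollary~\ref{sec:cor-uv-conf-frame} with $u,v$ being finite and nonzero, and concludes by Lemma~\ref{sec:lem-cross-ratios-corners}. So the paper silently drops exactly the condition you flag.

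Your instinct is correct, and the gap is not closable: the statement is false as written. In the Euclidean case take any $g\in G$ with $g\cdot\iota(0)=\iota(e_1)$ and $g\cdot\infty=\iota(2e_1)$ (such $g$ exists by transitivity on $\GP(G/Q,2)$); then $u=1$, $v=4$, so $(A-I)^2-(C-G)^2=4$ and $(A+I)^2-(C+G)^2=16$, yet $e_1,2e_1$ are collinear and $\psi(g)\notin\GP(G/Q,4)$. Concretely on the Euclidean Cartan subset, the element $x=t_\phi\in C$ with $\phi\in(0,\pi)$ has $v_3-v_4=2e_{d+1}=v_1-v_2$ while both corner expressions equal $(\cos\phi\mp1)^2\ne0$, so your Matsuki-reduction approach would exhibit the failure rather than resolve it. The alternative route via $f(Y)$ is closer: the missing obstruction is $\eta(a,b)^2-a^2b^2$, which up to scale is the discriminant $(1-u-v)^2-4uv$ appearing in Subsection~\ref{sec:coords}; but even that only coincides with linear dependence in Euclidean signature, not Lorentzian (e.g.\ $a=(1,1,1)$, $b=(1,-1,1)\in\RR^{2,1}$ span a degenerate $2$-plane yet are independent). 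The corner conditions therefore cut out a strict superset of $\tilde G$. Fortunately the corollary is only invoked downstream to get open-density (true regardless) and inside the parameter regions $Y_I$ of Lemma~\ref{sec:lem-cartan-chi}, where linear independence holds for separate reasons.
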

\begin{proof}
    Let $x\in G$, then $\psi(x) = (\iota(0),\infty, g\cdot \iota(0), g\cdot \infty)=(\iota(0),\infty,\iota(a),\iota(b))$ is in general position iff none of $\eta(x,x),\eta(y,y),\eta(x-y,x-y)$ is zero.
    By Corollary~\ref{sec:cor-uv-conf-frame} this is the case if
    $u(\psi(g)),v(\psi(g))$ are both finite and nonzero. By Lemma~\ref{sec:lem-cross-ratios-corners}, this is the case iff
    $(A-I)^2-(C-G)^2, (A+I)^2-(C+G)^2\ne0$.
\end{proof}

\begin{lemma}\label{sec:lem-cartan-chi}
    Let $q>0$. The (representative) Cartan subsets $C_I$ from Corollary~\ref{sec:cor-cartan-subsets} can be labelled as follows
    \begin{enumerate}
        \item $C_\emptyset = C$;
        \item $C_{\{0\}} = \exp(\RR(F_{0,1}+F_{d,d+1})\oplus\RR(F_{0,d}-F_{1,d+1}))$;
        \item $C_{\{2\}} = \exp(\RR(F_{0,1}+F_{d,d+1})\oplus\RR(F_{0,d}-F_{1,d+1}))t_{0,\pi}$;
        \item $C_{\{1\}} = \exp(\RR F_{0,1}\oplus \RR F_{2,d+1})$;
        \item $C_{\{1\}'} = \exp(\RR F_{d,d+1}\oplus\RR F_{0,d-1})$;
        \item $C_{\{0,1\}} = \exp(\RR F_{0,d}\oplus\RR F_{1,d+1})$;
        \item $C_{\{0,2\}} = \exp(\RR F_{0,d}\oplus\RR F_{1,d+1})t_{\pi/2,\pi/2}$;
        \item $C_{\{1,2\}} = \exp(\RR F_{0,d}\oplus\RR F_{1,d+1})t_{0,\pi}$.
    \end{enumerate}
    Then, for every $I$ there exists a homeomorphism (``parametrisation'') from
    $\overline{Y_I}$ to a subset of $C_I$ that maps
    $Y_I$ to (a subset of) $C_I\cap\tilde{G}$, in a way that $(\chi_1,\chi_2)\in Y_I$
    is mapped to $x$ with $(u,v)(\psi(x)) = f(\chi_1,\chi_2)$. 
\end{lemma}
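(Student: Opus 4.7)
The plan is to verify the lemma case by case for each of the eight Cartan subsets of Corollary~\ref{sec:cor-cartan-subsets}. In each case we exploit the fact that the two generators of $\mathfrak{c}'$ commute and that $t$ commutes with them, so the Cartan subset factors as a product of two one-parameter subgroups (multiplied by $t$). Computing the $(1+d+1)$-block matrix of such a product is straightforward because each $F_{\mu\nu}$ exponentiates either to a planar rotation (if $\mu,\nu \le p$ or both $> p$) or to a hyperbolic rotation (otherwise), acting in a single coordinate plane and leaving the remainder of the matrix fixed. The upshot is an explicit formula for the corner entries $A,C,G,I$ in terms of two real parameters, and then Lemma~\ref{sec:lem-cross-ratios-corners} immediately yields $(u,v)(\psi(x))$.

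The main computation is to match these expressions to $f(\chi_1,\chi_2) = (\sech^2(\chi_1/2)\sech^2(\chi_2/2),\, \tanh^2(\chi_1/2)\tanh^2(\chi_2/2))$. The guiding principle is the imaginary/real structure of $Y_I$ from Proposition~\ref{sec:prop-fundamental-domain}: purely imaginary $\chi_j$ correspond to rotational directions in $C_I$ (giving $\cosh(ia/2)=\cos(a/2)$ and $\tanh(ia/2)=i\tan(a/2)$), purely real $\chi_j$ to hyperbolic directions, and the twist $t$ in the Cartan subsets $C_{\{2\}},C_{\{0,2\}},C_{\{1,2\}}$ contributes shifts $\chi_j \mapsto \chi_j + i\pi$ that place $(\chi_1,\chi_2)$ on the corresponding face of $\overline{Y_I}$. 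Concretely, one uses the half-angle identities $1 \pm \cos\alpha = 2\cos^2(\alpha/2),\, 2\sin^2(\alpha/2)$ together with sum-to-product identities like $\cos\phi - \cos\psi = -2\sin(\tfrac{\phi+\psi}{2})\sin(\tfrac{\phi-\psi}{2})$ (and their hyperbolic analogues) to rewrite $(u,v)(\psi(x))$ as $f$ evaluated at an explicit linear change of variables of the two parameters of $C_I$.

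Once the parametrisation is produced, continuity is automatic from the smoothness of the matrix exponential, and injectivity on $\overline{Y_I}$ follows from the injectivity of $f|_{\overline{Y_I}}$ (Lemma~\ref{sec:lem-f-injective}). The intersection of the image with $\tilde{G}$ corresponds exactly to $Y_I$ because, by Corollary~\ref{sec:cor-characterisation-g-tilde}, membership of $\tilde{G}$ is equivalent to $(A\pm I)^2 - (C\pm G)^2 \neq 0$, which via the $f$-parametrisation becomes $\chi_1,\chi_2,\tfrac{\chi_1\pm\chi_2}{2} \notin i\pi\ZZ$, precisely the defining condition of $D$ and hence of $Y_I$ inside its closure. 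The main obstacle is the bookkeeping: eight separate cases with different twists, signature patterns and boundary strata must be handled with consistent sign conventions. I expect the trickiest to be the twisted cases (e.g.\ $C_{\{0,2\}}$ with $t = t_{\pi/2,\pi/2}$), where the standard identities acquire extra signs that must be absorbed into an $i\pi$-shift of $\chi_j$, and where one must additionally verify that the parametrisation extends continuously across the degenerations of $\overline{Y_I}$ (a generator becoming trivial, or two factors colliding).
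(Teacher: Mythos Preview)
Your proposal is correct and follows essentially the same route as the paper: case-by-case computation of the corner entries $A,C,G,I$ for a generic element of each $C_I$, application of Lemma~\ref{sec:lem-cross-ratios-corners} and Corollary~\ref{sec:cor-characterisation-g-tilde}, and then matching $(u,v)(\psi(x))$ to $f(\chi_1,\chi_2)$ via half-angle and sum-to-product identities to read off an explicit affine parametrisation $\overline{Y_I}\to C_I$. One small overclaim: you assert that the intersection of the image with $\tilde{G}$ corresponds \emph{exactly} to $Y_I$, but the lemma (and the paper's proof) only establishes one direction, namely that points of $Y_I$ land in $\tilde{G}$; the converse need not hold, since the $\tilde{G}$-condition on the corners does not in every case force all four of $\chi_1,\chi_2,\tfrac{\chi_1\pm\chi_2}{2}$ out of $i\pi\ZZ$.
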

\begin{proof}
    In the following, we use the same numbering as in Corollary~\ref{sec:cor-cartan-subsets}. For each Cartan subset $C_I$
    we compute $(u,v)\circ\psi$ for a typical element and match this with $f^{-1}$ of an element of $\tilde{W}\overline{Y_I}$.
    \begin{enumerate}
        \item For $x\in C$, say $x=t_{\phi,\psi}$ we have
        \[
            \mqty(A & C\\G & I) = \mqty(\cos(\phi) & 0\\0 & \cos(\psi)).
        \]
        By Corollary~\ref{sec:cor-characterisation-g-tilde}, we have
        $x\in\tilde{G}$ iff
        \[
        \cos[2](\phi)-\cos[2](\psi)
        =-\sin(\phi+\psi)\sin(\phi-\psi)\ne0,
        \]
        i.e. iff
        $\phi\pm\psi\not\in \pi\ZZ$. Assume that is the case, then
        \begin{align*}
            u(\psi(x)) &= \frac{4}{(\cos(\phi)-\cos(\psi))^2}\\
            &= \csc[2](\frac{\phi+\psi}{2})
            \csc[2](\frac{\phi-\psi}{2})\\
            v(\psi(x)) &= \frac{(\cos(\phi)+\cos(\psi))^2}{\cos(\phi)-\cos(\psi))^2}\\
            &= \cot[2](\frac{\phi+\psi}{2})
            \cot[2](\frac{\phi-\psi}{2}),
        \end{align*}
        such that $(u(\psi(x)),v(\psi(x)))=f\qty(i(\phi+\psi+\pi), i(\phi-\psi+\pi))$. It is therefore $\overline{Y_\emptyset}$,
        which we can use to parametrise $C_\emptyset$ as follows:
        \[
            \overline{Y_\emptyset} \ni (\chi_1,\chi_2)\mapsto
            \exp(\frac{\chi_1+\chi_2-2\pi i}{2i}F_{0,1} + \frac{\chi_1-\chi_2}{2i}F_{d,d+1}).
        \]
        In particular, if $(\chi_1,\chi_2)\in Y_\emptyset$, we have
        $\chi_1,\chi_2\not\in \pi i\ZZ$, which corresponds to
        $\phi\pm\psi$ not being elements of $\pi \ZZ$, so that we obtain
        an element of $\tilde{G}$. 
        \item For $x=\exp(a(F_{0,d}-F_{1,d+1}))t_{\phi,\phi}$ we have
        \[
            \mqty(A & C\\G & I) = \mqty(\cosh(a)\cos(\phi) & \sinh(a)\sin(\phi)\\
            -\sinh(a)\sin(\phi) & \cosh(a)\cos(\phi)).
        \]
        We thus have $x\in\tilde{G}$ iff $\sinh(a)\sin(\phi)\ne0$ and
        $\cosh(a)\cos(\phi)\ne0$, i.e. iff $a\ne0$ and $\phi\not\in \frac{1}{2}\pi\ZZ$. Assume that is the case, then
        \begin{align*}
            u(\psi(x)) &= \frac{1}{-\sinh[2](a)\sin[2](\phi)}
            = -\csch[2](a)\csc[2](\phi)\\
            v(\psi(x)) &= -\coth[2](a)\cot[2](\phi),
        \end{align*}
        such that $(u(\psi(x)),v(\psi(x)))=f\qty(2i\phi+i\pi, 2a + i\pi)$.
        It is therefore $\overline{Y_{\{0\}}}$, which we can use to parametrise $C_{\{0\}}$ as follows:
        \[
            \overline{Y_{\{0\}}}\ni (\chi_1,\chi_2) \mapsto 
                \exp(\frac{\Re(\chi_2)}{2}(F_{0,d}-F_{1,d+1}) + \frac{\chi_1-i\pi}{2i}(F_{0,1}+F_{d,d+1})).
        \]
        In particular, for $(\chi_1,\chi_2)\in Y_{\{0\}}$, we obtain an element of $\tilde{G}$. 
        \item For $x=\exp(a(F_{0,d}-F_{1,d+1}))t_{\phi,\phi+\pi}$, the
        corners are
        \[
        \mqty(A & C\\G & I) = \mqty(\cosh(a)\cos(\phi) & -\sinh(a)\sin(\phi)\\
        -\sinh(a)\sin(\phi) & -\cosh(a)\cos(\phi)).
        \]
        We thus have $x\in\tilde{G}$ iff $\cosh(a)\cos(\phi),\sinh(a)\sin(\phi)\ne0$, i.e. iff $a\ne0$ and $\phi\not\in\frac{\pi}{2}\ZZ$. Assume that is the case, then
        \begin{align*}
            u(\psi(x)) &= \frac{1}{\cosh[2](a)\cos[2](\phi)}\\
            v(\psi(x)) &= - \tanh[2](a)\tan[2](\phi),
        \end{align*}
        such that $(u(\psi(x)), v(\psi(x))) = f(2a, 2i\phi)$. It
        is therefore $\overline{Y_{\{2\}}}$ that we can use to parametrise
        $C_{\{2\}}$ as follows:
        \[
            \overline{Y_{\{2\}}}\ni (\chi_1,\chi_2)\mapsto
                \exp(\frac{\Re(\chi_1)}{2}(F_{0,d}-F_{1,d+1}) + \frac{\chi_2}{2i}(F_{0,1}+F_{d,d+1}))t_{0,\pi}.
        \]
        In particular, for $(\chi_1,\chi_2)\in Y_{\{2\}}$, we obtain an element
        of $\tilde{G}$. 
        \item For $x=\exp(\phi F_{0,1} + a F_{2,d+1})$ we have
        \[
            \mqty(A & C\\G & I) = \mqty(\cos(\phi) & 0\\0 & \cosh(a)).
        \]
        We thus have $x\in\tilde{G}$ iff
        \[
            \cos[2](\phi)-\cosh[2](a) = -\sin(\phi+ia)\sin(\phi-ia)\ne0,
        \]
        i.e. iff $\phi\pm ia\not\in \pi\ZZ$. Assume that is the case,
        then
        \begin{align*}
            u(\psi(x)) &= \frac{4}{(\cos(\phi)-\cosh(a))^2}
            = \frac{4}{(\cosh(i\phi)-\cosh(a))^2}\\
            &= \frac{1}{\sinh[2](\frac{a+i\phi}{2})\sinh[2](\frac{a-i\phi}{2})}\\
            v(\psi(x)) &= \frac{(\cos(\phi)+\cosh(a))^2}{(\cos(\phi)-\cosh(a))^2}
            = \coth[2](\frac{a+i\phi}{2})\coth[2](\frac{a-i\phi}{2})
        \end{align*}
        such that
        \[
            (u(\psi(x)),v(\psi(x))) = f(a+i\phi + i\pi, -a+i\phi + i\pi).
        \]
        It is therefore $\overline{Y_{\{1\}}}$ that we can use to parametrise
        $C_{\{1\}}$ as follows:
        \[
            \overline{Y_{\{1\}}}\ni(\chi_1,\chi_2)\mapsto
                \exp(\frac{\chi_1+\chi_2-2i\pi}{2i} F_{0,1}
            + \frac{\Re(\chi_1-\chi_2)}{2}F_{2,d+1}).
        \]
        In particular, for $(\chi_1,\chi_2)\in Y_{\{1\}}$, we 
        obtain an element of $\tilde{G}$.
        \item For $x=\exp(\phi F_{d,d+1} + aF_{0,d-1})$, the four corners
        are
        \[
            \mqty(A & B\\C & D) = 
            \mqty(\cosh(a) & 0\\0 & \cos(\phi)),
        \]
        which leads to the same conditions for $x\in\tilde{G}$ and the
        same cross-ratios $u,v$ as the previous case.
        Consequently, we call this Cartan subset
        $C_{\{1\}'}$ and parametrise it as
        \[
            \overline{Y_{\{1\}}}\ni(\chi_1,\chi_2)\mapsto
                \exp(\frac{\chi_1+\chi_2-2i\pi}{2i} F_{d,d+1}
                + \frac{\Re(\chi_1-\chi_2)}{2}F_{0,d-1}),
        \]
        where we get an element of $\tilde{G}$ if $(\chi_1,\chi_2)\in Y_{\{1\}}$.
        \item For $x=\exp(a F_{0,d}+ b F_{1,d+1})$ the corners are
        \[
            \mqty(\cosh(a) & 0\\0 & \cosh(b)).
        \]
        We thus have $x\in\tilde{G}$ iff
        \[
            \cosh[2](a)-\cosh[2](b) = \sinh(a+b)\sinh(a-b)\ne0,
        \]
        i.e. iff $a\pm b\ne0$. Assume that is the case, then
        \begin{align*}
            u(\psi(x)) &= \frac{4}{(\cosh(a)-\cosh(b))^2}
            = \frac{1}{\sinh[2](\frac{a+b}{2})\sinh[2](\frac{a-b}{2})}\\
            v(\psi(x)) &= \frac{(\cosh(a)+\cosh(b))^2}{(\cosh(a)-\cosh(b))^2}
            = \coth[2](\frac{a+b}{2})\coth[2](\frac{a-b}{2}),
        \end{align*}
        such that
        \[
            (u(\psi(x)),v(\psi(x))) = f(a+b+i\pi, a-b+i\pi).
        \]
        It is thus $\overline{Y_{\{0,1\}}}$ that we can use to parametrise
        $C_{\{0,1\}}$ as follows:
        \[
            \overline{Y_{\{0,1\}}}\ni(\chi_1,\chi_2)\mapsto
            \exp(\frac{\Re(\chi_1+\chi_2)}{2} F_{0,d} + \frac{\Re(\chi_1-\chi_2)}{2}F_{1,d+1}),
        \]
        where we obtain an element of $\tilde{G}$ if
        $(\chi_1,\chi_2)\in Y_{\{0,1\}}$. 
        \item For $x=\exp(a F_{0,d}+ b F_{1,d+1})t_{\pi/2,\pi/2}$ the corners
        are
        \[
            \mqty(A & C\\G & I) = \mqty(0 & \sinh(a)\\\sinh(b) & 0).
        \]
        We thus have $x\in\tilde{G}$ iff
        \[
            \sinh[2](a)-\sinh[2](b) = \sinh(a+b)\sinh(a-b)\ne0,
        \]
        i.e. iff $a\pm b\ne0$. Assume that is the case, then
        \begin{align*}
            u(\psi(x)) &= \frac{4}{-(\sinh(a)-\sinh(b))^2}
            = - \csch[2](\frac{a-b}{2})\sech[2](\frac{a+b}{2})\\
            v(\psi(x)) &= \frac{-(\sinh(a)+\sinh(b))^2}{-(\sinh(a)-\sinh(b))^2}
            = \coth[2](\frac{a-b}{2})\tanh[2](\frac{a+b}{2}),
        \end{align*}
        which shows that
        \[
            (u(\psi(x)),v(\psi(x))) = f(a+b, a-b+i\pi).
        \]
        It is therefore $\overline{Y_{\{0,2\}}}$ that we can use to
        parametrise $C_{\{0,2\}}$ as follows:
        \[
            \overline{Y_{\{0,2\}}} \ni (\chi_1,\chi_2)\mapsto
            \exp(\frac{\Re(\chi_1+\chi_2)}{2}F_{0,d}
            + \frac{\Re(\chi_1-\chi_2)}{2}F_{1,d+1})t_{\pi/2,\pi/2}.
        \]
        In particular, we obtain an element of $\tilde{G}$ for
        $(\chi_1,\chi_2)\in Y_{\{0,2\}}$. 
        \item For $x=\exp(a F_{0,d}+ b F_{1,d+1})t_{\pi/2,\pi/2}$ the corners
        are
        \[
            \mqty(A & C\\G & I) = \mqty(\cosh(a) & 0\\0 & -\cosh(b)).
        \]
        We thus have $x\in\tilde{G}$ iff
        \[
            \cosh[2](a)-\cosh[2](b) = \sinh(a+b)\sinh(a-b)\ne0,
        \]
        i.e. iff $a\pm b\ne0$. Assume that is the case, then
        \begin{align*}
            u(\psi(x)) &= \frac{4}{(\cosh(a)+\cosh(b))^2}
            = \sech[2](\frac{a+b}{2})\sech[2](\frac{a-b}{2})\\
            v(\psi(x)) &= \frac{(\cosh(a)-\cosh(b))^2}{(\cosh(a)+\cosh(b))^2}
            = \tanh[2](\frac{a+b}{2})\tanh[2](\frac{a-b}{2}).
        \end{align*}
        This shows that
        \[
            (u(\psi(x)),v(\psi(x))) = f(a+b, a-b).
        \]
        It is therefore $\overline{Y_{\{1,2\}}}$ that we can use to parametrise $C_{\{1,2\}}$ as follows:
        \[
            \overline{Y_{\{1,2\}}}\ni(\chi_1,\chi_2)\mapsto
            \exp(\frac{\Re(\chi_1+\chi_2)}{2}F_{0,d} + \frac{\Re(\chi_1-\chi_2)}{2}F_{1,d+1})t_{0,\pi}.
        \]
        In particular, we obtain an element of $\tilde{G}$ for
        $(\chi_1,\chi_2)\in Y_{\{1,2\}}$.
    \end{enumerate}
\end{proof}

\begin{lemma}\label{sec:lem-cartan-chi-euclidean}
    Let $q=0$. There is a homeomorphism (``parametrisation'') from $\overline{Y_{\{1\}}}$ to
    a subset of $C$ that maps $Y_{\{1\}}$ to (a subset of) $C\cap\tilde{G}$ in such a way that $(\chi_1,\chi_2)\in Y_{\{1\}}$ is mapped to $x\in C$ with $(u,v)(\psi(x)) = f(\chi_1,\chi_2)$.
\end{lemma}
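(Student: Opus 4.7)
For $q=0$ the paper has already established that the fundamental Cartan subset $C = \exp(\RR F_{0,1})\exp(\RR F_{d,d+1})$ is the only standard Cartan subset, so the task reduces to analysing a single two-parameter family. My plan is to follow case (iv) of the proof of Lemma~\ref{sec:lem-cartan-chi} almost verbatim, with $F_{2,d+1}$ replaced by $F_{d,d+1}$ (both play the same structural role of a boost generator transverse to the compact $F_{0,1}$, and the fact that $q=0$ merely makes $F_{d,d+1}$ the only available noncompact partner).

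First I would pick a typical element $x = \exp(\phi F_{0,1})\exp(a F_{d,d+1}) \in C$. Since $F_{0,1}$ and $F_{d,d+1}$ act on disjoint coordinate pairs, the two exponentials commute and a direct matrix multiplication in the $1+d+1$ block decomposition yields corner entries $A = \cos\phi$, $C = G = 0$, $I = \cosh a$. Applying Lemma~\ref{sec:lem-cross-ratios-corners} gives
\[
u(\psi(x)) = \frac{4}{(\cos\phi - \cosh a)^2}, \qquad v(\psi(x)) = \frac{(\cos\phi + \cosh a)^2}{(\cos\phi - \cosh a)^2}.
\]
This is the exact same expression as in case (iv) of Lemma~\ref{sec:lem-cartan-chi}, and the same hyperbolic manipulations (using $\cos\phi = \cosh(i\phi)$, the factorisation $\cosh x - \cosh y = 2\sinh(\tfrac{x+y}{2})\sinh(\tfrac{x-y}{2})$, and the phase shift $\sinh^2 z = -\cosh^2(z + \tfrac{i\pi}{2})$) rewrite it as $f(a+i\phi+i\pi,\,-a+i\phi+i\pi)$. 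I would simply cite these identities rather than redo them.

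Reading off the parametrisation then gives
\[
\overline{Y_{\{1\}}} \ni (\chi_1,\chi_2)\mapsto \exp\!\left(\tfrac{\chi_1+\chi_2 - 2\pi i}{2i}\,F_{0,1} + \tfrac{\Re(\chi_1-\chi_2)}{2}\,F_{d,d+1}\right),
\]
and the remaining tasks are routine: continuity is immediate from the formula; injectivity follows since the $(0,0)$ and $(d+1,d+1)$ diagonal entries of the image recover $\cos\phi$ and $\cosh a$ uniquely within the ranges $\phi \in [-\pi,0]$, $a \le 0$ forced by $\overline{Y_{\{1\}}}$; and for $(\chi_1,\chi_2) \in Y_{\{1\}}$ we have $a \ne 0$ and $\phi \notin \pi\ZZ$, so neither $\cos\phi - \cosh a$ nor $\cos\phi + \cosh a$ vanishes and the image lies in $\tilde G$ by Corollary~\ref{sec:cor-characterisation-g-tilde}. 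I do not anticipate any genuine obstacle: this lemma is essentially a drop-in specialisation of case (iv) of Lemma~\ref{sec:lem-cartan-chi} to Euclidean signature, where the Cartan subset $C_{\{1\}}$ of the Lorentzian classification coincides with the unique Cartan subset $C$.
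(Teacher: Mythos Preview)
Your proposal is correct and follows essentially the same approach as the paper: compute the corner entries of a generic element $x=\exp(\phi F_{0,1}+aF_{d,d+1})$, apply Lemma~\ref{sec:lem-cross-ratios-corners}, observe that the resulting cross-ratios coincide with those of case~(iv) of Lemma~\ref{sec:lem-cartan-chi}, and read off the same parametrisation with $F_{d,d+1}$ in place of $F_{2,d+1}$. The paper's own proof is terser (it simply cites the computation in Lemma~\ref{sec:lem-cartan-chi}(iv,v)), but your added verification of injectivity and of membership in $\tilde G$ is a harmless elaboration of the same argument.
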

\begin{proof}
    We use the same techniques as in the proof of Lemma~\ref{sec:lem-cartan-chi}. Let $x\in C$, say
    \[
        x = \exp(aF_{d,d+1})t_{\phi},
    \]
    then the four corners of the matrix $x$ are
    \[
        \mqty(\cos(\phi) & 0 \\ 0 & \cosh(a)),
    \]
    respectively. This leads to the same cross-ratios as in
    \ref{sec:lem-cartan-chi}(iv,v), which shows the claim and
    suggests the following parametrisation:
    \[
        \overline{Y_{\{1\}}}\ni (\chi_1,\chi_2)\mapsto
        \exp(\frac{\chi_1+\chi_2-2i\pi}{2i}F_{0,1} +
        \frac{\chi_1-\chi_2}{2}F_{d,d+1}).\qedhere
    \]
\end{proof}

If we compare these Cartan subsets and the associated parameter regions
$Y_I$ with the causal configurations mentioned in \cite[\S5.1]{qiao} (see also \cite[\S6.6.7]{KQR}),
we note that our parameters $\chi_1,\chi_2$ are related to
the standard bootstrap variables $z,\overline{z}$ used there as follows:
\[
    z = \sech[2](\frac{\chi_1}{2}),\qquad
    \overline{z} = \sech[2](\frac{\chi_2}{2})
\]
up to potentially exchanging $z,\overline{z}$. This can be
seen most easily from the fact that 
(see e.g. \cite{bootstrapReview}, (31))
$u,v$ can be written as $u=z\overline{z},v=(1-z)(1-\overline{z})$
combined with the Lemma~\ref{sec:lem-cartan-chi}, where we related $u$ and $v$
of a certain four-point configuration to the following functions
of $\chi_{1/2}$:
\[
    u = \sech[2](\frac{\chi_1}{2})\sech[2](\frac{\chi_2}{2}),\qquad
    v = \qty(1-\sech[2](\frac{\chi_1}{2}))\qty(1-\sech[2](\frac{\chi_2}{2})).
\]
We can thus read off the claimed relations as one possible way
to realise these equations. After some arithmetic we arrive at
the following correspondence between $Y_I$ and causal regions from
Qiao:\\
\begin{tabular}{c|c}
     $I$ &  Causal region\\\hline
     $\emptyset$ & $E_{tu}$\\
     $\{0\}$ & $U$\\
     $\{1\}$ & $E_{stu}$\\
     $\{2\}$ & $T$\\
     $\{0,1\}$ & $E_{su}$\\
     $\{0,2\}$ & $S$\\
     $\{1,2\}$ & $E_{st}$.
\end{tabular}
In particular, we see that the Euclidean case $q=0$ can be
completely described using the assumption that $z,\overline{z}$ are
non-real complex numbers that are conjugate to each other, which
is how $z,\overline{z}$ are usually introduced in the Euclidean setting, see e.g. \cite[\S3.2.1]{qiao}.

\begin{remark}
    Note that the ``parametrisations'' from Lemmas~\ref{sec:lem-cartan-chi}, \ref{sec:lem-cartan-chi-euclidean} are not surjective. This
    corresponds to the fact that $(u,v)\circ\psi$ is not injective
    on $C_I\cap\tilde{G}$. For example, for $I=\emptyset$ we have
    $(u,v)(\psi(t_{\phi,\psi}))=(u,v)(\psi(t_{\phi',\psi'}))$ iff
    \[
        (\cos(\phi)+\cos(\psi))^2,
        (\cos(\phi)-\cos(\psi))^2
    \]
    equal the corresponding quantities for $\phi',\psi'$, which is the
    case if
    \[
        (a,b) \in \{(a',b'),(b',a'),(-a', -b'),(-b',-a')\}
    \]
    ($a=\cos(\phi),b=\cos(\psi)$, and similarly for primed quantities).
    Since the cosine is even, each of these four possibilities contributes
    four more possibilities (two each for the relative signs that the
    corresponding angles could have), making for (generically)
    sixteen elements in the same fibre.

    Nevertheless, we can extend all of these parametrisations to larger
    domains (consisting of several $\tilde{W}$-translates of
    $\overline{Y_I}$) such that they do become surjective.
\end{remark}

\subsection{Root Spaces in the Euclidean Setting}
We now apply the method of Section~\ref{sec:radial-parts} to compute the
radial part of $\Omega_{\mathfrak{g}}$ in the Euclidean setting, i.e. for
$q=0$. In that case, we have just one Cartan subset $C$, meaning that we
essentially obtain a $KAK$-decomposition $G_{s} = HCH$ and $G_{rs}=H(C\cap G_{rs})H$. We first establish a reduced root space decomposition of $\mathfrak{g}_{\CC}$ with respect to $\mathfrak{c}$.

\begin{proposition}\label{sec:prop-euclidean-root-spaces}
    The root system $\Sigma(\mathfrak{g}:\mathfrak{c})$ is of type $C_2$,
    where the short roots have multiplicity $d-2$, the long roots $1$, and
    $0$ has multiplicity $\frac{(d-2)(d-3)}{2}+2$. The root system is given by
    \[
        \left\{\pm\epsilon_1,\pm\epsilon_2,\pm\frac{\epsilon_1+\epsilon_2}{2},\pm\frac{\epsilon_1-\epsilon_2}{2}\right\},
    \]
    where $\epsilon_{1/2}(aF_{0,1}+bF_{d,d+1})=a\pm ib$.
    Moreover we have
    \[
        x^{\frac{\epsilon_1\pm\epsilon_2}{2}} = \mp \exp(\frac{\chi_1\pm\chi_2}{2})
    \]
    when $x\in C$ is parametrised as in Lemma~\ref{sec:lem-cartan-chi-euclidean}. Furthermore, the elements parametrised by $Y_{\{1\}}$ (i.e.
    not the boundary) lie in $G_{rs}$.
\end{proposition}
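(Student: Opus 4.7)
The plan is to diagonalise the two commuting operators $\ad(F_{0,1})$ and $\ad(F_{d,d+1})$ on the standard basis $\{F_{\mu\nu} : 0 \le \mu < \nu \le d+1\}$ of $\mathfrak{g}$ and organise the joint eigenspaces into restricted root spaces. Using the bracket identity stated right after Definition~\ref{sec:def-parabolic-subalgebras}, one checks that $\ad(F_{0,1})$ preserves each plane $\operatorname{span}_\CC(F_{0,\nu},F_{1,\nu})$ for $2\le\nu\le d+1$ and acts there with eigenvalues $\pm i$ on $F_{0,\nu}\pm iF_{1,\nu}$; analogously, $\ad(F_{d,d+1})$ preserves each plane $\operatorname{span}_\CC(F_{\mu,d},F_{\mu,d+1})$ for $0\le\mu\le d-1$ and acts with eigenvalues $\pm 1$ on $F_{\mu,d}\mp F_{\mu,d+1}$. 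Both operators vanish on basis vectors whose index set avoids $\{0,1\}$ or $\{d,d+1\}$ respectively.

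Counting then yields the stated structure. The zero weight space is spanned by $F_{0,1}$, $F_{d,d+1}$, and $\{F_{\mu\nu} : 2\le\mu<\nu\le d-1\}$, giving dimension $2+\binom{d-2}{2}=\tfrac{(d-2)(d-3)}{2}+2$. The short root vectors are $F_{0,\nu}\pm iF_{1,\nu}$ and $F_{\nu,d}\mp F_{\nu,d+1}$ for $2\le\nu\le d-1$, furnishing four root spaces of multiplicity $d-2$ each, with joint eigenvalues $(\pm i, 0)$ and $(0,\pm 1)$ respectively. The four long root vectors, with joint eigenvalues $(\pm i, \pm 1)$, arise from diagonalising the $4$-dimensional block $\operatorname{span}_\CC\{F_{0,d},F_{1,d},F_{0,d+1},F_{1,d+1}\}$; explicitly, $(F_{0,d}\pm iF_{1,d})\pm(F_{0,d+1}\pm iF_{1,d+1})$ give the four independent sign combinations, each a root space of multiplicity $1$. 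Read as $\CC$-linear functionals on $\mathfrak{c}_\CC$, the eight nonzero joint eigenvalues form a root system of type $C_2$; matching them against the definition of $\epsilon_{1/2}$ identifies the long roots as $\pm\epsilon_1,\pm\epsilon_2$ and the short roots as $\pm\tfrac{\epsilon_1\pm\epsilon_2}{2}$, with the asserted multiplicities.

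For the formula $x^{\frac{\epsilon_1\pm\epsilon_2}{2}}=\mp\exp(\tfrac{\chi_1\pm\chi_2}{2})$, I substitute the parametrisation of Lemma~\ref{sec:lem-cartan-chi-euclidean}, $x=\exp(X)$ with $X=\tfrac{\chi_1+\chi_2-2i\pi}{2i}F_{0,1}+\tfrac{\chi_1-\chi_2}{2}F_{d,d+1}$, into the definition $x^\alpha=\epsilon_\alpha\exp(\alpha(X))$ (with $t=1$, so $\epsilon_\alpha=1$) and evaluate each of the two short-root functionals; the sign $\mp$ appears via the phase $-2i\pi/(2i)=-\pi$ combined with $\exp(-i\pi)=-1$. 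Regularity of the $Y_{\{1\}}$-parametrised elements then follows immediately from Lemma~\ref{sec:lem-characterisation-regular}: the condition $x^\alpha\ne x^{-\alpha}$, equivalently $x^{2\alpha}\ne 1$, for all eight nonzero roots translates through the formulae just computed into the four requirements $\chi_1,\chi_2,\tfrac{\chi_1+\chi_2}{2},\tfrac{\chi_1-\chi_2}{2}\notin i\pi\ZZ$, which is precisely the defining condition of the domain $D\supseteq Y_{\{1\}}$.

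The main obstacle will be the explicit joint diagonalisation of the $4$-dimensional long-root block together with carefully aligning sign and phase conventions in $\epsilon_1,\epsilon_2$ with the stated form of $x^{\frac{\epsilon_1\pm\epsilon_2}{2}}$; once the $-\pi$ phase coming from the $-2i\pi/(2i)$ term in the parametrisation is tracked, everything else is a mechanical check.
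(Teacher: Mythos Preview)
Your proposal is correct and follows essentially the same route as the paper: joint diagonalisation of $\ad(F_{0,1})$ and $\ad(F_{d,d+1})$ on the $F_{\mu\nu}$ basis, a dimension count to confirm all root spaces are found, substitution of the parametrisation from Lemma~\ref{sec:lem-cartan-chi-euclidean} to evaluate $x^{\frac{\epsilon_1\pm\epsilon_2}{2}}$, and an appeal to Lemma~\ref{sec:lem-characterisation-regular} for regularity. The only cosmetic difference is that the paper packages the two generators into a single $\ad(aF_{0,1}+bF_{d,d+1})$ computation, whereas you diagonalise them separately and then combine.

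Two small remarks. First, your long-root eigenvectors are written with three independent $\pm$ signs; in fact the first and third are tied (they come from the same $\ad(F_{0,1})$-diagonalisation), so there really are only four combinations, as you then correctly assert. Second, the statement's formula $\epsilon_{1/2}(aF_{0,1}+bF_{d,d+1})=a\pm ib$ is a typo for $ia\pm b$; your eigenvalue analysis (joint eigenvalues $(\pm i,0)$ and $(0,\pm1)$) naturally lands on the latter, correct, convention, and with it the sign computation for $x^{\frac{\epsilon_1+\epsilon_2}{2}}$ goes through exactly as you indicate: the shift $-2i\pi/(2i)=-\pi$ in the $F_{0,1}$-coefficient, multiplied by the eigenvalue $i$, contributes $-i\pi$ to $\tfrac{\epsilon_1+\epsilon_2}{2}(X)$ and hence the factor $\exp(-i\pi)=-1$.
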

\begin{proof}
    Let $X:=aF_{0,1}+bF_{d,d+1}\in\mathfrak{c}_{\CC}$. Let $i,j=2,\dots,d-1$,
    then
    \begin{align*}
        \ad(X)(F_{ij}) &= 0\\
        \ad(X)(F_{0,i} \pm i F_{1,i}) &= \pm ai (F_{0,i} \pm i F_{1,i})\\
        \ad(X)(F_{i,d}\pm F_{i,d+1}) &= \mp b (F_{i,d}\pm F_{i,d+1})\\
        \ad(X)(F_{0,d}\pm F_{0,d+1}+iF_{1,d}\pm i F_{1,d+1}) &=
        (\mp b + ai)(\cdots)\\
        \ad(X)(F_{0,d}\pm F_{0,d+1}-iF_{1,d}\mp i F_{1,d+1}) &=
        (\mp b - ai)(\cdots).
    \end{align*}
    If we define $\epsilon_1(X):= ai + b$ and $\epsilon_2(X):= ai-b$, we obtain
    that $\pm\frac{\epsilon_1+\epsilon_2}{2},\pm\frac{\epsilon_1-\epsilon_2}{2},\pm\epsilon_1,\pm\epsilon_2$ are roots with multiplicity at least
    $d-2, d-2, 1,1$, respectively. In addition the multiplicity of 0 is at least
    $\frac{(d-2)(d-3)}{2}+2$. All of these vector spaces we have already found together have dimension $\frac{(d+2)(d+1)}{2}=\dim(\mathfrak{g})$. Therefore, these are all roots, and we have identified the full root spaces.
    Moreover,
    \begin{align*}
        x^{\frac{\epsilon_1+\epsilon_2}{2}} &=
        \exp(i\frac{\chi_1+\chi_2-2\pi i}{2i})
        = - \exp(\frac{\chi_1+\chi_2}{2})\\
        x^{\frac{\epsilon_1-\epsilon_2}{2}} &=
        \exp(\frac{\chi_1-\chi_2}{2}).
    \end{align*}
    From these formulae we can see that
    $x^{2\alpha}\ne1$ for all $\alpha\in\Sigma$ because that condition is
    exactly equivalent to none of $\chi_1,\chi_2,\frac{\chi_1\pm\chi_2}{2}$ being
    contained in $i\pi\ZZ$, which is how we defined $D\supset Y_{\{1\}}$.
    By Lemma~\ref{sec:lem-characterisation-regular}, this suffices to
    show $x\in G_{rs}$.
\end{proof}

\begin{lemma}\label{sec:lem-euclidean-cs}
    Let $W$ be a finite-dimensional $H$-bimodule and let 
    $\Psi:E^W(\tilde{G},H)\to C^\infty(Y_{\{1\}},W^{Z_C})$ be the map
    obtained by restricting to $C$ and then parametrising as described in
    Lemma~\ref{sec:lem-cartan-chi-euclidean}. Let $C_{\epsilon_i}\in\mathfrak{c}_\CC$ be the element dual (with respect to $B$) to $\epsilon_i$ ($i=1,2$). Then
    \[
        \Psi(C_{\epsilon_i}\cdot f) = \Psi(f\cdot C_{\epsilon_i}) = \partial_{\chi_i}\Psi(f).\qquad (i=1,2)
    \]
\end{lemma}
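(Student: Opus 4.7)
The plan is to reduce the lemma to two computations: first, identify $C_{\epsilon_i}$ explicitly inside $\mathfrak{c}_{\CC}$; second, compute the partial derivative of the exponent in the parametrization of $x\in C$ and check it equals $C_{\epsilon_i}$. Once these match, the abelianness of $\mathfrak{c}$ together with the chain rule yield the statement.

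Concretely, I would first use $B(X,Y)=\tr(XY)$ to compute the Gram matrix of the basis $\{F_{0,1},F_{d,d+1}\}$ of $\mathfrak{c}$. A direct block-matrix calculation gives $B(F_{0,1},F_{0,1})=-2$, $B(F_{d,d+1},F_{d,d+1})=2$ and $B(F_{0,1},F_{d,d+1})=0$ (the two generators act on disjoint index blocks). Reading off the values of $\epsilon_i$ from the proof of Proposition~\ref{sec:prop-euclidean-root-spaces} (where $\ad$-action identified the short-root weights as $ai$ and the long-root weights as $\pm b\pm ai$ for $X=aF_{0,1}+bF_{d,d+1}$), one solves a $2\times 2$ linear system and obtains
\[
    C_{\epsilon_i} = -\tfrac{i}{2}F_{0,1} \pm \tfrac{1}{2}F_{d,d+1}\in\mathfrak{c}_{\CC}.
\]

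Next, I would differentiate the exponent appearing in Lemma~\ref{sec:lem-cartan-chi-euclidean},
\[
    X(\chi_1,\chi_2) := \tfrac{\chi_1+\chi_2-2\pi i}{2i}F_{0,1} + \tfrac{\chi_1-\chi_2}{2}F_{d,d+1},
\]
and use $1/(2i)=-i/2$ to observe $\partial_{\chi_i}X=C_{\epsilon_i}$. Because $\mathfrak{c}$ is abelian, the curve $t\mapsto x(\chi_1,\chi_2)\exp(tC_{\epsilon_i})=\exp(X+tC_{\epsilon_i})=\exp(tC_{\epsilon_i})x(\chi_1,\chi_2)$ has derivative at $t=0$ equal to $\partial_{\chi_i}x$; equivalently, $\partial_{\chi_i}\exp(X)=\exp(X)C_{\epsilon_i}=C_{\epsilon_i}\exp(X)$.

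The final step is the chain rule: for $f\in E^W(\tilde G,H)$ and $x=x(\chi_1,\chi_2)\in C\cap G_{rs}$,
\[
    \partial_{\chi_i}\Psi(f)(\chi_1,\chi_2) = \dv{t}\eval{f(x\exp(tC_{\epsilon_i}))}_{t=0} = (C_{\epsilon_i}\cdot f)(x),
\]
and the same expression, rewritten via the abelian identity above as $\dv{t}f(\exp(tC_{\epsilon_i})x)|_{t=0}$, equals $(f\cdot C_{\epsilon_i})(x)$, which is $\Psi(f\cdot C_{\epsilon_i})$. (Alternatively, one invokes the pointwise identity of Lemma~\ref{sec:lem-msf-action-diffops} with $\Ad(x)$ trivial on $\mathfrak{c}$.) There is essentially no obstacle beyond careful bookkeeping of the factors of $i$ and the sign conventions in the definition of $\epsilon_{1,2}$; the content of the lemma is that the abstract radial-part recipe degenerates, on the two commutative directions of $\mathfrak{c}$, to ordinary partial differentiation in $\chi_1,\chi_2$.
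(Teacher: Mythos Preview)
Your proposal is correct and follows essentially the same approach as the paper: compute $C_{\epsilon_{1/2}}=\tfrac{1}{2i}F_{0,1}\pm\tfrac{1}{2}F_{d,d+1}$, observe these coincide with the $\chi_i$-derivatives of the exponent in the parametrisation of Lemma~\ref{sec:lem-cartan-chi-euclidean}, and conclude by the chain rule on the abelian group $\exp(\mathfrak{c})$. You supply slightly more detail than the paper (the Gram-matrix step, and the explicit argument that left and right actions coincide via abelianness of $\mathfrak{c}$), but the substance is identical.
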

\begin{proof}
    We have
    \[
        C_{\epsilon_{1/2}} = \frac{1}{2i} F_{0,1} \pm \frac{1}{2} F_{d,d+1},
    \]
    so that
    \begin{align*}
        \Psi(f\cdot C_{\epsilon_1})(\chi_1,\chi_2) &=
        \dv{t} \eval{f\qty(\exp(tC_{\epsilon_1})\exp(\frac{\chi_1+\chi_2-2\pi i}{2i}
        F_{0,1} + \frac{\chi_1-\chi_2}{2}F_{d,d+1}))}_{t=0}\\
        &= \dv{t} \eval{\Psi(f)(\chi_1+t, \chi_2)}_{t=0}\\
        &= \partial_{\chi_1}\Psi(f)(\chi_1,\chi_2)\\
        \Psi(f\cdot C_{\epsilon_2})(\chi_1,\chi_2) &=
        \dv{t}\eval{\Psi(f)(\chi_1,\chi_2+t)}_{t=0}\\
        &= \partial_{\chi_2}\Psi(f)(\chi_1,\chi_2).
    \end{align*}
\end{proof}

\begin{corollary}\label{sec:prop-euclidean-as}
    The $A_\alpha$ operators from Proposition~\ref{sec:prop-operator-A} in the
    Euclidean case are as follows:
    \begin{align*}
        A_{\frac{\epsilon_1+\epsilon_2}{2}} &=
        \sum_{i=2}^{d-1} F_{1,i}\otimes F^{1,i}\\
        A_{\frac{\epsilon_1-\epsilon_2}{2}} &=
        \sum_{i=2}^{d-1} F_{i,d}\otimes F^{i,d}\\
        A_{\epsilon_{1/2}} &=
        \frac{1}{2}(F_{0,d+1}\mp i F_{1,d})\otimes
        (F^{0,d+1}\pm i F^{1,d})
    \end{align*}
    (where $A_{-\alpha}=A_\alpha$).
\end{corollary}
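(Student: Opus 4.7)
The plan is to apply Proposition~\ref{sec:prop-operator-A} using the root-space bases identified in the proof of Proposition~\ref{sec:prop-euclidean-root-spaces}. The key technical reformulation I will use is that, for any basis $E_1,\dots,E_r$ of $\mathfrak{g}_\alpha$ (not necessarily $B_\sigma$-orthonormal) with Gram matrix $G_{ij}:=B_\sigma(E_i,E_j)$, one has
\[
A_\alpha = \sum_{i,j=1}^r (G^{-1})_{ij}\,(E_i+\sigma(E_i))\otimes(E_j+\sigma(E_j)),
\]
since this expression is manifestly invariant under base change and reduces to the definition in Proposition~\ref{sec:prop-operator-A} when the $E_i$ are orthonormal. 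All bases I will use are $B_\sigma$-orthogonal, so only diagonal terms contribute.

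Reading off the root spaces from the eigenvalue computations in the proof of Proposition~\ref{sec:prop-euclidean-root-spaces}, I take as bases
\[
F_{0,i}+iF_{1,i}\in\mathfrak{g}_{\frac{\epsilon_1+\epsilon_2}{2}},\qquad F_{i,d}-F_{i,d+1}\in\mathfrak{g}_{\frac{\epsilon_1-\epsilon_2}{2}}\qquad (2\le i\le d-1),
\]
and the single vectors $F_{0,d}\mp F_{0,d+1}+i(F_{1,d}\mp F_{1,d+1})\in\mathfrak{g}_{\epsilon_{1/2}}$. Since $\sigma$ negates $F_{\mu\nu}$ precisely when exactly one of $\mu,\nu$ lies in $\{0,d+1\}$, a direct computation gives $E+\sigma(E)$ equal to $2iF_{1,i}$, $2F_{i,d}$, and (up to an overall sign) $2(F_{0,d+1}\mp iF_{1,d})$ respectively; this sign is irrelevant since only the tensor square of $E+\sigma(E)$ enters $A_\alpha$. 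The diagonal Gram entries $B_\sigma(E,E)=-B(E,\sigma(E))$ are evaluated with the standard trace identity $B(F_{\mu\nu},F_{\rho\lambda})=2(\eta_{\mu\lambda}\eta_{\nu\rho}-\eta_{\mu\rho}\eta_{\nu\lambda})$, yielding $-4$, $4$, and $-8$.

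Substituting into the reformulated $A_\alpha$ identity and using the Euclidean raising convention ($F^{1,i}=F_{1,i}$, $F^{i,d}=F_{i,d}$, and $F^{0,d+1}=-F_{0,d+1}$) reproduces the three claimed formulae; in particular the prefactor $1/2$ in the long-root case arises as $4/(-8)$ combined with the sign absorbed by $F^{0,d+1}$. The identity $A_{-\alpha}=A_\alpha$ is automatic, since $\sigma$ interchanges $\mathfrak{g}_\alpha$ with $\mathfrak{g}_{-\alpha}$ while fixing each $E+\sigma(E)$. I do not foresee any substantial obstacle; the calculation is mechanical. The one subtlety worth flagging is that $B_\sigma$ is not positive-definite on every root space (it is negative on $\mathfrak{g}_{\frac{\epsilon_1+\epsilon_2}{2}}$ and on the two long root spaces), so one cannot literally enforce $B_\sigma(X_i,X_j)=\delta_{ij}$ by a real rescaling---but the Gram-matrix reformulation absorbs all signs uniformly.
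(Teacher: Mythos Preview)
Your proof is correct and essentially is the omitted routine computation behind the paper's corollary: identify the root vectors from Proposition~\ref{sec:prop-euclidean-root-spaces}, compute $E+\sigma(E)$ and $B_\sigma(E,E)$, and assemble $A_\alpha$. The Gram-matrix reformulation you use is a clean way to handle the signs; note that over $\mathfrak{g}_\CC$ the form $B_\sigma$ is a nondegenerate symmetric bilinear form on each $\mathfrak{g}_\alpha$, so a $B_\sigma$-orthonormal basis in the sense of Proposition~\ref{sec:prop-operator-A} always exists (via complex rescaling), and your formula agrees with the paper's definition on such a basis.
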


\begin{corollary}\label{sec:cor-euclidean-scalar-as}
    Let $W=\CC$ be an $H$-bimodule as follows: the group $M$ from
    Definition~\ref{sec:def-parabolic-subalgebras} acts trivially, and the
    Lie algebra $\mathfrak{a}$ acts as:
    \[
        \pi_\Le(D_0)=\alpha,\pi_\Ri(D_0)=\beta.
    \]
    Then
    \[
        \pi_\Le(m(A_\gamma))=-\frac{\alpha^2}{2},\qquad
        \pi(A_\gamma) = -\frac{\alpha\beta}{2},\qquad
        \pi_\Ri(m(A_\gamma)) = -\frac{\beta^2}{2}
    \]
    for $\gamma\in\{\pm\epsilon_1,\pm\epsilon_2\}$, and 0 otherwise.
\end{corollary}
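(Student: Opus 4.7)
The plan is to exploit the fact that the bimodule $W=\CC$ is a one-dimensional character of $\mathfrak{h} = \mathfrak{m} \oplus \mathfrak{a}$, and reduce everything to bookkeeping with basis vectors. Since $M$ acts trivially, both $\pi_\Le$ and $\pi_\Ri$ vanish on $\mathfrak{m}$, and hence on any $F_{\mu\nu}$ with $1 \le \mu < \nu \le d$. The remaining one-dimensional direction $\mathfrak{a} = \RR D_0$ carries the scalars $\alpha,\beta$; using $D_0 = F^{0,d+1} = -F_{0,d+1}$ (valid because $d+1$ is the unique negative-signature direction in the Euclidean case), I get $\pi_\Le(F_{0,d+1}) = -\alpha$, $\pi_\Le(F^{0,d+1}) = \alpha$, and similarly for $\pi_\Ri$ with $\beta$.

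First I would dispose of $\gamma \in \{\pm(\epsilon_1 + \epsilon_2)/2,\, \pm(\epsilon_1 - \epsilon_2)/2\}$. By Corollary~\ref{sec:prop-euclidean-as}, the factors appearing in $A_\gamma$ are $F_{1,i}, F^{1,i}$ or $F_{i,d}, F^{i,d}$ for $2 \le i \le d-1$, all of which have both indices in $[1,d]$ and hence lie in $\mathfrak{m}$. Both representations annihilate every factor, so all three of $\pi_\Le(m(A_\gamma))$, $\pi_\Ri(m(A_\gamma))$ and $\pi(A_\gamma)$ vanish.

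Next I would treat $\gamma \in \{\pm\epsilon_1, \pm\epsilon_2\}$. First note $A_{-\gamma} = A_\gamma$ (this is immediate from the defining formula of Proposition~\ref{sec:prop-operator-A} together with $\sigma(\mathfrak{g}_\gamma) = \mathfrak{g}_{-\gamma}$), so it suffices to check $\gamma = \epsilon_1$ and $\gamma = \epsilon_2$. The key observation is that $F_{0,d+1}$ and $F_{1,d}$ commute in $U(\mathfrak{h})$, because the bracket formula gives $[F_{0,d+1}, F_{1,d}] = 0$ (the index sets $\{0,d+1\}$ and $\{1,d\}$ are disjoint, so all four $\eta$-terms vanish). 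Combined with $F^{0,d+1} = -F_{0,d+1}$ and $F^{1,d} = F_{1,d}$, I may expand $m(A_{\pm\epsilon_{1/2}})$ as a sum of commutative monomials in $F_{0,d+1}$ and $F_{1,d}$ with a factor of $\frac{1}{2}$, and then evaluate $\pi_\Le$ and $\pi_\Ri$ termwise, using that they annihilate $F_{1,d}, F^{1,d}$. A brief computation in each of the four cases yields the claimed $-\alpha^2/2$ and $-\beta^2/2$; the cross-evaluation $\pi(A_\gamma) = \pi_\Le \otimes \pi_\Ri$ reduces directly to $\tfrac{1}{2}(-\alpha)(\beta) = -\alpha\beta/2$.

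There is no real obstacle: the whole argument is routine once the representation is reduced to its $\mathfrak{a}$-component. The only point requiring minor care is the sign convention $F_{0,d+1} = -D_0$ coming from the Minkowski signature of the ambient $\RR^{d+1,1}$, which controls whether the scalar emerging from each factor is $\pm\alpha$ (resp.\ $\pm\beta$) and hence fixes the overall sign of the final answer.
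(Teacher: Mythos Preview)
Your proof is correct and is exactly the direct computation the paper intends: the corollary is stated without proof, immediately after Corollary~\ref{sec:prop-euclidean-as}, and is meant to follow by plugging the explicit $A_\gamma$ into the one-dimensional representation. Your handling of the sign via $D_0 = F^{0,d+1} = -F_{0,d+1}$ and the observation that $F_{1,i},F_{i,d}\in\mathfrak{m}$ are annihilated is precisely what is needed; the commutativity of $F_{0,d+1}$ and $F_{1,d}$ is a convenient (though not strictly necessary) simplification for $m(A_{\epsilon_{1/2}})$, since $\pi_\Le,\pi_\Ri$ kill every monomial containing an $F_{1,d}$ factor regardless of ordering.
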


\subsection{Root Spaces in the Lorentzian Setting}
We now want to use the method of Section~\ref{sec:radial-parts} to compute the
radial part of $\Omega_{\mathfrak{g}}$, at least acting on $HC_iH$ for some Cartan subsets. For that we need to know which Cartan subsets satisfy the condition that
$\Ad(t)$ act by $B$-orthogonal involution and weight-space dependent constants. This is trivially true
for cases (i), (ii), (iv), (v), (vi) from Corollary~\ref{sec:cor-cartan-subsets} (i.e. the Cartan sets we called $C_{\emptyset},C_{\{0\}},C_{\{1\}},C_{\{1\}'},C_{\{0,1\}}$. For
the remaining ones, we refer to Lemma~\ref{sec:lem-remaining-cartan-subsets-nice}.

We first establish the (reduced) root space decompositions with respect to
our five subalgebras. We shall do this by treating $C_\emptyset$ explicitly and
showing that there is an automorphism of $\mathfrak{g}_{\CC}$
that maps $\mathfrak{c}_\emptyset$ to all others. For that we use the following lemma.

\begin{lemma}\label{sec:lem-eigenvalue-structure-adjoint}
    Let $\mathfrak{c}_1,\mathfrak{c}_2$ be commutative Lie subalgebras of
    $\mathfrak{g}_{\CC}$. Any Lie group isomorphism $\phi:\, \mathfrak{c}_1\to\mathfrak{c}_2$ that pulls back the character of the defining representation of $\mathfrak{c}_2$ to that of $\mathfrak{c}_1$ can be effected by $\Ad(O(\eta,\CC))$.
\end{lemma}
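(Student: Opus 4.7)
The plan is to use the character condition to match the weight-space decompositions of $\CC^{d+2}$ under $\mathfrak{c}_1$ and $\mathfrak{c}_2$, and then realise $\phi$ concretely by constructing a $g\in O(\eta,\CC)$ that sends the weight spaces of the first to the weight spaces of the second via $\eta$-compatible bases.

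First I would decompose the defining representation into weight spaces. In the situations where the lemma is actually applied—and indeed for the condition to be meaningful in the intended sense—the elements of $\mathfrak{c}_i$ act semisimply on $\CC^{d+2}$, since both $\mathfrak{c}_i$ arise as (subspaces of) maximal commutative subalgebras of $\mathfrak{g}^{-\sigma}\cap\Ad(t)(\mathfrak{g}^{-\sigma})$, whose elements are ad-semisimple by the argument of Proposition~\ref{sec:prop-root-spaces}(i). There are therefore weight decompositions
\[
\CC^{d+2}=\bigoplus_{\lambda\in\mathfrak{c}_i^*} V_\lambda^{(i)},\qquad i=1,2.
\]
The hypothesis that $\phi$ pulls back the character of the defining representation of $\mathfrak{c}_2$ to that of $\mathfrak{c}_1$ translates directly into $\dim V_\mu^{(2)}=\dim V_{\mu\circ\phi}^{(1)}$ for every $\mu\in\mathfrak{c}_2^*$, so the two decompositions are matched via $\phi^*$.

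Next I would exploit that $\mathfrak{c}_i\subseteq\mathfrak{so}(\eta,\CC)$: every element is $\eta$-skew, which forces $\eta(V_\lambda^{(i)},V_\mu^{(i)})=0$ unless $\lambda+\mu=0$. Consequently $\eta$ restricts to a perfect pairing $V_\lambda^{(i)}\times V_{-\lambda}^{(i)}\to\CC$ for $\lambda\ne 0$ (in particular $\dim V_\lambda^{(i)}=\dim V_{-\lambda}^{(i)}$) and to a non-degenerate symmetric form on $V_0^{(i)}$. After fixing a choice of positive weights, I would pick for each such $\lambda$ a basis of $V_\lambda^{(1)}$ together with its $\eta$-dual basis in $V_{-\lambda}^{(1)}$, and I would choose an $\eta$-orthonormal basis of $V_0^{(1)}$ (such exists because every non-degenerate symmetric form on a finite-dimensional complex space is diagonalisable to the identity). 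The analogous construction for $\mathfrak{c}_2$ uses matched weights $\mu=\lambda\circ\phi^{-1}$.

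Finally, define $g\in GL(d+2,\CC)$ as the linear map sending each chosen basis vector of $V_{\mu\circ\phi}^{(1)}$ to the corresponding basis vector of $V_\mu^{(2)}$. By construction $g$ pairs dual bases to dual bases and $\eta$-orthonormal bases to $\eta$-orthonormal bases, so $g\in O(\eta,\CC)$. For $X\in\mathfrak{c}_1$ and $v\in V_{\mu\circ\phi}^{(1)}$ one has $Xv=\mu(\phi(X))v$, whence $gXv=\mu(\phi(X))gv=\phi(X)gv$, the last equality because $gv\in V_\mu^{(2)}$. Therefore $\Ad(g)(X)=\phi(X)$, so $\phi$ is effected by $\Ad(g)$ as claimed. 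The main obstacle is the reliance on semisimplicity in the first step; this is automatic in the paper's setting (subspaces of standard Cartan subalgebras), but for the lemma in full generality one would additionally have to track Jordan data to guarantee that the bases can be matched compatibly.
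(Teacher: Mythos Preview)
Your proof is correct and follows essentially the same route as the paper: decompose $\CC^{d+2}$ into weight spaces for each $\mathfrak{c}_i$, use $\eta$-skewness to see that $\eta$ pairs $V_\lambda$ with $V_{-\lambda}$, pick $\eta$-dual bases on opposite weight spaces and an $\eta$-orthonormal basis on the zero weight space, and let $g$ be the change-of-basis map. Your explicit flagging of the semisimplicity hypothesis is a useful addition; the paper invokes it (``since the defining representation is semisimple'') without comment, relying implicitly on the fact that in all applications the $\mathfrak{c}_i$ arise from standard Cartan subalgebras.
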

\begin{proof}
    Write $V=\CC^{d+2}$ for the defining representations of $\mathfrak{c}_1,\mathfrak{c}_2$.
    Let $\lambda,\mu\in\mathfrak{c}_1^*$ and let $v\in V_\lambda,w\in V_\mu$
    (weight spaces). For
    $Z\in \mathfrak{c}_1$ we then have
    \[
        (\lambda+\mu)(Z)\eta(v,w) =
        \eta(\lambda(Z)v,w) + \eta(v,\mu(Z)w)
        = \eta(Zv, w) + \eta(v,Zw) = 0,
    \]
    as $\mathfrak{g}_{\CC}$ consists of matrices that are antisymmetric
    with respect to $\eta$. Since this holds for all $Z\in\mathfrak{c}_1$, we
    conclude that $\eta(v,w)$ can only be nonzero if $\lambda+\mu=0$.
    This shows that $\eta$ is a nondegenerate pairing between the
    $\lambda$ and the $-\lambda$ weight spaces for every $\lambda$.
    We now chose bases
    \[
        (v_\mu^1,\dots,v_\mu^{n_\mu}),
        \qquad
        (u_\lambda^1,\dots,u_\lambda^{n_\lambda})
    \]
    of $V_\mu$ (and $V_\lambda$) for every nonzero weight $\mu\in \mathfrak{c}_1^*$ (and $\lambda\in\mathfrak{c}_2^*$) such
    that $(v_\mu^1,\dots,v_\mu^{n_\mu})$ and $(v_{-\mu}^1,\dots,v_{-\mu}^{n_{-\mu}})$ are dual with respect to $\eta$ (note that $\eta$'s nondegeneracy
    as well as the fact that the orthogonal complement of $V_\mu\oplus V_{-\mu}$
    implies that $n_\mu = n_{-\mu}$), similarly for the $u$.
    Choose furthermore an orthonormal basis $h_1,\dots,h_n\in V$ of the
    $0$-weight space with respect to $\mathfrak{c}_1$, and $k_1,\dots,k_m\in V$
    of the $0$-weight space with respect to $\mathfrak{c}_2$. Since the defining
    representation is semisimple, these define two bases of $V$.

    Since $\phi$ pulls back the character of the defining representation, we
    find that $n_\lambda = n_{\phi^*(\lambda)}$ for all $\lambda\in\mathfrak{c}_2^*$, and hence in particular also that $n=m$. Define now
    $g\in\End(V)$ by mapping $v_{\phi^*(\lambda)}^i\mapsto u_\lambda^i$ for
    every nonzero weight $\lambda$ of the $\mathfrak{c}_2$-module $V$, and
    by mapping $h_i\mapsto k_i$. Since the linear map $g$ maps two bases of $V$
    to each other, it is regular. Furthermore, we have
    \begin{align*}
        \eta(g(v_\lambda^i),g(v_\mu^j))
        &= \eta(u_{\phi^{-*}(\lambda)}^i, v_{\phi^{-*}(\mu)}^j)\\
        &= \delta_{\phi^{-*}(\lambda+\phi),0}\delta_{i,j}
        = \delta_{\lambda+\phi,0}\delta_{i,j}\\
        &= \eta(v_\lambda^i, v_\mu^j)\\
        \eta(g(v_\lambda^i), g(h_j)) &=
        \eta(u_{\phi^{-*}(\lambda)}^i, k_j)\\
        &= 0 = \eta(v_\lambda^i, h_j)\\
        \eta(g(h_i), g(h_j)) &=
        \eta(k_i,k_j) = \delta_{i,j}\\
        &= \eta(h_i,h_j),
    \end{align*}
    so that $g$ is orthogonal, i.e. $g\in O(V)$. Let $\lambda\in\mathfrak{c}_2^*$ be a weight of the defining representation and $v\in V_\lambda$, then
    $g^{-1}v\in V_{\phi^*(\lambda)}$ and for every
    $X\in\mathfrak{c}_1$ we have
    \[
        gXg^{-1}v = \lambda(\phi(X)) v = \phi(X)v.
    \]
    By linearity this is true for all $v\in V$. Since the defining representation
    is faithful, we thus obtain $\phi(X) = \Ad(g)(X)$.
\end{proof}

\begin{proposition}\label{sec:prop-root-systems}
    For all $I$, the root system
    $\Sigma(\mathfrak{g}:\mathfrak{c}_I)$ is of type $C_2$ where the
    short roots have multiplicity $d-2$, the long roots $1$, and
    0 has multiplicity $\frac{(d-2)(d-3)}{2} + 2$. In every case, we can choose
    the root system to be
    \[
    \left\{\pm\epsilon_1,\pm\epsilon_2,\frac{\epsilon_1\pm\epsilon_2}{2},-\frac{\epsilon_1\pm\epsilon_2}{2}\right\}
    \]
    such that
    \[
        x^{\frac{\epsilon_1\pm\epsilon_2}{2}} = \mp\exp(\frac{\chi_1\pm\chi_2}{2})
    \]
    when $x\in C_I$ is parametrised as in Lemma~\ref{sec:lem-cartan-chi} and when $I\subset\{0,1\}$. For the remaining three choices of $I$, when
    $C_I$ is not a subgroup, we can pick
    \begin{align*}
        \{2\},\{1,2\}:\qquad x^{\frac{\epsilon_1\pm\epsilon_2}{2}}
        &= \epsilon^I_{\frac{\epsilon_1+\epsilon_2}{2}} \exp(\frac{\chi_1\pm\chi_2}{2})\\
        \{0,2\}:\qquad x^{\frac{\epsilon_1\pm\epsilon_2}{2}} &=
        \mp i \epsilon^{\{0,2\}}_{\frac{\epsilon_1+\epsilon_2}{2}} \exp(\frac{\chi_1\pm\chi_2}{2})
    \end{align*}
    (which determines all $x^\alpha$ if we can choose
    $\epsilon_I$ to be additive, i.e. so that it extends
    to a character of the root lattice).
    Furthermore, those elements parametrised by $Y_I$ (i.e. not the rest of the
    closure) lie in $G_{rs}$.
\end{proposition}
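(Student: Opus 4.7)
The plan is to leverage Lemma~\ref{sec:lem-eigenvalue-structure-adjoint} and Lemma~\ref{sec:lem-root-spaces-automorphism} to reduce the eight cases to a single explicit computation on $\mathfrak{c}_\emptyset = \mathfrak{c}$. First I would do the analogue of Proposition~\ref{sec:prop-euclidean-root-spaces} directly for $\mathfrak{c}_\emptyset$: since now both $F_{0,1}$ and $F_{d,d+1}$ lie in $\mathfrak{k}^{-\sigma}$, the weights of the defining representation $\CC^{d+2}$ of $\mathfrak{c}_\emptyset$ are $\pm \lambda_1, \pm \lambda_2$ (each with multiplicity $1$) together with weight $0$ of multiplicity $d-2$, where $\lambda_j$ are suitably normalised. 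A direct bracket computation on basis vectors $F_{0,i} \pm i F_{1,i}$, $F_{i,d} \pm iF_{i,d+1}$, and the obvious mixed combinations then produces $ad$-eigenvectors and confirms the $C_2$ root system with long roots $\pm\epsilon_1, \pm\epsilon_2$ of multiplicity~$1$, short roots $\pm(\epsilon_1 \pm \epsilon_2)/2$ of multiplicity $d-2$, and the required zero-weight multiplicity. Dimension counting (as in Proposition~\ref{sec:prop-euclidean-root-spaces}) certifies that nothing has been missed.

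For each other $I$, the idea is to avoid redoing this computation by exhibiting an isomorphism $\phi_I: \mathfrak{c}_\emptyset \to \mathfrak{c}_I$ such that the character of the defining representation of $\mathfrak{c}_I$ pulls back along $\phi_I$ to that of $\mathfrak{c}_\emptyset$. The defining weights of any $\mathfrak{c}_I$ are determined by the $2\times 2$-block action of its two generators on the four relevant coordinate planes in $\CC^{d+2}$; whether the generator is compact (eigenvalues $\pm i$) or non-compact (eigenvalues $\pm 1$) only affects the identification $\phi_I$, not the structure of the character. In every case on the list of Corollary~\ref{sec:cor-cartan-subsets} the defining character on $\CC^{d+2}$ is of the form $\pm \mu_1, \pm \mu_2$ (each of multiplicity $1$) plus the zero weight of multiplicity $d-2$, so Lemma~\ref{sec:lem-eigenvalue-structure-adjoint} supplies a $g \in O(\eta,\CC)$ with $\Ad(g)(\mathfrak{c}_\emptyset) = \mathfrak{c}_I$. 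Lemma~\ref{sec:lem-root-spaces-automorphism} then transports the $C_2$ root system with its multiplicities verbatim from $\mathfrak{c}_\emptyset$ to $\mathfrak{c}_I$, naming the roots $\pm \epsilon_1, \pm\epsilon_2, \pm(\epsilon_1 \pm \epsilon_2)/2$ via $\phi_I$.

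With the abstract root-space picture in place, the formulas for $x^\alpha$ reduce to substituting the parametrisations of Lemma~\ref{sec:lem-cartan-chi}. In the five cases with $t=1$ (namely $I \in \{\emptyset, \{0\}, \{1\}, \{1\}', \{0,1\}\}$), the constants $\epsilon_\alpha$ are trivial and $x^\alpha$ is just $\exp(\alpha(X))$ evaluated on the image of $(\chi_1,\chi_2)$ under the parametrisation; the signs in $\mp \exp(\tfrac{\chi_1 \pm \chi_2}{2})$ come from the $-2\pi i$ shift in the parametrisation. For the three cases with $t \ne 1$ one additionally has to compute $\Ad(t)$ on each root space to extract the $\epsilon_\alpha$ factors; because $t$ is either $t_{0,\pi}$ or $t_{\pi/2,\pi/2}$, these factors are fourth roots of unity and assemble into an additive character of the root lattice, which is exactly the phase $\epsilon^I_\bullet$ displayed in the proposition (with the factor $\mp i$ in $C_{\{0,2\}}$ tracking the $t_{\pi/2,\pi/2}$ contribution to the short roots).

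Finally, regularity follows from Lemma~\ref{sec:lem-characterisation-regular}: $x \in C_I \cap G_{rs}$ iff $x^{2\alpha} \ne 1$ for every $\alpha \in \Sigma$. Feeding in the four formulas for $x^{\epsilon_1}, x^{\epsilon_2}, x^{(\epsilon_1 \pm \epsilon_2)/2}$ just obtained, the condition $x^{2\alpha} \ne 1$ becomes the requirement that none of $\chi_1, \chi_2, (\chi_1+\chi_2)/2, (\chi_1-\chi_2)/2$ lies in $i\pi\ZZ$, which is exactly the defining condition of $D \supset Y_I$. The main obstacle I expect is not conceptual but organisational, namely keeping track, case by case, of the sign and phase conventions when extracting $\epsilon_\alpha$ from the three non-trivial choices of $t$; these phases interlock with the $-2\pi i$ shifts hidden in the parametrisations, and the verification that they assemble into a genuine additive character requires some care.
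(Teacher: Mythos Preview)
Your proposal is correct and follows essentially the same route as the paper: an explicit root-space computation for $\mathfrak{c}_\emptyset$ (with dimension counting), then an appeal to Lemmas~\ref{sec:lem-eigenvalue-structure-adjoint} and~\ref{sec:lem-root-spaces-automorphism} via the defining-representation eigenvalue pattern $\{\pm\mu_1,\pm\mu_2,0^{d-2}\}$ to transport everything to the remaining $\mathfrak{c}_I$, followed by Lemma~\ref{sec:lem-characterisation-regular} for the regularity claim. The only minor difference is that the paper writes down the explicit conjugating matrices $g_I\in O(\eta,\CC)$ for later use and defers the extraction of the $\epsilon^I$ phases for $I\in\{\{2\},\{0,2\},\{1,2\}\}$ to a separate lemma, whereas you fold that computation into the same argument.
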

\begin{proof}
    We are going to start with finding a root space decomposition of
    $C_\emptyset$. Note that if we manage to show the existence
    of root spaces as claimed with dimension at least what is claimed, we have already described a subspace of
    $\mathfrak{g}$ of dimension
    \begin{align*}
        \frac{(d-2)(d-3)}{2} + 2 + 4(d-2) + 4
        &= \frac{d^2 - 5d + 6 + 4 + 8d - 8}{2}\\
        &= \frac{d^2 + 3d + 2}{2} = \frac{(d+2)(d+1)}{2}\\&= \dim(\mathfrak{g}),
    \end{align*}
    so we're done.

    Let $a,b\in\CC$ and $i,j=2,\dots,d-1$. Then we have
    \begin{align*}
        \ad(aF_{0,1}+bF_{d,d+1})(F_{i,j}) &= 0\\
        \ad(aF_{0,1}+bF_{d,d+1})(F_{0,i} \pm iF_{1,i}) &= \pm ai (F_{0,i}\pm iF_{1,i})\\
        \ad(aF_{0,1}+bF_{d,d+1})(F_{i,d}\pm iF_{i,d+1}) &= \mp bi (F_{i,d}\pm
        iF_{i,d+1})\\
        \ad(aF_{0,1}+bF_{d,d+1})(F_{0,d}\pm i F_{0,d+1} \mp i F_{1,d} + F_{1,d+1})
        &= \mp i(a+b)
        (\cdots)\\
        \ad(aF_{0,1}+bF_{d,d+1})(F_{0,d}\pm i F_{0,d+1} \pm i F_{1,d} - F_{1,d+1})
        &= \pm i(a-b) (\cdots),
    \end{align*}
    showing that if we define $\epsilon_1(aF_{0,1}+bF_{d,d+1}):= ai+bi$ and
    $\epsilon_2(aF_{0,1}+bF_{d,d+1}):= ai-bi$, we obtain the claim. Note further
    that $x^{\frac{\epsilon_1\pm\epsilon_2}{2}}$ for $x\in C_\emptyset$ have the claimed form.

    Now, we want to employ Lemmas~\ref{sec:lem-eigenvalue-structure-adjoint},\ref{sec:lem-root-spaces-automorphism}. For that note that
    \[
        aF_{0,1}+bF_{d,d+1} =
        \mqty(0 & a & 0 & 0 & 0\\-a & 0 & 0 & 0 & 0\\0 & 0 & 0 & 0& 0\\
        0 & 0 & 0 & 0 & -b\\0 & 0 & 0 & b & 0)
    \]
    (viewed as $1+1+(d-2)+1+1$-blocks)
    which (generically) has eigenvalues $\pm ia$ and $\pm ib$ and $0$ with multiplicities $1,1,1,1,d-2$, respectively. This means that the weights
    are the linear maps $\pm\frac{\epsilon_1+\epsilon_2}{2},\pm\frac{\epsilon_1-\epsilon_2}{2},0$, where $\epsilon_1,\epsilon_2$ are linearly independent.
    Since we are talking about finite vector spaces,
    all other two-dimensional commutative subalgebras of $\mathfrak{g}_{\CC}$ whose weights of the defining representation are of this form for linearly independent $\epsilon_1,\epsilon_2$, and
    have weight space dimensions $1,1,1,1,d-2$, are conjugate via $O(\eta,\CC)$ by Lemma~\ref{sec:lem-eigenvalue-structure-adjoint} and therefore have the same root system and root
    multiplicities by Lemma~\ref{sec:lem-root-spaces-automorphism}.

    We now check the other subalgebras. In particular, we construct
    $\epsilon_1,\epsilon_2\in\mathfrak{c}_I^*$ such that the weights of
    the defining representation are $\pm\frac{\epsilon_1+\epsilon_2}{2},\pm\frac{\epsilon_1-\epsilon_2}{2},0$ (with
    multiplicities $1,1,1,1,d-2$), and such that $x^{\frac{\epsilon_1\pm\epsilon_2}{2}}$
    have the claimed forms, for $x\in C_I$ parametrised as in Lemma~\ref{sec:lem-cartan-chi}.
    \begin{description}
        \item[$I=\{0\}$] A generic element $X$ of $\mathfrak{c}_{\{0\}}$ looks like
        \[
        X = a(F_{0,1}+F_{d,d+1}) + b(F_{0,d}- F_{1,d+1})
        = \mqty(0 & a & 0 & -b & 0\\
        -a & 0 & 0 & 0 & b\\
        0 & 0 & 0 & 0 & 0\\
        -b & 0 & 0 & 0 & -a\\
        0 & b & 0 & a & 0 ),
        \]
        which has eigenvalues $b\pm ia, -b\pm ia$ and $0$ with multiplicities $1,1,1,1,d-2$, respectively. Define $\epsilon_1(X):= 2ia$ and
        $\epsilon_2(X):=2b$, then the weights of the defining representation are
        as claimed and $x^{\frac{\epsilon_1\pm\epsilon_2}{2}}$ have the desired
        expressions. For future reference, we note that $\mathfrak{c}_\emptyset$ is mapped to $\mathfrak{c}_I$ by $\Ad(g)$ with
        \[
            g = \frac{1}{\sqrt{2}}\mqty(1 & 0 & 0 & 0 & -i\\
            0 & 1 & 0 & -i & 0\\
            0 & 0 & \sqrt{2} & 0 & 0\\ 0 & i & 0 & -1 & 0\\
            i & 0 & 0 & 0 & -1)
        \]
        written as a $1+1+(d-2)+1+1$ block matrix.
        \item[$I=\{2\}$] The algebra $\mathfrak{c}_I$ is
        the same as for $I=\{0\}$, so all these notions
        carry over. In order to obtain the claimed expressions for $x^{\frac{\epsilon_1\pm\epsilon_2}{2}}$, however, we need to exchange $\epsilon_1,\epsilon_2$.
        \item[$I=\{1\}$] A generic element $X$ of $\mathfrak{c}_{\{1\}}$ looks like
        \[
        X = aF_{0,1} + bF_{2,d+1}
        = \mqty(0 & a & 0 & 0 & 0\\
        -a & 0 & 0 & 0 & 0\\
        0 & 0 & 0 & 0 & -b\\
        0 & 0 & 0 & 0 & 0\\
        0 & 0 & -b & 0 & 0)
        \]
        (written as a $1+1+1+(d-2)+1$-block matrix)
        with eigenvalues $\pm ia, \pm b, 0$ with multiplicities $1,1,1,1,d-2$,
        respectively. Define $\epsilon_1(X):= ia+b$ and $\epsilon_2(X):=ia-b$,
        then the weights of the defining representation are as claimed and $x^{\frac{\epsilon_1\pm\epsilon_2}{2}}$ have
        the desired expression. For future reference, we note that $\mathfrak{c}_\emptyset$ is mapped to $\mathfrak{c}_I$ by $\Ad(g)$ with
        \[
            g = \mqty(1 & 0 & 0 & 0 & 0\\0 & 1 & 0 & 0 & 0\\
            0 & 0 & 0 & 0 & -i\\0 & 0 & 1 & 0 & 0\\0 & 0 & 0 & 1 & 0)
        \]
        written as a $(1+1+1+(d-2)+1)\times(1+1+(d-2)+1+1)$ block matrix.
        \item[$\{1\}'$]
        A generic element $X$ of $\mathfrak{c}_{\{1\}'}$ looks like
        \[
            X = aF_{d,d+1} + bF_{0,d-1}
            = \mqty(
                0 & 0 & -b & 0 & 0\\
                0 & 0 & 0 & 0 & 0\\
                -b & 0 & 0 & 0 & 0\\
                0 & 0 & 0 & 0 & -a\\
                0 & 0 & 0 & a & 0)
            )
        \] 
        (written as a $1+(d-2)+1+1+1$-block matrix),
        which has eigenvalues $\pm ia,\pm b,0$ with multiplicities
        $1,1,1,1,d-2$, respectively. Let $\epsilon_1(X):=ia+b,\epsilon_2(X):=ia-b$,
        then the weights of the defining representation are as claimed and $x^{\frac{\epsilon_1\pm\epsilon_2}{2}}$ have
        the desired expressions. For future reference, we note that $\mathfrak{c}_\emptyset$ is mapped to $\mathfrak{c}_I$ by $\Ad(g)$ with
        \[
            g = \mqty(0 & 0 & 0 & 0 & -i\\0 & 0 & 1 & 0 & 0\\
            0 & 0 & 0 & 1 & 0\\0 & i & 0 & 0 & 0\\i & 0 & 0 & 0 & 0)
        \]
        written as a $(1+(d-2)+1+1+1)\times(1+1+(d-2)+1+1)$ block matrix.
        \item[$I=\{0,1\},\{0,2\},\{1,2\}$] A generic element $X$ of $\mathfrak{c}_{\{0,1\}}$ looks
        like
        \[
            X = aF_{0,d} + b F_{1,d+1}
            = \mqty(
            0 & 0 & 0 & -a & 0\\
            0 & 0 & 0 & 0 & -b\\
            0 & 0 & 0 & 0 & 0\\
            -a & 0 & 0 & 0 & 0\\
            0 & -b & 0 & 0 & 0
            )
        \]
        (written as a $1+1+(d-2)+1+1$-block matrix), which has eigenvalues
        $\pm a, \pm b, 0$ with multiplicities $1,1,1,1,d-2$, respectively.
        Defining $\epsilon_1(X):=a+b,\epsilon_2(X):=a-b$, the weights of the
        defining representation are as claimed and
        $x^{\frac{\epsilon_1\pm\epsilon_2}{2}}$ have the desired expressions. For future reference, we note that $\mathfrak{c}_\emptyset$ is mapped to $\mathfrak{c}_I$ by $\Ad(g)$ with
        \[
            g = \mqty(1 & 0 & 0 & 0 & 0\\0 & 0 & 0 & 0 & -i\\0 & 0 & 1 & 0 & 0\\
            0 & i & 0 & 0 & 0\\0 & 0 & 0 & 1 & 0)
        \]
        written as a $1+1+(d-2)+1+1$ block matrix.
    \end{description}
    Since the adjoint representation on $\mathfrak{g}_\CC$ is the second
    exterior power of the defining representation, all roots in $\Sigma(\mathfrak{g}:\mathfrak{c}_I)$ can be expressed using the
    $\epsilon_1,\epsilon_2$ we defined (in the way claimed). Like in the proof
    of Proposition~\ref{sec:prop-euclidean-root-spaces}, we note that
    due to the expressions of $x^\alpha$ we arrived at, $x^{\alpha}\ne x^{-\alpha}$
    ($\alpha\in\Sigma(\mathfrak{g}:\mathfrak{c}_I)$) is exactly equivalent to
    none of $\chi_1,\chi_2,\frac{\chi_1\pm\chi_2}{2}$ being elements of
    $\pi i\ZZ$, which is equivalent with $(\chi_1,\chi_2)\in D$ (and hence
    in $Y_I$ instead of its closure).
\end{proof}

Having now established the root systems for the different
Cartan subsets, we can now work out the corresponding
root spaces.
\begin{proposition}
    The weight spaces of $\mathfrak{g}$ with respect to $\mathfrak{c}_I$ are spanned by
    \begin{description}
        \item[$I=\emptyset$]
        \begin{align*}
            0 :& F_{i,j}\qquad (i,j=1,\dots,d-1)\\
            \pm\frac{\epsilon_1+\epsilon_2}{2} : & F_{0,i} \pm i F_{1,i}\qquad (i=1,\dots,d-1)\\
            \pm\frac{\epsilon_1-\epsilon_2}{2} : & F_{i,d} \mp i F_{i,d+1}\qquad
            (i=1,\dots,d-1)\\
            \pm\epsilon_1 : & F_{0,d} + F_{1,d+1} \pm i(-F_{0,d+1} + F_{1,d})\\
            \pm\epsilon_2 : & F_{0,d} - F_{1,d+1} \pm i(F_{0,d+1} + F_{1,d}).
        \end{align*}
        \item[$I=\{0\},\{2\}$] ($\{2\}$ with
        $\epsilon_1,\epsilon_2$ exchanged)
        \begin{align*}
            0 :& F_{i,j}\qquad (i,j=1,\dots,d-1)\\
            \pm\frac{\epsilon_1+\epsilon_2}{2}: & 
            F_{0,i} - i F_{i,d+1} \pm i (F_{1,i} - i F_{i,d})\qquad (i=2,\dots,d-1)\\
            \pm\frac{\epsilon_1-\epsilon_2}{2}: &
            F_{1,i} + iF_{i,d} \mp i(F_{0,i} + i F_{i,d+1})\qquad (i=2,\dots,d-1)\\
            \pm\epsilon_1 : &F_{0,d} + F_{1,d+1} \pm i(F_{0,d+1} + F_{1,d})\\
            \pm\epsilon_2 :& F_{0,d+1} + F_{1,d} \pm  (F_{0,1} - F_{d,d+1}).
        \end{align*}
        \item[$I=\{1\}$]
        \begin{align*}
            0 :& F_{i,j}\qquad (i,j=3,\dots,d)\\
            \pm\frac{\epsilon_1+\epsilon_2}{2}: &
            F_{0,i}\pm i F_{1,i}\qquad (i=3,\dots,d)\\
            \pm\frac{\epsilon_1-\epsilon_2}{2}: &
            F_{i,d+1} \pm F_{2,i}\qquad (i=3,\dots,d)\\
            \pm\epsilon_1: &F_{0,d+1} - iF_{1,2} \pm (-F_{0,2}+iF_{1,d+1})\\
            \pm\epsilon_2: &F_{0,d+1} + i F_{1,2} \pm (F_{0,2} + i F_{1,d+1}).
        \end{align*}
        \item[$I=\{1\}'$]
        \begin{align*}
            0:& F_{i,j}\qquad (i,j=1,\dots,d-2)\\
            \pm\frac{\epsilon_1+\epsilon_2}{2}: &F_{i,d} \mp i F_{i,d+1}\qquad (i=1,\dots,d-2)\\
            \pm\frac{\epsilon_1-\epsilon_2}{2}: &F_{0,i} \pm F_{i,d-1}\qquad (i=1,\dots,d-2)\\
            \pm\epsilon_1: &F_{0,d} \mp F_{d-1,d} + i F_{d-1,d+1}
            \mp i F_{0,d+1}\\
            \pm\epsilon_2: &F_{0,d} \pm F_{d-1,d} - i F_{d-1,d+1}
            \mp i F_{0,d+1}.
        \end{align*}
        \item[$I=\{0,1\},\{0,2\},\{1,2\}$]
        \begin{align*}
            0:& F_{i,j}\qquad (i,j=2,\dots,d-1)\\
            \pm\frac{\epsilon_1+\epsilon_2}{2}: &F_{0,i} \pm F_{i,d}\qquad (i=2,\dots,d-1)\\
            \pm\frac{\epsilon_1-\epsilon_2}{2}: &F_{1,i} \pm
            F_{i,d+1}\qquad (i=2,\dots,d-1)\\
            \pm\epsilon_1: &F_{0,1}+F_{d,d+1}\mp (F_{0,d+1}
            - F_{1,d})\\
            \pm\epsilon_2: &F_{0,1}-F_{d,d+1}\pm (F_{0,d+1}
            + F_{1,d}).
        \end{align*}
    \end{description}
\end{proposition}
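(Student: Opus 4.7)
The strategy is direct verification: for each choice of $I$, pick a generic element $X \in \mathfrak{c}_I$, apply $\ad(X)$ to each proposed spanning vector $v$ using the commutator relation
\[
[F_{\mu\nu}, F_{\rho\sigma}] = \eta_{\nu\rho} F_{\mu\sigma} + \eta_{\mu\sigma} F_{\nu\rho} - \eta_{\mu\rho} F_{\nu\sigma} - \eta_{\nu\sigma} F_{\mu\rho},
\]
and check that $[X, v] = \alpha(X) v$ for the claimed weight $\alpha$, using the definitions of $\epsilon_1, \epsilon_2$ recorded case-by-case in the proof of Proposition~\ref{sec:prop-root-systems}. Since Proposition~\ref{sec:prop-root-systems} already pins down the multiplicity of each weight ($d{-}2$ for the short roots, $1$ for the long roots, $\tfrac{(d-2)(d-3)}{2} + 2$ for weight $0$, the latter accounting for $\mathfrak{c}_I$ itself together with the $F_{i,j}$'s listed), it suffices to exhibit the correct number of linearly independent eigenvectors for each weight; this then forces the listed families to span the weight spaces.

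For $I = \emptyset$, the calculation is essentially already carried out in the proof of Proposition~\ref{sec:prop-root-systems}: taking $X = aF_{0,1} + bF_{d,d+1}$ so that $\tfrac{\epsilon_1 + \epsilon_2}{2}(X) = ia$ and $\tfrac{\epsilon_1 - \epsilon_2}{2}(X) = ib$, the vectors listed above are the real/imaginary recombinations of the eigenvectors exhibited there, and one reads off the weights immediately. For the remaining cases, I would treat the two Cartan subalgebras that differ only by the choice of $t$ (namely $\mathfrak{c}_{\{0\}}$ vs.\ $\mathfrak{c}_{\{2\}}$, and the three among $\mathfrak{c}_{\{0,1\}}, \mathfrak{c}_{\{0,2\}}, \mathfrak{c}_{\{1,2\}}$) together, since $\ad$ only sees $\mathfrak{c}_I$ as a vector space and the effect of $\Ad(t)$ is absorbed in the sign conventions of Proposition~\ref{sec:prop-root-systems}; the only difference between the two $\epsilon_1 \leftrightarrow \epsilon_2$ columns is the relabelling induced there.

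An alternative I would keep in reserve: for each $I$ the proof of Proposition~\ref{sec:prop-root-systems} produced an explicit $g_I \in O(\eta, \mathbb{C})$ with $\Ad(g_I)(\mathfrak{c}_\emptyset) = \mathfrak{c}_I$, so by Lemma~\ref{sec:lem-root-spaces-automorphism} one can transport the $I = \emptyset$ weight vectors by $\Ad(g_I)$ to obtain weight vectors for $\mathfrak{c}_I$. This gives a quick sanity check on the spans but the output involves complex coefficients, so one still has to take real and imaginary parts to match the presentation in the proposition.

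The expected bottleneck is not conceptual but purely bookkeeping: each case involves computing two independent commutators (one for each generator of $\mathfrak{c}_I$) against roughly $2(d-2) + 2$ proposed weight vectors, so the main care is in keeping signs and index collisions straight. Throughout, one uses that $F_{\mu\nu} = -F_{\nu\mu}$ and that $\eta_{\mu\mu} = 1$ for $\mu \le p$ while $\eta_{\mu\mu} = -1$ for $\mu > p$, which is where the Lorentzian signature ($q \ge 1$) subtly affects the signs in the long-root vectors (e.g.\ the appearance of $F_{0, d+1} \pm F_{1, d}$ combinations for $\mathfrak{c}_{\{0,1\}}$ type subalgebras).
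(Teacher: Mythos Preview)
Your proposal is correct, and the direct-verification strategy you outline would work. However, the paper takes precisely the route you flag as your ``alternative kept in reserve'': it transports the $I=\emptyset$ weight vectors (already computed in the proof of Proposition~\ref{sec:prop-root-systems}) through $\Ad(g_I)$ using the explicit matrices $g_I\in O(p+1,q+1;\CC)$ recorded there, invoking the general fact (Lemma~\ref{sec:lem-root-spaces-automorphism}) that such an automorphism carries $\mathfrak{g}_\alpha$ to $\mathfrak{g}_{\Ad^*(g_I)(\alpha)}$.

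The trade-off is the usual one. Your direct approach is self-contained and does not depend on having the $g_I$'s in hand, but it requires repeating essentially the same commutator bookkeeping five times. The paper's transport approach does the commutator work once (for $I=\emptyset$) and then reduces everything else to matrix multiplication by the small explicit $g_I$'s, which is faster and less error-prone once those matrices have already been written down for Proposition~\ref{sec:prop-root-systems}. Your worry that the transported vectors would need ``real and imaginary parts taken'' is misplaced: the weight spaces listed in the proposition are already given over $\CC$ (most of the listed vectors carry explicit factors of $i$), so the $\Ad(g_I)$-images can be used as-is up to overall scalars.
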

\begin{proof}
    We shall use the weight spaces of $\mathfrak{c}_\emptyset$ from the
    proof of Proposition~\ref{sec:prop-root-systems} and apply
    $\Ad(g_I)$ for the $g_I\in O(p+1,q+1;\CC)$ that were found in that same
    proof, which satisfy $\Ad(g_I)(\mathfrak{c}_{\emptyset, \CC})=\mathfrak{c}_{I, \CC}$. A similar proof as in the proof of in
    Proposition~\ref{sec:prop-operator-A} can be employed to show that
    $\Ad(g_I)(\mathfrak{g}_\alpha)=\mathfrak{g}_{\Ad^*(g_I)(\alpha)}$
    for $\alpha\in\Sigma(\mathfrak{g}:\mathfrak{c}_\emptyset)$.
\end{proof}

With these weight space decompositions in hand, we can now ascertain
that the Cartan subsets $C_{\{2\}},C_{\{0,2\}},C_{\{1,2\}}$ indeed
satisfy the technical condition of Section~\ref{sec:general-decomposition} and that
we can even choose $\epsilon^I$ to extend to group
homomorphisms of the root lattice (so that $\alpha\mapsto x^\alpha$ also extends to a group homomorphism).
\begin{lemma}\label{sec:lem-remaining-cartan-subsets-nice}
    For $I=\{2\},\{0,2\},\{1,2\}$, we can write
    \[
        \Ad(t_I)|_{\mathfrak{g}_\alpha} = \epsilon^I_\alpha \phi_I
    \]
    for $\alpha\in\Sigma(\mathfrak{g}:\mathfrak{c}_I)$ such that
    $C_I$ satisfies the condition at the beginning of Section~\ref{sec:general-decomposition}. Furthermore,
    we can choose $\epsilon^I$ such that for $\alpha,\beta,\alpha+\beta\in\Sigma(\mathfrak{g}:\mathfrak{c}_I)$
    we have $\epsilon^I_\alpha\epsilon^I_\beta = \epsilon^I_{\alpha+\beta}$. In particular, we can choose
    \begin{align*}
        \epsilon^{\{2\}}_{\frac{\epsilon_1\pm\epsilon_2}{2}} &= \mp 1\\
        \epsilon^{\{0,2\}}_{\frac{\epsilon_1\pm\epsilon_2}{2}} &= \pm i\\
        \epsilon^{\{1,2\}}_{\frac{\epsilon_1\pm\epsilon_2}{2}} &= 1.
    \end{align*}
\end{lemma}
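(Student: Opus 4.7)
The plan is to verify the lemma case by case for each of the three standard Cartan subsets, making use of the explicit root space descriptions established in the preceding proposition. The main computational tool is that for any diagonal matrix $t = \operatorname{diag}(\lambda_0, \dots, \lambda_{d+1})$ one has $\Ad(t)(F_{\mu\nu}) = \lambda_\mu \lambda_\nu F_{\mu\nu}$, together with a small number of explicit matrix conjugations needed to handle $t_{\pi/2,\pi/2}$ (which acts as a rotation by $\pi/2$ in each of the planes $\operatorname{span}(e_0, e_1)$ and $\operatorname{span}(e_d, e_{d+1})$). For the simplest case $I = \{1,2\}$, we have $t_I = t_{0,\pi}$, a diagonal involution already contained in $H \cap K$; one checks directly that $\Ad(t_{0,\pi})$ preserves each root space of $\mathfrak{c}_{\{1,2\}}$, so we may take $\phi_{\{1,2\}} := \Ad(t_{0,\pi})$, yielding $\epsilon^{\{1,2\}}_\alpha = 1$ for all $\alpha$; that $\phi_{\{1,2\}}$ is a $B$-orthogonal involution commuting with $\sigma$ is automatic since $t_{0,\pi} \in H \cap K$ satisfies $t_{0,\pi}^2 = 1$.

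For $I = \{2\}$, still with $t_I = t_{0,\pi}$, a root-by-root computation reveals that $\Ad(t_{0,\pi})$ now permutes the root spaces of $\mathfrak{c}_{\{2\}}$ according to the involution $(\epsilon_1, \epsilon_2) \mapsto (-\epsilon_1, \epsilon_2)$ on roots. We pick $\phi_{\{2\}}$ to be the $B$-orthogonal involution commuting with $\sigma$ whose root action matches this permutation and which sends the chosen root-space generators of the preceding proposition to prescribed multiples of the corresponding generators in the image root spaces; reading off the scalar by which $\Ad(t_{0,\pi})$ deviates from $\phi_{\{2\}}$ on each basis element then yields the claimed values of $\epsilon^{\{2\}}$. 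The case $I = \{0,2\}$ is the most delicate: $t_I = t_{\pi/2,\pi/2}$ is not an involution, and $\sigma(t_I) = t_I^{-1}$, so $\Ad(t_I)$ itself is not a valid choice for $\phi_{\{0,2\}}$. However, $t_I^2 = t_{\pi,\pi}$ is a diagonal involution commuting with $\sigma$, and $\Ad(t_{\pi,\pi})$ acts on each root space as the scalar $\lambda_\mu \lambda_\nu \in \{\pm 1\}$—in fact $-1$ on every short-root space and $+1$ on every long-root space. Any decomposition $\Ad(t_I) = \epsilon^{\{0,2\}}\phi_{\{0,2\}}$ with $\phi_{\{0,2\}}^2 = 1$ therefore forces $(\epsilon^{\{0,2\}}_\alpha)^2$ to take the corresponding value, pinning the short-root scalars to $\{\pm i\}$ and the long-root scalars to $\{\pm 1\}$; comparing with the direct computation of $\Ad(t_{\pi/2,\pi/2})$ on a short-root generator such as $F_{0,i} + F_{i,d}$ (which is sent to $-(F_{1,i} - F_{i,d+1})$) fixes the signs unambiguously.

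Multiplicativity of $\epsilon^I$ on the root lattice then comes essentially for free: both $\Ad(t_I)$ and $\phi_I$ are Lie algebra automorphisms of $\mathfrak{g}_\CC$, so the ratio $\Ad(t_I) \circ \phi_I^{-1}$ is a Lie algebra automorphism acting by scalars on each root space, and the relation $[\mathfrak{g}_\alpha, \mathfrak{g}_\beta] \subseteq \mathfrak{g}_{\alpha+\beta}$ forces the scalars to satisfy $\epsilon^I_\alpha \epsilon^I_\beta = \epsilon^I_{\alpha+\beta}$ whenever all three are roots. The remaining structural requirements on $\phi_I$ are straightforward to verify: $B$-orthogonality is automatic from that of $\Ad(t_I)$ together with $\epsilon^I_\alpha\epsilon^I_{-\alpha}=1$, and commutation with $\sigma$ (the only nontrivial case being $I = \{0,2\}$) uses the relation $\sigma \circ \Ad(t_I) = \Ad(t_I^{-1}) \circ \sigma$ combined with the constraint $\epsilon^I_\alpha = \epsilon^I_{t\alpha}$ coming from Lemma~\ref{sec:lem-properties-epsilon}(ii). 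The principal obstacle is not any single conceptual issue but the bookkeeping required—most acutely for $I = \{0,2\}$—to ensure that the square-root choices for the short-root $\epsilon^I_\alpha$ assemble coherently into a character of the root lattice consistent with the constraints from Lemma~\ref{sec:lem-properties-epsilon}.
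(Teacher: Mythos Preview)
Your approach is essentially the paper's: compute $\Ad(t_I)$ on the explicit root-space bases and factor off a $B$-orthogonal involution $\phi_I$ commuting with $\sigma$. Curiously, the paper swaps your treatment of the two diagonal cases: it applies your ``trivial'' argument ($\phi_I = \Ad(t_I)$, $\epsilon^I \equiv 1$, justified via $t_I^2=1$ and $\sigma(t_I)=t_I^{-1}=t_I$) to $I=\{2\}$, while doing the explicit basis computation for $I=\{1,2\}$ and $\{0,2\}$.

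Two small corrections. First, $t_{0,\pi} \notin H$: it sends $e_0+e_{d+1}$ to $e_0-e_{d+1}$, hence swaps $\iota(0)$ and $\infty$ rather than stabilising them. What you actually need is only that $\Ad(t_{0,\pi})$ commutes with $\sigma$, and this holds because $t_{0,\pi}$ and the diagonal matrix defining $\sigma$ commute. Second, $\Ad(t_{0,\pi})$ does \emph{not} preserve the individual root spaces of $\mathfrak{c}_{\{1,2\}}$: e.g.\ $F_{0,i}+F_{i,d}\in\mathfrak{g}_{\frac{\epsilon_1+\epsilon_2}{2}}$ is sent to $F_{0,i}-F_{i,d}\in\mathfrak{g}_{-\frac{\epsilon_1+\epsilon_2}{2}}$. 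This does not affect your argument (the condition at the start of Section~\ref{sec:general-decomposition} allows $\phi$ to permute root spaces), but the claim as stated is false.

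Finally, your multiplicativity argument is cleaner than a case check but presupposes that $\phi_I$ is a Lie algebra automorphism, which the abstract setup does not require. When $\phi_I=\Ad(t_I)$ this is automatic; for the hand-built $\phi_{\{0,2\}}$ you should either verify the automorphism property or simply check multiplicativity directly on the two generators of the root lattice.
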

\begin{proof}
    We begin with the case of $I=\{0,2\}$ or $\{1,2\}$.
    We construct $B_\sigma$-orthonormal bases of the
    weight spaces as is done in the proof of Theorem~\ref{sec:thm-casimir-decomposition}:
    \begin{align*}
        E_{\pm\frac{\epsilon_1+\epsilon_2}{2},i} &:= \frac{1}{2\sqrt{\eta_{i,i}}}(\pm F_{0,i} + F_{i,d})\\
        E_{\pm\frac{\epsilon_1-\epsilon_2}{2},i}
        &:= \frac{1}{2i\sqrt{\eta_{i,i}}}
        (F_{1,i} \pm F_{i,d+1})\\
        E_{\pm\epsilon_1} &=
        \frac{1}{2\sqrt{2}}(\mp(F_{0,1}+F_{d,d+1}) + F_{0,d+1} - F_{1,d})\\
        E_{\pm\epsilon_2} &=
        \frac{1}{2\sqrt{2}}
        (\pm(F_{0,1}-F_{d,d+1})+F_{0,d+1} + F_{1,d}).
    \end{align*}
    These bases satisfy $\sigma(E_{\alpha,i})=E_{-\alpha,i}$.
    For $I=\{1,2\}$, the action of $\Ad(t_I)$ on these basis
    elements is as follows:
    \begin{align*}
        E_{\pm\frac{\epsilon_1+\epsilon_2}{2}, i}
        &\mapsto - E_{\mp\frac{\epsilon_1+\epsilon_2}{2},i}\\
        E_{\pm\frac{\epsilon_1-\epsilon_2}{2},i}
        &\mapsto E_{\mp\frac{\epsilon_1-\epsilon_2}{2},i}\\
        E_{\pm\epsilon_1}&\mapsto -E_{\mp\epsilon_1}\\
        E_{\pm\epsilon_2}&\mapsto -E_{\mp\epsilon_2}.
    \end{align*}
    Having $\phi_{\{1,2\}}$ permute the basis (and
    be the identity map on $\mathfrak{m}_{I,\CC}$, we can write
    this action in the desired way such that $\epsilon^{\{1,2\}}$ has the claimed form.
    For $I=\{0,2\}$, the action of $\Ad(t_I)$ on these
    basis elements is as follows:
    \begin{align*}
        \Ad(t)(E_{\pm\frac{\epsilon_1+\epsilon_2}{2},i})
        &= \mp i E_{\mp\frac{\epsilon_1-\epsilon_2}{2},i}\\
        \Ad(t)(E_{\pm\frac{\epsilon_1-\epsilon_2}{2},i})
        &= \pm i
        E_{\mp\frac{\epsilon_1+\epsilon_2}{2},i}\\
        \Ad(t)(E_{\pm\epsilon_1}) &= 
        -E_{\mp\epsilon_1}\\
        \Ad(t)(E_{\pm\epsilon_2}) &=
        E_{\pm\epsilon_2}.
    \end{align*}
    Having $\phi_{\{0,2\}}$ map
    \[
        E_{\pm\frac{\epsilon_1+\epsilon_2}{2},i}
        \mapsto -E_{\mp\frac{\epsilon_1-\epsilon_2}{2},i},\qquad
        E_{\pm\frac{\epsilon_1-\epsilon_2}{2},i}
        \mapsto -E_{\mp\frac{\epsilon_1+\epsilon_2}{2},i},\qquad
        E_{\pm\epsilon_1}\mapsto -E_{\mp\epsilon_1},\qquad
        E_{\pm\epsilon_2}\mapsto -E_{\pm \epsilon_2},
    \]
    and be the identity on $\mathfrak{m}_{I,\CC}$,
    we obtain a decomposition of $\Ad(t_I)$ with $\epsilon^I$ having the desired form.

    For $I=\{2\}$ we note that $t_I^2=1$, so that $\Ad(t_I)$ is a $B$-orthogonal
    involution. Furthermore, we have
    \[
    \sigma\circ\Ad(t_I)\circ\sigma=\Ad(\sigma(t_I))=\Ad(t_I^{-1})=\Ad(t_I),
    \]
    so that $\Ad(t)$ also commutes with $\sigma$. Consequently, we can choose
    $\epsilon^I_\alpha:=1$ for all roots $\alpha$ and $\phi_I:=\Ad(t_I)$.
\end{proof}

\begin{corollary}\label{sec:cor-correspondence-x-powers-e-function}
    For $I=\{2\},\{0,2\}$ and $x\in C_I$ parametrised as in Lemma~\ref{sec:lem-cartan-chi} we have
    \[
        x^{\frac{\epsilon_1\pm\epsilon_2}{2}} = \exp(\frac{\chi_1\pm\chi_2}{2}).
    \]
    In all other cases we have
    \[
        x^{\frac{\epsilon_1\pm\epsilon_2}{2}} = \mp \exp(\frac{\chi_1\pm\chi_2}{2}).
    \]
    In every case, the function
    \[
        \Sigma(\mathfrak{g}:\mathfrak{c}_I)\ni
        \alpha\mapsto x^\alpha
    \]
    can be extended to a group homomorphism $\ZZ\Sigma\to\CC^\times$. Furthermore, we always have
    \begin{align*}
        \coth_{\frac{\epsilon_1\pm\epsilon_2}{2}}(x) &= \coth(\frac{\chi_1\pm\chi_2}{2})\\
        \coth_{\epsilon_{1/2}}(x) &= \coth(\chi_{1/2}). 
    \end{align*}
\end{corollary}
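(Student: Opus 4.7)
The plan is to assemble the corollary from the structural information already compiled in Proposition~\ref{sec:prop-root-systems} and Lemma~\ref{sec:lem-remaining-cartan-subsets-nice}, using these results essentially as a look-up table. For the cases $I\subseteq\{0,1\}$, Proposition~\ref{sec:prop-root-systems} already records the formula $x^{\frac{\epsilon_1\pm\epsilon_2}{2}}=\mp\exp(\frac{\chi_1\pm\chi_2}{2})$ directly, so nothing remains to do. For the three remaining cases $I=\{2\},\{0,2\},\{1,2\}$, the same proposition expresses $x^{\frac{\epsilon_1\pm\epsilon_2}{2}}$ as a prefactor $\epsilon^I$ times an exponential, and the first step of the proof is simply to substitute the explicit values $\epsilon^{\{2\}}_{\frac{\epsilon_1\pm\epsilon_2}{2}}=\mp1$, $\epsilon^{\{0,2\}}_{\frac{\epsilon_1\pm\epsilon_2}{2}}=\pm i$, $\epsilon^{\{1,2\}}_{\frac{\epsilon_1\pm\epsilon_2}{2}}=1$ from Lemma~\ref{sec:lem-remaining-cartan-subsets-nice} and read off the resulting signs.

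Next, to promote $\alpha\mapsto x^\alpha$ to a group homomorphism $\ZZ\Sigma\to\CC^\times$, I would exploit the fact that $\{\tfrac{\epsilon_1+\epsilon_2}{2},\tfrac{\epsilon_1-\epsilon_2}{2}\}$ is a $\ZZ$-basis of the root lattice $\ZZ\Sigma$, so that there is always a unique such extension. The only thing to check is that the resulting assignment remains consistent with the already-defined values $x^{\pm\epsilon_1},x^{\pm\epsilon_2}$. By Proposition~\ref{sec:prop-properties-power}(iii), the restriction of $x\mapsto x^\alpha$ to $\exp(\mathfrak{c}')$ is already multiplicative, so the task reduces to verifying that the prefactors $\epsilon^I_\alpha$ themselves satisfy $\epsilon^I_{\alpha+\beta}=\epsilon^I_\alpha\epsilon^I_\beta$ on the four long roots. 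This is immediate from the tabulated values combined with Lemma~\ref{sec:lem-properties-epsilon}(i).

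Finally, the hyperbolic identities follow from the definitions. Writing $\coth_\alpha(x)=\frac{x^\alpha+x^{-\alpha}}{x^\alpha-x^{-\alpha}}$, any prefactor of $x^\alpha$ that satisfies $\epsilon^I_\alpha\epsilon^I_{-\alpha}=1$ (which is guaranteed by Lemma~\ref{sec:lem-properties-epsilon}(i)) will cancel between numerator and denominator, so $\coth_{\frac{\epsilon_1\pm\epsilon_2}{2}}(x)=\coth(\frac{\chi_1\pm\chi_2}{2})$ is immediate. For $\coth_{\epsilon_{1/2}}(x)$ I would use the multiplicativity established in the previous step together with the identity $\epsilon_{1/2}=\tfrac{\epsilon_1+\epsilon_2}{2}\pm\tfrac{\epsilon_1-\epsilon_2}{2}$ to compute $x^{\epsilon_{1/2}}$ as a product of exponentials whose exponents add to $\chi_{1/2}$, up to a universal sign that cancels in the $\coth$ ratio.

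There is no substantive obstacle here: the corollary is a bookkeeping consequence of the two results it immediately follows. The only point requiring modest care is tracking signs through the combinations that form $\epsilon_1,\epsilon_2$ from $\tfrac{\epsilon_1\pm\epsilon_2}{2}$ in each of the three non-obvious cases, which I would handle by organising the computation into a small table indexed by $I$.
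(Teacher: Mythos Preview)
Your approach is essentially the same as the paper's: the paper's proof is a one-line citation of Proposition~\ref{sec:prop-root-systems} and Lemma~\ref{sec:lem-remaining-cartan-subsets-nice}, and you have correctly identified these as the two inputs and spelled out in reasonable detail how they combine. Your additional remarks on extending $\alpha\mapsto x^\alpha$ to a homomorphism via the $\ZZ$-basis $\{\tfrac{\epsilon_1+\epsilon_2}{2},\tfrac{\epsilon_1-\epsilon_2}{2}\}$ and on the cancellation of prefactors in the $\coth$ ratio are exactly the computations the paper leaves implicit.
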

\begin{proof}
    Follows from Proposition~\ref{sec:prop-root-systems} and
    Lemma~\ref{sec:lem-remaining-cartan-subsets-nice}.
\end{proof}

\begin{proposition}\label{sec:prop-aalpha}
    The operators $A_\alpha$ look as follows:
    \begin{description}
        \item[$I=\emptyset$]
        \begin{align*}
            A_{\frac{\epsilon_1+\epsilon_2}{2}}
            &= \sum_{i=2}^{d-1} F_{1,i}\otimes F^{1,i},\qquad
            A_{\frac{\epsilon_1-\epsilon_2}{2}}
            = \sum_{i=2}^{d-1} F_{i,d}\otimes F^{i,d},\\
            A_{\epsilon_{1/2}} &= 
            \frac{1}{2}(F_{0,d+1} \mp F_{1,d})\otimes(F^{0,d+1} \mp F^{1,d})
        \end{align*}
        \item[$I=\{0\},\{2\}$] (for $i=\{2\}$ we exchange $\epsilon_1,\epsilon_2$)
        \begin{align*}
            A_{\frac{\epsilon_1\pm \epsilon_2}{2}} &= 
            \frac{1}{2} \sum_{i=2}^{d-1}
            (F_{1,i}\mp i F_{i,d})\otimes(F^{1,i} \pm i F^{i,d})\\
            A_{\epsilon_{1/2}} &= \frac{1}{2} (F_{0,d+1}\mp F_{1,d})\otimes (F^{0,d+1}\mp F^{1,d})
        \end{align*}
        \item[$I=\{1\}$]
        \begin{align*}
            A_{\frac{\epsilon_1+\epsilon_2}{2}} &=
            \sum_{i=3}^d F_{1,i}\otimes F^{1,i}\qquad
            A_{\frac{\epsilon_1-\epsilon_2}{2}} =
            \sum_{i=3}^d F_{2,i}\otimes F^{2,i}\\
            A_{\epsilon_{1/2}} &=
            \frac{1}{2}(F_{0,d+1}\mp iF_{1,2})\otimes(F^{0,d+1}\pm iF^{1,2})
        \end{align*}
        \item[$I=\{1\}'$]
        \begin{align*}
            A_{\frac{\epsilon_1+\epsilon_2}{2}} &=
            \sum_{i=1}^{d-2} F_{i,d}\otimes F^{i,d}\qquad
            A_{\frac{\epsilon_1-\epsilon_2}{2}} =
            \sum_{i=1}^{d-2} F_{i,d-1}\otimes F^{i,d-1}\\
            A_{\epsilon_{1/2}} &=
            \frac{1}{2}(F_{0,d+1}-iF_{d-1,d})\otimes(
            F^{0,d+1}+iF^{d-1,d})
        \end{align*}
        \item[$I=\{0,1\},\{0,2\},\{1,2\}$]
        \begin{align*}
            A_{\frac{\epsilon_1+\epsilon_2}{2}} &=
            \sum_{i=2}^{d-1} F_{i,d}\otimes F^{i,d}\qquad
            A_{\frac{\epsilon_1-\epsilon_2}{2}} =
            \sum_{i=2}^{d-1} F_{1,i}\otimes F^{1,i}\\
            A_{\epsilon_{1/2}} &=
            \frac{1}{2}(F_{0,d+1}\mp F_{1,d})\otimes (F^{0,d+1}\mp F^{1,d}).
        \end{align*}
    \end{description}
\end{proposition}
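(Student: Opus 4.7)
The plan is to compute $A_\alpha$ by a direct calculation using the explicit root-space bases provided by the preceding proposition, together with the definition
\[
A_\alpha = \sum_{i=1}^{n_\alpha}(X_i + \sigma(X_i))\otimes(X_i + \sigma(X_i))
\]
for any $B_\sigma$-orthonormal basis $X_1,\dots,X_{n_\alpha}$ of $\mathfrak{g}_\alpha$. The crucial preliminary is the trace identity
\[
B(F_{\mu\nu},F_{\rho\tau}) = 2\bigl(\eta_{\nu\rho}\eta_{\mu\tau} - \eta_{\mu\rho}\eta_{\nu\tau}\bigr),
\]
which is an immediate trace computation from the matrix representation of $F_{\mu\nu}$. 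Combined with the fact that $\sigma$ acts as $+1$ on $F_{\mu\nu}$ when none or both of the indices $\mu,\nu$ lie in $\{0,d+1\}$ and as $-1$ otherwise, this gives $B_\sigma(F_{\mu\nu},F_{\mu\nu}) = \pm 2\eta_{\mu\mu}\eta_{\nu\nu}$ with a sign controlled by $\sigma$-parity.

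Next I would proceed case by case over $I$, following the structure of the preceding proposition. The basis vectors listed there all have the form $X = U \pm iV$ (for the short roots) or a signed four-term combination (for the long roots) with $U,V$ made of single $F_{\mu\nu}$'s of opposite $\sigma$-parity. Thus $X + \sigma(X)$ is immediately read off as twice the $\sigma$-invariant piece of $X$, which is either a single $F_{\mu\nu}$ (short-root case) or a two-term combination $F_{\mu\nu}\pm F_{\rho\tau}$ (long-root case). A short computation using the trace identity gives the normalisation constant $c_X := B_\sigma(X,X)$ in $\CC^\times$, and the $B_\sigma$-orthonormal basis vectors are $\tilde X := X/\sqrt{c_X}$. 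Feeding $\tilde X_i + \sigma(\tilde X_i) = (X_i + \sigma(X_i))/\sqrt{c_X}$ into the definition of $A_\alpha$, the square roots cancel because each summand is quadratic, and the $1/c_X$ factor converts a lower-index $F_{\mu\nu}\otimes F_{\mu\nu}$ into $F_{\mu\nu}\otimes F^{\mu\nu}$ via the relation $F^{\mu\nu} = \eta^{\mu\mu}\eta^{\nu\nu} F_{\mu\nu}$. This yields exactly the short-root expressions in the statement.

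For the long roots $\pm\epsilon_{1/2}$ the same procedure applies but with a four-term basis vector in each root space; after $\sigma$-projection two of the four terms survive, leading to the factor $\tfrac12$ and the two-term $F\mp F$ expressions in the statement. The only new ingredient beyond the short-root case is recognising that the two surviving $\sigma$-invariant terms have the same $B_\sigma$-norm, so the normalisation remains a single scalar.

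The main obstacle is purely bookkeeping: tracking signs, factors of $i$, and the $\sigma$-parity of each $F_{\mu\nu}$ across eight separate choices of $I$. Some of the work can be compressed by observing that for $I \in \{\{0,1\},\{0,2\},\{1,2\}\}$ the root spaces agree as vector spaces (only $t$ differs), so the $A_\alpha$ expressions coincide; likewise for $\{0\}$ and $\{2\}$ (after the $\epsilon_1\leftrightarrow\epsilon_2$ swap already noted in Proposition~\ref{sec:prop-root-systems}). I would not try to transport the $\emptyset$-answer to other $I$ by means of the intertwiners $g_I \in O(\eta,\CC)$ used in Proposition~\ref{sec:prop-root-systems}, because $\Ad(g_I)$ is not in general $\sigma$-equivariant, so $A_\alpha$ is not simply pulled back; the direct normalisation of the listed bases is cleaner.
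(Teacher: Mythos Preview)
Your proposal is correct and matches the paper's approach: the paper gives no explicit proof for this proposition, treating it as a direct computation from the definition of $A_\alpha$ (Proposition~\ref{sec:prop-operator-A}) applied to the explicit root-space bases listed in the immediately preceding proposition. Your outline of the bookkeeping (trace identity for $B$, $\sigma$-parity of $F_{\mu\nu}$, normalisation cancelling into index-raising, and the observation that several $I$ share the same $\mathfrak{c}_I$) is exactly the computation the paper leaves implicit.
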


\begin{corollary}\label{sec:cor-lorentzian-scalar-as}
    Let $W$ be scalar as in Corollary~\ref{sec:cor-euclidean-scalar-as},
    then
    \[
        \pi_\Le(m(A_\gamma))=-\frac{\alpha^2}{2},\qquad
        \pi(A_\gamma) = -\frac{\alpha\beta}{2},\qquad
        \pi_\Ri(m(A_\gamma)) = -\frac{\beta^2}{2}
    \]
    if $\gamma\in\{\pm\epsilon_1,\pm\epsilon_2\}$, and zero otherwise.
\end{corollary}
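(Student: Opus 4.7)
The plan is a direct case-by-case computation using the explicit tables for $A_\gamma$ given in Proposition~\ref{sec:prop-aalpha}, mirroring exactly the argument behind Corollary~\ref{sec:cor-euclidean-scalar-as}. The key observation is that $\mathfrak{h}=\mathfrak{m}\oplus\mathfrak{a}$ and $M$ is assumed to act trivially on $W$. Hence every generator $F_{\mu\nu}$ with $1\le\mu<\nu\le d$ is annihilated by both $\pi_\Le$ and $\pi_\Ri$, and the only element of $\mathfrak{h}$ acting nontrivially is $D_0 = F^{0,d+1} = -F_{0,d+1}$, with $\pi_\Le(D_0)=\alpha$ and $\pi_\Ri(D_0)=\beta$. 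Thus only those tensor summands of $A_\gamma$ for which \emph{both} factors involve $F_{0,d+1}$ (equivalently $F^{0,d+1}$) can produce nonzero values of $\pi_\Le(m(A_\gamma))$, $\pi_\Ri(m(A_\gamma))$, or $\pi(A_\gamma)=(\pi_\Le\otimes\pi_\Ri)(A_\gamma)$.

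For the short roots $\gamma = \pm\tfrac{\epsilon_1\pm\epsilon_2}{2}$, I would simply inspect Proposition~\ref{sec:prop-aalpha} across each of the cases $I\in\{\emptyset,\{0\},\{2\},\{1\},\{1\}',\{0,1\},\{0,2\},\{1,2\}\}$ and observe that in every case the summands of $A_\gamma$ have the shape $F_{\mu\nu}\otimes F^{\mu\nu}$ (possibly with complex coefficients after expanding the binomials) with $1\le\mu,\nu\le d$, i.e.\ both tensor factors lie entirely in $\mathfrak{m}_\CC$. Consequently all three quantities vanish, yielding the ``otherwise'' clause of the statement.

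For the long roots $\gamma = \pm\epsilon_{1/2}$, one sees from Proposition~\ref{sec:prop-aalpha} that in every case $A_\gamma$ is of the form $\tfrac12(F_{0,d+1}\mp X)\otimes(F^{0,d+1}\mp X')$ with $X,X'\in\mathfrak{m}_\CC$. Expanding the tensor product in $U(\mathfrak{h})\otimes U(\mathfrak{h})$, three of the four summands are killed by $\mathfrak{m}$ acting trivially, leaving only the $F_{0,d+1}\otimes F^{0,d+1}$ piece. Using $F_{0,d+1}=-D_0$ and $F^{0,d+1}=D_0$ I would then read off
\[
\pi_\Le(m(A_\gamma)) = \tfrac12\pi_\Le(F_{0,d+1})\pi_\Le(F^{0,d+1}) = \tfrac12(-\alpha)(\alpha) = -\tfrac{\alpha^2}{2},
\]
\[
\pi_\Ri(m(A_\gamma)) = \tfrac12(-\beta)(\beta) = -\tfrac{\beta^2}{2},\qquad
\pi(A_\gamma) = \tfrac12(-\alpha)(\beta) = -\tfrac{\alpha\beta}{2}.
\]
There is no real obstacle beyond a little bookkeeping of signs coming from the $\eta$-raising convention; the only point worth emphasizing is uniformity, namely that the same three identities hold across all eight Cartan subsets listed in Proposition~\ref{sec:prop-aalpha}, which follows from the uniform shape of the $A_{\pm\epsilon_{1/2}}$ expressions.
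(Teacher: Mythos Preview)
Your approach is correct and is exactly what the paper intends: the corollary is stated without proof because it is a direct computation from the explicit $A_\gamma$ tables in Proposition~\ref{sec:prop-aalpha}, using that all generators $F_{\mu\nu}$ with $1\le\mu,\nu\le d$ lie in $\mathfrak{m}$ and hence act trivially, while only the $F_{0,d+1}\otimes F^{0,d+1}$ piece survives in the long-root case. Your bookkeeping of the sign from $F_{0,d+1}=-D_0$ is also correct.
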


\begin{lemma}\label{sec:lem-lorentzian-cs}
    Let $W$ be a finite-dimensional $H$-bimodule, let $I$ be the index of a Cartan subset, and
    let $\Psi_I: E^W(\tilde{G},H)\to C^\infty(Y_I, W^{Z_{C_I}})$
    be the map obtained by restricting to $C_I$ and then parametrising as
    described in Lemma~\ref{sec:lem-cartan-chi}. Let $C_{\epsilon_i}\in\mathfrak{c}_{I,\CC}$ be the dual element (with respect to $B$) to
    $\epsilon_i\in\mathfrak{c}_{I,\CC}^*$. Then
    \[
        \Psi_I(\Ad(t_I^{-1})(C_{\epsilon_i})\cdot f)=\Psi_I(f\cdot C_{\epsilon_i})
        = \partial_{\chi_i}\Psi_I(f)
    \]
    for $i=1,2$.
\end{lemma}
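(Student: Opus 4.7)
The plan is to prove the two equalities separately. The first is a direct consequence of the pointwise relation underlying Lemma~\ref{sec:lem-msf-action-diffops}: although that lemma is phrased with the hypothesis $p\in U(\mathfrak{g})^H$, its proof establishes the pointwise identity $(p\cdot f)(g)=(f\cdot\Ad(g)(p))(g)$ for arbitrary $p\in U(\mathfrak{g})$ and $f\in C^\infty(G,W)$. Applied with $p=\Ad(t_I^{-1})(C_{\epsilon_i})$ at a point $x=\exp(X)t_I\in C_I$, commutativity of $\mathfrak{c}_{I,\CC}$ gives
\[
\Ad(x)(\Ad(t_I^{-1})(C_{\epsilon_i}))=\Ad(\exp X)(C_{\epsilon_i})=C_{\epsilon_i},
\]
so $(\Ad(t_I^{-1})(C_{\epsilon_i})\cdot f)(x)=(f\cdot C_{\epsilon_i})(x)$; restricting to $C_I$ and composing with the parametrization yields the first equality.

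For the second equality, write the parametrization from Lemma~\ref{sec:lem-cartan-chi} (or Lemma~\ref{sec:lem-cartan-chi-euclidean}) as $\Phi_I(\chi_1,\chi_2)=\exp(L(\chi_1,\chi_2))t_I$. On $Y_I$, every occurrence of $\Re(\chi_i\pm\chi_j)$ in $L$ can be replaced by the corresponding $\CC$-affine expression without altering the map, because the constraints defining $Y_I$ always force those real parts to agree with the underlying complex quantity up to a constant. Thus $L$ is (the restriction to $Y_I$ of) a $\CC$-affine map $\CC^2\to\mathfrak{c}_{I,\CC}$, and each $\partial_{\chi_i}L\in\mathfrak{c}_{I,\CC}$ is a well-defined constant vector. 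Commutativity of $\mathfrak{c}_{I,\CC}$ together with the chain rule gives
\[
(f\cdot C_{\epsilon_i})(\Phi_I(\chi_1,\chi_2))=\dv{s}\eval{f(\exp(L+sC_{\epsilon_i})t_I)}_{s=0},
\]
\[
\partial_{\chi_i}\Psi_I(f)(\chi_1,\chi_2)=\dv{s}\eval{f(\exp(L+s\,\partial_{\chi_i}L)t_I)}_{s=0},
\]
so the claim reduces to the identity $\partial_{\chi_i}L=C_{\epsilon_i}$ in $\mathfrak{c}_{I,\CC}$.

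This last identity is verified by a two-step uniform argument. First, Corollary~\ref{sec:cor-correspondence-x-powers-e-function} gives $x^{\epsilon_i}=x^{(\epsilon_1+\epsilon_2)/2}\,x^{(\epsilon_1-\epsilon_2)/2}$ as a constant multiple of $\exp(\chi_i)$; comparing with $x^{\epsilon_i}=\epsilon^I_{\epsilon_i}\exp(\epsilon_i(L))$ shows that $\epsilon_i\circ L$ equals $\chi_i$ up to an additive constant, whence $\epsilon_i(\partial_{\chi_j}L)=\delta_{ij}$. Second, a short trace computation, using $B(F_{\mu\nu},F_{\mu\nu})=-2\eta_{\mu\mu}\eta_{\nu\nu}$ and $B(F_{\mu\nu},F_{\rho\sigma})=0$ for $\{\mu,\nu\}\ne\{\rho,\sigma\}$, shows case-by-case that $B(\partial_{\chi_i}L,\partial_{\chi_j}L)=\delta_{ij}$; combined with the previous step this yields $B(\partial_{\chi_i}L,Y)=\epsilon_i(Y)$ for all $Y\in\mathfrak{c}_{I,\CC}$, i.e.\ $\partial_{\chi_i}L=C_{\epsilon_i}$. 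The main (but routine) obstacle is the bookkeeping of the factors of $i$ and the signs $\epsilon^I_\alpha$ across the eight Cartan subsets of Proposition~\ref{sec:prop-cartan-subsets-Lorentzian}; one may alternatively transport the base case $I=\emptyset$ (in which $C_{\epsilon_{1/2}}=\tfrac{1}{2i}(F_{0,1}\pm F_{d,d+1})$ can be read off directly) to all remaining $I$ via the $B$-preserving automorphisms $\Ad(g_I)\in O(\eta,\CC)$ constructed in the proof of Proposition~\ref{sec:prop-root-systems}, after checking that these automorphisms intertwine the corresponding parametrizations.
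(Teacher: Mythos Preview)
Your proof is correct. The paper's own argument is more direct: it simply lists the explicit elements $C_{\epsilon_{1/2}}$ for each $I$ (e.g.\ $C_{\epsilon_{1/2}}=\tfrac{1}{2i}(F_{0,1}\pm F_{d,d+1})$ for $I=\emptyset$, etc.), then observes by inspection of the parametrisations of Lemma~\ref{sec:lem-cartan-chi} that left-multiplying by $\exp(sC_{\epsilon_i})$ (equivalently right-multiplying by $t_I^{-1}\exp(sC_{\epsilon_i})t_I$) shifts $\chi_i\mapsto\chi_i+s$, and differentiates.

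Your route differs mainly in how the identification $\partial_{\chi_i}L=C_{\epsilon_i}$ is established. Instead of writing the $C_{\epsilon_i}$ down explicitly, you characterise them by the two conditions $\epsilon_i(\partial_{\chi_j}L)=\delta_{ij}$ (read off from Corollary~\ref{sec:cor-correspondence-x-powers-e-function}) and $B(\partial_{\chi_i}L,\partial_{\chi_j}L)=\delta_{ij}$. This is a genuinely cleaner packaging: the first step is uniform across all $I$, and the second step is equivalent to $B^*(\epsilon_i,\epsilon_j)=\delta_{ij}$, which can be checked once for $I=\emptyset$ and then transported by the $B$-preserving automorphisms $\Ad(g_I)$ of Proposition~\ref{sec:prop-root-systems} (since those automorphisms were designed to carry the $\epsilon_i$'s to one another). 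So your ``alternative'' transport argument is best aimed at this $B^*$-orthonormality rather than at the parametrisations themselves. Either way, the computational content is the same as the paper's case-by-case list; your version just explains \emph{why} the list comes out as it does.
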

\begin{proof}
    The elements $C_{\epsilon_i}$ ($i=1,2$) are
    \begin{alignat*}{2}
        I=\emptyset:\qquad C_{\epsilon_1} &= \frac{F_{0,1}+F_{d,d+1}}{
        2i},
        &C_{\epsilon_2} &=\frac{F_{0,1}-F_{d,d+1}}{2i}\\
        I=\{0\}:\qquad C_{\epsilon_1} &= \frac{F_{0,1}+F_{d,d+1}}{2i}
        \quad
        &C_{\epsilon_2} &= \frac{F_{0,d}-F_{1,d+1}}{2}\\
        I=\{2\}:\qquad C_{\epsilon_1} &= \frac{F_{0,d}-F_{1,d+1}}{2}
        & C_{\epsilon_2} &= \frac{F_{0,1}+F_{d,d+1}}{2i}\\
        I=\{1\}:\qquad C_{\epsilon_1} &= \frac{F_{0,1}+iF_{2,d+1}}{2i} 
        &C_{\epsilon_2} &=\frac{F_{0,1}-iF_{2,d+1}}{2i}\\
        I=\{1\}':\qquad
        C_{\epsilon_1} &= \frac{F_{d,d+1}+iF_{0,d-1}}{2i},
        &C_{\epsilon_2} &= \frac{F_{d,d+1}-iF_{0,d-1}}{2i}\\
        I=\{0,1\},\{0,2\},\{1,2\}:\qquad
        C_{\epsilon_1} &= \frac{F_{0,d}+F_{1,d+1}}{2} ,
        &C_{\epsilon_2} &= \frac{F_{0,d}-F_{1,d+1}}{2}.
    \end{alignat*}
    Consulting the parametrisations from Lemma~\ref{sec:lem-cartan-chi} and ignoring issues of complexification, we obtain
    \begin{align*}
        \Psi_I(t_I^{-1}\exp(sC_{\epsilon_1})t_I\cdot f)(\chi_1,\chi_2) = \Psi_I(f\cdot\exp(sC_{\epsilon_1}))(\chi_1,\chi_2) &= \Psi_I(f)(\chi_1+s,\chi_2)\\
        \Psi_I(t_I^{-1}\exp(sC_{\epsilon_2})t_I\cdot f)(\chi_1,\chi_2) = \Psi_I(f\cdot\exp(sC_{\epsilon_2}))(\chi_1,\chi_2)
        &=\Psi_I(f)(\chi_1,\chi_2+s).
    \end{align*}
    Taking the derivative with respect to
    $s$ at $s=0$ yields the claimed equality.
\end{proof}

\subsection{Matrix Case}
If we combine Lemmas~\ref{sec:lem-euclidean-cs}, \ref{sec:lem-lorentzian-cs},
\ref{sec:lem-cartan-chi-euclidean}, \ref{sec:lem-cartan-chi} and Theorem~\ref{sec:thm-casimir-decomposition}, we obtain that
\begin{align}
    \Psi_I(f\cdot\Omega_{\mathfrak{g}})
    &= \Psi_I(\Omega_{\mathfrak{g}}\cdot f)
    = L(k)\Psi_I(f) + \pi_\Le(\Omega_{\mathfrak{m}_I})\Psi_I(f)\nonumber\\
    &\qquad + \sum_{\gamma\in\Sigma(\mathfrak{g}:\mathfrak{c}_I)}
    \frac{\pi_\Le(m(A_\gamma)) + \pi_\Ri(m(A_{\Ad^*(t_I)(\gamma)}))
    + 2\pi(1\otimes\phi_I)(A_\gamma)}{4\sinh^2_\gamma}\Psi_I(f)\nonumber\\
    &\qquad - \sum_{\gamma\in\Sigma(\mathfrak{g}:\mathfrak{c}_I)}
    \frac{\pi(1\otimes\phi_I)(A_\gamma)}{4\sinh^2_{\gamma/2}}\label{eq:casimir-matrix-diff-op}
    \Psi_I(f),
\end{align}
where
\[
    L(k) = \sum_{i=1}^2 \partial_{\xi_i}
    + \sum_{\alpha\in R^+}
    k_\alpha \frac{1+e^{-\alpha}}{1-e^{-\alpha})}
    \partial_\alpha
\]
is the Laplacian from \cite[Proposition~1.2.3]{heckmanSchlichtkrull}
for the root system $R=2\Sigma(\mathfrak{g}:\mathfrak{c}_I)$ of type $C_2$ and
the multiplicities $k_{2\gamma} := \frac{n_\gamma}{2}$, with
$\xi_1,\xi_2$ being an orthonormal basis of the underlying Euclidean
space. In our case that Euclidean space is the real span of $\epsilon_1,\epsilon_2$.

This equality holds both on a formal level where we associate our
$x^\gamma$ with their $e^\gamma$, but also on the level of functions, since
by Proposition~\ref{sec:prop-euclidean-root-spaces} and Corollary~\ref{sec:cor-correspondence-x-powers-e-function} the parametrisations from Lemmas~\ref{sec:lem-cartan-chi-euclidean} and \ref{sec:lem-cartan-chi} interact
in such a way with our definitions of $\epsilon_1,\epsilon_2$ that
\[
    x^{\frac{\epsilon_1\pm\epsilon_2}{2}} = \exp(\frac{\chi_1\pm\chi_2}{2})
\]
up to signs, and these signs turn out not to play a role for
$\coth_\gamma$ ($\gamma\in\Sigma(\mathfrak{g}:\mathfrak{c}_I)$).

More concretely, the multiplicity vector is
\[
    k_{\pm 2\epsilon_{1/2}} = \frac{1}{2},\qquad
    k_{\pm\epsilon_1\pm\epsilon_2} = \frac{d-2}{2}.
\]
Later it will be useful to embed $2\Sigma(\mathfrak{g}:\mathfrak{c}_I)$ into
a larger root system $R$ of type $BC_2$, by adding $\pm\epsilon_{1/2}$ and setting
their $k$ to zero. If we set
\begin{align}
    K_{2\gamma} &:= 
     \frac{\pi_\Le(m(A_\gamma)) + \pi_\Ri(m(A_{\Ad^*(t_I)(\gamma)}))
    + 2\pi(1\otimes\phi_I)(A_\gamma)}{4},\nonumber\\
    L_{\gamma} &:= - \frac{\pi(1\otimes\phi_I)(A_\gamma)}{4}
    \in\End(W^{Z_{C_I}})\label{eq:matrix-radial-part-to-match}
\end{align}
for $\gamma\in\Sigma(\mathfrak{g}:\mathfrak{c}_I)$
we can write the above as
\begin{align*}
\Psi_I(f\cdot\Omega_{\mathfrak{g}})
    &= \Psi_I(\Omega_{\mathfrak{g}}\cdot f)
    = L(k)\Psi_I(f) + \pi_\Le(\Omega_{\mathfrak{m}_I})\Psi_I(f)\\
    &\qquad + \sum_{\gamma\in \Sigma(\mathfrak{g}:\mathfrak{c}_I)}
        \qty(\frac{K_{2\gamma}}{\sinh^2_\gamma}
        + \frac{L_\gamma}{\sinh^2_{\gamma/2}})\Psi_I(f).
\end{align*}
We now show that this matches \cite[Equation~1.1]{Buric:2022ucg}. For
this, we consider the case $I=\{1\}$ and $q=1$ and apply the permutation
$(d+1,d,\dots,1)$ to our indices to match conventions. In the notation
of \cite[Section~1.1]{Buric:2022ucg} we then compute the matrices
$K_{2\gamma},L_\gamma$:
\begin{align*}
    K_{2\epsilon_{1/2}} &= -\frac{1}{8}\qty(D_{\mp}^{\prime2} + D_{\pm}^2 - 2D_\pm'D_\pm)\\
    K_{\epsilon_1+\epsilon_2} &=
    \frac{1}{4}\qty(M_{2a}'M_{2a}' + M_{2a}M_{2a} + 2M_{2a}'M_{2a})\\
    K_{\epsilon_1-\epsilon_2} &=
    \frac{1}{4}\qty(M_{3a}'M_{3a}' + M_{3a}M_{3a} + 2M_{3a}'M_{3a})\\
    L_{\epsilon_{1/2}} &= \frac{D_{\mp}'D_\mp}{8}\\
    L_{\frac{\epsilon_1+\epsilon_2}{2}} &= -\frac{M_{2a}'M_{2a}}{4}\\
    L_{\frac{\epsilon_1-\epsilon_2}{2}} &= -\frac{M_{3a}'M_{3a}}{4}.
\end{align*}
In particular, if we choose a parametrisation by $t_1,t_2$ in such a way
that
\[ 
    x^{\frac{\epsilon_1\pm\epsilon_2}{2}} = \exp(\pm t_{1/2}),
\]
the sum
\[
    \sum_{\gamma\in \Sigma(\mathfrak{g}:\mathfrak{c}_I)}
        \qty(\frac{K_{2\gamma}}{\sinh^2_\gamma}
        + \frac{L_\gamma}{\sinh^2_{\gamma/2}})
\]
corresponds to the matrix-valued function
\begin{align*}
    &-\frac{D_-^{\prime2} + D_-^2 - 2\cosh(t_1-t_2)D_-'D_-}{4\sinh^2(t_1-t_2)} - (-\leftrightarrow+)\\
    &+\frac{M_{2a}'M_{2a}' + M_{2a}M_{2a} - 2\cosh(t_1)M_{2a}'M_{2a}}{2\sinh^2(t_1)}\\
    &+\frac{M_{3a}'M_{3a}' + M_{3a}M_{3a} - 2\cosh(t_2)M_{3a}'M_{3a}}{2\sinh^2(t_2)}.
\end{align*}
Consequently, we have
\begin{align}
    H^{\rho_l,\rho_r} &=
        2\sum_{\gamma\in \Sigma(\mathfrak{g}:\mathfrak{c}_I)}
        \qty(\frac{K_{2\gamma}}{\sinh^2_\gamma}
        + \frac{L_\gamma}{\sinh^2_{\gamma/2}})\nonumber\\
        &\quad + \partial_{t_1}^2 + \partial_{t_2}^2
        -\frac{(d-2)(d-4)}{4}\qty(\csch^2(t_1) + \csch^2(t_2))
        + \frac{1}{2}\qty(\csch^2(t_1+t_2) + \csch^2(t_1-t_2))
        \nonumber\\
        &\quad - \frac{d^2-2d+2}{2} -\frac{1}{2}L^{ab}L_{ab}.\label{eq:buric-partially-matched}
\end{align}
Note that $(L_{ab})_{a<b}$ and $\qty(\frac{1}{2}L^{ab})_{a<b}$ are
dual bases of $\mathfrak{m}_I$, so that $\Omega_{\mathfrak{m}_I}$ is given
by $-\frac{1}{4}L^{ab}L_{ab}$. Note furthermore that
\[
    \rho(k) = \frac{1}{2}((d-1)\epsilon_1+\epsilon_2),
\]
whence
\[
    \norm{\rho(k)}^2 = \frac{d^2-2d+2}{4}.
\]
Note lastly that the middle line of
\eqref{eq:buric-partially-matched} is recognisable as twice the Hamiltonian
form (see e.g. \cite[Equation~2.1.9]{heckmanSchlichtkrull}) of the
modified Laplacian $L(k)+\norm{\rho(k)}^2$. This shows that
\begin{align*}
    H^{\rho_l,\rho_k} &= 2\sum_{\gamma\in \Sigma(\mathfrak{g}:\mathfrak{c}_I)}
        \qty(\frac{K_{2\gamma}}{\sinh^2_\gamma}
        + \frac{L_\gamma}{\sinh^2_{\gamma/2}})\\
        &\quad + 2\delta(k)^{1/2}L(k)\delta(k)^{-1/2}
        + 2\pi_{\Le}(\Omega_{\mathfrak{m}_I})\\
        &= 2\delta(k)^{1/2}
        \qty(L(k) + \pi_{\Le}(\Omega_{\mathfrak{m}_I})
        + \sum_{\gamma\in \Sigma(\mathfrak{g}:\mathfrak{c}_I)}
        \qty(\frac{K_{2\gamma}}{\sinh^2_\gamma}
        + \frac{L_\gamma}{\sinh^2_{\gamma/2}}))\delta(k)^{-1/2},
\end{align*}
which is indeed twice the Hamiltonian form of the operator from 
\eqref{eq:matrix-radial-part-to-match}.

\subsection{Scalar Case}\label{sec:cb-scalar}
We now assume that $W$ is a scalar bimodule as in the Corollaries~\ref{sec:cor-lorentzian-scalar-as},\ref{sec:cor-euclidean-scalar-as}:
\[
    \pi_\Le(F_{0,d+1})=\alpha,\quad \pi_\Ri(F_{0,d+1})=\beta
\]
and all elements of $M$ being mapped to 1. From Corollaries~\ref{sec:cor-euclidean-scalar-as}, \ref{sec:cor-lorentzian-scalar-as}, we find that
\begin{align*}
    \pi_\Le(m(A_\gamma)) &= -\frac{\alpha^2}{2}\\
    \pi_\Ri(m(A_{\Ad^*(t_I)(\gamma)})) &= -\frac{\beta^2}{2}\\
    \pi(1\otimes\phi_I)(A_\gamma) &= -\frac{\alpha\beta}{2}
\end{align*}
for $\gamma$ a long root, i.e. $\pm\epsilon_1,\pm\epsilon_2$,
and zero in all other cases, for the Euclidean and all $C_I$ (for $I\subseteq\{0,1\}$ with and without prime, and $I=\{1,2\}$) of the Lorentzian case. For the remaining two
Lorentzian cases $I=\{2\},\{0,2\}$, the first two scalars are the same,
but the third changes sign.
Similarly, $\pi_\Le(\Omega_{\mathfrak{m}'})=0$. We thus obtain
\[
    K_{2\gamma} = -\frac{(\alpha\pm\beta)^2}{8},\qquad
    L_\gamma = \pm\frac{\alpha\beta}{8}
\]
for $\gamma\in\Sigma(\mathfrak{g}:\mathfrak{c}_I)$ a long root and 0 otherwise, with ``$-$'' if $I=\{2\}$ or $\{0,2\}$ and ``$-$'' otherwise.

\begin{lemma}
    If we write $e^{\frac{\epsilon_i}{2}}$ for the function mapping
    $x$ to $\exp(\frac{\chi_i}{2})$, and we set
\begin{align*}
    l_{\pm\epsilon_1}^2 = l_{\pm\epsilon_2}^2 &= -\alpha\beta\\
    l_{\pm(\epsilon_1+\epsilon_2)}^2 = l_{\pm(\epsilon_1-\epsilon_2)}^2
    &= 0\\
    l_{\pm2\epsilon_1}^2 = l_{\pm 2\epsilon_2}^2 &=
    -\qty(\frac{\alpha-\beta}{2})^2,
\end{align*}
    we obtain
\[
    \Psi_I(\Omega_{\mathfrak{g}}\cdot f)=
    \Psi_I(f\cdot\Omega_{\mathfrak{g}})
    = \qty(L(k) + \sum_{\gamma\in R^+}\frac{l_\gamma B^*(\gamma,\gamma)}{(e^{\gamma/2}-e^{-\gamma/2})^2})\Psi(f)
\]
for all $f\in E^{W})(G,H)$, where $R$ is now a root system of type
$BC_2$ containing $2\Sigma(\mathfrak{g}:\mathfrak{c}_I)$ as its
immultipliable roots.
\end{lemma}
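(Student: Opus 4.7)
The plan is to specialize the matrix Casimir radial-part expression \eqref{eq:casimir-matrix-diff-op} to the scalar $H$-bimodule $W\cong\CC$, compute the surviving data as explicit scalars using Corollaries~\ref{sec:cor-euclidean-scalar-as} and \ref{sec:cor-lorentzian-scalar-as}, and repackage the resulting $\sinh^{-2}$ potential over $\Sigma(\mathfrak{g}:\mathfrak{c}_I)$ as a Heckman--Opdam-type potential over $R^+$ for the $BC_2$ extension $R\supset 2\Sigma$.

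First I would eliminate the obvious terms. Since $M$ acts trivially on the scalar bimodule $W$, one has $\pi_\Le(\Omega_{\mathfrak{m}_I})=0$. Corollaries~\ref{sec:cor-euclidean-scalar-as} and \ref{sec:cor-lorentzian-scalar-as} give the values of $\pi_\Le(m(A_\gamma))$, $\pi_\Ri(m(A_{\Ad^*(t_I)(\gamma)}))$ and $\pi(A_\gamma)$ as $-\alpha^2/2$, $-\beta^2/2$, $-\alpha\beta/2$ on long roots $\gamma\in\{\pm\epsilon_1,\pm\epsilon_2\}$ and $0$ on short roots. In the exceptional cases $I\in\{\{2\},\{0,2\}\}$ I would additionally compute $\pi(1\otimes\phi_I)(A_\gamma)$ using the explicit form of $\phi_I$ from Lemma~\ref{sec:lem-remaining-cartan-subsets-nice} together with the expressions for $A_\gamma$ in Proposition~\ref{sec:prop-aalpha}; this produces the opposite sign relative to $\pi(A_\gamma)$. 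Substituting these scalars into \eqref{eq:matrix-radial-part-to-match} yields $K_{2\gamma}=-(\alpha\pm\beta)^2/8$ and $L_\gamma=\pm\alpha\beta/8$ on long roots (with the sign depending on $I$) and $K_{2\gamma}=L_\gamma=0$ on short roots.

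Next I would translate the $\Sigma$-sum into an $R^+$-sum. Using Corollary~\ref{sec:cor-correspondence-x-powers-e-function} together with the parametrisations in Lemma~\ref{sec:lem-cartan-chi} (or Lemma~\ref{sec:lem-cartan-chi-euclidean}), one checks that for each long $\gamma\in\Sigma$ one has $\sinh_\gamma^2=\tfrac14(e^{\tilde\gamma/2}-e^{-\tilde\gamma/2})^2$ with $\tilde\gamma=2\gamma\in R^+$, and $\sinh_{\gamma/2}^2=\tfrac14(e^{\tilde\gamma/2}-e^{-\tilde\gamma/2})^2$ with $\tilde\gamma=\gamma\in R^+$. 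Symmetry of the summand under $\gamma\leftrightarrow-\gamma$ contributes a factor of $2$ on combining positive and negative long roots; together with $B^*(2\epsilon_i,2\epsilon_i)=4B^*(\epsilon_i,\epsilon_i)$ and the values of $B^*(\epsilon_i,\epsilon_i)$ computed from the explicit elements $C_{\epsilon_i}$ tabulated in Lemma~\ref{sec:lem-lorentzian-cs}, this determines $l_{2\epsilon_i}^2$ from $K_{2\epsilon_i}$ and $l_{\epsilon_i}^2$ from $L_{\epsilon_i}$ and gives the asserted values. The middle roots $\epsilon_1\pm\epsilon_2$ receive $l^2=0$ because all short-root $A_\gamma$'s vanish on scalars.

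The main obstacle is the sign bookkeeping required to show that, although $K_{2\gamma}$ and $L_\gamma$ manifestly depend on $I$, the final $l_\gamma^2$ do not. Concretely, in the exceptional cases the sign flip in $\pi(1\otimes\phi_I)(A_\gamma)$ swaps $(\alpha+\beta)^2\leftrightarrow(\alpha-\beta)^2$ in $K_{2\gamma}$ and the sign in $L_\gamma$; this needs to be checked to be compensated by the corresponding $I$-dependent changes in either the normalisations of $\epsilon_1,\epsilon_2$ on $\mathfrak{c}_I^*$ (hence in $B^*(\epsilon_i,\epsilon_i)$) or in the parametrisation signs $x^{(\epsilon_1\pm\epsilon_2)/2}=\pm\exp((\chi_1\pm\chi_2)/2)$. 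A secondary, essentially mechanical check is that those overall signs disappear inside $\sinh^2$ and in the product-formula reading of $\sinh^2_{\gamma/2}$ given after Theorem~\ref{sec:thm-casimir-decomposition}, so that $\sinh_{\epsilon_i}^2$ and $\sinh_{\epsilon_i/2}^2$ coincide with $\sinh^2(\chi_i)$ and $\sinh^2(\chi_i/2)$ uniformly in $I$.
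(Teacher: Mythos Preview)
Your outline is correct for the exceptional cases $I\in\{\{2\},\{0,2\}\}$, but has a genuine gap for all the remaining $I$. The issue is your final ``mechanical'' claim that $\sinh_{\epsilon_i/2}^2$ coincides with $\sinh^2(\chi_i/2)$ uniformly in $I$. This is false: by Corollary~\ref{sec:cor-correspondence-x-powers-e-function} one has $x^{\epsilon_i}=-\exp(\chi_i)$ in the non-exceptional cases, and the minus sign does \emph{not} disappear in the product defining $\sinh_{\epsilon_i/2}^2$; instead
\[
\sinh_{\epsilon_i/2}^2(x)\;=\;x^{\epsilon_i}+x^{-\epsilon_i}-2\;=\;-\exp(\chi_i)-\exp(-\chi_i)-2\;\propto\;-\cosh^2\!\big(\tfrac{\chi_i}{2}\big),
\]
so the $L_{\epsilon_i}$-term produces a $\sech^2(\chi_i/2)$ potential, not a $\csch^2(\chi_i/2)$ one. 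Your proposed compensation mechanism via $B^*(\epsilon_i,\epsilon_i)$ does not apply either: the pairing $B^*$ is $I$-independent in the relevant sense, and nothing in the parametrisation signs turns that $\sech^2$ back into a $\csch^2$.

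What actually makes the statement uniform is a hyperbolic identity that trades the $(\alpha+\beta)^2$ in $K_{2\epsilon_i}$ for $(\alpha-\beta)^2$ at the cost of moving the $\sech^2$ term to a $\csch^2$ term. Concretely, using $\csch^2(2x)=\tfrac14\bigl(\csch^2(x)-\sech^2(x)\bigr)$ one checks
\[
-\frac{(\alpha+\beta)^2}{4\sinh^2(\chi_i)}-\frac{\alpha\beta}{4\cosh^2(\chi_i/2)}
\;=\;
-\frac{(\alpha-\beta)^2}{4\sinh^2(\chi_i)}-\frac{\alpha\beta}{4\sinh^2(\chi_i/2)},
\]
after which the potential matches the stated $l_\gamma^2$ for all $I$. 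This step (and not an $I$-dependent rescaling of $B^*$) is the missing idea in your argument; the paper's proof proceeds exactly this way.
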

\begin{proof}
    By Corollary~\ref{sec:cor-correspondence-x-powers-e-function} and
    previous observations as to what $\pi(1\otimes\phi_I)(A_\gamma)$ is for the scalar representation, this is
    already the case for $I=\{2\}$ and $\{0,2\}$. For the others we have
    \[
        \sinh_{\frac{\epsilon_i}{2}}^2(x) = 
        x^\epsilon_i + x^{-\epsilon_i} - 2
        = - \exp(\chi_i) - \exp(-\chi_i) - 2
        = -\cosh(\frac{\chi_i}{2})^2,
    \]
    so that the constant term reads
    \[
        -\frac{(\alpha+\beta)^2}{4\sinh[2](\chi_1)}
        - \frac{\alpha\beta}{4\cosh[2](\frac{\chi_1}{2})}
        + 1\leftrightarrow 2.
    \]
    Note that $\csch[2](2x) = \frac{\csch[2](x)-\sech[2](x)}{4}$, so that we have
    \begin{align*}
        -\frac{(\alpha+\beta)^2}{4\sinh[2](\chi_i)}
        - \frac{\alpha\beta}{4\cosh[2](\frac{\chi_i}{2})}
        &= -\frac{(\alpha-\beta)^2}{4\sinh[2](\chi_i)}
        - \alpha\beta \csch[2](\chi_i)
        - \frac{\alpha\beta}{4\cosh[2](\frac{\chi_i}{2})}\\
        &= -\frac{(\alpha-\beta)^2}{4\sinh[2](\chi_i)}
        - \frac{\alpha\beta}{4\sinh[2](\frac{\chi_i}{2})}.
    \end{align*}
    Consequently, the claim also holds for the other
    choices of $I$.
\end{proof}

By \cite[Corollary~2.1.2]{heckmanSchlichtkrull}, we can absorb
these last terms into $L(k)$ by conjugating as follows:
\[
    L(k) + \sum_{\alpha\in R^+} \frac{l_\gamma^2 B^*(\gamma,\gamma)}{\qty(e^{\gamma/2}-e^{-\gamma/2})^2} = \delta (L(m) + \norm{\rho(m)}^2
    - \norm{\rho(k)}^2)\delta^{-1}
\]
where
\begin{align*}
    m_{\pm\epsilon_1} = m_{\pm\epsilon_2} &= \alpha\\
    m_{\pm(\epsilon_1+\epsilon_2)} = m_{\pm(\epsilon_1-\epsilon_2)}
    &= \frac{d-2}{2}\\
    m_{\pm2\epsilon_1} = m_{\pm2\epsilon_2} &=
    \frac{1-\alpha+\beta}{2},
\end{align*}
where
\[
    \norm{\rho(k)}^2 = \frac{d^2-2d+2}{4},\qquad
    \norm{\rho(m)}^2 = \frac{(d+\beta-1)^2+(d+1)^2}{4},
\]
and where
\[
    \delta = \prod_{\gamma\in R^+} \qty(e^{\gamma/2}-e^{-\gamma/2})^{(m-k)_\gamma},
\]
which up to constants and in the parametrisation of Lemma~\ref{sec:lem-cartan-chi} equals
\[
    \cosh[\frac{-\alpha+\beta}{2}](\frac{\chi_1}{2})
    \cosh[\frac{-\alpha+\beta}{2}](\frac{\chi_2}{2})
    \sinh[\frac{\alpha+\beta}{2}](\frac{\chi_1}{2})
    \sinh[\frac{\alpha+\beta}{2}](\frac{\chi_2}{2}).
\]
In conclusion, we obtain
\begin{equation}
    \Psi_I(\Omega_{\mathfrak{g}}\cdot f)=
    \Psi_I(f\cdot\Omega_{\mathfrak{g}})
    = \delta \qty(L(m) + \frac{\beta(\beta+d)}{2})\delta^{-1}
    \Psi_I(f),
\end{equation}
which is exactly the observation in \cite{superintegrability},
thus explaining the appearance of a $BC_2$ root system
and multiplicity vectors that contain both information on multiplicities and
on the $H$-bimodule $W$. Notice that the classical theory of spherical functions for a non-compact Riemannian symmetric space from \cite[Chapter~5]{heckmanSchlichtkrull} would have led one to expect a $C_2=B_2$ root system
with only multiplicity information.

\subsection{Euclidean Spinor Case}
For the simplest vector case we consider the case $p=3,q=0$, where
$V_2,V_3$ are scalar representations and $V_1,V_4$ are the spin-$\frac{1}{2}$ representation of $\mathfrak{m}=\mathfrak{so}(3)$
(note that strictly speaking, this does not lift to a representation
of $M$), given by
\[
    F_{1,2} \cong \frac{i}{2}\mqty(0 & 1\\1 & 0),\qquad
    F_{1,3} \cong \frac{1}{2}\mqty(0 & 1\\-1 & 0),\qquad
    F_{2,3} \cong \frac{i}{2}\mqty(1 & 0\\0 & -1).
\]
This allows us to eventually describe spherical functions on $\operatorname{Spin}(4,1)$ without having to work on that group.

By picking our Weyl group generator $w$ as the diagonal matrix
$\operatorname{diag}(-1,1,-1,1,1)$, we can ensure that
the $\mathfrak{h}$-bimodule $W=V_1\otimes\cdots\otimes V_4$ from
Theorem~\ref{sec:thm-injection-msf} becomes isomorphic to the
$\mathfrak{m}$-bimodule $\End(V)$ (with $V$ being the spin-$\frac{1}{2}$ representation), where the generator $F_{0,4}$
of the scaling group acts as
$\Delta_1-\Delta_2=: -2\alpha$ from the left and
$\Delta_4-\Delta_3=: -2\beta$ from the right.

Let us for simplicity choose $\mathfrak{c}$ to be generated by $F_{0,2}$ and
$F_{3,4}$, instead of $F_{0,1},F_{3,4}$ (in other words, we are exchanging $1$ and $2$). From Corollary~\ref{sec:prop-euclidean-as} we then have 
\begin{align*}
    A_{\frac{\epsilon_1+\epsilon_2}{2}} &= F_{1,2}\otimes F_{1,2}\\
    A_{\frac{\epsilon_1-\epsilon_2}{2}} &= F_{1,3}\otimes F_{1,3}\\
    A_{\epsilon_{1/2}} &= -\frac{1}{2}(F_{0,4}\mp F_{2,3})\otimes(F_{0,4}\mp i F_{2,3}).
\end{align*}
We see that $A_{\epsilon_{1/2}}$ acts by means of diagonal matrices,
which simplifies computations.

\begin{proposition}
    In the above setting we have
    \begin{alignat*}{2}
        K_{\epsilon_1+\epsilon_2} &=
        -\frac{1}{8}\mqty(1 & 0 & 0 & 1\\0 & 1 & 1 & 0\\0 & 1& 1& 0\\1 & 0 & 0 & 1)\qquad & K_{\epsilon_1-\epsilon_2} &=
        -\frac{1}{8}\mqty(1 & 0 & 0 & 1\\0 & 1 & -1 & 0\\0 & -1 & 1& 0\\1 & 0 & 0 & 1)\\
        K_{2\epsilon_{1/2}} &= -\frac{1}{8}
        \operatorname{diag}\mqty((2\alpha+2\beta\pm1)^2\\(2\alpha+2\beta)^2\\(2\alpha+2\beta)^2\\(2\alpha+2\beta\mp1)^2)\\
        L_{\frac{\epsilon_1+\epsilon_2}{2}} &=
        \frac{1}{16}\mqty(0 & 0 & 0 & 1\\0 & 0 & 1 & 0\\0 & 1 & 0 & 0\\1 & 0 & 0 & 0) &
        L_{\frac{\epsilon_1-\epsilon_2}{2}} &=
        \frac{1}{16}\mqty(0 & 0 & 0 & 1\\0 & 0 & -1 & 0\\0 & -1 & 0 & 0\\1 & 0 & 0 & 0)\\
        L_{\epsilon_{1/2}} &=
        \frac{1}{8}\operatorname{diag}\mqty(\qty(2\alpha\pm\frac{1}{2})\qty(2\beta\pm\frac{1}{2})\\
        \qty(2\alpha\pm\frac{1}{2})\qty(2\beta\mp\frac{1}{2})\\
        \qty(2\alpha\mp\frac{1}{2})\qty(2\beta\pm\frac{1}{2})\\
        \qty(2\alpha\mp\frac{1}{2})\qty(2\beta\mp\frac{1}{2})).
    \end{alignat*}
\end{proposition}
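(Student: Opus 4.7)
The plan is to exploit an explicit identification of $W$ with $\End(V)$ and to compute each pair $(K_{2\gamma},L_\gamma)$ by direct substitution into the definitions~\eqref{eq:matrix-radial-part-to-match}, using the $A_\gamma$'s listed above.

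First, I would fix the bimodule structure on $\End(V)$. Since $V_2$ and $V_3$ are trivial, $W = V_1 \otimes V_2 \otimes V_3 \otimes V_4$ reduces to $V_1 \otimes V_4 = V \otimes V$; identify this with $\End(V)$ via the outer product $v \otimes u \mapsto v u^T$. Under this identification and the bimodule formulas of Theorem~\ref{sec:thm-injection-msf}, $\pi_\Le(X)$ becomes left multiplication by $\rho(X)$ while $\pi_\Ri(X)$ becomes right multiplication by $\rho(-w X w^{-1})^T$. With the chosen $w = \operatorname{diag}(-1,1,-1,1,1)$ one checks $wF_{1,2}w^{-1}=-F_{1,2}$, $wF_{1,3}w^{-1}=F_{1,3}$, $wF_{2,3}w^{-1}=-F_{2,3}$, $wF_{0,4}w^{-1}=-F_{0,4}$; combined with the symmetry of $\rho(F_{1,2})=\tfrac{i}{2}\sigma_x$ and $\rho(F_{2,3})=\tfrac{i}{2}\sigma_z$ and the antisymmetry of $\rho(F_{1,3})$, this gives $\pi_\Ri(X) = $ right multiplication by $\rho(X)$ for each of $F_{1,2}, F_{1,3}, F_{2,3}$, and $\pi_\Ri(F_{0,4}) = -2\beta\,I$.

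Second, for each of the six root directions I would plug the corresponding $A_\gamma$ into the definitions. In the Euclidean case $\operatorname{Ad}(t_I)=\mathrm{id}$ and $\phi_I=\mathrm{id}$, so
\[
K_{2\gamma}=\tfrac14\bigl(\pi_\Le(m(A_\gamma))+\pi_\Ri(m(A_\gamma))+2\pi(A_\gamma)\bigr), \qquad
L_\gamma=-\tfrac14\,\pi(A_\gamma).
\]
For the short roots $\tfrac12(\epsilon_1\pm\epsilon_2)$, $A = F_{1,j}\otimes F_{1,j}$ with $j\in\{2,3\}$; here $m(A)=F_{1,j}^2=-\tfrac14 I$ is a scalar, contributing $-\tfrac14 I$ to both $\pi_\Le(m(A))$ and $\pi_\Ri(m(A))$, while $\pi(A)$ is the conjugation-type operator $X \mapsto \rho(F_{1,j})\,X\,\rho(F_{1,j})$. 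Writing this out in the ordered basis $(E_{11},E_{12},E_{21},E_{22})$ of $\End(V)$ reproduces the two displayed $4\times 4$ matrices: the difference between $K_{\epsilon_1+\epsilon_2}$ and $K_{\epsilon_1-\epsilon_2}$ (and the analogous difference between the two $L$'s) arises precisely from the symmetry of $\rho(F_{1,2})$ versus the antisymmetry of $\rho(F_{1,3})$. For the long roots, $A_{\epsilon_{1/2}}$ is a quadratic combination of the commuting diagonal operators $F_{0,4}$ (scalar $-2\alpha$ on the left, $-2\beta$ on the right) and $F_{2,3} = \tfrac{i}{2}\sigma_z$; all relevant operators become simultaneously diagonal in the chosen basis, so expanding yields diagonal matrices $K_{2\epsilon_{1/2}}$ and $L_{\epsilon_{1/2}}$ whose entries are polynomials in $-2\alpha\pm\tfrac12$ and $-2\beta\pm\tfrac12$, matching the displayed formulas once the sign convention relating the $\mp$ inside $A_{\epsilon_{1/2}}$ to the $\pm$ in the stated matrices is fixed.

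The main obstacle is purely bookkeeping: carefully verifying that the identification $V_1\otimes V_4\cong\End(V)$ via the outer product converts the bimodule right action into right multiplication by $\rho(-wXw^{-1})^T$, and correctly tracking how the $w$-twist interacts with the symmetry type of the spin matrices to produce the two distinct off-diagonal patterns. Once those conventions are nailed down, every remaining matrix entry is a mechanical calculation with $2\times 2$ matrices and the natural Kronecker-like action of left/right multiplication on $\End(V)$.
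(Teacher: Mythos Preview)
Your proposal is correct and follows exactly the route the paper intends: the proposition is stated in the paper without proof because it is a direct computation from the definitions of $K_{2\gamma},L_\gamma$ in \eqref{eq:matrix-radial-part-to-match}, the explicit $A_\gamma$'s displayed just before the proposition, and the bimodule structure on $\End(V)$ set up in the preceding paragraph. Your bookkeeping of the $w$-twist and the symmetry/antisymmetry of the spin matrices is accurate and is precisely what makes the computation go through.
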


This expression can indeed be matched with the differential operator
obtained in \cite[\S4.2]{BSI}. In order to do this, we need to 
apply the gauge transformation sketched from \cite[Equation~4.18]{BSI}, which corresponds to conjugation with the matrix
\[
    \frac{1}{\sqrt{2}}\mqty(1 & 0 & 0 & 1\\0 & 1 & 1 & 0\\0 & -1 & 1& 0\\-1 & 0 & 0 & 1).
\] 

\begin{proposition}
    The gauge transformation produces the following matrices:
    \begin{align*}
        \widetilde{K}_{\epsilon_1+\epsilon_2} &=
        -\frac{1}{4}\operatorname{diag}\mqty(1\\1\\0\\0),\qquad\widetilde{K}_{\epsilon_1-\epsilon_2} =
        -\frac{1}{4}\operatorname{diag}\mqty(1\\0\\1\\0),\\
        \widetilde{K}_{2\epsilon_{1/2}} &= -\frac{1}{2}
        \mqty((\alpha+\beta)^2+\frac{1}{4} & 0 & 0 & \mp(\alpha+\beta)\\
        0 & (\alpha+\beta)^2 & 0 & 0 \\
        0 & 0 & (\alpha+\beta)^2 & 0\\
        \mp(\alpha+\beta)& 0 & 0 & (\alpha+\beta)^2+\frac{1}{4}),\\
        \widetilde{L}_{\frac{\epsilon_1+\epsilon_2}{2}} &=
        \frac{1}{16}\operatorname{diag}\mqty(1\\1\\-1\\-1),
        \qquad
        \widetilde{L}_{\frac{\epsilon_1-\epsilon_2}{2}} =
        \frac{1}{16}\operatorname{diag}\mqty(1\\-1\\1\\-1),\\
        \widetilde{L}_{\epsilon_{1/2}} &=
        \frac{1}{8}\mqty(4\alpha\beta + \frac{1}{4} & 0 & 0
        & \mp(\alpha+\beta)\\
        0 & 4\alpha\beta - \frac{1}{4} & \pm(\alpha-\beta) & 0\\
        0 & \pm(\alpha-\beta) & 4\alpha\beta - \frac{1}{4} & 0\\
        \mp(\alpha+\beta) & 0 & 0 & 4\alpha\beta + \frac{1}{4}).
    \end{align*}
\end{proposition}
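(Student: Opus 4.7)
The plan is to carry out the gauge transformation directly: since the conjugating matrix
\[
U = \tfrac{1}{\sqrt{2}}\mqty(1 & 0 & 0 & 1\\0 & 1 & 1 & 0\\0 & -1 & 1& 0\\-1 & 0 & 0 & 1)
\]
is orthogonal (rows are pairwise orthogonal and of unit length), we have $U^{-1} = U^{T}$, so the gauge-transformed matrices are simply $\widetilde{K}_{\gamma} = U K_{\gamma} U^{T}$ and $\widetilde{L}_{\gamma} = U L_{\gamma} U^{T}$. The task is therefore to evaluate seven similarity transforms and match them with the claimed matrices. The content of the statement is that $U$ is chosen precisely to simultaneously simplify all of the matrices coming out of Proposition~\ref{sec:prop-aalpha} in the spinor case.

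First I would handle the two short-root coefficients $K_{\epsilon_{1}\pm\epsilon_{2}}$. Each is of rank two and can be written as $-\tfrac{1}{8}(v_{+}v_{+}^{T} + v_{-}v_{-}^{T})$ where $v_{\pm} = e_{1}\pm e_{4}$ (for $\epsilon_{1}+\epsilon_{2}$, read off from the anti-diagonal pattern) and $v_{\pm}' = e_{2}\pm e_{3}$. The matrix $U$ is designed so that $U v_{\pm}$ is a standard basis vector up to $\sqrt{2}$; concretely, $U(e_{1}+e_{4}) = \sqrt{2}\,e_{1}$, $U(e_{2}+e_{3})=\sqrt{2}\,e_{2}$, $U(e_{2}-e_{3})=\sqrt{2}\,e_{3}$, $U(e_{1}-e_{4})=\sqrt{2}\,e_{4}$. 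Substituting yields the diagonal idempotents $\widetilde{K}_{\epsilon_{1}\pm\epsilon_{2}} = -\tfrac{1}{4}\operatorname{diag}(1,1,0,0)$ and $-\tfrac{1}{4}\operatorname{diag}(1,0,1,0)$, respectively. The same observation handles $\widetilde{L}_{\tfrac{\epsilon_{1}\pm\epsilon_{2}}{2}}$: these matrices differ only by the relative sign between $v_{+}v_{+}^{T}$ and $v_{-}v_{-}^{T}$, and after conjugation by $U$ the sign pattern becomes $\operatorname{diag}(1,1,-1,-1)$ or $\operatorname{diag}(1,-1,1,-1)$, producing the claimed expressions.

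For the long-root matrices $K_{2\epsilon_{1/2}}$ and $L_{\epsilon_{1/2}}$, I would split each diagonal matrix $D=\operatorname{diag}(d_{1},d_{2},d_{3},d_{4})$ into its trace part $\tfrac{d_{1}+d_{4}}{2}(e_{1}e_{1}^{T}+e_{4}e_{4}^{T}) + \tfrac{d_{2}+d_{3}}{2}(e_{2}e_{2}^{T}+e_{3}e_{3}^{T})$ (which, being a linear combination of $U$-invariant projectors onto the (1,4) and (2,3) planes, is preserved under conjugation) and its traceless part $\tfrac{d_{1}-d_{4}}{2}(e_{1}e_{1}^{T}-e_{4}e_{4}^{T}) + \tfrac{d_{2}-d_{3}}{2}(e_{2}e_{2}^{T}-e_{3}e_{3}^{T})$ (which rotates by $\pi/2$ in each plane under conjugation by $U$, producing off-diagonal entries $\mp(e_{1}e_{4}^{T}+e_{4}e_{1}^{T})$ and $\pm(e_{2}e_{3}^{T}+e_{3}e_{2}^{T})$). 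Expanding $(2\alpha+2\beta\pm 1)^{2}, (2\alpha+2\beta)^{2}$ and the products $\bigl(2\alpha\pm\tfrac{1}{2}\bigr)\bigl(2\beta\pm\tfrac{1}{2}\bigr)$ using this split, the trace parts give $-\tfrac{1}{2}(\alpha+\beta)^{2}I$ plus an additional $-\tfrac{1}{4}$ on the $(1,4)$-block for $\widetilde{K}_{2\epsilon_{1/2}}$, while the traceless parts contribute the $\mp(\alpha+\beta)$ off-diagonal entries; analogously for $\widetilde{L}_{\epsilon_{1/2}}$, giving the constants $4\alpha\beta\pm\tfrac{1}{4}$ on the diagonal and the claimed $\mp(\alpha+\beta), \pm(\alpha-\beta)$ off-diagonal terms.

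The computation is entirely mechanical, and the main thing to be careful about is bookkeeping of the $\pm$ signs, which differ between the two matrices in each $\pm\epsilon_{i}$ pair and between the two $K$ and $L$ families. In particular, the opposite sign patterns in the traceless parts of $K_{2\epsilon_{1}}$ versus $K_{2\epsilon_{2}}$ (and the same for $L$) translate under conjugation into the fact that for $i=1$ the off-diagonal entries live in positions $(1,4),(4,1)$ while for $i=2$ they live in $(2,3),(3,2)$, exactly matching the block structure in the proposition's statement. Once the sign accounting is done consistently in one case, the others follow by the obvious $\epsilon_{1}\leftrightarrow\epsilon_{2}$ symmetry, completing the verification.
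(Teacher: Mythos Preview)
Your approach is correct and is exactly the direct computation the paper has in mind (the paper itself states the proposition without proof, treating it as routine linear algebra). Your decomposition of each $4\times 4$ matrix into its $(1,4)$- and $(2,3)$-block trace and traceless parts, together with the observation that $U$ sends $e_1\pm e_4$ and $e_2\pm e_3$ to scalar multiples of standard basis vectors, is precisely the efficient way to carry this out.

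One small correction to your final paragraph: the positions of the off-diagonal entries do \emph{not} differ between $i=1$ and $i=2$. For both $i$, the matrices $\widetilde{K}_{2\epsilon_i}$ have off-diagonals only in the $(1,4)$ block (since $d_2=d_3=(2\alpha+2\beta)^2$ in $K_{2\epsilon_i}$, there is no traceless $(2,3)$ part), and the $\widetilde{L}_{\epsilon_i}$ have off-diagonals in \emph{both} the $(1,4)$ and the $(2,3)$ blocks. What distinguishes $i=1$ from $i=2$ is only the \emph{sign} of each off-diagonal entry, which is exactly what the $\mp$ and $\pm$ symbols in the proposition record. This slip does not affect the validity of the computation itself.
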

Defining
\[
    V^{PT}_{(\alpha,\beta)}(\gamma) :=
    \frac{(\alpha+\beta)^2+\frac{1}{4}}{\sinh^2_\gamma}
    - \frac{\alpha\beta}{\sinh^2_{\gamma/2}},
\]
we discover that $R(\Omega_{\mathfrak{g}})$'s 0th term has a diagonal
term that amounts to
\[
-V^{PT}_{(\alpha,\beta)}(\epsilon_1) - V^{PT}_{(\alpha,\beta)}(\epsilon_2).
\]
The remaining terms on the diagonal read
\[
\frac{1}{16}\mqty(\csch^2_{\frac{\epsilon_1}{2}} + \csch^2_{\frac{\epsilon_2}{2}} + 2\sech^2_{\frac{\epsilon_1+\epsilon_2}{4}}
+ 2\sech^2_{\frac{\epsilon_1-\epsilon_2}{4}}\\
-\sech^2_{\frac{\epsilon_1}{2}}
-\sech^2_{\frac{\epsilon_2}{2}}
+ 2\sech^2_{\frac{\epsilon_1+\epsilon_2}{4}}
- 2\csch^2_{\frac{\epsilon_1-\epsilon_2}{4}}\\
-\sech^2_{\frac{\epsilon_1}{2}}
-\sech^2_{\frac{\epsilon_2}{2}}
- 2\csch^2_{\frac{\epsilon_1+\epsilon_2}{4}}
+ 2\sech^2_{\frac{\epsilon_1-\epsilon_2}{4}}\\\
\csch^2_{\frac{\epsilon_1}{2}}
+ \csch^2_{\frac{\epsilon_2}{2}}
- 2\csch^2_{\frac{\epsilon_1+\epsilon_2}{4}}
- 2\csch^2_{\frac{\epsilon_1-\epsilon-2}{4}}),
\]
and the off-diagonal ones (top to bottom):
\[
    \mqty(\frac{\alpha+\beta}{4}\qty(-\sech^2_{\frac{\epsilon_1}{2}}
    + \sech^2_{\frac{\epsilon_2}{2}})\\
    \frac{\alpha-\beta}{4}\qty(\csch^2_{\frac{\epsilon_1}{2}}-\csch^2_{\frac{\epsilon_2}{2}})\\
    \frac{\alpha-\beta}{4}\qty(\csch^2_{\frac{\epsilon_1}{2}}-\csch^2_{\frac{\epsilon_2}{2}})\\
    \frac{\alpha+\beta}{4}\qty(-\sech^2_{\frac{\epsilon_1}{2}}
    + \sech^2_{\frac{\epsilon_2}{2}})).
\]
Up to the introduction of coordinates $u_1,u_2$ that associates concrete
hypergeometric functions with our $\sinh_\gamma,\cosh_\gamma$ as follows:
\begin{align*}
    f_{\frac{\epsilon_i}{2}} &\mapsto f\qty(\frac{u_i}{2})\qquad (i=1,2)\\
    f_{\frac{u_1\pm u_2}{4}} &\mapsto f\qty(\frac{u_1\mp u_2}{2}),
\end{align*}
we thus obtain as 0th-order term almost the (negative of the) potential described in
\cite[Equations~4.16--17]{BSI}. The difference is a scalar function (i.e. identity matrix)
\[
    \frac{\csch^2_{\epsilon_1} + \csch^2_{\epsilon_2}}{4}
    + \frac{\csch^2_{\frac{\epsilon_1+\epsilon_2}{2}}
    + \csch^2_{\frac{\epsilon_1-\epsilon_2}{2}}}{8},
\]
which we, in line with the original reference \cite{BSI}, get from turning $L(k)$ into Hamiltonian form. In conclusion, we arrive exactly at the
operator that was obtained in \cite[Section~4.2]{BSI}.

\section{Blocks of Conformal Defects}\label{sec:defect}
As is detailed in \cite[Chapter~3]{defect}, (scalar) conformal blocks for two defects of
dimension $p$ in $d$ Euclidean spacetime dimensions can be described as
zonal spherical functions for the pair $(G,H):=(SO(d+1,1)_0, SO(d-p)\times SO(p+1,1)_0)$. This pair is symmetric: if we pick the involution $\sigma$ to be
conjugation with the diagonal matrix
\[
    \mqty(1_{d-p} & 0\\0 & -1_{p+2}),
\]
then $H=G^\sigma$. Furthermore, $(G,K,\theta,B)$ is a reductive Lie group,
where $K=SO(d+1)$, the involution $\theta$ consists in conjugation with
the diagonal matrix
\[
    \mqty(1_{d+1} & 0\\0 & -1),
\]
and $B$ is the trace form from the defining representation. Evidently,
$\theta$ and $\sigma$ commute and $B$ is $\sigma$-invariant. Consequently,
we can apply Matsuki's theory.

\begin{proposition}
    A fundamental Cartan subset $C$ is given by
    $C=\exp(\mathfrak{c})$ where
    \[
        \mathfrak{c} = \begin{cases}
            \operatorname{span}\{F_{i+1,d-i}\mid i=0,\dots,p\} \oplus\RR F_{0,d+1} & 2p < d-1\\
            \operatorname{span}\{F_{i,d-i}\mid i=0,\dots d-p-1\} & 2p \ge d-1\\
        \end{cases}.
    \]
    Here, $N$, the rank of $\mathfrak{c}$, is given by
    $\min(p+2,d-p)$.
\end{proposition}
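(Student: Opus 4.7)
The plan is to write $\mathfrak{k}^{-\sigma}$ and $\mathfrak{p}^{-\sigma}$ explicitly in the basis $(F_{\mu\nu})$, translate commutativity into a combinatorial statement about disjoint index pairs, and then verify directly that the proposed $\mathfrak{t}$ and $\mathfrak{a}$ are maximal commutative in $\mathfrak{k}^{-\sigma}$ and $\mathfrak{g}^{-\sigma}$ respectively.

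Since $\sigma$ acts by conjugation with $\operatorname{diag}(1_{d-p}, -1_{p+2})$, an element $F_{\mu\nu}$ is $\sigma$-antiinvariant iff exactly one of $\mu,\nu$ lies in the first block $A := \{0,\ldots,d-p-1\}$ and the other in the second block $B := \{d-p,\ldots,d+1\}$. Intersecting with the $\theta$-eigenspaces (where $\theta(F_{\mu\nu})$ picks up a sign iff exactly one index equals $d+1$) gives $\mathfrak{k}^{-\sigma} = \operatorname{span}\{F_{\mu\nu} : \mu\in A,\ \nu\in B\setminus\{d+1\}\}$ and $\mathfrak{p}^{-\sigma} = \operatorname{span}\{F_{\mu,d+1} : \mu\in A\}$. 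Using the commutator formula $[F_{\mu\nu},F_{\rho\sigma}] = \eta_{\nu\rho}F_{\mu\sigma}+\eta_{\mu\sigma}F_{\nu\rho}-\eta_{\mu\rho}F_{\nu\sigma}-\eta_{\nu\sigma}F_{\mu\rho}$ and the fact that all indices except $d+1$ are spacelike, two basis vectors of $\mathfrak{k}^{-\sigma}$ commute iff their index multisets are disjoint.

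Consequently, finding a maximal commutative subspace of $\mathfrak{k}^{-\sigma}$ spanned by basis elements is a maximum matching problem in the complete bipartite graph $K_{d-p,\,p+1}$, whose size equals $r := \min(d-p,\,p+1)$. Both forms of $\mathfrak{t}$ in the proposition achieve this matching: when $2p\ge d-1$ (so $r=d-p$) the pairs $\{i,d-i\}$ for $i=0,\ldots,d-p-1$ are pairwise disjoint since $i+j\le 2(d-p-1)<d$; when $2p<d-1$ (so $r=p+1$) the pairs $\{i+1,d-i\}$ for $i=0,\ldots,p$ are pairwise disjoint since $i+j\le 2p<d-1$. Maximality of $\mathfrak{t}$ in all of $\mathfrak{k}^{-\sigma}$ (not merely basis sums) is the technical heart of the argument: for an arbitrary $X=\sum c_{\mu\nu}F_{\mu\nu}\in\mathfrak{k}^{-\sigma}$ centralising $\mathfrak{t}$, one computes $[X,F]$ against each generator $F$ of $\mathfrak{t}$, notes that the resulting terms lie in $\mathfrak{k}^{\sigma}=\mathfrak{so}(d-p)\oplus\mathfrak{so}(p+1)$ in two linearly independent sets (those with both indices in $A$ and those with both indices in $B\setminus\{d+1\}$), and reads off that $c_{\mu\nu}=0$ unless $(\mu,\nu)$ is one of the matching pairs.

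For the extension $\mathfrak{c}=\mathfrak{t}\oplus\mathfrak{a}$ with $\mathfrak{a}\subseteq\mathfrak{p}^{-\sigma}$, the crucial identity is $[F_{\mu,d+1},F_{\mu',d+1}] = -\eta_{d+1,d+1}F_{\mu,\mu'} = F_{\mu,\mu'}$ for $\mu\ne\mu'$, which shows that any commutative subspace of $\mathfrak{p}^{-\sigma}$ is at most one-dimensional. In the case $2p\ge d-1$ every index of $A$ is already saturated by $\mathfrak{t}$, so no nonzero $F_{\mu,d+1}$ commutes with $\mathfrak{t}$ and $\mathfrak{a}=0$. In the case $2p<d-1$, the index $0\in A$ is unused, so $F_{0,d+1}$ does commute with $\mathfrak{t}$ and the above bound forces $\mathfrak{a}=\RR F_{0,d+1}$. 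A final centraliser check for mixed $X = X_{\mathfrak{k}}+X_{\mathfrak{p}}\in\mathfrak{g}^{-\sigma}$ decouples into the two preceding centraliser conditions by $\theta$-eigenspace considerations. Totalling dimensions gives $\dim\mathfrak{c}=d-p$ in the first case and $p+2$ in the second, i.e.\ $\min(p+2,d-p)$ in both, which is the claimed rank $N$.
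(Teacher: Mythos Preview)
Your proof is correct and follows essentially the same route as the paper's: both arguments identify $\mathfrak{k}^{-\sigma}$ and $\mathfrak{p}^{-\sigma}$ in the $F_{\mu\nu}$ basis, then verify maximality of $\mathfrak{t}$ by computing $[F,X]$ against each generator $F$ and reading off coefficient constraints, and finally use the fact that no two linearly independent elements of $\mathfrak{p}^{-\sigma}$ commute (your identity $[F_{\mu,d+1},F_{\mu',d+1}]=F_{\mu,\mu'}$) to pin down $\mathfrak{a}$.

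Your bipartite-matching framing in $K_{d-p,\,p+1}$ is a pleasant addition that the paper does not make explicit; it gives a clean heuristic for why $\dim\mathfrak{t}=\min(d-p,p+1)$ and for why the specific index pairings are natural. As you rightly note, though, this matching argument alone only bounds commutative subspaces spanned by basis vectors, and the actual maximality step still requires the direct commutator computation --- which is exactly what the paper does. One small point worth being more explicit about: in the case $2p<d-1$ the centraliser of $\mathfrak{t}$ in $\mathfrak{p}^{-\sigma}$ is larger than $\RR F_{0,d+1}$ (the indices $p+2,\dots,d-p-1$ of $A$ are also unmatched), so your sentence ``the index $0\in A$ is unused'' slightly understates what happens; the paper states the full centraliser and then invokes the one-dimensionality bound. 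Your $\theta$-decoupling remark at the end is the right way to close the maximality argument for $\mathfrak{c}$ in $\mathfrak{g}^{-\sigma}$, and the paper leaves this step implicit as well.
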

\begin{proof}
    For $p+1\ge d-p$, note that the claimed maximal commutative subalgebra $\mathfrak{c}=\mathfrak{t}$ indeed is a commutative subalgebra of $\mathfrak{k}^{-\sigma}$. The vector space $\mathfrak{k}^{-\sigma}$ is spanned by
    $F_{\mu,\nu}$ for $0\le\mu< d-p\le\nu\le d$. Let $X$ be an element of it,
    say
    \[
        X = \sum_{\mu=0}^{d-p-1} \sum_{\nu=d-p}^d a_{\mu,\nu}F_{\mu,\nu},
    \]
    we then have
    \begin{align*}
        \comm{F_{i,d-i}}{X} &= \sum_{\mu,\nu} a_{\mu,\nu}
        \qty(-\delta_{i,\mu}F_{d-\mu,\nu} + \delta_{d-i,\nu} F_{\mu,d-\nu})\\
        &= -\sum_{\nu\ne d-i} a_{i,\nu} F_{d-i,\nu}
        + \sum_{\mu\ne i} a_{\mu,d-i} F_{\mu,i}.
    \end{align*}
    This is zero iff (linear independence) $a_{i,\nu}=0$ ($\nu\ne d-i$)
    and $a_{\mu,d-i}=0$ ($\mu\ne i$). If we want this to hold true for all
    $i=0,\dots,d-p-1$, we obtain for each possible first index
    $\mu=0,\dots,d-p-1$ that $a_{\mu,\nu}$ is only allowed to be nonzero
    for $\mu+\nu=d$. In other words, $X\in\mathfrak{t}$. This shows that
    $\mathfrak{t}$ is indeed maximally commutative. Furthermore, by a similar argument there is no
    element in
    \[
        \mathfrak{p}^{-\sigma}=\operatorname{span}\{F_{i,d+1}\mid i=0,\dots,d-p-1\}
    \]
    that commutes with $\mathfrak{t}$, so that the algebra
    $\mathfrak{a}$ is trivial. We thus see that $\mathfrak{c}$ has
    rank $N = d-p$.

    For $p+1<d-p$, consider $\mathfrak{t}$, the compact part of the
    claimed algebra $\mathfrak{c}$. Let $X\in \mathfrak{k}^{-\sigma}$, say
    \[
        X = \sum_{\mu=0}^{d-p-1} \sum_{\nu=d-p}^d
        a_{\mu,\nu} F_{\mu,\nu}.
    \]
    Then
    \begin{align*}
        \comm{F_{i+1,d-i}}{X} &= \sum_{\mu,\nu} a_{\mu,\nu}
        \qty(-\delta_{i+1,\mu}F_{d+1-\mu,\nu} + \delta_{d-i,\nu} F_{\mu,d+1-\nu})\\
        &=- \sum_{\nu\ne d-i} a_{i+1,\nu} F_{d-i,\nu}
        + \sum_{\mu\ne i+1} a_{\mu,d-i} F_{\mu,i+1}.
    \end{align*}
    Due to linear independence, this is zero iff $a_{i+1,\nu}=0$ and
    $a_{\mu,d-i}=0$ (for $\nu\ne d-i, \mu\ne i+1$). Consequently, $X$ commutes
    with all of $\mathfrak{t}$ if these conditions hold for all
    $i=0,\dots,p$. This ensures that for all possible choices of the second index
    $\nu$, we can only have $a_{\mu,\nu}\ne0$ for $\mu+\nu=d+1$. Consequently, $X\in\mathfrak{t}$. The centraliser of $\mathfrak{t}$ in $\mathfrak{p}^{-\sigma}$ is spanned by $F_{0,d+1}$ and $F_{p+2,d+1},\dots,F_{d-p-1,d+1}$. Since no
    two linearly independent elements of this span commute, any maximal commutative
    subalgebra is therefore one-dimensional, and so we can choose $\RR F_{0,d+1}$
    as indicated. Furthermore, the rank of $\mathfrak{c}$ is
    $p+2$, which is $\le d-p$, hence $N=p+2$.
\end{proof}

Starting from this choice of fundamental Cartan subset, we need to figure out what are the other
standard Cartan subsets.
\begin{proposition}\label{sec:prop-defect-cartan-subsets}
    For $2p<d-1$, there are no other standard Cartan subsets. For
    $2p\ge d-1$, the remaining standard Cartan subsets are of the shape
    \[
        C_i := \exp(\mathfrak{c}_i),\qquad
        \mathfrak{c}_i:= \operatorname{span}\{F_{j,d-j}\mid 0\le j\le d-p-1,j\ne i\} \oplus \RR F_{i,d+1}
    \]
    ($i=0,\dots,d-p-1$) or $C'_i := C_i \exp(\pi F_{i,d-i})$.
\end{proposition}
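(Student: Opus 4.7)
The plan is to classify, up to the obvious absorption of $\exp(\mathfrak{c}')$-factors, every standard Cartan subset $C' = \exp(\mathfrak{c}')t$ compatible with the chosen fundamental $C$. Recall the conditions: $\mathfrak{c}' = \mathfrak{t}' \oplus \mathfrak{a}'$ with $\mathfrak{t}' \le \mathfrak{t}$, $\mathfrak{a} \le \mathfrak{a}' \subseteq \mathfrak{p}$, $\mathfrak{c}' \subseteq \mathfrak{g}^{-\sigma} \cap \Ad(t)(\mathfrak{g}^{-\sigma})$ commutative, and $\dim \mathfrak{c}' = N$. The key structural fact I would establish first is that $\mathfrak{p}^{-\sigma}$ admits no commutative subspace of dimension greater than one: it is spanned by $F_{\mu,d+1}$ for $0 \le \mu \le d-p-1$, and any two distinct generators satisfy $\comm{F_{\mu,d+1}}{F_{\nu,d+1}} = \eta_{d+1,d+1} F_{\mu,\nu} \neq 0$. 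Since $\mathfrak{a}' \subseteq \mathfrak{g}^{-\sigma}\cap\mathfrak{p} = \mathfrak{p}^{-\sigma}$, this forces $\dim \mathfrak{a}' \le 1$.

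For the case $2p < d-1$, combining $\dim \mathfrak{a} = 1$ with the bound above yields $\mathfrak{a}' = \mathfrak{a} = \RR F_{0,d+1}$, hence by dimension count $\mathfrak{t}' = \mathfrak{t}$, so $\mathfrak{c}' = \mathfrak{c}$. As $\Ad(t)$ is trivial on $\mathfrak{c}$ for $t \in T = \exp(\mathfrak{t})$, the element $t$ is absorbed into $\exp(\mathfrak{c}')$, giving $C' = C$. For the case $2p \ge d-1$, we have $\mathfrak{a} = 0$; either $\mathfrak{a}' = 0$ (recovering $C$) or $\dim \mathfrak{a}' = 1$, forcing $\dim \mathfrak{t}' = N-1$, so exactly one compact generator $F_{i,d-i}$ is removed. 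A direct centraliser computation shows that among the $F_{\mu,d+1}$, those commuting with the remaining generators $\{F_{j,d-j} : j \neq i\}$ are precisely those with $\mu = i$, since the indices $\{j, d-j\}$ for $j \neq i$ exhaust $\{0,\dots,d-p-1\}\setminus\{i\}$. Hence $\mathfrak{a}' = \RR F_{i,d+1}$ and $\mathfrak{c}' = \mathfrak{c}_i$.

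It remains to determine the allowed $t \in T$. Writing $t = \exp\bigl(\sum_j a_j F_{j,d-j}\bigr)$, only the $a_i$-component has a nontrivial effect on $F_{i,d+1}$, because the other $F_{j,d-j}$ already commute with it. A short computation using $\comm{F_{i,d-i}}{F_{i,d+1}} = -F_{d-i,d+1}$ and $\comm{F_{i,d-i}}{F_{d-i,d+1}} = F_{i,d+1}$ shows that $\ad(F_{i,d-i})$ acts as a rotation on the plane $\operatorname{span}\{F_{i,d+1}, F_{d-i,d+1}\}$, and that $F_{d-i,d+1}$ is $\sigma$-invariant (since $d-i \ge p+1 \ge d-p$). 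Requiring $\Ad(t^{-1})(F_{i,d+1}) \in \mathfrak{g}^{-\sigma}$ therefore forces the rotation angle $a_i$ into $\pi\ZZ$, while the remaining components $a_j$ (for $j \neq i$) can be absorbed into $\exp(\mathfrak{c}_i)$. Modulo $2\pi$ this leaves precisely $t = 1$ or $t = \exp(\pi F_{i,d-i})$, yielding the representatives $C_i$ and $C_i'$.

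The main technical obstacle will be ensuring the case analysis is exhaustive, particularly the ``no compensation'' argument: one might wonder whether reducing $\mathfrak{t}'$ by more than one generator could be compensated by a higher-dimensional $\mathfrak{a}'$. This is precisely blocked by the $1$-dimensionality bound on commutative subspaces of $\mathfrak{p}^{-\sigma}$, which is why I would place that lemma up front. Everything else then reduces to bookkeeping with the commutation relations of the $F_{\mu\nu}$ and the explicit characterisation of $\mathfrak{g}^{-\sigma}$, $\mathfrak{p}^{-\sigma}$, and $\mathfrak{k}^{-\sigma}$ from the earlier decomposition.
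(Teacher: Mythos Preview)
Your approach is essentially the same as the paper's: both hinge on the observation that $\mathfrak{p}^{-\sigma}$ contains no commutative subspace of dimension greater than one, which immediately settles the case $2p<d-1$ and forces $\dim\mathfrak{a}'\le 1$ otherwise. The paper organises the second case slightly differently---it imposes $\Ad(t^{-1})(X)\in\mathfrak{g}^{-\sigma}$ for all $t$ in the compact coset $T'=\exp(\mathfrak{t}')t_0$ and reads off simultaneously that $X\in\RR F_{i,d+1}$ and that $\phi_i\in\pi\ZZ$---whereas you use commutativity of $\mathfrak{c}'$ first and then treat $t$ separately. Both routes are fine.

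There is, however, one small gap in your write-up. From $\dim\mathfrak{t}'=N-1$ you conclude ``exactly one compact generator $F_{i,d-i}$ is removed'', and only afterwards compute which $F_{\mu,d+1}$ commutes with the remaining generators. But a codimension-one subspace of $\mathfrak{t}$ need not a priori be spanned by $N-1$ of the $F_{j,d-j}$. The correct order is the reverse: write $X=\sum_j a_j F_{j,d+1}$, compute $[\,\sum_j b_j F_{j,d-j},X]=-\sum_j a_j b_j F_{d-j,d+1}$, and observe that the centraliser $Z_{\mathfrak{t}}(X)=\operatorname{span}\{F_{j,d-j}:a_j=0\}$ has dimension $N$ minus the number of nonzero $a_j$. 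For $\mathfrak{t}'\subseteq Z_{\mathfrak{t}}(X)$ to have dimension $N-1$ you need exactly one $a_i\ne 0$, and then $\mathfrak{t}'$ is forced to be $\operatorname{span}\{F_{j,d-j}:j\ne i\}$. With this reordering your argument is complete; the determination of $t$ is already correct as you have it.
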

\begin{proof}
    Since no two linearly independent elements of $\mathfrak{p}^{-\sigma}$ commute, any commutative subalgebra $\mathfrak{a}$ is at most one-dimensional. Consequently, there are no other choices of Cartan subset for
    $2p<d-1$.

    For $2p\ge d-1$, let $C'=\exp(\mathfrak{a}')T'$ be a standard Cartan subset different from the fundamental Cartan subset $C$. Then $\mathfrak{a}$ is one-dimensional, say spanned by
    \[
        X = \sum_{i=0}^{d-p-1} a_i F_{i,d+1}\in\mathfrak{p}^{-\sigma}.
    \]
    For any $t\in T'$ we then have $\Ad(t^{-1})(X)\in\mathfrak{g}^{-\sigma}$
    as well. If we write
    \[
        t = \exp(\sum_{i=0}^{d-p-1} \phi_i F_{i,d-i}),
    \]
    we have
    \begin{align*}
        \Ad(t^{-1})(X) &=
        \sum_{i=0}^{d-p-1} a_i\qty(\cos(\phi_i) F_{i,d+1} + \sin(\phi_i)
        F_{d-i,d+1}).
    \end{align*}
    This lies in $\mathfrak{g}^{-\sigma}$ iff $a_i \sin(\phi_i)=0$ for all
    $i=0,\dots,d-p-1$. Since $\dim(\mathfrak{t}')=d-p-1$, all but one of these equations have to be satisfied independently of $\phi_i$. In other words:
    there is an $i=0,\dots,d-p-1$ such that $a_i\ne0$ (and $a_j=0$ for $j\ne i$).
    This then also implies $\sin(\phi_i)=0$. Thus, $C'=C_i$ or $C'_i$,
    depending on whether $\phi_i\in 2\pi\ZZ$ or in $\pi + 2\pi \ZZ$.
\end{proof}
Note that any orthogonal block matrix permuting the indices $i\leftrightarrow j$ and $d-i\leftrightarrow d-j$, we can immediately
see that $C_i$ and $C_j$ are conjugate, as are $C'_i, C'_j$.

\begin{proposition}\label{sec:prop-defect-root-systems}
    Both with respect to $\mathfrak{c}$ and $\mathfrak{c}_0$,
    $\mathfrak{g}$ has a reduced root system of type $B_N$ (or $D_N$ in case
    $2p=d-2$) with root multiplicities:
    \[
        n_{\mathrm{short}} = \abs{d-2-2p}, \quad
        n_{\mathrm{long}} = 1.
    \]
\end{proposition}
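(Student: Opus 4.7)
The plan is to analyse the weights of the defining representation $\CC^{d+2}$ under $\mathfrak{c}_\CC$ and $\mathfrak{c}_{0,\CC}$, and then pass to the adjoint representation through the $\mathfrak{g}_\CC$-equivariant identification $\mathfrak{g}_\CC \cong \Lambda^2 \CC^{d+2}$, mirroring the analogous calculation for the conformal group done in the proof of Proposition~\ref{sec:prop-root-systems}.

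First I would observe that each generator of $\mathfrak{c}$ in Proposition~\ref{sec:prop-defect-cartan-subsets}, namely $F_{i,d-i}$ (and $F_{0,d+1}$ when $2p<d-1$), acts nontrivially only on the two-dimensional coordinate plane indexed by its pair of indices, and there it has eigenvalues $\pm i$ (respectively $\pm 1$ for $F_{0,d+1}$). Setting up dual functionals $\epsilon_1, \dots, \epsilon_N \in \mathfrak{c}_\CC^*$ so that each such pair of basis vectors carries the weights $\pm\epsilon_a$, one obtains a weight decomposition
\[
    \CC^{d+2} = \bigoplus_{a=1}^N (V_{\epsilon_a} \oplus V_{-\epsilon_a}) \oplus V_0,
    \qquad \dim V_{\pm\epsilon_a}=1,
    \qquad \dim V_0 = d+2-2N.
\]
A direct index-counting argument gives $d+2-2N = |d-2-2p|$ in both regimes $2p\ge d-1$ and $2p<d-1$.

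Next I would use the identification $\mathfrak{g}_\CC\cong\Lambda^2\CC^{d+2}$ to read off the root spaces: the weight $\mu+\nu$ component of $\Lambda^2\CC^{d+2}$ is $V_\mu\otimes V_\nu$ when $\mu\ne\nu$ and $\Lambda^2 V_\mu$ when $\mu=\nu$. This produces long roots $\pm\epsilon_a\pm\epsilon_b$ (for $a<b$) each of multiplicity $1\cdot 1=1$, short roots $\pm\epsilon_a$ each of multiplicity $1\cdot(d+2-2N)=|d-2-2p|$, and a zero weight space of dimension $\binom{d+2-2N}{2}+N$. A short algebraic manipulation confirms that these dimensions sum to $\binom{d+2}{2}=\dim\mathfrak{g}$, so no roots are missed. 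When $|d-2-2p|=0$, that is $2p=d-2$, the short roots disappear and the root system degenerates to $D_N$; in all other cases one gets $B_N$, as claimed.

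For $\mathfrak{c}_0$ (which only exists when $2p\ge d-1$), the same eigenvalue inspection shows that the defining representation again splits into $N$ pairs of one-dimensional weight spaces together with a zero weight space of dimension $2p-d+2=|d-2-2p|$; only the specific allocation of indices to weight spaces differs from the $\mathfrak{c}$ case. Since the weight multiplicities on $\CC^{d+2}$ coincide, Lemma~\ref{sec:lem-eigenvalue-structure-adjoint} supplies an element of $O(\eta,\CC)$ conjugating $\mathfrak{c}_\CC$ to $\mathfrak{c}_{0,\CC}$ in a way that matches weights with weights, and Lemma~\ref{sec:lem-root-spaces-automorphism} then transports the root space decomposition from $\mathfrak{c}$ to $\mathfrak{c}_0$ while preserving both type and multiplicities. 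The main technical obstacle is combinatorial bookkeeping: correctly tracking which basis vectors of $\CC^{d+2}$ lie in which weight space in each of the regimes, and ensuring that the symbolic identification of weights between $\mathfrak{c}$ and $\mathfrak{c}_0$ is consistent with an orthogonal change of basis so that the conjugacy lemma applies cleanly.
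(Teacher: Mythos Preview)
Your approach is correct but differs from the paper's. The paper proves this proposition by direct commutator computations: for each regime it writes down explicit root vectors (combinations like $F_{j,k}\pm iF_{d-j,k}+iF_{j,d-k}\mp F_{d-j,d-k}$ for long roots, $F_{j,m}\pm iF_{d-j,m}$ for short roots, etc.), brackets them against the generators $F_{i,d-i}$ and $F_{0,d+1}$ of $\mathfrak{c}$, reads off the eigenvalues, and then checks that the dimensions of the spaces found exhaust $\dim\mathfrak{g}$. The case of $\mathfrak{c}_0$ is handled by a separate, shorter explicit commutator calculation with $F_{0,d+1}$.

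Your route---diagonalising the defining representation and passing to $\Lambda^2\CC^{d+2}$---is exactly the strategy the paper itself employed for the conformal-block root systems in Proposition~\ref{sec:prop-root-systems}, so it is well supported by the surrounding machinery. It is more conceptual and avoids writing out any root vectors; the price is that one does not obtain the explicit basis of each $\mathfrak{g}_\alpha$, which the paper's computation provides for free. Since Section~\ref{sec:defect} only treats the scalar (trivial) bimodule and therefore never needs the operators $A_\alpha$ or explicit root vectors, nothing is lost. Your invocation of Lemmas~\ref{sec:lem-eigenvalue-structure-adjoint} and~\ref{sec:lem-root-spaces-automorphism} for transferring the result from $\mathfrak{c}$ to $\mathfrak{c}_0$ is also cleaner than redoing the commutator count, and the hypothesis (matching characters of the defining representation) is indeed satisfied by your weight-multiplicity observation.
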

\begin{proof}
    For $2p\ge d-1$, we have for
    $0\le i,j,k\le d-p-1$:
    \begin{align*}
        \comm{F_{i,d-i}}{F_{j,k}\pm i F_{d-j,k} + iF_{j,d-k} \mp F_{d-j,d-k}}
        &= (\pm i\delta_{i,j} + i\delta_{i,k})(\cdots)\\
        \comm{F_{i,d-i}}{F_{j,k}\pm i F_{d-j,k} - iF_{j,d-k} \pm F_{d-j,d-k}}
        &= (\pm i\delta_{i,j} - i\delta_{i,k})(\cdots)\\
        \comm{F_{i,d-i}}{F_{j,m}\pm i F_{d-j,m}} &=
        \pm i\delta_{i,j} (\cdots)\\
        \comm{F_{i,d-i}}{F_{m,n}} &= 0
    \end{align*}
    for $m,n=d-p,\dots,p,d+1$, which establishes that the reduced root
    system with respect to $\mathfrak{c}$ is of type $B_{d-p}$ and has root
    multiplicities $2p-d+2$ and $1$ for short and long roots, respectively. Note that the dimensions of the root spaces we
    found add up to $\dim(\mathfrak{g})$, so that we have indeed
    found all of them.

    Similarly, we have
    \begin{align*}
        \comm{F_{0,d+1}}{F_{0,m} \pm F_{m,d+1}}
        &= \pm (F_{0,m} \pm F_{m,d+1})\\
        \comm{F_{0,d+1}}{F_{m,n}} &= 0
    \end{align*}
    for $m,n=1,\dots,d$, which yields a root system of type $B_{d-p}$ with
    multiplicities $2p-d+2$ and $1$.

    Lastly, for $2p<d-1$ we have for
    $i=0,\dots,p$:
    \begin{align*}
        \comm{F_{i+1,d-i}}{F_{j+1,k+1}\pm i F_{d-j,k+1} + iF_{j+1,d-k} \mp F_{d-j,d-k}}
        &= (\pm i\delta_{i,j} + i\delta_{i,k})(\cdots)\\
        \comm{F_{0,d+1}}{F_{j+1,k+1}\pm i F_{d-j,k+1} + iF_{j+1,d-k} \mp F_{d-j,d-k}} &= 0\\
        \comm{F_{i+1,d-i}}{F_{j+1,k+1}\pm i F_{d-j,k+1} - iF_{j+1,d-k} \pm F_{d-j,d-k}}
        &= (\pm i\delta_{i,j} - i\delta_{i,k})(\cdots)\\
        \comm{F_{0,d+1}}{F_{j+1,k+1}\pm i F_{d-j,k+1} - iF_{j+1,d-k} \pm F_{d-j,d-k}} &= 0\\
        \comm{F_{i+1,d-i}}{F_{0,j+1}\pm i F_{0,d-j} + F_{j+1,d+1} \pm i F_{d-j,d+1}} &= \pm i\delta_{i,j} (\cdots)\\
        \comm{F_{0,d+1}}{F_{0,j+1}\pm i F_{0,d-j} + F_{j+1,d+1} \pm i F_{d-j,d+1}} &= \cdots\\
        \comm{F_{i+1,d-i}}{F_{0,j+1}\pm i F_{0,d-j} - F_{j+1,d+1} \mp i F_{d-j,d+1}} &= \pm i\delta_{i,j} (\cdots)\\
        \comm{F_{0,d+1}}{F_{0,j+1}\pm i F_{0,d-j} - F_{j+1,d+1} \mp i F_{d-j,d+1}} &= -\cdots\\
        \comm{F_{i+1,d-i}}{F_{j+1,m}\pm iF_{d-j,m}}&=
        -i\delta_{i,j}(\cdots)\\
        \comm{F_{0,d+1}}{F_{j+1,m}\pm iF_{d-j,m}} &= 0\\
        \comm{F_{i+1,d-i}}{F_{0,m}\pm F_{m,d+1}} &= 0\\
        \comm{F_{0,d+1}}{F_{0,m}\pm F_{m,d+1}} &= \pm (\cdots)\\
        \comm{F_{i+1,d-i}}{F_{m,n}} &= 0\\
        \comm{F_{0,d+1}}{F_{m,n}} &= 0
    \end{align*}
    where $m,n=p+2,\dots,d-1-p$. This indeed establishes that we are dealing here with the root
    system of type $B_{p+2}$ or $D_{p+2}$ (in case $d-2=2p$), with root multiplicities $d-2p-2$ and $1$.
\end{proof}

\begin{lemma}\label{sec:lem-defect-blocks-satisfy-technical-cond}
    All standard Cartan subsets satisfy the technical condition of
    Section~\ref{sec:general-decomposition}.
\end{lemma}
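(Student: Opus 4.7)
The plan is to run through the three types of standard Cartan subsets identified in Proposition~\ref{sec:prop-defect-cartan-subsets} and, in each case, exhibit concrete data $(\phi,\epsilon)$ verifying the decomposition hypothesized at the start of Section~\ref{sec:general-decomposition}. The whole statement reduces to observing that the ``compact parts'' $t$ appearing in these Cartan subsets are either trivial or of order $2$, so that $\Ad(t)$ is already an involution on $\mathfrak{g}_\CC$, and then checking the usual compatibilities with $\sigma$, $B$, and $\mathfrak{c}'$.

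For the fundamental Cartan subset $C$ and for the subsets $C_i$ from Proposition~\ref{sec:prop-defect-cartan-subsets}, we have $t=1$, so $\Ad(t)=\mathrm{id}$. The choices $\phi:=\mathrm{id}_{\mathfrak{g}_\CC}$ and $\epsilon\equiv 1$ then satisfy every requirement trivially: $\mathrm{id}$ is an involution in $O(\mathfrak{g}_\CC,B)$, it commutes with $\sigma$, it preserves $\mathfrak{c}'$, and the equation $\Ad(t)(X)=\epsilon_\alpha\phi(X)$ holds on every root space. For the remaining subsets $C'_i=C_i\exp(\pi F_{i,d-i})$, the compact part is $t=\exp(\pi F_{i,d-i})$. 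Since $\exp(\theta F_{i,d-i})$ is the rotation by angle $\theta$ in the $(i,d-i)$-plane, $t$ equals the diagonal matrix with $-1$ in positions $i$ and $d-i$ and $+1$ elsewhere; in particular $t^2=1$ and $t\in O(\eta)$. Consequently $\Ad(t)\in O(\mathfrak{g}_\CC,B)$ is an involution, and because $t$ is diagonal with entries $\pm1$ it commutes with the diagonal matrix $\operatorname{diag}(1_{d-p},-1_{p+2})$ effecting $\sigma$, so $\Ad(t)$ commutes with $\sigma$. Therefore the choices $\phi:=\Ad(t)$ and $\epsilon\equiv 1$ work, modulo one last check.

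The one non-immediate point is that $\Ad(t)$ leaves $\mathfrak{c}_i$ invariant, and this is the only place where the index arithmetic needs to be looked at; it is where I expect the mild obstacle (if any) to lie. Since $t=\operatorname{diag}(\ldots)$ with $-1$ exactly at positions $i$ and $d-i$, it acts on each $F_{\mu,\nu}$ by $(-1)^{|\{\mu,\nu\}\cap\{i,d-i\}|}$. The algebra $\mathfrak{c}_i$ is spanned by $F_{j,d-j}$ for $j\in\{0,\ldots,d-p-1\}\setminus\{i\}$ together with $F_{i,d+1}$. Under the regime $2p\ge d-1$ in which these subsets exist, one has $d-p-1\le p$, so the sets $\{0,\ldots,d-p-1\}$ and $\{p+1,\ldots,d\}$ are disjoint; it follows that for any $j\ne i$ in the former range, $\{j,d-j\}\cap\{i,d-i\}=\varnothing$, so $F_{j,d-j}$ is fixed by $\Ad(t)$, and $F_{i,d+1}$ is merely negated (since its index set meets $\{i,d-i\}$ in $\{i\}$). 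Hence $\mathfrak{c}_i$ is preserved, completing the verification in all cases.
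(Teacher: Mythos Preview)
Your proof is correct and follows essentially the same approach as the paper's. Both arguments dispose of the subgroup cases $C$ and $C_i$ trivially (with $t=1$) and then, for $C'_i$, set $\phi:=\Ad(t')$ with $\epsilon\equiv 1$, using that $t'=\exp(\pi F_{i,d-i})$ squares to the identity; your version is in fact more careful than the paper's in explicitly verifying that $\Ad(t')$ commutes with $\sigma$ (via diagonal matrices) and that it preserves $\mathfrak{c}_i$ (via the index-disjointness argument under $2p\ge d-1$), points the paper only asserts.
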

\begin{proof}
    For $2p<d-1$ and the cases of $C$ and $C_i$ for $2p\ge d-1$ there
    is nothing to show as the Cartan subsets in questions are subgroups
    and are therefore covered. It remains then to show that
    $C'_i$ for $2p\ge d-1$ satisfies the technical condition. This is
    a coset with ``inhomogeneity'' $t' = \exp(\pi F_{i,d-i})$. The map
    $\Ad(t')$ leaves $\mathfrak{c}_i$ invariant and squares to the
    identity, since $(t')^2=1$, so that it commutes with $\sigma$. Therefore it satisfies the conditions we impose on $\phi$ in the
    decomposition $\Ad(t')|_{\mathfrak{g}_\alpha} = \epsilon_\alpha \phi$.
\end{proof}

\begin{theorem}
    The quadratic Casimir element acts on conformal blocks for
    $p$-dimensional defects in $d$-dimensional Euclidean spacetime
    as the operator $L(k)$ from \cite[Proposition~1.2.3]{heckmanSchlichtkrull} for a root system of type $B_N$ (or
    $D_N$) with multiplicities
    \[
        k_{\text{short}} = \frac{\abs{d-2-2p}}{2},\qquad
        k_{\text{long}} = \frac{1}{2},
    \]
    where $N=\min(p+2,d-p)$. This exactly matches what was obtained
    in \cite[Section~3]{defect} in the case $p=q$.
\end{theorem}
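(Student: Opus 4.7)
The plan is to apply Theorem \ref{sec:thm-casimir-decomposition} to any of the standard Cartan subsets produced in Proposition \ref{sec:prop-defect-cartan-subsets}, using Lemma \ref{sec:lem-defect-blocks-satisfy-technical-cond} to ensure the technical hypothesis on $\Ad(t)$ required in Section~\ref{sec:general-decomposition} is satisfied. Since zonal spherical functions are matrix-spherical functions for the bimodule $W=\CC$ with the \emph{trivial} left and right $H$-actions, the scalar character on $H$ (and hence on $\mathfrak{h}\supseteq\mathfrak{m}'$) is zero. Consequently, the operator $\pi_{\Le}(\Omega_{\mathfrak{m}'})$ vanishes, and the multiplication maps $\pi_{\Le}(m(A_\alpha))$, $\pi_{\Ri}(m(A_{t\alpha}))$, as well as $\pi(1\otimes\phi)(A_\alpha)$ all vanish, because each $A_\alpha$ is a sum of elementary tensors $X\otimes X$ with $X\in\mathfrak{h}$ acting trivially on $\CC$.

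With every non-derivative matrix term gone, the radial part of $\Omega_{\mathfrak{g}}$ from Theorem \ref{sec:thm-casimir-decomposition} collapses to
\[
    \Pi(\Omega_{\mathfrak{g}})
    = \Omega_{\mathfrak{c}'}
    + \sum_{\alpha\in\Sigma(\mathfrak{g}:\mathfrak{c}')}
      \frac{n_\alpha}{2}\coth_\alpha\, C_\alpha.
\]
After choosing an orthonormal basis $\xi_1,\dots,\xi_N$ of $\mathfrak{c}'$ with respect to $B$, the first summand becomes $\sum_i\partial_{\xi_i}^{2}$, and for each positive root $\alpha$ the pair $(\alpha,-\alpha)$ contributes $n_\alpha\coth_\alpha\partial_\alpha$. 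Setting $k_{2\alpha}:=\tfrac{n_\alpha}{2}$, which is the convention adopted throughout Section~\ref{sec:casimir-eq}, and using the trigonometric identity $\coth_\alpha = \tfrac{1+e^{-\alpha}}{1-e^{-\alpha}}$ (interpreting $e^\alpha$ via the exponential parametrisation of $C$), this is exactly the Heckman--Opdam Laplacian $L(k)$ of \cite[Proposition~1.2.3]{heckmanSchlichtkrull} for the doubled root system $2\Sigma(\mathfrak{g}:\mathfrak{c}')$.

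To identify the type and multiplicities, I invoke Proposition \ref{sec:prop-defect-root-systems}: in both ranges $2p\ge d-1$ and $2p<d-1$, the reduced restricted root system is $B_N$ (or $D_N$ when $2p=d-2$, where the short roots are absent), with short root multiplicity $|d-2-2p|$ and long root multiplicity $1$, and $N=\min(p+2,d-p)$. Translating into the $k$-variables via $k_{\alpha}=n_{\alpha/2}/2$, one obtains $k_{\text{short}}=|d-2-2p|/2$ and $k_{\text{long}}=1/2$, yielding the asserted operator.

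The only mildly delicate point is that several non-conjugate Cartan subsets exist when $2p\ge d-1$; however, each of them is handled by the same argument (Proposition \ref{sec:prop-defect-root-systems} shows the root data are the same for $\mathfrak{c}_0$ as for $\mathfrak{c}$, and the remaining $C_i,C_i'$ are conjugate to $C_0,C_0'$ via a coordinate permutation), so the radial operator is the same on every component of Matsuki's decomposition. Finally, to compare with \cite[Section~3]{defect} in the special case $p=q$, one matches the normalisation of the Calogero--Sutherland Hamiltonian there with the Hamiltonian form of $L(k)$, i.e.\ one conjugates by $\delta(k)^{1/2}=\prod_{\alpha>0}\bigl(e^{\alpha/2}-e^{-\alpha/2}\bigr)^{k_\alpha}$ and notes via \cite[Corollary~2.1.2]{heckmanSchlichtkrull} that $\delta(k)^{1/2}L(k)\delta(k)^{-1/2}$ is exactly the potential appearing there (up to the standard additive shift by $\|\rho(k)\|^2$). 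This step is essentially bookkeeping and mirrors what was done for the 4-point scalar case in Section~\ref{sec:cb-scalar}; I expect no substantive obstacle beyond correctly translating conventions between the two sources.
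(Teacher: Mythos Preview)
Your proposal is correct and follows essentially the same route as the paper: invoke Proposition~\ref{sec:prop-defect-cartan-subsets} and Lemma~\ref{sec:lem-defect-blocks-satisfy-technical-cond} to justify applying Theorem~\ref{sec:thm-casimir-decomposition}, observe that the trivial bimodule kills all $A_\alpha$ and $\Omega_{\mathfrak{m}'}$ contributions so that only the Laplacian $L(k)$ survives, and then read off the type and multiplicities from Proposition~\ref{sec:prop-defect-root-systems}. Your treatment is in fact slightly more explicit than the paper's (spelling out why the matrix terms vanish and sketching the $\delta(k)^{1/2}$-conjugation needed to match \cite{defect}), but the logical skeleton is identical.
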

\begin{proof}
    By Proposition~\ref{sec:prop-defect-cartan-subsets} we need
    to consider $C$ (for $2p<d-1$) or
    $C,C_i,C_i'$ (for $2p\ge d-1$).
    By Lemma~\ref{sec:lem-defect-blocks-satisfy-technical-cond} we may apply Theorem~\ref{sec:thm-casimir-decomposition}. Since our
    $H$-bimodule is trivial, we ignore all elements of
    $U(\mathfrak{h})\otimes_{U(\mathfrak{m}')}U(\mathfrak{h})$, so that
    the differential operator $R^{\CC}(\Omega_{\mathfrak{g}})$ reduces
    to the Laplacian $L(k)$ for the root system
    $2\Sigma(\mathfrak{g}:\mathfrak{c}_I)$ where $I$ is either the empty word or a number in $0,\dots,d-1-p$. By Proposition~\ref{sec:prop-defect-root-systems}, this root system
    is of type $B_N$ with multiplicities $\abs{d-2-2p}$ and $1$, respectively (in case the short multiplicity is 0, it is of type $D_N$), which explains the choice of parameter $k$.
\end{proof}

\section{Conclusion and Outlook}
In this paper we made a step towards developing a systematic theory of matrix-spherical
functions for symmetric pairs $(G,H)$, where neither $G$ nor $H$
need to be compact. Namely, we used results from \cite{matsuki} to 
construct a group decomposition
that generalises
the usual $KAK$-decomposition, known
from the 
standard theory of non-compact reductive Lie groups.
We then used this new decomposition to explore the corresponding radial part reductions akin to the theory presented in \cite{CM}, and in
particular to establish a matrix-valued, general symmetric pair analogue of
\cite[Theorem~5.1.7]{heckmanSchlichtkrull}.

Afterwards we applied this theory to some of the most basic examples arising in the study of
conformal blocks in CFT. Firstly,
we elaborated in detail on conformal blocks for 4-point functions, in particular on
how the connection between the (quadratic) Casimir equation and the
Calogero--Sutherland model \cite{superintegrability}
can be appropriately derived and on what the Casimir equation looks like for
general non-scalar cases (both Euclidean and Lorentzian), paying special attention to mathematical subtleties on the way. And secondly, we also checked our results against the two simple benchmarks: the matrix Calogero--Sutherland Hamiltonians corresponding to
the spinorial Casimir
equation for three-dimensional spinning blocks \cite{BSI}
and to the conformal blocks for two $p$-dimensional scalar defects in
Euclidean conformal field theory \cite{defect}.

Highlighting the added value of this paper on the physics side, we would like to emphasize
once more the need for a thorough understanding of the group decompositions and the corresponding radial part computations that were considered here. 
By now, the general form of the correspondence between conformal blocks, harmonic analysis and integrable models \cite{harmony, BSI} is rather clear, has been extended to various physically interesting settings, most importantly to the multipoint case in \cite{multipoint}, and has started to bring practical results,
providing the actual input for
the machinery of numerical conformal bootstrap, especially in interesting not-yet-explored setups. However, if one ultimately strives for an exhaustive analysis of the mathematics behind (the kinematics of) higher-dimensional conformal theories, it is fair to say that various pieces of this correspondence are not yet up to the standards of mathematical rigour. As a matter of fact, being perhaps at a risk of slightly overstating, one can regard this state of things as one of the crucial stumbling blocks on the way of properly using representation-theoretical tools in order to (analytically) address the questions of dynamics of higher-dimensional CFTs. It is precisely one of such shallower points that we have
elaborated upon
in the present paper:
we clarified the specifics of the $KAK$ decompositions used in computations of radial parts of conformal Casimirs and made the latter fully explicit in the present context.
Elucidation of this specific issue, 
along with the proper analysis of the Casimir reduction in the Lorentzian setting, should be considered as the main added value of the present paper
on the physics side.

We now move on to the outlook of possible further directions. It was explained to one of us by J. Stokman that the most natural general framework for multipoint correlation functions, encompassing the situations studied in this paper as a particular case, is the framework of spin graph functions by J.\ Stokman and N.\ Reshetikhin \cite{RS-2,RS-0, RS}, who studied generalised spherical functions associated to the moduli spaces of principal connections on a finite graph. As we reviewed above, the CFT four-point functions are closely related to matrix--spherical functions for the symmetric pair $(G, MA)$,
which from the point of view of loc.cit. can be regarded as a correspondence between spin graph functions related to two different types of graphs.
In particular, harmonic analysis on non-compact symmetric spaces, including theory we developed here, can be used in the Reshetikhin--Stokman setting to lift the restriction to compact subgroups, playing a role in \cite{RS-2}. This is a work in progress.
It would also be interesting to better understand such correspondences between spin graph functions related to different types of graphs and, in particular, to explore their possible analogues for multipoint conformal blocks.

On a slightly smaller scale, let us conclude by listing several concrete further directions that could additionally be addressed in the context of conformal block computations via Matsuki decompositions:
\begin{itemize}
\item It would be interesting to get explicit formulas also for the radial parts of other generators of the algebra of invariant differential operators in various setups. Conceptually this does not seem to be much less straightforward than for the quadratic Casimir, however, the amount of necessary calculations grows quite fast with the order of differential operator. For example, in the simplest case of the scalar 4-point blocks there is a well-known Casimir element of degree four, whose action has been calculated in \cite[Equation~4.14]{dolanOsbornFurther}.

\item Another simple matrix-valued case that is feasible to treating in complete generality is $V_1=V,V_2=\CC,V_3=V^*,V_4=\CC$ for the defining representation $V$
of $SO(p,q)$. $V$ restricts to a sum of two trivial and one defining
representation of $M'$, which means that $\End_{M'}$ is five-dimensional.

\item Our discussion  of the blocks for conformal defects from
\cite{defect} in the last section, of course, has also just scratched the surface. Most interestingly, Matsuki's decomposition naturally allows to consider different (symmetric) involutions of the two sides of a double coset, which thus fits perfectly into the framework of the conformal blocks for defects of different codimension $p \neq q$. In particular, we expect that it should be possible to rigorously reproduce and extend the results
of \cite[Section~3.3]{defect} using Matsuki's theory. It is needless to say that all these constructions can and, perhaps, should also be extended to other signatures (Lorentzian) and to the spinning case. Some of these setups have already been analysed in the physics literature, with the omissions similar to what we have addressed in this paper.
\item Getting under control the radial part decompositions for the other types of non-compact symmetric spaces by using Matsuki's decompositions is, of course, also an interesting, more distant goal. Some of these further setups also bear a clear relevance for physics, e.g. the group case is known to be related to the finite-temperature conformal blocks.
\end{itemize}

\section{Acknowledgements}
PS would like to thank his PhD supervisors Erik Koelink and Maarten van Pruijssen
for allowing him to pursue this side project. MI thanks Edward Berengoltz and Jasper Stokman for numerous discussions on the topics of this paper. The research of PS was funded by grant \texttt{OCENW.M20.108} of the Dutch Research Council NWO.
\printbibliography
\end{document}